\newtheorem{theorem}{Theorem}[section]
\newtheorem*{theorem*}{Theorem}
\newtheorem*{lemma*}{Lemma}
\newtheorem*{theorema*}{Theorem A}
\newtheorem*{theoremb*}{Theorem B}
\newtheorem*{theoremc*}{Theorem C}
\newtheorem*{mainlemma*}{Main Lemma}
\newtheorem{lemma}[theorem]{Lemma}
\newtheorem{coro}[theorem]{Corollary}
\newtheorem{propo}[theorem]{Proposition}
\newtheorem*{claim}{Claim}
\newtheorem*{conjecture*}{Conjecture}
\theoremstyle{definition}
\newtheorem{definition}[theorem]{\bf Definition}
\theoremstyle{remark}
\newtheorem{remark}[theorem]{\bf Remark}
\numberwithin{equation}{section}
\begin{document}

\title
[Zero set of solutions near the boundary]
{Size of the zero set of solutions of elliptic PDEs near the boundary of Lipschitz domains with small Lipschitz constant}

\author[Josep M. Gallegos]{Josep M. Gallegos$^1$}
\thanks{Supported by the European Research Council (ERC) under the European Union's Horizon 2020 research and innovation programme (grant agreement 101018680), by MICINN (Spain) under the grant PID2020-114167GB-I00, and by the Deutsche Forschungsgemeinschaft (DFG, German Research Foundation) under Germany's Excellence Strategy - EXC-2047/1 - 390685813. \\ \\
$^1$Departament de Matem\`atiques, Universitat Aut\`onoma de Barcelona, Edifici C Facultat de Ci\`encies, 08193 Bellaterra, Spain
\\ \Letter~ jgallegos@mat.uab.cat 
}

\begin{abstract}
	Let $\Omega \subset \mathbb R^d$ be a $C^1$ domain or, more generally, a Lipschitz domain with small Lipschitz constant and $A(x)$ be a $d \times d$ uniformly elliptic, symmetric matrix with Lipschitz coefficients.
	Assume $u$ is harmonic in $\Omega$, or with greater generality $u$ solves $\operatorname{div}(A(x)\nabla u)=0$ in $\Omega$, and $u$ vanishes on $\Sigma = \partial\Omega \cap B$ for some ball $B$.
	We study the \textit{dimension of the singular set} of $u$ in $\Sigma$, in particular we show that there is a countable family of open balls $(B_i)_i$ such that $u|_{B_i \cap \Omega}$ does not change sign and $K \backslash \bigcup_i B_i$ has Minkowski dimension smaller than $d-1-\epsilon$ for any compact $K \subset \Sigma$. We also find upper bounds for the $(d-1)$-dimensional Hausdorff measure of the zero set of $u$ in balls intersecting $\Sigma$ in terms of the frequency.
	
	As a consequence, we prove a new \textit{unique continuation principle at the boundary} for this class of functions 
	and show that the \textit{order of vanishing} at all points of $\Sigma$ is bounded except for a set of Hausdorff dimension at most $d-1-\epsilon$.
\end{abstract}

\maketitle

\section{Introduction}
	In this paper, we study the size of the zero set of solutions $u$ of a certain class of elliptic PDEs (see Section \ref{section: conditions on A(x)}) near the boundary of a Lipschitz domain.
	Assume $\Omega\subset \mathbb R^d$ is a Lipschitz domain with small Lipschitz constant and $\Sigma$ is an open set of the boundary $\partial\Omega$ where $u$ vanishes. 
	We investigate the dimension of the set $\mathcal S_{\Sigma}'(u) = \{x \in \Sigma ~|~ u^{-1}(\{0\}) \cap B(x,r)\cap\Omega \neq \varnothing, ~\forall r>0\}$, the \textit{set where $u$ changes sign in every neighborhood}. 
		
	In a more regular setting, for example in the case $\Omega$ is a $C^{1,\operatorname{Dini}}$ domain (see \cite{AE, DEK, KN, KZ1, KZ2} for the definition) and $u$ is harmonic, $\mathcal S_{\Sigma}'(u)$ coincides with the usual \textit{singular set} at the boundary of $u$: $\mathcal S_{\Sigma}(u) = \{x \in \Sigma ~|~ |\nabla u(x)| = 0\}$ (see Proposition \ref{prop: equality singular set}). Note that all $C^{1,\alpha}$ domains (for any $\alpha>0$) are $C^{1,\operatorname{Dini}}$ and all $C^{1,\operatorname{Dini}}$ domains are $C^1$, but the converse is not true.
	Nonetheless, in the case where $\Omega$ is a Lipschitz domain, $\nabla u(x)$ (or $\partial_\nu u$) only exists in $\Sigma$ in a weaker sense (see Appendix \ref{appendix: nontangential limits}) as far as we know, which anticipates that we will not be able to find fine estimates of the size and dimension of $\mathcal S_{\Sigma}(u)$ (see Section \ref{section: counterexample to hausdorff estimates}). The situation is different for $\mathcal S_{\Sigma}'(u)$, for which we prove the following Minkowski dimension estimate:
	\begin{theorem}
		\label{thm: u is positive besides a set of positive codimension}
		Let $\Omega \subset \mathbb R^d$ be a Lipschitz domain, and let $A(x)$ be a uniformly elliptic symmetric matrix with Lipschitz coefficients defined on $\overline\Omega$. Let $B$ be a ball centered in $\partial\Omega$ and suppose that $\Sigma = B \cap \partial\Omega$ is a Lipschitz graph with slope $\tau<\tau_0$, where $\tau_0$ is some positive constant depending only on $d$ and the ellipticity of $A(x)$.
		Let $u\not\equiv0$ be a solution of $\operatorname{div}(A(x)\nabla u(x)) = 0$ in $\Omega$, continuous in $\overline \Omega$ that vanishes in $\Sigma$. Then there exists some small constant $\epsilon_1(d) > 0$ and a family of open balls $(B_i)_i, ~i \in \mathbb N$ centered on $\Sigma$ such that 
		\begin{enumerate}
			\item $u|_{B_i \cap \Omega}$ is either strictly positive or negative, for all $i \in \mathbb N$,
			\item $K\backslash \bigcup_i B_i$ has Minkowski dimension at most $d-1-\epsilon_1$ for any compact $K \subset \Sigma$.
		\end{enumerate}
		Moreover, in the planar setting ($d=2$), the set $K\backslash\bigcup_i B_i$ is finite for any compact $K\subset \Sigma$.
	\end{theorem}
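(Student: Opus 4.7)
The plan is to introduce an Almgren-type frequency function at boundary points, show that points of $S'_\Sigma(u)$ have frequency bounded below by $1+\delta$ for some universal $\delta>0$, and then apply a quantitative stratification argument to bound the Minkowski dimension of such a set.

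First I would flatten the boundary via a bi-Lipschitz change of variables $\Phi$. Because $\tau < \tau_0$ is small, $\Phi$ is $C^{0,1}$-close to the identity and the transformed operator $\operatorname{div}(\tilde A \nabla \tilde u)=0$ remains uniformly elliptic on the upper half ball, with $\tilde u$ vanishing on the flat portion of the boundary. By odd reflection $\bar u(x',x_d)=-\tilde u(x',-x_d)$ across $\{x_d=0\}$, I extend $\tilde u$ to a solution of $\operatorname{div}(\bar A\nabla \bar u)=0$ in a full ball, where $\bar A$ is again uniformly elliptic. I would then define the height and Dirichlet energies $H(x,r)=\int_{\partial B(x,r)}|\bar u|^{2}\,d\sigma$ and $D(x,r)=\int_{B(x,r)}\bar A\nabla \bar u\cdot\nabla \bar u\,dV$, set $N(x,r)=rD(x,r)/H(x,r)$, and establish that $e^{Cr}N(x,r)$ is nondecreasing in $r$, so that the vanishing order $N(x,0+)$ exists at every $x\in\Sigma$.

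Second, I would characterize $S'_\Sigma(u)$ in terms of frequency. Since $\tilde u$ vanishes on $\Sigma$ one always has $N(x_0,0+)\geq 1$, with equality precisely when a blow-up of $\bar u$ at $x_0$ is of the form $c\,x_d$ with $c\neq 0$. A compactness argument then gives a dichotomy: either $N(x_0,0+)=1$ and $\tilde u$ has definite sign in some $B(x_0,r_0)\cap\{x_d>0\}$, so that $x_0\notin S'_\Sigma(u)$, or $N(x_0,0+)\geq 1+\delta$ for a universal $\delta>0$ (a gap lemma in the spirit of Almgren). Consequently $S'_\Sigma(u)\subseteq\{x\in\Sigma : N(x,0+)\geq 1+\delta\}$, and the balls $(B_i)_i$ are obtained by covering $\Sigma\setminus S'_\Sigma(u)$ by the definite-sign balls produced by the dichotomy.

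To bound the Minkowski dimension of the high-frequency set by $d-1-\epsilon_1$ I would apply a Naber-Valtorta type quantitative stratification: effective monotonicity of $N$ forces points where the blow-up fails to be $(d-1)$-symmetric to cluster in a quantitatively controlled way, giving a packing estimate for the relevant stratum and hence the Minkowski dimension bound. For the planar case $d=2$, a stronger argument is available: by the Stoilow-type factorization, any $W^{1,2}$ solution of a 2D divergence-form elliptic equation can be written as a holomorphic function composed with a quasi-conformal homeomorphism, so boundary sign-change points correspond to isolated zeros of a holomorphic function, and are locally finite. The hard part will be the dichotomy in the second step: to force a frequency-one blow-up to be a nonzero multiple of $x_d$ one needs enough boundary regularity for $\bar A$ and a no-flatness gap for blow-ups, and the smallness of $\tau$ is critical here since it is what makes the reflected operator $\bar A$ retain the approximate symmetry and structure required for the Rellich-type identities, monotonicity, and gap lemma to go through.
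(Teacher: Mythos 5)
Your plan departs substantially from the paper's route, but it runs into the same wall that the paper explicitly identifies as the reason a new strategy is needed: in a merely Lipschitz domain, pointwise frequency monotonicity centered at boundary points is not available, and your construction reintroduces precisely that obstruction. When you flatten $\Sigma$ by a bi-Lipschitz map $\Phi$, the new coefficient matrix $\tilde A = (D\Phi) A (D\Phi)^T/|\det D\Phi|$ involves $\nabla\varphi$ for the Lipschitz graph function $\varphi$, so $\tilde A$ is only $L^\infty$, not Lipschitz, even though $A$ itself is Lipschitz. Almost-monotonicity $e^{Cr}N(x,r)\uparrow$ for the Almgren frequency of $\operatorname{div}(\tilde A\nabla\tilde u)=0$ at points of $\{x_d=0\}$ requires at least Dini-continuous coefficients; with bounded measurable coefficients it is false in general. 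The odd reflection to produce $\bar u$ and $\bar A$ compounds this: $\bar A$ has a jump discontinuity across $\{x_d=0\}$ unless $a_{id}$ is odd and the rest even in $x_d$, and even when the reflection formally works the coefficients remain merely bounded. So the monotone quantity you need for the rest of the argument does not exist.

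The downstream steps inherit the problem. The dichotomy ``either $N(x_0,0^+)=1$ and $\tilde u$ has definite sign near $x_0$, or $N(x_0,0^+)\geq 1+\delta$'' presupposes the existence of the limit $N(x_0,0^+)$ at every $x_0\in\Sigma$ and convergence of blow-ups; in the Lipschitz case neither is established (the paper lists the finiteness of the order of vanishing at every boundary point as an open question). Without a well-defined vanishing order, there is no gap lemma and no stratification to apply, so the Naber--Valtorta argument never gets off the ground. The paper circumvents all of this: it only ever evaluates the frequency at \emph{interior} points (centers of Whitney cubes), where $A$ can be assumed close to a constant matrix and almost-monotonicity holds, and it replaces the gap lemma by two quantitative lemmas (Lemma \ref{lemma: Key lemma}, which says the frequency drops by a factor $\tfrac12$ on a fixed proportion of children in the Whitney tree, and Lemma \ref{lemma: 2nd hyperplane lemma}, which converts bounded frequency into a nearby sign-definite ball). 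The Minkowski bound then comes from an elementary binomial counting argument on the tree, not from a stratification.

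One part of your proposal does survive and indeed matches the paper: in $d=2$, using a quasiconformal factorization (Stoilow/Astala--Iwaniec--Martin) to reduce to a holomorphic function and conclude that sign-change points are isolated is exactly the argument given in the paper.

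A repairable but nontrivial loose end, independent of the above: you need the sign-definite balls not only where the dichotomy says ``$N=1$'', but also that their union carries all of $\Sigma$ except a small set. In your scheme the bad set is $\{N(x,0^+)\geq 1+\delta\}$, but to bound it you would also need a uniform upper bound on $N(x,0^+)$ and doubling estimates at boundary centers, which again require the monotonicity you do not have. If you want to keep the reflection idea, you would at minimum have to restrict to $C^{1,\operatorname{Dini}}$ boundaries (so $\tilde A$ stays Dini) and accept the loss of the statement's generality.
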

	Recall that the upper Minkowski dimension of a set $E \subset \mathbb R^{d-1}$ can be defined as 
	\begin{equation}
	\label{eq: Minkowski dimension}
	\operatorname{dim}_{\overline{\mathcal M}} E = \limsup_{j\to\infty} \frac{\log (\#\{\mbox{dyadic cubes $Q$ of side length $2^{-j}$ that satisfy $Q \cap E \neq \varnothing$}\})}{j\log 2}.
	\end{equation}
	{The previous result gives the following corollary:}
	\begin{coro}
		\label{coro:minkowski_dim}
		{Assume $\Omega\subset\mathbb R^d$, $\Sigma$, $A(x)$, $\tau<\tau_0$, and $u\not\equiv0$ as in the statement of Theorem \ref{thm: u is positive besides a set of positive codimension}. Then, there exists a constant $\epsilon_1(d)>0$ such that}
		\[
			\color{black}\operatorname{dim}_{\overline{\mathcal M}} \left ( \mathcal S_\Sigma'(u) \cap K \right) \leq d-1-\epsilon_1
		\] 
		{for any compact set $K\subset \Sigma$.}
		
	\end{coro}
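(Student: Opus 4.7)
The plan is to deduce the corollary directly from Theorem \ref{thm: u is positive besides a set of positive codimension} by a simple set inclusion. Specifically, I want to show
\[
	\mathcal S_\Sigma'(u) \cap K \subseteq K \setminus \bigcup_i B_i,
\]
where $(B_i)_i$ is the family of balls furnished by the theorem, and then invoke monotonicity of upper Minkowski dimension.

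For the inclusion, fix $x \in K$ with $x \in \bigcup_i B_i$, so that $x \in B_{i_0}$ for some $i_0$. Since $B_{i_0}$ is open and contains $x$, there exists $r_0>0$ with $B(x,r_0)\subset B_{i_0}$. By property (1) of Theorem \ref{thm: u is positive besides a set of positive codimension}, $u$ is of constant (strict) sign on $B_{i_0}\cap\Omega$, hence on $B(x,r_0)\cap\Omega$, so $u^{-1}(\{0\})\cap B(x,r_0)\cap\Omega = \varnothing$. This means $x \notin \mathcal S_\Sigma'(u)$. Contrapositively, every point of $\mathcal S_\Sigma'(u)\cap K$ lies in $K\setminus\bigcup_i B_i$.

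It remains to observe that the upper Minkowski dimension is monotone under inclusion: from definition \eqref{eq: Minkowski dimension}, if $E\subseteq F$ then every dyadic cube intersecting $E$ also intersects $F$, so
\[
	\operatorname{dim}_{\overline{\mathcal M}} E \le \operatorname{dim}_{\overline{\mathcal M}} F.
\]
Applying this with $E=\mathcal S_\Sigma'(u)\cap K$ and $F=K\setminus\bigcup_i B_i$ together with property (2) of Theorem \ref{thm: u is positive besides a set of positive codimension} yields
\[
	\operatorname{dim}_{\overline{\mathcal M}}\bigl(\mathcal S_\Sigma'(u)\cap K\bigr) \le d-1-\epsilon_1,
\]
which is exactly the claim. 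There is no real obstacle here; the content of the corollary is entirely concentrated in the theorem, and the corollary amounts to unpacking the definition of $\mathcal S_\Sigma'(u)$ against statement (1) of the theorem.
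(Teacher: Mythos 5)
Your proof is correct and is exactly the intended deduction: the paper states the corollary as an immediate consequence of Theorem \ref{thm: u is positive besides a set of positive codimension} without further argument, and your inclusion $\mathcal S_\Sigma'(u)\cap K\subseteq K\setminus\bigcup_i B_i$ (via property (1) and the openness of the balls) followed by monotonicity of upper Minkowski dimension and property (2) is precisely what that phrase elides.
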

	\begin{remark}
		We remark that Theorem \ref{thm: u is positive besides a set of positive codimension} {and Corollary \ref{coro:minkowski_dim}}
		\begin{itemize}
			\item are new even in the harmonic case {as the set $\mathcal S_\Sigma'(u)$ has not been well studied before (as far as I know)},
			\item are valid for harmonic functions in Riemannian manifolds with Lipschitz boundary (with small Lipschitz constant depending on the metric),
			\item include the case when $\Omega$ is a $C^1$ domain, {situation where not too much is known either},
			\item give Hausdorff dimension estimates for the set $\mathcal S_{\Sigma}'(u)$ by taking an exhaustion of $\Sigma$ by compact sets. {Note that Hausdorff dimension estimates are weaker than Minkowski dimension estimates but these were not known either.}
		\end{itemize}
	\end{remark}
	\vspace{1mm}
	\begin{remark}
		{Some of the results of the present paper suggest that the set $\mathcal S_\Sigma'(u)$ might be a natural substitute of the usual singular set $\mathcal S_\Sigma(u)$ in the case of Lipschitz domains (and rougher). These results are the fact that $\mathcal S_\Sigma(u) = \mathcal S_\Sigma'(u)$ in the case $\Omega$ is a $C^{1,\operatorname{Dini}}$ domain (Proposition \ref{prop: equality singular set}), the existence of an example of a Lipschitz domain where $\operatorname{dim}_{\mathcal H} \mathcal S_\Sigma(u)=d-1$ and no better (see Section \ref{section: counterexample to hausdorff estimates}), and Corollary \ref{coro:minkowski_dim} showing that better dimension estimates are true for $\mathcal S_\Sigma'(u)$. }
	\end{remark}
	Moreover, we show an upper bound estimate on the size of the zero set of $u$ on balls centered at $\Sigma$ in terms of the \textit{frequency function} $N(x,r)$ (see Definition \ref{def: frequency function}):
		\begin{theorem}
		\label{thm: measure of nodal set near the boundary}
		Assume $\Omega$, $\Sigma$, $A(x)$, $\tau<\tau_0$, and $u\not\equiv0$ as in the statement of Theorem \ref{thm: u is positive besides a set of positive codimension}.
		Let $x\in\Sigma$ and $0<r<r_0$ with $ r_0$ depending on $\operatorname{dist}(x, \partial\Omega \backslash \Sigma)$, the Lipschitz constant $L_A$ and the ellipticity $\Lambda_A$ of $A(x)$, and $d$.
		There exists $\tilde x \in \Omega$ such that $\vert x-\tilde x\vert \approx_{\Lambda_A} \operatorname{dist}(\tilde x, \partial\Omega) \approx_{\Lambda_A} r$ and
		\[
		\mathcal H^{d-1}(\{u = 0\} \cap B(x,r) \cap \Omega) \leq C r^{d-1} (N(\tilde x, S r)+1)^\alpha 
		\]
		for some large $S$ depending on $L_A, \Lambda_A$ and $d$, and some $\alpha\geq 1$ depending on $d$. 
	\end{theorem}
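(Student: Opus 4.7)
The plan is to combine the interior Hausdorff bound for nodal sets of solutions of divergence-form elliptic equations with Lipschitz coefficients (due to Logunov and coauthors) with a Whitney-type decomposition of $B(x,r)\cap\Omega$, and then to propagate the frequency from the corkscrew point $\tilde x$ to every Whitney ball using the boundary vanishing of $u$ on $\Sigma$.

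First, I would cover $B(x,r)\cap\Omega$ by a Whitney-type family $\{B_j=B(z_j,s_j)\}_j$ with $s_j\approx\operatorname{dist}(z_j,\partial\Omega)$, bounded overlap, and dilates $cB_j$ still contained in $\Omega$. Because $\Sigma$ is a Lipschitz graph with small slope, a standard geometric calculation gives $\sum_j s_j^{d-1}\lesssim r^{d-1}$. On each $B_j$ I would then apply the interior Hausdorff nodal-set inequality
\[
\mathcal H^{d-1}\bigl(\{u=0\}\cap B_j\bigr)\leq C s_j^{d-1}(N_j+1)^\alpha,
\]
where $N_j$ denotes the local frequency (or, equivalently up to constants, the local doubling index) of $u$ in $cB_j$, and $\alpha$ depends only on $d$.

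The core step is to dominate every $N_j$ by $N(\tilde x, Sr)+1$. For Whitney balls with $s_j\approx r$ this is essentially immediate: one chains $z_j$ to $\tilde x$ by a bounded sequence of overlapping interior balls and invokes the Garofalo--Lin monotonicity of the frequency together with a three-balls inequality. For Whitney balls close to $\Sigma$ the boundary vanishing of $u$ must be exploited: after a bi-Lipschitz flattening of $\Sigma$, which is close to the identity thanks to the small Lipschitz constant assumption and preserves the class of elliptic equations under consideration, one obtains a solution of a uniformly elliptic PDE with Lipschitz coefficients in a half-ball, vanishing on the flat portion of the boundary. A boundary doubling estimate of Adolfsson--Escauriaza type then transfers the doubling from a reference interior ball (already controlled by $N(\tilde x, Sr)$ via the first regime) down to $B_j$ with constants independent of $s_j$.

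Putting everything together, summing the interior nodal-set inequality over the Whitney balls and invoking $\sum_j s_j^{d-1}\lesssim r^{d-1}$ yields the desired bound
\[
\mathcal H^{d-1}\bigl(\{u=0\}\cap B(x,r)\cap\Omega\bigr)\leq C r^{d-1}(N(\tilde x, Sr)+1)^\alpha.
\]
The principal obstacle is the uniform frequency propagation for Whitney balls arbitrarily close to $\Sigma$: this is exactly where the smallness of the Lipschitz constant is indispensable, as it keeps the constants in the boundary flattening, the boundary Harnack inequality, and the boundary doubling estimate under control independently of how deep into $\Omega$ each $B_j$ sits.
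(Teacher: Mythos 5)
Your plan has a fatal gap at the summation step: the claim that $\sum_j s_j^{d-1}\lesssim r^{d-1}$ for a Whitney family of $B(x,r)\cap\Omega$ is false. At each dyadic generation $k$ the Whitney balls of radius $\approx 2^{-k}r$ number about $2^{(d-1)k}$, so each generation contributes $\approx r^{d-1}$ to the sum and the full sum diverges. Consequently, even if you could establish a uniform bound $N_j\lesssim N(\tilde x,Sr)+1$ for every Whitney ball, the interior Logunov estimate summed over the decomposition would not yield a finite bound, let alone one of order $r^{d-1}(N(\tilde x,Sr)+1)^\alpha$. Some mechanism that makes the contribution of deep generations decay geometrically is indispensable, and your proposal has none.

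This is precisely what the paper's argument supplies, and it is genuinely different from uniform frequency propagation. The paper introduces a modified frequency $N'(Q)$ on the Whitney tree and shows, using Lemma \ref{lemma: Key lemma} together with Corollary \ref{coro: 2nd hyperplane} (the elliptic second hyperplane lemma), that when passing from a cube $\widehat Q$ to its $K$-th generation descendants, a definite proportion $\delta_0$ of them satisfy $N'(Q)\leq N'(\widehat Q)/2$, while the rest satisfy $N'(Q)\leq(1+\epsilon)N'(\widehat Q)$. One then proves $\mathcal H^{d-1}(\{u=0\}\cap t(Q)\cap K)\leq C_0 N'(Q)^\alpha\ell(Q)^{d-1}$ by induction over the tree; the induction closes because $(1+\epsilon)^\alpha(1-\delta_0)+\delta_0 2^{-\alpha}<1$, so the geometric loss from the $2^{(d-1)K}$ children is dominated by the gain from the halved frequency on a $\delta_0$-fraction of them, and the Logunov interior estimate only enters at a single bounded layer of interior cubes per induction step. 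Without that cancellation, no bound is available. A secondary problem with your route: the boundary doubling estimate of Adolfsson--Escauriaza type you invoke to transfer doubling to Whitney balls near $\Sigma$ is a $C^{1,\text{Dini}}$ tool relying on pointwise monotonicity, which, as the paper itself emphasizes, is not available in Lipschitz domains with small constant; the replacement here is exactly the probabilistic decay of $N'$ encoded in the two key lemmas.
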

	The precise $\tilde x$ appearing in the statement of Theorem \ref{thm: measure of nodal set near the boundary} is the center of a certain dyadic cube related to a Whitney cube decomposition of $\Omega$ but we have freedom in choosing it. For more details see Section \ref{section: whitney structure} and Remark \ref{rmk: x in the thm of measure of nodal set}. This result is analogous to Theorem $2$ in \cite{LMNN} for a more general class of functions but with a worse exponent. 
	Further, we briefly discuss the application of this theorem to the study of the zero set of Dirichlet eigenfunctions of the operator $\operatorname{div}(A\nabla \cdot)$ in $\Omega$ in Section \ref{section: Yau}. See \cite{LMNN} for more background on this result and its applications in the harmonic case.
	
	Let us give some historical background for the results of this paper.
	L. Bers asked the following question. Consider a harmonic function $u$ in the upper half-plane $\mathbb R^d_+$, $C^1$ up to the boundary such that there exists $E \in \partial \mathbb R^d_+ = \mathbb R^{d-1}$ where $u = \vert \nabla u \vert = 0$ on $E$. Does $\operatorname{measure}_{d-1}(E)>0$ imply $u \equiv 0$? This question has positive answer in the plane, thanks to the subharmonicity of $\log |\nabla u|$. But in $\mathbb R^d_+,~ d \geq 3$, there are examples constructed by J. Bourgain and T. Wolff \cite{BW} which give a negative response in general.
	
	A related conjecture by F.-H. Lin \cite{Lin} which is still open is the following:
	\begin{conjecture*}
		Let $\Omega \subset \mathbb R^d$ be a Lipschitz domain and $\Sigma = B\cap \partial\Omega$ for some ball $B$ centered in $\partial\Omega$. Let $u$ be a harmonic function in $\Omega$ and continuous up to the boundary that vanishes in $\Sigma$. If the set where $\partial_\nu u = 0$ in $\Sigma$ has positive surface measure, then $u$ must be identically zero. 
	\end{conjecture*}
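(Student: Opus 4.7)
The plan is to derive the conjecture (in the setting of Lipschitz domains with sufficiently small Lipschitz constant, i.e.\ the setting of the present paper) as a consequence of Theorem \ref{thm: u is positive besides a set of positive codimension} combined with classical Fatou- and boundary-Harnack-type results for nonnegative solutions of $\operatorname{div}(A\nabla u)=0$ in Lipschitz domains. The intuition is: if $u$ changes sign only on a set of codimension strictly greater than one in $\Sigma$, then on large patches of $\Sigma$ the function $u$ retains a definite sign, and a Hopf-type boundary estimate should then force $\partial_\nu u\neq 0$ surface-almost everywhere on such patches, contradicting any hypothesis that $\partial_\nu u=0$ holds on a set of positive $(d-1)$-measure.

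Concretely, I would argue by contrapositive. Assume $u\not\equiv 0$ and, for contradiction, that the set $E:=\{x\in\Sigma:\partial_\nu u(x)=0\}$ (interpreted in the non-tangential sense of Appendix \ref{appendix: nontangential limits}) satisfies $\mathcal H^{d-1}(E)>0$. Apply Theorem \ref{thm: u is positive besides a set of positive codimension} to obtain balls $(B_i)_i$ centered on $\Sigma$ with $u|_{B_i\cap\Omega}$ of constant sign on each $B_i\cap\Omega$, and $\Sigma\setminus\bigcup_i B_i$ of Hausdorff dimension at most $d-1-\epsilon_1$, hence of zero $(d-1)$-dimensional surface measure. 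Consequently $\mathcal H^{d-1}(E\cap B_i\cap\Sigma)>0$ for some index $i$.

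After possibly replacing $u$ by $-u$ we may assume $u>0$ on $B_i\cap\Omega$; then $u$ is a positive solution of $\operatorname{div}(A\nabla u)=0$ vanishing continuously on $B_i\cap\Sigma$. The boundary Harnack principle in Lipschitz (indeed NTA) domains, together with the Dahlberg-type mutual absolute continuity between $A$-harmonic measure and surface measure, yields $u(x)\asymp G(x,y_0)$ up to $B_i\cap\Sigma$, where $G$ is the Green function for $\operatorname{div}(A\nabla\,\cdot\,)$ with pole at an interior point $y_0\in\Omega$ bounded away from $B_i$. Since $G(x,y_0)/\operatorname{dist}(x,\partial\Omega)$ has strictly positive non-tangential limit $\mathcal H^{d-1}$-a.e.\ on $B_i\cap\Sigma$, the same holds for $u$; in the non-tangential sense of Appendix \ref{appendix: nontangential limits} this means $\partial_\nu u\neq 0$ surface-a.e.\ on $B_i\cap\Sigma$, contradicting the choice of $i$.

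The principal obstacle---and the reason the full conjecture remains open---is the smallness hypothesis on the Lipschitz constant, which is built into Theorem \ref{thm: u is positive besides a set of positive codimension}; without it one cannot guarantee that $\mathcal S_\Sigma'(u)$ is lower-dimensional, so the reduction to a sign-definite ball $B_i$ breaks down. A secondary, more technical, point is the careful matching between the non-tangential notion of $\partial_\nu u$ appearing in the statement of the conjecture and the one produced by the Fatou-type argument above: for $C^{1,\operatorname{Dini}}$ or smoother domains both agree with the classical normal derivative, but for rougher $\Sigma$ the equivalence rests on the machinery summarized in Appendix \ref{appendix: nontangential limits}.
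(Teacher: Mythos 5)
Your argument---restricted, as you correctly note, to the small-Lipschitz-constant regime that the paper actually covers---is essentially the paper's own proof of Corollary \ref{cor: unique continuation}: reduce via Theorem \ref{thm: u is positive besides a set of positive codimension} to a ball $B_i$ where $u$ is sign-definite, then apply boundary Harnack to compare $u$ with a Green function and invoke the $\mathcal A_\infty$ property of elliptic measure (Dahlberg's theorem plus the Fefferman--Kenig--Pipher extension to operators with Lipschitz coefficients) to conclude that the density $d\omega_A/d\sigma$, and hence $\partial_\nu u$, is nonzero $\sigma$-a.e.\ on $B_i\cap\Sigma$. You also correctly flag that the statement as posed---for general Lipschitz domains---remains an open conjecture, precisely because Theorem \ref{thm: u is positive besides a set of positive codimension} and hence the first reduction step is unavailable without the smallness hypothesis on $\tau$.
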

	This conjecture was proved in the $C^{1,1}$ case in \cite{Lin}, where it was also shown that $\mathcal S_{\Sigma}(u)$ is a $(d-2)$-dimensional set (see also \cite{BZ} for more quantitative estimates).
	V. Adolfsson, L. Escauriaza and C. Kenig also gave a positive answer to the conjecture in \cite{AEK} in the case $\Omega$ is a convex Lipschitz domain. Their work was followed by I. Kukavica and K. Nystr\"om \cite{KN}, and V. Adolfsson and L. Escauriaza \cite{AE} where the conjecture is solved in the case $\Omega$ is $C^{1, \operatorname{Dini}}$. Moreover, in \cite{AE} it is also proved that $\mathcal S_{\Sigma}(u)$ has Hausdorff dimension at most $d-2$.
	
	In the interior of the domain, the singular and critical sets of $u$ have been extensively studied too. In particular, the recent results from A. Naber and D. Valtorta \cite{NV} are remarkable: they find $(d-2)$-dimensional Minkowski content bounds and prove they are $(d-2)$-rectifiable with the introduction of a new quantitative stratification. Some of their techniques have been adapted to study the Minkowski content of the singular set at the boundary in the convex Lipschitz case by S. McCurdy \cite{Mc} and in the $C^{1, \operatorname{Dini}}$ by C. Kenig and Z. Zhao \cite{KZ1}. {Unfortunately, the methods relying on pointwise monotonicity that have been commonly used to study this problem are no longer useful in general Lipschitz domains, and new ideas are needed in this case.}
	The conjecture of F.-H. Lin saw no further progress until X. Tolsa proved the result for harmonic functions in Lipschitz domains with small Lipschitz constants in \cite{To}. His proof uses the new powerful methods developed by A. Logunov and E. Malinnikova (see \cite{Lo1,Lo2,LM2}) to study zero sets of Laplace eigenfunctions in compact Riemannian manifolds. These techniques are used on the boundary of the domain $\Omega$ with the trade-off of restricting its Lipschitz constant. This idea from \cite{To} has also been successfully used by A. Logunov, E. Malinnikova, N. Nadirashvili and F. Nazarov in \cite{LMNN} to study the zero set of Dirichlet Laplace eigenfunctions in Lipschitz domains with small Lipschitz constant. In \cite{LMNN}, the authors also develop novel methods to control the zero set near the boundary that will be relevant in the present paper.
	
	The main tool used in this paper is \textit{Almgren's frequency function} (see Definition \ref{def: frequency function}), a quantity that controls the doubling properties of the $L^2$ averages of $u$ in spheres. {This function was also used in most of the works mentioned in this introduction}.
	Our proof of Theorem \ref{thm: u is positive besides a set of positive codimension} requires two technical lemmas. One is Lemma \ref{lemma: Key lemma}, an adaptation of Lemma 3.1 - \textit{Key lemma} from \cite{To} and it controls the behavior of the frequency function at points in $\Omega$ near the boundary (see also Lemma 7 from \cite{LMNN}). The other is Lemma \ref{lemma: 2nd hyperplane lemma}, inspired by Lemma 8 - \textit{Second hyperplane lemma: cubes without zeros} from \cite{LMNN} and it controls the size of balls centered at the boundary that contain no zeros of $u$. Both results were originally proved only for harmonic functions, hence we first extend them for solutions of second order linear elliptic PDEs in divergence form.
	Afterwards, we combine both lemmas in a {new combinatorial} argument that controls the size of the zero set of $u$ near the boundary. {We remark that the extension of Lemma 8 from \cite{LMNN} to the elliptic case presents many difficulties and it might be interesting on its own.}

	Theorem \ref{thm: u is positive besides a set of positive codimension} allows us to prove an analogous \textit{unique continuation at the boundary} result to the one in \cite{To} for more general elliptic PDEs in divergence form:
	\begin{coro}
		\label{cor: unique continuation}
		Assume $\Omega$, $\Sigma$, $A(x)$, $\tau<\tau_0$, and $u\not\equiv0$ as in the statement of Theorem \ref{thm: u is positive besides a set of positive codimension}. The set $\{x \in \Sigma ~|~ \partial_\nu u(x) = 0\}$ has $(d-1)$-Hausdorff measure $0$.
	\end{coro}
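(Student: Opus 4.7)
The plan is to combine Theorem \ref{thm: u is positive besides a set of positive codimension} with a boundary Hopf-type estimate for nonnegative solutions of $\operatorname{div}(A\nabla\cdot)=0$ that vanish on a portion of a Lipschitz boundary.

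First, I would apply Theorem \ref{thm: u is positive besides a set of positive codimension} to produce a countable family of open balls $(B_i)_i$ centered on $\Sigma$ such that $u|_{B_i\cap\Omega}$ has constant sign for every $i$, and $K\setminus\bigcup_i B_i$ has upper Minkowski dimension at most $d-1-\epsilon_1$ for every compact $K\subset\Sigma$. Since Hausdorff dimension is dominated by upper Minkowski dimension, $\mathcal{H}^{d-1}(K\setminus\bigcup_i B_i)=0$ for every compact $K\subset\Sigma$, and exhausting $\Sigma$ by an increasing sequence of compacts gives $\mathcal{H}^{d-1}(\Sigma\setminus\bigcup_i B_i)=0$. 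It therefore suffices to show that $\partial_\nu u\neq 0$ at $\mathcal{H}^{d-1}$-a.e. point of $\Sigma\cap B_i$, for each $i$.

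Next, I would fix such an $i$ and, without loss of generality, assume $u>0$ in $B_i\cap\Omega$; the opposite sign is symmetric. Then $u$ is a positive solution vanishing on $\Sigma\cap B_i$, and the smallness of the Lipschitz constant of $\Omega$ ensures that $\Sigma$ satisfies an interior cone condition with opening uniformly close to that of a half-space. A standard barrier argument for the operator $\operatorname{div}(A\nabla\cdot)$, using a subsolution supported in an interior cone and the weak maximum principle, yields a Hopf-type lower bound of the form $u(x)\gtrsim \mathrm{dist}(x,\Sigma)$ for $x$ in any fixed nontangential approach cone at a point of $\Sigma\cap B_i$. Combined with the existence of the nontangential inward normal derivative $\partial_\nu u$ at $\mathcal{H}^{d-1}$-a.e. point of $\Sigma$ (the nontangential limit theory recalled in Appendix \ref{appendix: nontangential limits}), this forces $\partial_\nu u(x_0)>0$ at $\mathcal{H}^{d-1}$-a.e. $x_0\in\Sigma\cap B_i$, and in particular $\partial_\nu u(x_0)\neq 0$.

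The main obstacle is justifying the Hopf-type lower bound in the Lipschitz setting, since classical Hopf arguments rely on interior tangent balls. The smallness of the Lipschitz constant is precisely what is needed: it guarantees interior cones wide enough for standard barrier constructions to go through for $\operatorname{div}(A\nabla\cdot)$ with Lipschitz coefficients. Alternatively, one could invoke the boundary Harnack principle for divergence-form elliptic operators on Lipschitz domains to show that $u$ is comparable in $B_i\cap\Omega$ to the Green's function with a pole placed deep inside $B_i\cap\Omega$; since the Green's function has a strictly positive nontangential normal derivative $\mathcal{H}^{d-1}$-a.e.\ on $\Sigma\cap B_i$, the same property transfers to $u$, closing the argument.
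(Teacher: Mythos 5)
Your first reduction is correct and matches the paper: invoke Theorem \ref{thm: u is positive besides a set of positive codimension}, note that upper Minkowski dimension dominates Hausdorff dimension, exhaust $\Sigma$ by compacts, and reduce to showing $\partial_\nu u\neq 0$ at $\mathcal{H}^{d-1}$-a.e.\ point of $\Sigma\cap B_i$ for each $i$.

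The main route you propose, a Hopf-type barrier, does not work in this setting, and the paper's own Section \ref{section: counterexample to hausdorff estimates} explains why it cannot. In a Lipschitz domain with small constant $\tau$, the interior cones you can fit at a boundary point are strictly narrower than a half-space, and the subsolution you would compare $u$ against (the Green function of such a cone) vanishes like $\operatorname{dist}(\cdot,\Sigma)^{\beta}$ with $\beta$ slightly larger than $1$. The barrier therefore yields $u\gtrsim \operatorname{dist}^{\beta}$ rather than a linear lower bound, which is exactly consistent with $\partial_\nu u=0$. Even at points where the Lipschitz graph is differentiable (a.e.\ by Rademacher), the cones only open up to a half-space asymptotically without a definite rate, so the constants degenerate and no pointwise Hopf inequality follows. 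This is not a technicality: the example in Section \ref{section: counterexample to hausdorff estimates} produces a Lipschitz domain with arbitrarily small constant where $\partial_\nu u=0$ on a set of Hausdorff dimension arbitrarily close to $d-1$. Any pointwise barrier argument would have to fail on that set.

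Your alternative route, via boundary Harnack comparison of $u$ with the Green's function $g$, is in substance the paper's proof, but as written it has a gap at the decisive step. You assert that $g$ has a strictly positive nontangential normal derivative $\mathcal{H}^{d-1}$-a.e.\ on $\Sigma\cap B_i$; this is precisely the nontrivial fact that must be proved, and it is not a consequence of the barrier discussion above (which, again, only gives $g\gtrsim\operatorname{dist}^{\beta}$). The paper supplies this fact by observing that $\partial_\nu g$ is comparable to $d\omega_A/d\sigma$, and that the elliptic measure $\omega_A$ for a divergence-form operator with Lipschitz coefficients on a Lipschitz domain is an $\mathcal{A}_\infty$ weight with respect to surface measure (Dahlberg \cite{Dah} for the Laplacian, \cite{FKP} for the elliptic case). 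The $\mathcal{A}_\infty$ property implies $d\omega_A/d\sigma>0$ at $\sigma$-a.e.\ point, which is what closes the argument. You should cite this $\mathcal{A}_\infty$ input explicitly; without it the claim about $g$ is unjustified.
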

	Observe that the assumption that $u$ vanishes continuously in $\Sigma$ implies that $\nabla u$ exists $\sigma$-a.e. as a non-tangential limit in $\Sigma$, and $\nabla u = (\partial_\nu u) \nu \in L^2_{\text{loc}}(\sigma)$. Here $\sigma$ stands for the surface measure restricted to $\Sigma$ and $\nu$ is the outer unit normal (see Remark \ref{rmk: existence of normal derivative} and Appendix \ref{appendix: nontangential limits}). 
	
	The proof of Corollary \ref{cor: unique continuation} uses that the elliptic measure $\omega_A$ associated to the elliptic operator $\operatorname{div}(A\nabla \cdot )$ is an $\mathcal A_\infty$ Muckenhoupt weight with respect to $\sigma$ (see Definiton \ref{def: Ainfty weight}) and that, since $\vert u\vert$ is locally comparable to a Green function near most points of $\Sigma$ (thanks to not changing sign in a neighborhood), $\partial_\nu u$ is comparable to $d\omega_A/d\sigma$.
	
	Theorem \ref{thm: u is positive besides a set of positive codimension} also has a second corollary that controls the \textit{vanishing order of the zeros} in the set where $u$ {does not change sign nearby}. 	
	\begin{definition}
		\label{def: vanishing order of zeros}
		The \textit{vanishing order of the zero at a point $x \in \Sigma$} is defined as the supremum of the $\alpha>0$ such that there exist $C_\alpha>0$ finite and $r_0>0$ satisfying
		\[
		\fint_{B(x,r)\cap\Omega} \vert u \vert dy \leq C_\alpha r^\alpha, \quad \mbox{$0<r\leq r_0$}.
		\]
	\end{definition}
	\begin{coro}
		\label{cor: order of zeros}
		Assume $\Omega$, $\Sigma$, $A(x)$, $\tau<\tau_0$, and $u\not\equiv0$ as in the statement of Theorem \ref{thm: u is positive besides a set of positive codimension}.
		There exists some small constant $\epsilon_2>0$ depending on $d$, the Lipschitz constant $\tau$ of $\Sigma$, and the ellipticity $\Lambda_A$ of the matrix $A(x)$ such that for all $x\in\Sigma$ outside a set of Hausdorff dimension $d-1-\epsilon_1$, the vanishing order of $u$ at $x$ is smaller than $1+\epsilon_2$.
		Moreover, for all $x \in \Sigma$, the vanishing order of $u$ at $x$ is greater than $1-\epsilon_2$.
	\end{coro}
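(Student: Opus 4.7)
The plan is to deduce both halves of the corollary from Hölder-type boundary behavior of solutions of $\operatorname{div}(A\nabla \cdot) = 0$ in Lipschitz domains with small Lipschitz constant, combined with the decomposition of $\Sigma$ provided by Theorem \ref{thm: u is positive besides a set of positive codimension}. I would produce a single $\epsilon_2 = \epsilon_2(d,\tau,\Lambda_A)$, tending to zero with $\tau$, that suffices simultaneously for the upper and lower bounds; at the end I take the maximum of the two values that arise.

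For the \emph{lower} bound, valid at every $x\in\Sigma$, I invoke the standard boundary Hölder regularity for solutions that vanish continuously on a flat enough portion of the boundary. Because $\Sigma$ is a Lipschitz graph of slope $\tau<\tau_0$ and the coefficients of $A$ are Lipschitz, one obtains
\[
|u(y)| \,\leq\, C\,\sup_{\Omega}|u|\,\bigl(|y-x|/r_0\bigr)^{1-\epsilon_2}, \qquad y \in B(x,r_0)\cap \Omega,
\]
with $\epsilon_2\to 0$ as $\tau\to 0$. Averaging over $B(x,r)\cap \Omega$ for $r\leq r_0$ immediately gives $\fint_{B(x,r)\cap \Omega}|u|\,dy \leq C r^{1-\epsilon_2}$, so the vanishing order at $x$ is at least $1-\epsilon_2$.

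For the \emph{upper} bound, I use Theorem \ref{thm: u is positive besides a set of positive codimension}: writing $\Sigma=\bigcup_n K_n$ as an increasing union of compacts, each $K_n\setminus\bigcup_i B_i$ has Minkowski, and hence Hausdorff, dimension at most $d-1-\epsilon_1$, so the same bound holds for $\Sigma\setminus\bigcup_i B_i$. It therefore suffices to treat $x\in B_i\cap\Sigma$, where $u$ has a fixed sign on $B_i\cap\Omega$; say $u>0$. Fix an interior pole $y_0\in\Omega$ at definite distance from $\partial\Omega$ and from $B_i$, and let $G(\cdot,y_0)$ denote the Green function of $\operatorname{div}(A\nabla\cdot)$. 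By the boundary Harnack principle for divergence-form operators in Lipschitz domains with small Lipschitz constant, $u/G(\cdot,y_0)$ is Hölder continuous up to $B_i\cap \Sigma$ and, being non-zero, is bounded below by a positive constant $c(u)$ on a slightly smaller ball $B_i'\subset B_i$. Combined with the Green-function lower estimate
\[
G(A_{x,r},y_0) \,\gtrsim\, r^{1+\epsilon_2}
\]
at any corkscrew point $A_{x,r}\in\Omega$ satisfying $|A_{x,r}-x|\approx \operatorname{dist}(A_{x,r},\partial\Omega)\approx r$ -- which in small-Lipschitz-constant domains follows from the $\mathcal A_\infty$ relation between elliptic measure and surface measure -- this yields $u(A_{x,r})\gtrsim r^{1+\epsilon_2}$. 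The interior Harnack inequality then propagates the estimate to $u\gtrsim r^{1+\epsilon_2}$ on a ball of volume $\gtrsim r^d$ inside $B(x,r)\cap\Omega$, and therefore $\fint_{B(x,r)\cap\Omega}|u|\,dy\gtrsim r^{1+\epsilon_2}$; the vanishing order at $x$ is thus at most $1+\epsilon_2$.

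The main technical hurdle is calibrating $\epsilon_2$ so that the boundary Hölder exponent $1-\epsilon_2$ and the Green-function exponent $1+\epsilon_2$ can be obtained simultaneously with a common value that vanishes as $\tau\to 0$. Both ingredients are classical in the small-Lipschitz-constant setting -- De Giorgi--Nash--Moser boundary estimates after flattening $\Sigma$ by a bi-Lipschitz change of variables for the former, and the boundary Harnack principle of Caffarelli--Fabes--Mortola--Salsa together with the $\mathcal A_\infty$ property of the elliptic measure for the latter -- so the matching of exponents is essentially bookkeeping, after which taking the larger of the two $\epsilon_2$'s completes the proof.
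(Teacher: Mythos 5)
Your high-level structure is the same as the paper's: treat the lower bound at all boundary points via a one-sided comparison, reduce the upper bound to balls $B_i$ where $u$ has a sign using Theorem \ref{thm: u is positive besides a set of positive codimension}, and then use boundary Harnack to compare $u$ with a Green function. However, the two ingredients you cite to calibrate $\epsilon_2\to 0$ as $\tau\to 0$ do not actually deliver the exponent, and this is precisely the content of the paper's proof.

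First, the upper bound. You assert that the Green-function lower estimate $G(A_{x,r},y_0)\gtrsim r^{1+\epsilon_2}$, with $\epsilon_2\to 0$ as $\tau\to 0$, ``follows from the $\mathcal A_\infty$ relation between elliptic measure and surface measure.'' This is a genuine gap: $\mathcal A_\infty$ (equivalently a reverse-H\"older inequality for $d\omega/d\sigma$) only gives a two-sided comparison $\omega(B(x,r))\lesssim r^{(d-1)\theta}$ for \emph{some} $\theta\in(0,1)$, with no control on $\theta$ in terms of the Lipschitz constant $\tau$; it does not upgrade to $\omega(B(x,r))\gtrsim r^{d-1+\epsilon_2}$ with $\epsilon_2\to 0$. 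Similarly, for the lower bound you invoke ``standard boundary H\"older regularity after flattening,'' but De Giorgi--Nash--Moser at the boundary produces an unspecified small exponent $\alpha>0$, again with no mechanism forcing $\alpha\to 1$ as $\tau\to 0$ (flattening a $\tau$-Lipschitz graph degrades the coefficient ellipticity, so you cannot naively pass to a half-space estimate). What actually controls the exponent in both directions is a barrier comparison with the explicit Green function of a cone: $\Omega$ locally contains a convex cone $\tilde C_\tau$ and is locally contained in a concave cone $\tilde C_{-\tau}$, and the cone Green function with pole at infinity has homogeneity $|x|^{\beta(\pm\tau)}$ where $\beta$ is expressed through the first Dirichlet eigenvalue of the spherical Laplace--Beltrami operator; continuity of that eigenvalue in the aperture gives $\beta(\pm\tau)\to 1$. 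This is the device the paper uses, and nothing cited in your proposal replaces it.

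Second, you pass silently from the harmonic to the elliptic case, treating ``boundary Harnack for divergence-form operators'' as if it immediately gives the same quantitative rate. It does not: the explicit homogeneity of the cone Green function is available only for the Laplacian, so for a variable-coefficient operator one must show that suitable blow-ups of a positive solution in a cone converge to the harmonic cone Green function, which is what Lemma \ref{lemma: blowups in cones} (together with Lemmas \ref{lemma: N and vanishing order} and \ref{lemma: frequency bounded for positive solutions}) accomplishes; this compactness step, plus the identification of the vanishing order with the limit of the frequency (Lemma \ref{lemma: N and vanishing order}), is not ``bookkeeping'' but the part of the argument that your outline omits entirely.
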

	This corollary is proved by comparing $u$ locally (in the neighborhoods where it does not change sign) with the Green function of a certain cone with angular opening related to the Lipschitz constant $\tau$ of $\Sigma$. 
	
	{In Section \ref{section: counterexample to hausdorff estimates} we provide an example showing that Corollary \ref{cor: unique continuation} cannot be improved in the sense of Hausdorff dimension estimates. This contrasts with the higher regularity case ($C^{1,\text{Dini}}$) where the set $\mathcal S_\Sigma(u)$ is known to be $(d-2)$-rectifiable and, a fortiori, has Hausdorff dimension at most $d-2$ (see \cite{KZ1}). Finally, in Section \ref{section: equality singular set in Dini dmn}, we prove the following proposition relating $\mathcal S_\Sigma(u)$ and $\mathcal S_\Sigma'(u)$ in the smooth case.}
	\begin{propo}
	\label{prop: equality singular set} 
	{Let $\Omega \subset \mathbb R^d$ be a $C^{1,\operatorname{Dini}}$ domain, $B$ be a ball centered in $\partial\Omega$, and $\Sigma = B\cap\partial\Omega$. Let $u$ be a harmonic function defined in $\Omega$, continuous in $\overline\Omega$ that vanishes in $\Sigma$.
	Then $\mathcal S_{\Sigma}(u)$ coincides with $\mathcal S_{\Sigma}'(u)$.}
	\end{propo}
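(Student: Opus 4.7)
The plan is to establish the two inclusions $\mathcal S_\Sigma'(u)\subseteq \mathcal S_\Sigma(u)$ and $\mathcal S_\Sigma(u)\subseteq \mathcal S_\Sigma'(u)$ separately. The common backbone in both directions is that, in $C^{1,\operatorname{Dini}}$ domains, $\nabla u$ extends continuously to $\overline\Omega$ near $\Sigma$, and at any $x\in\Sigma$ with $u(x)=0$ the frequency function is bounded and, along subsequences $r_j\to 0$, the $L^2$--renormalized functions $u(x+r_j y)/M(r_j)$ converge locally uniformly on the closed tangent half-space to a nontrivial homogeneous harmonic polynomial vanishing on the tangent hyperplane; these facts are due to Adolfsson--Escauriaza and Kukavica--Nystr\"om (\cite{AE,KN}).

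\textbf{First inclusion.} Fix $x\in\mathcal S_\Sigma'(u)$. Since $u\equiv 0$ on $\Sigma$, the tangential derivatives of $u$ at $x$ vanish, and the $C^1$ regularity of $\Sigma$ forces $\nabla u(x)=c\,\nu(x)$ for some $c\in\mathbb R$. Pick zeros $z_n\in\Omega$ of $u$ with $z_n\to x$, and let $z_n'\in\Sigma$ be the nearest-point projection of $z_n$ onto $\Sigma$, which is well defined for $n$ large and satisfies $z_n'\to x$. Since $u(z_n)=u(z_n')=0$, writing $v_n=(z_n-z_n')/|z_n-z_n'|$ and applying the fundamental theorem of calculus along the segment from $z_n'$ to $z_n$ yields
\[
0 \;=\; \int_0^1 \nabla u\bigl(z_n' + t(z_n-z_n')\bigr)\cdot v_n\,dt.
\]
By the $C^1$ regularity of $\Sigma$, $v_n=-\nu(z_n')$ and $\nu(z_n')\to\nu(x)$, so $v_n\to -\nu(x)$. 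Passing to the limit using continuity of $\nabla u$ up to $\overline\Omega$ yields $-\nabla u(x)\cdot \nu(x)=0$, hence $c=0$ and $x\in\mathcal S_\Sigma(u)$.

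\textbf{Second inclusion.} Fix $x\in\mathcal S_\Sigma(u)$, and after rotating assume $\nu(x)=e_d$ so that $\Omega$ lies locally in $\{y_d<0\}$. By the blowup theory recalled above there is a sequence $r_j\to 0$ and a nonzero homogeneous harmonic polynomial $P$ of degree $N_0\geq 1$ on $\mathbb R^d$, vanishing on $\{y_d=0\}$, such that $u(x+r_j y)/M(r_j)\to P(y)$ locally uniformly on $\{y_d\leq 0\}$. Since $\nabla u(x)=0$ excludes the linear case, we have $N_0\geq 2$. The geometric heart of the argument is the following claim: \emph{any nontrivial harmonic polynomial $P$ of degree $N\geq 2$ that vanishes on $\{y_d=0\}$ has zeros in the open half-space $\{y_d<0\}$}. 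Otherwise, $P$ would have constant sign on $\{y_d<0\}$, and $P|_{S^{d-1}_-}$ would be a signed (hence principal) Dirichlet eigenfunction of the Laplace--Beltrami operator on the lower hemisphere, with eigenvalue $N(N+d-2)$. But the principal Dirichlet eigenvalue of the hemisphere equals $d-1$ and is attained uniquely (up to scalar) by the linear function $y\mapsto -y_d$ corresponding to $N=1$, forcing $N=1$ and contradicting $N\geq 2$.

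Granting the claim, pick a regular zero $y_0$ of $P$ with $y_{0,d}<0$ and $\nabla P(y_0)\neq 0$, which exists because the singular locus of the algebraic variety $\{P=0\}$ has strictly lower dimension. Then $P$ changes sign in every neighborhood of $y_0$, and by the locally uniform convergence so does $u(x+r_j y)$ for $j$ large; hence $u$ vanishes somewhere in $\Omega\cap B(x,2r_j|y_0|)$, and letting $j\to\infty$ gives $x\in\mathcal S_\Sigma'(u)$. The main obstacle is the second inclusion, which rests on two ingredients: the $C^{1,\operatorname{Dini}}$ boundary blowup machinery (supplied by \cite{AE,KN}) and the non-triviality of the zero set of a harmonic polynomial of degree $\geq 2$ vanishing on a hyperplane, which is resolved cleanly by the hemisphere eigenvalue comparison above.
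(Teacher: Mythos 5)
Your proof is correct, but it takes a genuinely different route from the paper's. The paper rests both inclusions on the Kenig--Zhao local expansion $u(y)=P_N(y-x)+\psi(y-x)$ with $\psi$ of higher order (see \cite[Theorem 1.1]{KZ2}): when $N=1$ one has $\nabla u(x)=\nabla P_1(0)\neq 0$ immediately, and nearby interior zeros of $u$ are excluded by Rolle's theorem applied in the direction $\nabla P_1/|\nabla P_1|$; when $N>1$ one has $\nabla u(x)=0$ immediately, and the absence of sign changes near $x$ is ruled out in one line by the generalized Hopf principle of Safonov \cite{Saf}, which would force $\partial_\nu u(x)<0$. You avoid the expansion entirely in the inclusion $\mathcal S_\Sigma'(u)\subseteq\mathcal S_\Sigma(u)$: the nearest-point projection $z_n'$ of interior zeros $z_n$, combined with the identity $0=\int_0^1 \nabla u(z_n'+t(z_n-z_n'))\cdot v_n\,dt$ and uniform continuity of $\nabla u$ up to $\overline\Omega$, gives $\partial_\nu u(x)=0$ directly. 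This uses only the $C^1$-regularity of $u$ up to the boundary (itself a consequence of the $C^{1,\operatorname{Dini}}$ hypothesis via \cite{DEK}) and is arguably more elementary than invoking the full expansion. For $\mathcal S_\Sigma(u)\subseteq\mathcal S_\Sigma'(u)$ you instead run the blowup machinery of \cite{AE,KN}, reduce to the degree-$N_0\geq 2$ tangent polynomial $P$, and then prove that $P$ must vanish in the open half-space by the hemisphere eigenvalue comparison (a signed Dirichlet eigenfunction on $S^{d-1}_-$ must be the principal one, whose eigenvalue is $d-1$, forcing degree one). This is a self-contained replacement for the Hopf lemma step: where Safonov's principle gives $\partial_\nu u<0$ from a one-sided sign condition, your eigenvalue comparison directly shows the tangent cone has interior zeros, and locally uniform convergence of the blowups then produces interior zeros of $u$. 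The paper's route is shorter because the Hopf lemma is an off-the-shelf tool; your route is somewhat longer but builds the zeros explicitly and showcases the spectral characterization of the vanishing order, which is of independent interest. One small remark on your second inclusion: you implicitly use that the frequency of $u$ at $x$ is finite (so that the blowups are non-trivial); this is precisely the content of the unique continuation results of \cite{AE,KN}, which you cite, so it is fine to use, but it is worth flagging that this is where the $C^{1,\operatorname{Dini}}$ hypothesis is essential.
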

	{The proof of the proposition follows from a local expansion of $u$ as the sum of a homogeneous harmonic polynomial and an error term of higher degree proved in \cite{KZ2}.}
	\vspace{4mm}
	
	{\noindent\textbf{Further questions.} 
	We present some open questions related to the previous results:}
	\begin{itemize}
		\item {Is it true that $\operatorname{dim}_{\overline{\mathcal M}} \mathcal S_\Sigma'(u)\cap K$ is at most $d-2$ for any compact $K\subset \Sigma$ ($\epsilon_1 \equiv 1$)? What about the (slightly) easier case where $\Omega$ is a $C^1$ domain?}
		\item {Is the set $\mathcal S_\Sigma'(u)$ $(d-2)$-rectifiable?}
		\item {Do these estimates hold for general Lipschitz domains? For the moment, even the conjecture of Lin is still open.}
		\item {Can there exist points with vanishing order $\infty$ in $\Sigma$ in the Lipschitz case? Thanks to the results of \cite{AE,KN,KZ1,KZ2} we know this cannot happen in the $C^{1,\operatorname{Dini}}$ case.}
	\end{itemize}
	\vspace{3mm}
	
	{\noindent\textbf{Outline of the paper.}} In Sections \ref{section: domain and PDE} and \ref{section: frequency function}, we present some notation, tools and ideas that will be used throughout the paper (often without reference). The main aim of Section \ref{section: behavior of frequency} is the proof of Lemma \ref{lemma: Key lemma}, although in Section \ref{section: whitney structure} we construct a Whitney cube structure to $\Omega$ that will be used during the sections following after. Section \ref{section: balls without zeros} is devoted to Lemma \ref{lemma: 2nd hyperplane lemma}. Both lemmas are then combined in a combinatorial argument in Section \ref{section: pf of thm 1.1} to prove Theorems \ref{thm: u is positive besides a set of positive codimension} and \ref{thm: measure of nodal set near the boundary}. In Section \ref{section: pf of thm 1.1}, we also briefly discuss the application of Theorem \ref{thm: measure of nodal set near the boundary} to the study of the zero set of certain class of eigenfunctions. The rest of the paper is spent on the proofs of Corollary \ref{cor: unique continuation}, Corollary \ref{cor: order of zeros}, the example of a Lipschitz domain and harmonic function $u$ with ``large" $\mathcal S_\Sigma(u)$, and the equality $\mathcal S_{\Sigma}(u) = \mathcal S_{\Sigma}'(u)$ in the $C^{1,\operatorname{Dini}}$ case. This last part does not require the Whitney cube structure or Lemmas \ref{lemma: Key lemma} and \ref{lemma: 2nd hyperplane lemma}. In Appendix \ref{appendix: nontangential limits}, we discuss the existence of the non-tangential limit of $\nabla u$ in $\Sigma$.

	\vspace{3mm}
	\noindent\textbf{Acknowledgments} 
	Part of this work was carried out while the author was visiting the Hausdorff Research Institute for Mathematics in Bonn during the research trimester \textit{Interactions between Geometric measure theory, Singular integrals, and PDE}. The author thanks this institution and its staff for their hospitality.
	The author is also grateful to Xavier Tolsa for his guidance and advice, to Jaume de Dios for some useful discussions, {and to the anonymous referee that helped improve the readability of the paper.}

\section{Lipschitz domains with small constant and some properties of elliptic PDEs}
	\label{section: domain and PDE}
	\noindent\textbf{Notation:} the letters $C, c, c', \tilde c$ are used to denote positive constants that depend on the dimension $d$ and whose values may change on different proofs. The constants $c_H$ and $C_N$ retain their values.
	The notation $A\lesssim B$ is equivalent to $A \leq CB$, and $A \sim B$ is equivalent to $A \lesssim B \lesssim A$.
	Sometimes, we will also use the notation $A(x) = B(x) + O(x)$ to denote that $|A(x) - B(x)| \lesssim |x|$.
	
	In the whole paper, we assume that $\Omega$, $\Sigma$, and $u\not\equiv0$ are as in Theorem \ref{thm: u is positive besides a set of positive codimension}. Moreover, we assume that $\Sigma$ is a Lipschitz graph with Lipschitz constant $\tau$ with respect to the hyperplane $H_0 := \{x_d = 0\}$ and that locally $\Omega$ lies above $\Sigma$. 
	
	\begin{remark}
		Note that a $C^1$ domain is a Lipschitz domain with local Lipschitz constant as small as we need. In particular, Theorem \ref{thm: u is positive besides a set of positive codimension}, Theorem \ref{thm: measure of nodal set near the boundary}, and its corollaries are valid for $C^1$ domains. 
	\end{remark}

	\begin{remark}
		The Lipschitz constant of $\Omega$ is invariant by rescalings. 
		If we consider a more general (anisotropic) scaling given by multiplication by a positive definite symmetric matrix with ellipticity $\tilde \Lambda$, then the Lipschitz constant of $\Sigma$ changes by a factor of at most $\tilde\Lambda^2$.
	\end{remark}

	\subsection{Divergence form elliptic PDEs with Lipschitz coefficients}
	\label{section: conditions on A(x)}
	The function $u$ we study solves $\operatorname{div}(A(x)\nabla u(x)) = 0$ weakly in $\Omega$ where the matrix $A(x)$ satisfies that 
	\begin{itemize}
		\item $A(x)$ is symmetric.
		\item There exists $\Lambda_A>1$ such that $\Lambda_A^{-1} \vert y \vert^2 \leq(A(x)y,y) \leq \Lambda_A\vert y\vert^2$ for all $x\in\overline\Omega, y\in \mathbb R^d$ (\textit{uniform ellipticity}).
		\item There exists a Lipschitz constant $L_A > 0$ such that $|A_{ij}(x)-A_{ij}(y)| \leq L_A |x-y|$ (\textit{Lipschitz coefficients}).
	\end{itemize}
	By standard elliptic PDE theory ({see \cite{GT, FR} for example}), we know that $u \in C^{1,\alpha}$ in any ball with closure inside $\Omega$ for any $0<\alpha <1$. {Also $u \in W^{1,2}(\Omega)$ and $u \in W^{2,2}(\Omega')$ for any compactly embedded subdomain $\Omega' \subset \subset \Omega$.}
	
	We extend the function $u$ by $0$ outside of $\overline \Omega$ so that it is continuous through $\Sigma$. We also extend the matrix $A(x)$ in a way that it preserves the ellipticity $\Lambda_A$ and Lipschitz $L_A$ constants up to a constant factor (the particular extension we choose will not matter). In particular, note that the absolute value of the extended function $|u|$ is a subsolution in balls $B$ such that $B\cap\partial\Omega\subset \Sigma$. This means that
	\[
	\int_B (A \nabla |u|, \nabla\phi) \,dx \leq 0,\quad \mbox{$\forall \phi \in C_c^1{(B)}$}.
	\]
	
	\subsection{Modifying the domain and \texorpdfstring{$A(x)$}{A(x)}}
	\label{section: modifying the domain}
	\begin{remark}
		\label{rmk: small LA and LambdaA}
		Throughout this paper, we will require at different points the constants $\max(\Lambda_A-1,1-\Lambda_A^{-1})$ and $L_A$ to be very small.
		We can obtain this by exploiting the fact that our considerations are local. 
		Indeed, we can cover our initial domain $\Omega$ by a finite family of ``small" domains and prove the results on the introduction on each one separately. 
		
		By ``zooming" on a small domain, we decrease the Lipschitz constant $L_A$ of the matrix.
		By rescaling the domain by multiplication with an adequate matrix, we can force $A(x) = I$ at a particular point $x$. This, in addition to the small Lipschitz constant $L_A$ of the new matrix, implies small ellipticity $\Lambda_A$ (we will show this below). Note, though, that this last operation {changes the Lipschitz constant $\tau$ of $\Sigma$}. For this reason, in the statement of Theorem \ref{thm: u is positive besides a set of positive codimension}, $\tau_0$ depends on the ellipticity of $A(x)$. Intuitively, we use the Lipschitz regularity of $A(x)$ to exploit that, in small scales, $A(x)$ is very close to a constant matrix.
	\end{remark}
	Let us show the effects of ``zooming" and rescaling the domain when the domain is a ball.
	Suppose $u$ solves weakly $\operatorname{div}(A\nabla u)=0$ in a ball $B_R$ for some $R<1$, that is, for all $\phi \in  C_c^1 (B_R)$ we have
	\[
	\int_{B_R} (A(x) \nabla u(x),\nabla \phi(x)) \,dx = 0. 
	\]
	If we ``zoom" in on the origin by considering the function $\tilde u(y) = u(Ry)$ defined on $B_1$, we can show it satisfies
	\[
	\int_{B_1} (\tilde A(y) \nabla \tilde u(y),\nabla \tilde \phi(y)) \,dy = 
	0, \quad \forall \tilde \phi \in C_c^1(B_1),
	\]
	where $\tilde A(y) = A(Ry)$. Notice that $\tilde A(y)$ has improved Lipschitz constant $R \cdot L_A < L_A$.
			
	Now suppose that $A(0) \neq I$ and we want to change the function, the domain and the equation so that ``$A(0) = I$" for the solution of the new equation.

	Consider $\tilde u(x) = u(\tilde Sx)$ defined in $\tilde S^{-1}B_R$ where $\tilde S$ is the symmetric positive definite square root of $A(0)$ and $B_R$ is a ball centered at the origin of radius $R>0$.
	Then, for all $\tilde \phi(x)  \in  C_c^\infty (\tilde S^{-1}B_R)$, we have
	\begin{align*}
	\int_{\tilde S^{-1}B_R} (\tilde S^{-1}A(\tilde Sx)\tilde S^{-1} \nabla \tilde u(x),\nabla \tilde \phi(x)) dx &= \int_{\tilde S^{-1}B_R} (A( \tilde Sx)  \tilde S^{-1}\nabla \tilde u(x),\tilde S^{-1}\nabla \tilde \phi(x)) dx \\ 
	&= \int_{\tilde S^{-1}B_R} (A(\tilde Sx)  \nabla u(\tilde Sx),\nabla \phi(\tilde Sx)) dx \\
	&= (\det \tilde S)^{-1} \int_{B_R} (A(y)  \nabla u(y),\nabla \phi(y)) dy \\
	&= 0
	\end{align*}
	where $\phi(\tilde Sx) = \tilde \phi(x)$.
	This implies that $\tilde u$ is a weak solution of the equation $ \operatorname{div}(A_{\tilde S}(x)\nabla \tilde u )=0$ in $\tilde S^{-1}{B_R}$ where $A_{\tilde S}(x) := \tilde S^{-1}A(\tilde Sx)\tilde S^{-1}$. 
	\vspace{5mm}
	
	\noindent \textbf{Properties of the new matrix $A_{\tilde S}(x)$:}
	\begin{itemize}
		\item Observe that $A_{\tilde S}(0) = \tilde S^{-1}A(0)\tilde S^{-1} = I$.    
				
		\item The coefficients of $A_{\tilde S}(x)$ are also Lipschitz with constant $L_{A_{\tilde S}}$ and the ratio between the Lipschitz constants of $A_{\tilde S}$ and $A$ can be bounded above and below by positive constants depending only on $d$ and $\Lambda_A$.
		
		\item $A_{\tilde S}(x)$ is uniformly elliptic with ellipticity constant bounded by $\min(\Lambda_A^2,L_{A_{\tilde S}} \text{diam}(\tilde S^{-1}B_R) d)$.
		Indeed, the largest and smallest eigenvalues $\lambda_\text{max}$ and $\lambda_\text{min}$ of ${A_{\tilde S}(z)}$ at a point $z\in \tilde S^{-1}B_R$ satisfy 
		\begin{align*}
			\max(\lambda_\text{max} - 1, 1-\lambda_\text{min}) &= \Vert A_{\tilde S}(z) - I \Vert_2 \leq \Vert A_{\tilde S}(z) - I \Vert_F =	
	 		\sqrt{\sum_{i,j} |a_{\tilde S,i,j}(z) - \delta_{i,j}|^2}  \\
			&\leq  L_{A_{\tilde S}} |z| d  
			\leq L_{A_{\tilde S}} \text{diam}(\tilde S^{-1}B_R) d
		\end{align*}
		where $\Vert \cdot \Vert_F$ is the Frobenius norm, $\Vert \cdot \Vert_2$ is the spectral norm, and $\delta_{i,j}$ is the Kronecker delta.
	
	\end{itemize}

\section{Frequency function for solutions of elliptic PDEs in divergence form}
	\label{section: frequency function}

	Let $x \in \Omega \cup \Sigma$ such that $A(x) = I$. For $r>0$ such that $B_r(x) \cap \partial\Omega \subset \Sigma$, we denote $\mu_x(y) := (A(y) (y-x) , y-x)/\vert y-x \vert^2$ and
	\[
	H(x,r) := r^{1-d} \int_{\partial B_r(x)} \mu_x(z) \vert u(z) \vert^2 d\sigma(z)
	\]
	where $d\sigma$ is the surface measure of the ball.
	Note that the quantity $H(x,r)$ is nonnegative.
	To simplify the notation we will assume from now on that $x$ is the origin and we will write $H(r) := H(x,r)$ and $\mu(y) := \mu_x(y)$.

	\begin{remark} 
		\label{rmk: properties mu et al}
		Observe that
		$\Lambda_A^{-1}\leq \mu(y) \leq \Lambda_A$ and, since $A$ has Lipschitz coefficients, we have {$|\mu(y) - 1| \lesssim L_A |y|$}, $\Vert A(y) - I\Vert_2 \lesssim  L_A |y|$, and $|\text{tr}A(y) - d| \lesssim L_A |y|$.
	\end{remark}
	We also denote
	\[
	I(x,r) :=r^{1-d} \int_{B_r(x)\cap \Omega} (A\nabla u, \nabla u) dy.
	\]
	Note that $I(x,r)$ is also nonnegative since $A$ is positive definite. Moreover, $I(x,r)$ is finite thanks to Caccioppoli's inequality supposing $B_r(0)\cap\partial\Omega \subset \Sigma$. As before, we will write $I(r):=I(x,r)$.

	\begin{propo}
		\label{prop:exp(cr)H(r) is nondecreasing}
		Assume $0\in\Omega\cup\Sigma$ and $A(0) = I$. For $r>0$ such that $B_r \cap \partial\Omega \subset \Sigma$, the derivative of $H(r)$ is
		\[
		H'(r) = 2I(r) + O(L_A H(r)).
		\]
		As a consequence there exists some constant $c>0$ such that
		\[
		H'(r) \geq -c L_A H(r) .
		\]
	\end{propo}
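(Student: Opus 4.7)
The plan is to parametrize the surface integral in $H(r)$ over the unit sphere, differentiate in $r$, and then convert the boundary terms that appear into the bulk quantity $I(r)$ via the divergence theorem and the PDE $\operatorname{div}(A\nabla u)=0$. Writing $H(r) = \int_{S^{d-1}} \mu(r\xi)\,u(r\xi)^2\,d\sigma(\xi)$, differentiation gives two contributions: one proportional to $\nabla\mu\cdot\nu$, and one of the form $2r^{1-d}\int_{\partial B_r}\mu u\,(\nabla u\cdot\nu)\,d\sigma$. The radial derivative of $\mu$ satisfies $\nabla\mu\cdot\nu=O(L_A)$ on $\partial B_r$ (the seemingly singular $O(1/r)$ pieces cancel after contracting with $\nu$), so the first contribution is $O(L_A H(r))$ once one uses that $\mu$ is uniformly bounded from above and below, giving $\int_{\partial B_r} u^2\,d\sigma \lesssim r^{d-1}H(r)$.

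The conceptual point is that $\mu(z) = (A(z)\nu,\nu)$ with $\nu = z/|z|$, so the vector field $T(z) := A(z)\nu - \mu(z)\nu$ satisfies $T\cdot\nu\equiv 0$ everywhere, i.e.\ it is tangent to every sphere centered at the origin. Decomposing $A\nu = \mu\nu + T$ turns the second contribution into
\[
 2r^{1-d}\int_{\partial B_r} u\,(A\nabla u)\cdot\nu\,d\sigma \;-\; 2r^{1-d}\int_{\partial B_r} u\,\nabla u\cdot T\,d\sigma.
\]
The first piece here is converted to $2I(r)$ by the divergence theorem applied to $\operatorname{div}(uA\nabla u) = (A\nabla u,\nabla u) + u\operatorname{div}(A\nabla u) = (A\nabla u,\nabla u)$, using the symmetry of $A$ and the equation for $u$.

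It remains to control the tangential leftover. Because $T$ is tangent to $\partial B_r$, I write $2u\,\nabla u\cdot T = \nabla_t(u^2)\cdot T$ and integrate by parts on the sphere to get $-\int u^2\,\operatorname{div}_t T\,d\sigma$. The main obstacle is to check $|\operatorname{div}_t T|\lesssim L_A$. Since $T\cdot\nu\equiv 0$ and $\partial_\nu\nu=0$, we have $\operatorname{div}_t T = \operatorname{div} T$ on $\partial B_r$; a direct calculation of $\operatorname{div}(A\nu)$ and $\operatorname{div}(\mu\nu)$ separately yields a $(d-1)/r$ term in each that is easily seen to cancel, together with a term $(\operatorname{tr}A - d\mu)/r$ which, using $\operatorname{tr}A(z)-d = O(L_A|z|)$ and $\mu(z)-1=O(L_A|z|)$ from Remark \ref{rmk: properties mu et al} and $A(0)=I$, is $O(L_A)$ on $\partial B_r$. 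The remaining contributions ($\nabla\mu\cdot\nu$ and $\partial_i A_{ij}\nu_j$) are $O(L_A)$ by the Lipschitz hypothesis. Thus the tangential piece is $O(L_A H(r))$.

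Assembling these estimates gives $H'(r) = 2I(r) + O(L_A H(r))$, and since $I(r)\ge 0$ (as $A$ is positive definite), we conclude $H'(r)\ge -cL_A H(r)$ for some dimensional constant $c$. The delicate step in the argument is the cancellation producing $|\operatorname{div}_t T|\lesssim L_A$; the choice of weight $\mu$ in the definition of $H$ is precisely what makes $T$ tangent to spheres and all the $1/r$ singular terms cancel, so that the error involves only the Lipschitz constant $L_A$ and not the ellipticity gap alone.
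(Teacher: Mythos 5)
Your argument is correct at the level of a formal computation, and it is a genuinely different decomposition from the paper's. The paper first rewrites $H(r)$ as a \emph{volume} integral, $H(r)=r^{-d}\int_{B_r}\operatorname{div}(|u|^2A(x)x)\,dx$, and then differentiates; after the $d|u|^2\mu$ term cancels the Leibniz term $-dr^{-1}H$, it is left directly with the boundary integrand $2u(\nabla u,Ax)+f|u|^2$ (with $f=O(L_A|x|)$), so it never sees a tangential leftover. You instead differentiate the \emph{surface} integral, which produces $2\mu u\,\partial_\nu u$, and then split $\mu\nu=A\nu-T$ with $T$ tangent to the spheres. Your route has the virtue of explaining why the weight $\mu$ is exactly right (it is what makes $T\cdot\nu\equiv0$ and kills all the $1/r$ singular terms in $\operatorname{div}T$), at the cost of one extra integration by parts on the sphere. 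I verified your cancellation: $\operatorname{div}T=\sum_{ij}(\partial_iA_{ij})\nu_j-\nabla\mu\cdot\nu+(\operatorname{tr}A-d\mu)/|z|$, and each term is $O(L_A)$ using $A(0)=I$ and Lipschitz continuity, so the bound $|\operatorname{div}_tT|\lesssim L_A$ holds. Both routes ultimately convert $\int_{\partial B_r}u\,(A\nabla u)\cdot\nu\,d\sigma$ to $I(r)$ by the divergence theorem.

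The gap is that you never address the case $B_r\cap\partial\Omega\neq\varnothing$, which the proposition explicitly allows and which the paper flags as \emph{the} delicate point. Your two integration-by-parts steps — converting $\int_{\partial B_r}u(A\nabla u)\cdot\nu$ into $I(r)$, and the tangential identity $\int_{\partial B_r}\nabla_t(u^2)\cdot T=-\int_{\partial B_r}u^2\operatorname{div}_tT$ — both require more than $u\in W^{1,2}(B_r\cap\Omega)$: the vector field $uA\nabla u$ is only $L^1$ near $\Sigma$, the extension of $u$ by zero is merely continuous across $\Sigma$, and $\operatorname{div}(A\nabla u)$ for the extended function is a nonzero (singular) measure on $\Sigma$ rather than identically $0$. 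The paper handles this by integrating over translated domains $B_r\cap\Omega_\epsilon$ and passing to the limit $\epsilon\to0$, using the nontangential $L^2_{\mathrm{loc}}$ convergence of $\nabla u$ on $\Sigma$ established in Appendix A. Without some version of that argument (or an equivalent approximation), the formal identities you wrote are only justified when $B_r\subset\Omega$, which is strictly weaker than what the proposition claims.
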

	For the rest of this paper, we denote $c_H := c L_A$.
	In particular, $H(r)e^{c_H r}$ is a nondecreasing function of $r$. Note that in the harmonic case, $c_H = 0$ and $H$ is nondecreasing (which is also a corollary of the subharmonicity of $|u|^2$).
	
	Note that the proof of Proposition \ref{prop:exp(cr)H(r) is nondecreasing} is quite simple in the case $B(0,r) \cap \partial\Omega = \varnothing$. But in our setting we require some extra considerations to apply the divergence theorem in balls touching the boundary since $u$ only belongs in $W^{1,2}(B(0,r)\cap\Omega)$. To address this problem, we use Appendix \ref{appendix: nontangential limits} and follow the ideas of \cite{To} in the harmonic case.
	\begin{proof}
		First, using that $u \in W^{1,2}(B(0,r))$, we apply the divergence theorem to obtain
		\[
		H(r) = r^{-d} \int_{B_r }\operatorname{div}(|u|^2 A(x)x)dx.
		\]
		Using Remark \ref{rmk: properties mu et al}, we expand the term inside the integral as
		\begin{align*}
			\operatorname{div}(|u|^2 A(x) x) &= 2u (\nabla u, A(x)x) + |u|^2 \underbrace{\text{tr}(A(x))}_{d + O(L_A |x|)} + |u|^2 \underbrace{\sum_{i,j} (\partial_i a_{ij})x_j}_{O(L_A |x|)} \\
			&=: 2u (\nabla u, A(x)x) + d|u|^2 \mu(x) +  \underbrace{f(x)}_{O(L_A |x|)}|u|^2.
		\end{align*}
		
		We compute (formally) the derivative of $H$:
		\begin{align*}
		H'(r) &= -d r^{-1} H(r) + r^{-d} \int_{\partial B_r} \left (2u (\nabla u, A(x)x) + d|u|^2 \mu(x) + |u|^2 {f(x)} \right)d\sigma(x)\\
		&= r^{-d} \int_{\partial B_r} \left( 2u (\nabla u, A(x)x) + |u|^2 {f(x)} \right)d\sigma(x).
		\end{align*}
		Note that the only problematic term is $ r^{-d} \int_{\partial B_r} 2u (\nabla u, A(x)x) d\sigma(x)$. Observe though, that $\int_{\partial B_r} |\nabla u| d\sigma$ exists and is finite for almost every $r$ since $\int_0^b \int_{\partial B_r} |\nabla u| \,d\sigma\, dr = \int_{B_r} |\nabla u| dx$ is finite.
		Nonetheless, using the divergence theorem, we show that for all $[a,b]\subset [0,\operatorname{dist}(0,\partial\Omega\backslash\Sigma))$:
		\begin{align*}
		\int_{a}^b H'(s) ds &= \int_{a}^b s^{-d}\int_{\partial B_s} \left(2u (\nabla u, A(x)x) + |u|^2 {f(x)} \right)d\sigma(x)\, ds \\
		&= \int_{\mathcal A(0,a,b)} \left(\frac{1}{|x|^d}2u (\nabla u, A(x)x) + \frac{1}{|x|^d}|u|^2 {f(x)} \right)dx \\
		&= \int_{\mathcal A(0,a,b)} \frac{1}{|x|^d}\operatorname{div}(|u|^2A(x)x) dx - \int_{\mathcal A(0,a,b)} \frac{d}{|x|^d}|u|^2 {\mu(x)} \,dx \\
		&= \int_{\mathcal A(0,a,b)} \operatorname{div}\left(|u|^2A(x)\frac{x}{|x|^d}\right) dx \\	
		&= H(b) - H(a).
		\end{align*}
		where $\mathcal A(0,a,b)$ is the annulus $B(0,b)\backslash B(0,a)$ for $0<a<b$. Thus, we have the identity for $H'(r)$ for almost every $r$.
		
		Now we want to prove that
		\[
		r^{-d} \int_{\partial B_r} 2u (\nabla u, A(x)x)\, d\sigma(x) = 2r^{1-d} \int_{B_r} (A\nabla u,\nabla u) dx = 2I(r).
		\]
		We denote $\Sigma_\epsilon = \Sigma + \epsilon e_d$ and $\Omega_\epsilon = \Omega + \epsilon e_d$ for small $\epsilon>0$ where $e_d = (0,\hdots,0,1)$ and we have
		\[
		r^{-d} \int_{\partial B_r\cap\Omega} 2u (\nabla u, A(x)x) d\sigma(x)=\lim_{\epsilon\to0} r^{-d} \int_{\partial (B_r\cap\Omega_\epsilon)} 2u (\nabla u, A(x)x) d\sigma(x)
		\]
		from the fact that $u$ vanishes continuously in $\Sigma$ and $\nabla u$ converges as a non-tangential limit in $L^2_{\operatorname{loc}}(\Sigma)$, as shown in Appendix \ref{appendix: nontangential limits}.
		
		Now, since $u\in W^{2,2}(B_r\cap\Omega_\epsilon)$, we may use divergence's theorem and that $u$ solves $\operatorname{div}(A\nabla u)=0$ to obtain
		\begin{align*}
		r^{-d} \int_{\partial (B_r\cap\Omega_\epsilon)} 2u (\nabla u, A(x)x) d\sigma(x) 
		&= 
		r^{-d} \int_{ B_r\cap\Omega_\epsilon} 2r \operatorname{div}(u A(x) \nabla u) dx \\
		&= 
		r^{-d} \int_{ B_r\cap\Omega_\epsilon} 2r (A(x)\nabla u, \nabla u) dx .
		\end{align*}
		Finally, the limit as $\epsilon\to0$ of the last term exists because $u\in W^{1,2}(B_r)$.
		
		Summing up, we have
		\[
		H'(r) = 2I(r) + r^{-d} \int_{\partial B_r} |u|^2 f(x) d\sigma(x)
		\]
		with the second term being $O(L_A H(r))$.
	\end{proof}

	\begin{definition}
		\label{def: frequency function}
		Assume $x\in\Omega \cup \Sigma$ and $A(x) = I$. For $r>0$ such that $B_r(x) \cap \partial\Omega \subset \Sigma$ we define the\textit{ Almgren's frequency function} as
		\[
		N(x,r) := r\frac{I(x,r)}{H(x,r)}.
		\]
	\end{definition}
	We will assume, as before, that $x$ is the origin and we will write $N(r) := N(x,r)$.

	The following geometric lemma is essential to ensure good behavior for $N(x,r)$ in balls intersecting $\Sigma$.
		\begin{lemma}
		\label{lemma: conditions on boundary term so that it is positive}
		Suppose that $x \in \Omega$ and $A(x) = I$.
		Assuming $B_r(x)\cap\partial\Omega\subset\Sigma$, $r$ is small enough (depending on $\operatorname{dist}(x,\Sigma)$ and the Lipschitz constant $L_A$ of $A(y)$), and the Lipschitz constant $\tau$ of the domain is also small enough (depending on $\operatorname{dist}(x,\Sigma)/r$ and $L_A r$), 
		then
		\[
			(\nu(y), A(y) (y-x)) \geq 0
		\]
		for $\mathcal H^{d-1}$-a.e. $y\in\Sigma\cap B_r(x)$ where $\nu$ is the outer unit normal to $\partial\Omega$.
		In particular, if $x$ is the origin, we have
		\[
		\int_{B_r \cap \partial\Omega} \mu(y)^{-1}(\nu, A y)(A\nu,\nu) |\partial_\nu u|^2 d\sigma(y) \geq 0 .
		\]
	\end{lemma}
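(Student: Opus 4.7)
The plan is to isolate the purely geometric content of the claim (an inequality involving only $\nu$, $x$, and $y$), prove it using the small Lipschitz constant of $\Sigma$ and the fact that $x$ sits at positive height above $\Sigma$, and then superimpose the perturbation caused by $A(y) \neq A(x)=I$, using the Lipschitz continuity of $A$. The ``In particular'' display will then follow from sign considerations alone.

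\textbf{Step 1 (geometric estimate for $(\nu, y-x)$).} Locally parametrize $\Sigma$ as the graph $y_d = \varphi(y')$ with $\|\nabla\varphi\|_\infty \leq \tau$; at $\mathcal H^{d-1}$-a.e. $y = (y', \varphi(y')) \in \Sigma$ the outer unit normal is $\nu(y) = (\nabla\varphi(y'), -1)/\sqrt{1+|\nabla\varphi(y')|^2}$. Since $x \in \Omega$ sits above the graph and $d_x := \mathrm{dist}(x, \Sigma)$ satisfies $x_d - \varphi(x') \geq d_x$, I would compute
\[
(\nu(y), y-x)\sqrt{1+|\nabla\varphi(y')|^2} = (\nabla\varphi(y'), y'-x') + (x_d - \varphi(y')) \geq d_x - 2\tau\, |y'-x'| \geq d_x - 2\tau r,
\]
where the last step uses the Lipschitz bound on $\varphi$ twice (once on each summand).

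\textbf{Step 2 (add the elliptic perturbation).} Because $A(x)=I$ and $A$ is $L_A$-Lipschitz, $\|A(y)-I\|_2 \lesssim L_A |y-x| \leq L_A r$, hence
\[
(\nu, A(y)(y-x)) = (\nu, y-x) + (\nu, (A(y)-I)(y-x)) \geq \frac{d_x - 2\tau r}{\sqrt{1+\tau^2}} - C L_A r^2.
\]
The right-hand side is nonnegative provided $r$ is small enough (depending on $d_x$ and $L_A$) that $C L_A r^2 \leq d_x/4$, and $\tau$ is small enough (depending on $d_x/r$ and $L_A r$) that $2\tau r \leq d_x/4$. This produces the first asserted inequality.

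\textbf{Step 3 (the integral display).} Taking $x=0$, Step 2 gives $(\nu(y), A(y)y) \geq 0$ for $\mathcal H^{d-1}$-a.e.\ $y \in \Sigma \cap B_r$. The remaining factors in the integrand of the second display are pointwise nonnegative: $\mu(y)^{-1} = |y|^2/(A(y)y, y) > 0$ and $(A(y)\nu, \nu) > 0$ by uniform ellipticity, and $|\partial_\nu u|^2 \geq 0$. Since by Appendix \ref{appendix: nontangential limits} the non-tangential limit of $\nabla u$ exists $\sigma$-a.e.\ on $\Sigma$, so that $\partial_\nu u = (\nabla u, \nu)$ is $\sigma$-a.e.\ defined, the integrand is $\mathcal H^{d-1}$-a.e.\ nonnegative and the integral is $\geq 0$.

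\textbf{Main obstacle.} No deep analysis is required; the only delicate point is the quantitative balancing of the three small quantities $\tau$, $L_A r$, and $d_x/r$, so that the geometric ``room'' $d_x$ dominates both the horizontal tilt $\tau r$ coming from $\Sigma$ being non-flat and the ellipticity perturbation $L_A r^2$ coming from $A$ being non-constant. This bookkeeping exactly produces the two smallness conditions stated in the hypothesis.
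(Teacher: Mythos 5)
Your proof is correct and follows essentially the same approach as the paper: lower-bound the geometric quantity $(\nu(y),y-x)$ using the smallness of the Lipschitz constant and the height $\operatorname{dist}(x,\Sigma)$ above the graph, upper-bound the elliptic perturbation $(\nu,(A(y)-I)(y-x))$ by $O(L_A r^2)$ via Lipschitz continuity of $A$ at $x$, and balance the three small quantities. The only cosmetic difference is that you carry out Step 1 with an explicit graph parametrization $y_d=\varphi(y')$, whereas the paper works with the angle $\alpha$ between $y-x$ and $\nu(y)$; the resulting smallness conditions on $r$ and $\tau$ are equivalent.
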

	\begin{proof}		
	Without loss of generality, we assume $x$ is the origin, denote $T=\operatorname{dist}(0,\Sigma)$ and {suppose $T\leq r$. Otherwise, $\Sigma \cap B_r = \varnothing$.}
	
	First, note that $\Vert A(y) - I \Vert_2 \leq C L_A r$ for all $y \in B_r \cap \partial\Omega$ by the Lipschitz continuity of $A$.
	Using this, we obtain
	\begin{align*}
	(\nu(y), A(y) y) &= (\nu(y),y) + (\nu(y),(A-I)y)\\
	&\geq (\nu(y),y) - |y|C L_A r \\
	&= |y| \cos(\alpha) - |y|C L_A r
	\end{align*}
	where $\alpha$ is the angle between $y$ and $\nu(y)$.
	
	If the domain were flat ($\tau = 0$ and $\Sigma$ were an open subset of a hyperplane) then all the points would have the same fixed normal vector $\tilde \nu$ and we would have $\cos \alpha \geq \frac T r $. Since the domain is not flat ($\tau \neq 0$), we have that the normal vector $\nu$ makes at most an angle of $\arctan(\tau)$  with $\tilde \nu$. Using this    
	we can bound from above $\alpha$ by $\arccos \frac T r + \arctan \tau$. 
	
	Now, assuming $r$ is small enough so that 
	\[
	\frac T r > CL_A r
	\]
	and $\tau$ is also small enough so that
	\[
	\cos\left ( \arccos \frac T r + \arctan \tau \right) \geq C L_A r,
	\]
	we obtain the desired inequality. 
\end{proof}    

	\begin{definition}
		\label{def: x r admissible}
		We will say that the origin $0\in\Omega\cup\Sigma$ (assuming $A(0)=I$) and a radius $r$ are \textit{admissible} if they satisfy the assumptions of the previous lemma. 
		If $A(0)\neq I$, we will say that $0\in\Omega\cup\Sigma$ and $r$ are \textit{admissible} if $0$ and $\Lambda_A^{1/2} r$ are admissible for the transformed domain $\tilde S^{-1}\Omega$ and the matrix $A_{\tilde S}$ (see Remark \ref{rmk: small LA and LambdaA}).
		We can extend this definition to a point $x\neq0$ by translating the domain.
	\end{definition}
	~
	\begin{remark}~
		\begin{enumerate}

			\item If $ \operatorname{dist}(x,\Sigma) > r$, we have that $B_r \cap \partial\Omega = \varnothing$ and the integral is $0$. In this case we also say that $x$ and $r$ are admissible.
			
			\item If $x$ and $r'$ are admissible, then $x$ and $r$ are admissible for all $0\leq r < r'$.
			
			\item In the case $A(x) \neq I$, we can ensure admissibility if we impose
			\[
			\cos\left ( \arccos \left(\Lambda_A^{-1} \frac {\operatorname{dist}(x,\Sigma)} r\right) + \arctan (\Lambda_A\tau) \right) \geq C L_A \Lambda_A^{1/2} r.
			\]
			The reason is that, in the transformed domain, the Lipschitz constant $\tau$ increases at most by a factor $\Lambda_A$ and the distance from the point to $\Sigma$ decreases at most by a factor $\Lambda_A^{1/2}$.
		\end{enumerate}
	\end{remark}

	\begin{remark}
		\label{rmk: existence of normal derivative}
		Since the outer unit normal vector $\nu$ is only defined $\sigma$-a.e. the derivative $\partial_\nu u$ may not exist everywhere.
		Its existence, the non-tangential convergence, and the fact that $\nabla u = (\nabla u, \nu) \nu$ in $L^2(\sigma)$ are proven in Appendix \ref{appendix: nontangential limits}.
	\end{remark}

	We prove an almost monotonicity property for $N(r)$ which was first observed in \cite{GL} in the interior of the domain.
	\begin{propo}
		\label{prop:exp(Cr)N(r) is nondecreasing}
		Assume $0$ and $r'$ are admissible, $A(0) = I$, and the {ellipticity constant $\Lambda_A$ of $A(x)$ is smaller than $2$}. Then there exists $C_N>0$ depending on $L_A$ such that $e^{r C_N} N(r)$ is nondecreasing in the interval $(0,r')$.
	\end{propo}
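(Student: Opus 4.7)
The plan is to carry out the standard Garofalo--Lin differentiation of Almgren's frequency, taking care that the Lipschitz (not $C^1$) coefficients of $A$ produce only an exponential correction rather than ruining monotonicity. Log-differentiating gives $N'(r)/N(r) = 1/r + I'(r)/I(r) - H'(r)/H(r)$, and Proposition \ref{prop:exp(cr)H(r) is nondecreasing} already provides $H'(r)/H(r) = 2I(r)/H(r) + O(L_A)$. The goal is therefore to show
\[
\frac{1}{r} + \frac{I'(r)}{I(r)} - \frac{2 I(r)}{H(r)} \,\geq\, -C L_A \quad\text{on } (0, r'),
\]
with $C = C(d)$; setting $C_N := C L_A$ then yields $(e^{C_N r} N(r))' \geq 0$.

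I would first derive two identities. By the co-area formula, $I'(r) = -(d-1) r^{-1} I(r) + r^{1-d}\int_{\partial B_r \cap \Omega}\beta\,d\sigma$ with $\beta := (A\nabla u, \nabla u)$. Testing the equation $\operatorname{div}(A\nabla u)=0$ against the Pohozaev multiplier $(x, \nabla u)$ and integrating by parts on $B_r \cap \Omega$ (justified via the $\Omega_\epsilon$-approximation of Proposition \ref{prop:exp(cr)H(r) is nondecreasing} and the nontangential $L^2$-convergence from Appendix \ref{appendix: nontangential limits}), and using the identity $\partial_k \beta = 2 a_{ij}\partial_{ki}u\,\partial_j u + (\partial_k a_{ij})\partial_i u\,\partial_j u$ together with the symmetry of $A$, one obtains the Rellich--Pohozaev identity
\[
r\!\!\int_{\partial B_r\cap\Omega}\!\!\beta\,d\sigma \,=\, (d-2)\!\!\int_{B_r\cap\Omega}\!\!\beta\,dy + \frac{2}{r}\!\!\int_{\partial B_r\cap\Omega}\!\!(A\nabla u, x)(x,\nabla u)\,d\sigma + \!\!\int_{\Sigma\cap B_r}\!\!(\partial_\nu u)^2(A\nu,\nu)(x,\nu)\,d\sigma + O\!\Bigl(L_A r\!\!\int_{B_r\cap\Omega}\!\!\beta\,dy\Bigr).
\]
The error gathers the Lipschitz terms $x_k(\partial_k a_{ij})\partial_i u\,\partial_j u$. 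Writing $(x,\nu)(A\nu,\nu) = \mu(y)^{-1}(\nu, A(y)y)(A\nu,\nu) + O(L_A r^2)(A\nu,\nu)$ via Remark \ref{rmk: properties mu et al} and invoking Lemma \ref{lemma: conditions on boundary term so that it is positive} at the admissible scale, the $\Sigma$-integral is nonnegative up to a boundary trace error of size $O(L_A r^2)\int_\Sigma (A\nu,\nu)|\partial_\nu u|^2\,d\sigma$, which I will absorb later.

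Substituting the Rellich--Pohozaev identity into the co-area expression for $I'$ cancels the boundary integral $\int_{\partial B_r \cap \Omega}\beta\,d\sigma$ and produces
\[
\frac{1}{r} + \frac{I'(r)}{I(r)} \,=\, \frac{2}{r^{d+1} I(r)}\!\!\int_{\partial B_r\cap\Omega}\!\!(A\nabla u, x)(x,\nabla u)\,d\sigma + \frac{1}{r^{d} I(r)}\!\!\int_{\Sigma\cap B_r}\!\!(\partial_\nu u)^2(A\nu,\nu)(x,\nu)\,d\sigma + O(L_A).
\]
Introducing $D(r) := r^{1-d}\int_{\partial B_r\cap\Omega}\mu(y)^{-1}(A\nabla u, y/r)^2\,d\sigma$ and using Remark \ref{rmk: properties mu et al} to replace $(A\nabla u, x)(x, \nabla u) = \mu^{-1}(A\nabla u, x)^2 + O(L_A r^3)|\nabla u|^2$ on $\partial B_r$, the first term equals $2 D(r)/I(r) + O(L_A)$. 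Testing the equation against $u$ (again via the $\Omega_\epsilon$ approximation, and using $u \equiv 0$ on $\Sigma$) yields
\[
r^{d-1} I(r) \,=\, \int_{\partial B_r\cap\Omega} u\,(A\nabla u, x/r)\,d\sigma,
\]
to which Cauchy--Schwarz applies, giving $I(r)^2 \leq D(r) H(r)$. Combining everything,
\[
\frac{1}{r} + \frac{I'(r)}{I(r)} - \frac{H'(r)}{H(r)} \,\geq\, \frac{2\bigl(D(r) H(r) - I(r)^2\bigr)}{I(r) H(r)} - C L_A \,\geq\, -C L_A,
\]
which proves the proposition (the $\Sigma$-contribution, being nonnegative modulo absorbed errors, only helps).

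The main obstacle is the bookkeeping in the Rellich--Pohozaev step: turning the raw boundary factor $(A\nu,\nu)(x,\nu)$ into the form $\mu^{-1}(\nu, A(y)y)(A\nu,\nu)$ to which Lemma \ref{lemma: conditions on boundary term so that it is positive} applies requires Lipschitz continuity of $A$ and admissibility at the right scale, and the resulting $O(L_A r^2)\int_\Sigma (A\nu,\nu)|\partial_\nu u|^2\,d\sigma$ correction must be absorbed into the other $O(L_A)$ errors (for instance, by re-applying the Rellich identity to control $\int_\Sigma \beta\,d\sigma$ by a multiple of $I(r)$, or via a standard boundary trace estimate leveraging $\Lambda_A \leq 2$). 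All integrations by parts must additionally be justified through the $\Omega_\epsilon$ approximation because $u$ is only $W^{1,2}$ across $\partial\Omega$ globally. Conceptually, the scheme is the classical Garofalo--Lin monotonicity adapted to divergence-form operators with Lipschitz coefficients, the novelty being the handling of the $\Sigma$-boundary contribution via the admissibility condition of Definition \ref{def: x r admissible}.
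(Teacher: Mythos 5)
Your overall scheme is exactly the one the paper uses (Garofalo--Lin log-differentiation, a Rellich--Pohozaev identity, positivity of the $\Sigma$-trace via Lemma~\ref{lemma: conditions on boundary term so that it is positive}, and the Cauchy--Schwarz estimate $I^2\le DH$), but the choice of Pohozaev multiplier creates a genuine gap. You use the vector field $x$, whereas the paper uses $w(y)=\mu(y)^{-1}A(y)y$. The key point of the paper's choice is that on $\partial B_r$ one has \emph{exactly} $(\nabla u, w)(A\nabla u,\nu)=r\,\mu^{-1}(A\nabla u,\nu)^2$, so the Rellich cross-term on $\partial B_r$ already has the precise form needed for Cauchy--Schwarz, with no error. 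With your multiplier $x$, the cross-term is $r^2(A\nabla u,\nu)(\nabla u,\nu)$, and the discrepancy
\[
(A\nabla u,\nu)(\nabla u,\nu)-\mu^{-1}(A\nabla u,\nu)^2 = (A\nabla u,\nu)\bigl[((I-A)\nabla u,\nu)+(1-\mu^{-1})(A\nabla u,\nu)\bigr] = O(L_A r)|\nabla u|^2
\]
on $\partial B_r$. Your claim that ``the first term equals $2D(r)/I(r)+O(L_A)$'' therefore has an error of size
\[
O(L_A)\cdot\frac{r^{2-d}\int_{\partial B_r\cap\Omega}|\nabla u|^2\,d\sigma}{I(r)},
\]
a \emph{sphere} integral divided by an interior (solid) integral. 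This ratio is not bounded: there is no a priori control of $r\int_{\partial B_r}|\nabla u|^2\,d\sigma$ by $\int_{B_r}|\nabla u|^2\,dy$. Using the co-area formula to replace the sphere integral by $I'(r)+(d-1)r^{-1}I(r)$ only feeds the quantity you are trying to bound back into the error, and chasing it through ($r I'/I = r N'/N - 1 + 2N + O(L_A r)$) ultimately degrades $N'\ge -C_N N$ to something like $N'\ge -CL_A N^2$, which is too weak.

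Your handling of the $\Sigma$-trace is easier to repair than you suggest: the proof of Lemma~\ref{lemma: conditions on boundary term so that it is positive} actually establishes that $\cos\alpha\ge CL_A r\ge 0$, i.e.\ $(\nu(y),y)\ge0$ directly, so $\int_{\Sigma\cap B_r}(A\nu,\nu)(\partial_\nu u)^2(x,\nu)\,d\sigma\ge0$ with no $O(L_A r^2)$ correction to absorb. The real obstruction is the $\partial B_r$ cross-term described above, and the fix is precisely the paper's choice of vector field $w=\mu^{-1}Ay$, for which the $\partial B_r$ cross-term is exact and only interior $O(L_A r)\cdot r^{d-1}I(r)$ errors (from $\operatorname{div}w = d+O(L_A|x|)$, $Dw = I+O(L_A|x|)$, and $A_{D,w}$) remain.
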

	For the proof of this proposition we will only keep track of the relation between the constant $C_N$ and the Lipschitz constant $L_A$. $C_N$ also depends on the ellipticity constant $\Lambda_A$ but, by assuming (for example) $\Lambda_A < 2$, we can omit it. We may do so thanks to Remark \ref{rmk: small LA and LambdaA}.
	Notice also that $C_N \equiv 0$ in the harmonic case.
	
	Our proof is an adaptation of the one in \cite{LM1} but with the inclusion of the case $B_r\cap\partial\Omega \neq \varnothing$. We need special care when $B_r\cap\partial\Omega \neq \varnothing$ as with the proof of Proposition \ref{prop:exp(cr)H(r) is nondecreasing}. Again, we use Appendix \ref{appendix: nontangential limits} and follow the ideas of \cite{To} in the harmonic case to circumvent these problems.
	\begin{proof}
		Fix a compact interval $I \subset (0,r')$. We will show that the derivative of $N(r)$ is positive a.e. $r\in I$.
		
		Since 
		\[
		I(r) = r^{1-d} \int_{0}^r \int_{\partial B_s} (A\nabla u, \nabla u)\, d\sigma ds
		\]
		and $r$ is bounded away from $0$, we have that $I$ is absolutely continuous. Also note that we are only considering the case $0\in\Omega$ (since we ask for admissibility). For this reason, $H$ is of class $C^1$ and bounded away from $0$ and $N$ is also absolutely continuous.
		The derivative is
		\[
		N'(r) = \frac{( I(r)+r I'(r)) H(r) - r I(r) H'(r)}{ H(r)^2}.
		\]
		Let's compute
		\[
		I'(r) = (1-d) r^{-1} I(r) + r^{1-d} \underbrace{\int_{\partial B_r\cap\Omega} (A \nabla u, \nabla u) d\sigma}_{\boxed A}.
		\]	
		The previous identity is true for a.e. $r\in I$.
		
		Let $w(y) := \mu(y)^{-1} A(y)y$ be a vector field in $\Omega$. Observe that $(w(y), y) = \vert y \vert^2$ and that $y/r = \nu$, the normal vector in $\partial B_r$. We can rewrite \boxed A as
		\begin{align*}
		\boxed A &= \frac{1}{r} \int_{\partial B_r\cap\Omega} (A \nabla u, \nabla u) (\nu(y), w(y)) d\sigma(y) \\
		&= \frac{1}{r}\underbrace{ \int_{\partial (B_r\cap\Omega)} (A \nabla u, \nabla u) (\nu(y), w(y)) d\sigma}_{\boxed B} - \frac{1}{r} \underbrace{\int_{ B_r\cap\partial\Omega} (A \nabla u, \nabla u) (\nu(y), w(y)) d\sigma}_{\boxed C}.
		\end{align*}
		To study \boxed C we can use that $B_r\cap\partial\Omega \subset \Sigma$ and, thus, $\nabla u = (\nabla u,\nu)\nu$ on $\Sigma$ (in the sense of non-tangential limits, see Appendix \ref{appendix: nontangential limits}).

		Then, we have that
		\[
		\boxed C = \int_{B_r\cap\partial\Omega} (A \nu,\nu) |\partial_\nu u|^2 (\nu(y), w(y)) d\sigma(y) .
		\]
		{This term is a bit problematic and we will treat it later with the help of Lemma \ref{lemma: conditions on boundary term so that it is positive}.}
		
		Let's use the divergence theorem on \boxed B (we do this in $B_r\cap\Omega_\epsilon$ and then let $\epsilon\to0$, as in the proof of Proposition \ref{prop:exp(cr)H(r) is nondecreasing}), obtaining
		\[
		\boxed B = \int_{B_r\cap\Omega} \text{div}(w(x)(A \nabla u, \nabla u)) dx.
		\]    
		We have that
		\begin{align}\begin{split}
		\label{eq: terms we miss in the proof of monotonicity of N}
		\text{div}(w(x) (A \nabla u, \nabla u )) &=
		\text{div}(w) (A \nabla u, \nabla u) +
		(w, \nabla(A \nabla u, \nabla u)) \\
		&=\text{div}(w) (A \nabla u, \nabla u) +
		2(w, \text{Hess}(u)(A \nabla u)) + (A_{D,w}\nabla u, \nabla u)
		\end{split}\end{align}
		where $A_{D,w} = \{\sum_k (\partial_k a_{ij})w_k\}_{ij}$. Furthermore, $\operatorname{Hess}(u)$ is symmetric and
		\[
		\operatorname{Hess}(u)(w) = \nabla (\nabla u, w) - (Dw)\nabla u.
		\]
		Let's compute the integrals with all these terms in \eqref{eq: terms we miss in the proof of monotonicity of N} one by one. First, we obtain
		\[
		\int_{B_r  \cap \Omega_\epsilon} (\operatorname{Hess}(u)w, A \nabla u) dx=
		\int_{B_r  \cap \Omega_\epsilon} \left((\nabla (\nabla u, w), A \nabla u) - ((Dw)\nabla u, A \nabla u) \right)dx.
		\]
		Using that $u$ satisfies $\operatorname{div}(A\nabla u) = 0$ in $\Omega$, we get
		\begin{align*}
		\int_{B_r  \cap \Omega_\epsilon} (\operatorname{Hess}(u)w, A \nabla u) dx
		&=\int_{B_r  \cap \Omega_\epsilon} \left(\operatorname{div}((\nabla u, w)A\nabla u) - ((Dw)\nabla u, A \nabla u) \right)dx \\
		&= \underbrace{\int_{\partial(B_r  \cap \Omega_\epsilon)} (\nabla u, w) (A\nabla u, \nu) d\sigma}_{\boxed D} - \underbrace{\int_{B_r  \cap \Omega_\epsilon} ((Dw)\nabla u, A \nabla u) dx}_{\boxed E}.
		\end{align*}
		We can rewrite the previous equation using the divergence theorem:
		\begin{align*}
		\boxed D &= \int_{\partial B_r  \cap \Omega_\epsilon} (\nabla u, w) (A\nabla u, \nu) d\sigma + \int_{B_r  \cap \partial\Omega_\epsilon} (\nabla u, w) (A\nabla u, \nu) d\sigma 
		\end{align*}
		Using that $\nabla u|_{\Sigma_\epsilon}$ converges to $(\nabla u,\nu)\nu$ on $\Sigma$ in $L^2_{\operatorname{loc}}(\sigma)$ as $\epsilon \to 0^+$ (see Appendix \ref{appendix: nontangential limits}), we get
		\[
		\lim_{\epsilon\to0} \boxed D = \int_{\partial B_r  \cap \Omega} (\nabla u, w) (A\nabla u, \nu) d\sigma 
		+ \underbrace{\int_{B_r  \cap \partial\Omega} (\nu, w)(A\nu,\nu) |\partial_\nu u|^2 d\sigma}_{\boxed C}
		\]
		where the last term on the right hand side coincides with $\boxed C$ defined previously.

		Remember that $w(x) = \mu(x)^{-1} A(x)x$. Thus, we have the approximations $Dw = I + O(L_A|x|)$, and $\operatorname{div}(w) = d + O(L_A|x|)$ for some $a>0$. 
		We can use these estimates to show that 
		\[
		\lim_{\epsilon\to0}\boxed E = r^{d-1}I(r) + O(L_A r^d I(r)).
		\]
		One of the terms we missed in \eqref{eq: terms we miss in the proof of monotonicity of N} behaves as
		\[
		\int_{B_r \cap \Omega} \text{div}(w)(A \nabla u, \nabla u) dx = d r^{d-1} I(r) + O(L_A r^d I(r))
		\]
		and the other as
		\[
		\int_{B_r \cap \Omega} (A_{D,w} \nabla u, \nabla u) dx \lesssim {L_A} \int_{B_r \cap \Omega} r \vert \nabla u \vert^2 = O(L_A r^d I(r)).
		\]
		Summing everything up,
		\[
		I'(r) = - r^{-1} I(r) +  O(L_A I(r)) + 2 r^{1-d} \int_{\partial B_r \cap \Omega} \mu(y)^{-1}(A \nabla u, \nu)^2 d\sigma + r^{-d} \boxed C
		\]	
		and we obtain
		\begin{align*}
		N'(r)N(r)^{-1} &= (rI'(r)+I(r)) (rI(r))^{-1}- H'(r) (H(r))^{-1} \\
		&=  
		\left (2 r^{1-d} \int_{\partial B_r \cap \Omega} \mu(y)^{-1}(A \nabla u, \nu)^2 d\sigma + r^{-d} \boxed C \right ) I(r)^{-1} 
		- 2 r^{-1}N(r) 
		+ O(L_A) \\
		&=  
		\left (2 r^{1-d} H(r) \int_{\partial B_r \cap \Omega} \mu(y)^{-1}(A \nabla u, \nu)^2 d\sigma + 
		r^{-d} H(r) \boxed C - 
		2I(r)^2\right) (H(r)I(r))^{-1} 
		+ O(L_A).
		\end{align*}
		Since $\boxed C \geq 0$ thanks to the fact that $0$ and $r'$ are admissible, and Lemma \ref{lemma: conditions on boundary term so that it is positive}, we can use Cauchy-Schwarz inequality to show that the whole first term is positive. Indeed, note that from the proof of Proposition \ref{prop:exp(cr)H(r) is nondecreasing}, we have
		\[
		I(r) = r^{1-d} \int_{\partial B_r\cap\Omega} u (\nabla u, A(x)\nu) \,d\sigma(x).
		\]
		Thus, using Cauchy-Schwarz, we obtain
		\[
		I(r)^2 \leq r^{2-2d} \int_{\partial B_r\cap\Omega} \mu |u|^2\,d\sigma \int_{\partial B_r \cap \Omega} \mu(y)^{-1}(A \nabla u, \nu)^2 d\sigma = r^{1-d}H(r)\int_{\partial B_r \cap \Omega} \mu(y)^{-1}(A \nabla u, \nu)^2 d\sigma.
		\]
		Therefore, there exists $ C \geq 0$ such that 
		\[
		N'(r) \geq - C L_A N(r)
		\]
		and we denote $C_N := C L_A$.
	\end{proof}
	
\subsection{Frequency function centered at arbitrary points}
	We have only considered $H(x,r)$ and $N(x,r)$ centered at points $x\in\Omega \cup \Sigma$ where $A(x) = I$.
	We can treat general points by making a change of variables such as the one in Section \ref{section: modifying the domain}.
	
	Assume $A(0) \neq I$. {Let $\tilde S$ be the symmetric positive definite square root of $A(0)$, $\tilde u(x) = u(\tilde Sx)$, and $A_{\tilde S} = \tilde S^{-1}A(\tilde Sx)\tilde S^{-1}$.} For the transformed equation $\operatorname{div}(A_{\tilde S}\nabla \tilde u)=0$ with $A_{\tilde S}(0) = I$, we can compute
	\[
	H(r) = r^{-1-d} \int_{\partial B_r} (A_{\tilde S}(x)x,x) |\tilde u(x)|^2 d\sigma(x).
	\]
	After some computations and a change of variables, we can check that it is equal (in the original domain) to
	\[
	H(r) = (\det \tilde S)^{-1} r^{-d} \int_{\partial(\tilde SB_r) } \vert u(y) \vert^2 |A(0)^{-1} y| (A(y)\nu(y), \nu(y)) d\sigma(y).
	\]
	\begin{remark}
		\label{rmk: H in general points}
		Note that $\det \tilde S, (A(y)\nu(y), \nu(y))$ and $r^{-1}|A(0)^{-1} y|$ can be upper and lower bounded by a constant depending only on the ellipticity constant $\Lambda_A$.
		
		In particular, by assuming $\Lambda_A$ bounded, we have
		\[
		H(r) \approx r^{1-d} \int_{\partial(\tilde SB_r)} \vert u(y) \vert^2 d\sigma(y) .
		\]
	\end{remark}
	On the other hand, we have
	\[
	I(r) = r^{1-d} \int_{B_r} (A_{\tilde S}(x) \nabla \tilde u, \nabla \tilde u) dx
	\]
	which, in the original domain, is equal to
	\[
	I(r) = (\det \tilde S)^{-1} r^{1-d} \int_{\tilde SB_r} (A(y) \nabla u(y), \nabla u(y)) dy \approx r^{1-d} \int_{\tilde SB_r} |\nabla u|^2 dy.
	\]
	This allows us to compute $N(x,r)$ for general points $x$. Beware that $A_{\tilde S}$ may have different Lipschitz and ellipticity constants but this is not a problem since the change can be controlled as discussed in Section \ref{section: modifying the domain}.
	
\subsection{Auxiliar lemmas on the behavior of \texorpdfstring{$H(r)$}{H(r)} and \texorpdfstring{$N(r)$}{N(r)}}
	First, we will present a lemma that controls the growth of $H(r)$ using $N(r)$.
	\begin{lemma}
	\label{lemma: lemma 2.2 of Tolsa}
	Suppose $0\in\Omega\cup\Sigma$, $A(0)=I$, and $\alpha >1$.
	Then
	\[
	\int_{\rho}^{\alpha\rho} \left (2\frac{N(r)}{r} - c_H \right) dr\leq\log\left(\frac{H(\alpha\rho)}{H(\rho)}\right) \leq \int_{\rho}^{\alpha\rho} \left(2\frac{N(r)}{r} + c_H \right)dr.
 	\] 
	Moreover, if $0$ and $\alpha \rho$ are admissible, we have
	\begin{equation*}
		2N(\rho) (\log\alpha) e^{-C_N(\alpha-1) \rho} -c_H(\alpha-1)\rho 
		\leq 
		\log \left (\frac{H(\alpha\rho)}{H(\rho)} \right) 
		\leq 
		2 N(\alpha \rho)(\log \alpha) e^{C_N(\alpha-1) \rho} + c_H(\alpha-1)\rho.
	\end{equation*}
	\end{lemma}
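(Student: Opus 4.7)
The plan is to derive both sides of the lemma from the infinitesimal identity contained in Proposition~\ref{prop:exp(cr)H(r) is nondecreasing}, namely $H'(r) = 2I(r) + O(L_A H(r))$. Dividing by $H(r)$ (which is positive and continuous in $r$, so logarithmic differentiation is legitimate on intervals where we work) and using the definition $N(r)/r = I(r)/H(r)$, we obtain the pointwise bound
\[
\left| \frac{H'(r)}{H(r)} - \frac{2N(r)}{r} \right| \leq c_H.
\]
This is the heart of the lemma; everything else is integration.

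For the first double inequality, I would simply integrate the previous display on the interval $[\rho,\alpha\rho]$. Since $H(r) > 0$ and $\log H$ is absolutely continuous in $r$ (using that $I \in L^1_{\mathrm{loc}}$ and $H \in C^1$ away from the boundary, as already employed in the proof of Proposition~\ref{prop:exp(cr)H(r) is nondecreasing}), the integral of $H'(r)/H(r)$ equals $\log(H(\alpha\rho)/H(\rho))$, yielding the two-sided estimate stated.

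For the \emph{Moreover} part, I would use the almost-monotonicity of $e^{r C_N} N(r)$ from Proposition~\ref{prop:exp(Cr)N(r) is nondecreasing}, which applies precisely because $0$ and $\alpha\rho$ are admissible. For any $r \in [\rho,\alpha\rho]$, monotonicity of $e^{rC_N}N(r)$ gives
\[
N(\rho)\, e^{-C_N(r-\rho)} \leq N(r) \leq N(\alpha\rho)\, e^{C_N(\alpha\rho - r)},
\]
and in particular, on this interval, $N(\rho)\,e^{-C_N(\alpha-1)\rho} \leq N(r) \leq N(\alpha\rho)\,e^{C_N(\alpha-1)\rho}$. Plugging these into the first inequality and computing $\int_\rho^{\alpha\rho} dr/r = \log\alpha$ and $\int_\rho^{\alpha\rho} c_H \,dr = c_H(\alpha-1)\rho$ produces exactly the two bounds claimed.

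I do not expect a serious obstacle: the first inequality is literally an integration of Proposition~\ref{prop:exp(cr)H(r) is nondecreasing}, and the second is a one-line application of the almost-monotonicity of $e^{rC_N}N(r)$. The only mild subtlety is justifying $\log(H(\alpha\rho)/H(\rho)) = \int_\rho^{\alpha\rho} H'(r)/H(r)\,dr$ when $B_r$ touches $\Sigma$, but this is already handled in the proof of Proposition~\ref{prop:exp(cr)H(r) is nondecreasing} via the same approximation by $\Omega_\epsilon$ and the non-tangential convergence of $\nabla u$ discussed in Appendix~\ref{appendix: nontangential limits}, so one merely invokes that computation here.
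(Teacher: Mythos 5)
Your proposal is correct and follows essentially the same route as the paper: rewrite $H'(r)/H(r) = 2N(r)/r + O(c_H)$ from Proposition~\ref{prop:exp(cr)H(r) is nondecreasing}, integrate over $[\rho,\alpha\rho]$ for the first pair of inequalities, then bound $N(r)$ on that interval via the almost-monotonicity of $e^{rC_N}N(r)$ from Proposition~\ref{prop:exp(Cr)N(r) is nondecreasing} to get the refined bounds. The constants and exponents match what the paper obtains, and your remark about absolute continuity of $\log H$ being already settled in the proof of Proposition~\ref{prop:exp(cr)H(r) is nondecreasing} is the right justification for the integration step.
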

	\begin{proof}
		Using Propositions  \ref{prop:exp(cr)H(r) is nondecreasing} and \ref{prop:exp(Cr)N(r) is nondecreasing} on the interval $[\rho, \alpha\rho]$, we can control
		\[
		H'(r) \leq 2I(r) + c_H H(r) = (2r^{-1}N(r) + c_H)H(r) \leq
		(2r^{-1}N(\alpha \rho)e^{C_N(\alpha\rho-r)} + c_H)H(r).
		\]
		Analogously,
		\[
		H'(r) \geq 2I(r) - c_H H(r) \geq (2r^{-1}N(\rho) e^{C_N(\rho-r)} - c_H)H(r).
		\]
		Now we simply integrate $H'/H$ in the interval $[\rho, \alpha\rho]$.
	\end{proof}
	The next lemma bounds $L^2$ norms in annuli by $H(r)$. We denote an annulus centered at $x$ of outer radius $r_2$ and inner radius $r_1$ by $\mathcal A(x,r_1,r_2):= B(x,r_2)\backslash B(x,r_1)$.
	\begin{lemma}
		\label{lemma: annulus bounds}
		Suppose $0\in\Omega\cup\Sigma$, $r,\delta\geq0$, $B(0,r+\delta)\cap\partial\Omega \subset \Sigma$ and $A(0)=I$. Then we have
		\begin{equation}
			\label{integration over annulus}
			e^{c_Hr}H(r) \frac{1}{d \vert B_1 \vert} 
			\leq
			\fint_{\mathcal A(0, r, r+\delta)} e^{c_H\vert x \vert}\mu(x) \vert u(x) \vert^2 dx  
			\leq  
			e^{c_H(r+\delta)}H(r+\delta) \frac{1}{d \vert B_1 \vert}.
		\end{equation}
	\end{lemma}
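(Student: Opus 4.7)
The plan is to rewrite the annular integral in polar coordinates and then use the monotonicity of $e^{c_Hs}H(s)$ established in Proposition \ref{prop:exp(cr)H(r) is nondecreasing}. First I would recall the definition
\[
H(s) = s^{1-d}\int_{\partial B_s}\mu(z)|u(z)|^2\,d\sigma(z),
\]
which gives $\int_{\partial B_s}\mu(z)|u(z)|^2\,d\sigma(z) = s^{d-1}H(s)$. The hypothesis $B(0,r+\delta)\cap\partial\Omega \subset \Sigma$ ensures this is well-defined on the whole interval $[r,r+\delta]$ (since we extend $u$ by zero across $\Sigma$).

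Next, slicing $\mathcal A(0,r,r+\delta)$ by spheres of radius $s\in[r,r+\delta]$ yields
\[
\int_{\mathcal A(0,r,r+\delta)} e^{c_H|x|}\mu(x)|u(x)|^2\,dx = \int_r^{r+\delta} e^{c_H s}\,s^{d-1} H(s)\,ds.
\]
By Proposition \ref{prop:exp(cr)H(r) is nondecreasing}, the function $s\mapsto e^{c_Hs}H(s)$ is nondecreasing, so for all $s\in[r,r+\delta]$
\[
e^{c_H r}H(r) \leq e^{c_H s}H(s) \leq e^{c_H(r+\delta)}H(r+\delta).
\]
Multiplying by the positive weight $s^{d-1}$ and integrating over $[r,r+\delta]$ gives
\[
e^{c_H r}H(r)\int_r^{r+\delta}s^{d-1}\,ds \;\leq\; \int_r^{r+\delta} e^{c_H s}s^{d-1}H(s)\,ds \;\leq\; e^{c_H(r+\delta)}H(r+\delta)\int_r^{r+\delta}s^{d-1}\,ds.
\]

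Finally, I would observe that $\int_r^{r+\delta}s^{d-1}\,ds = \tfrac{(r+\delta)^d - r^d}{d} = \tfrac{|\mathcal A(0,r,r+\delta)|}{d|B_1|}$, so dividing through by $|\mathcal A(0,r,r+\delta)|$ converts the integral on the left into the average and cancels the geometric factor on the right, producing exactly \eqref{integration over annulus}. There is no real obstacle here — the lemma is essentially a direct consequence of the almost-monotonicity of $e^{c_Hr}H(r)$ together with Fubini in polar coordinates; the only point requiring a sentence of justification is that the polar slicing is legitimate, which follows from the $W^{1,2}$ regularity of $u$ (with its zero extension) on $B(0,r+\delta)$.
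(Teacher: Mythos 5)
Your proof is correct and follows the same route as the paper: express the annular integral in polar coordinates as $\int_r^{r+\delta} s^{d-1}e^{c_Hs}H(s)\,ds$, apply the almost-monotonicity of $e^{c_Hs}H(s)$ from Proposition \ref{prop:exp(cr)H(r) is nondecreasing}, and normalize by $|\mathcal A(0,r,r+\delta)| = |B_1|((r+\delta)^d - r^d)$. You have simply spelled out the bookkeeping that the paper leaves implicit.
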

	\begin{proof}
	Using polar coordinates, write
	\[
	\int_{\mathcal A(0, r, r+\delta)} e^{c_H \vert x \vert}\mu(x) \vert u(x) \vert^2 dx =
	\int_{r}^{r+\delta} s^{d-1} e^{c_H s} H(s) ds,
	\] 
	and use that $e^{c_H r} H(r)$ is nondecreasing (Proposition \ref{prop:exp(cr)H(r) is nondecreasing}), and $|\mathcal A(0,r,r+\delta)| = |B_1|((r+\delta)^d-r^d)$.
	\end{proof}
	
	In a similar fashion, if $A(x_0)\neq I$ and we assume $\Lambda_A$ bounded we obtain the following result.
	\begin{lemma} 
		\label{lemma: bounding balls by tilde H}
		Suppose $x_0 \in \Omega\cup\Sigma$ and $B(x_0, \Lambda_A^{1/2} r) \cap \partial\Omega \subset \Sigma$. Then we have
		\[
		\fint_{B(x_0,r)} \vert u \vert^2 dx  
		\lesssim e^{c_H \Lambda_A^{1/2} r}  H(x_0, \Lambda_A^{1/2} r).
		\]
	\end{lemma}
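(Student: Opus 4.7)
The plan is to reduce the statement to Lemma \ref{lemma: annulus bounds} (which assumes $A(x_0)=I$) by means of the anisotropic change of variables introduced in Section \ref{section: modifying the domain}. After translating so that $x_0 = 0$, set $\tilde S = A(0)^{1/2}$ and $\tilde u(y) := u(\tilde S y)$, so that $\tilde u$ solves $\operatorname{div}(A_{\tilde S}\nabla \tilde u) = 0$ in $\tilde S^{-1}\Omega$ with $A_{\tilde S}(0) = I$. By definition (cf.\ Remark \ref{rmk: H in general points}), $H(x_0, s)$ coincides with the $H$-function $\tilde H(s)$ attached to $\tilde u$ in the transformed setting.

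Since the eigenvalues of $\tilde S^{-1}$ lie in $[\Lambda_A^{-1/2}, \Lambda_A^{1/2}]$, we have the inclusion $\tilde S^{-1}B(0,r) \subset B(0, \Lambda_A^{1/2} r)$. Changing variables $x = \tilde Sy$ in the target integral yields
\[
\int_{B(0,r)} |u(x)|^2\,dx = \det(\tilde S)\int_{\tilde S^{-1}B(0,r)} |\tilde u(y)|^2\,dy \leq \det(\tilde S)\int_{B(0,\Lambda_A^{1/2} r)} |\tilde u(y)|^2\,dy.
\]
Now I would apply Lemma \ref{lemma: annulus bounds} to $\tilde u$ in the transformed setting with inner radius $0$ and outer radius $\delta = \Lambda_A^{1/2} r$. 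Using that $\Lambda_A$ is bounded, the factor $\mu_{\tilde S}(x)$ appearing inside that estimate is bounded from below by a constant, and $e^{c_H |x|}\geq 1$. This gives
\[
\int_{B(0, \Lambda_A^{1/2} r)} |\tilde u(y)|^2\,dy \lesssim (\Lambda_A^{1/2} r)^d\, e^{c_H \Lambda_A^{1/2} r}\, \tilde H(\Lambda_A^{1/2} r).
\]
Combining the two displays, using $\det(\tilde S) \leq \Lambda_A^{d/2}$, and dividing by $|B(0,r)| = |B_1|r^d$, all $\Lambda_A$-dependent multiplicative factors can be absorbed into the implicit constant (since $\Lambda_A$ is assumed bounded), and the resulting bound is exactly the claimed estimate after identifying $\tilde H(\Lambda_A^{1/2} r) = H(x_0, \Lambda_A^{1/2} r)$.

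The computation is essentially a routine change of variables; the only point requiring some care is ensuring that the hypothesis of Lemma \ref{lemma: annulus bounds} is available in the transformed setting, that is, $B(0, \Lambda_A^{1/2} r)\cap \partial(\tilde S^{-1}\Omega) \subset \tilde S^{-1}\Sigma$. A point $\tilde S^{-1}x$ in this set corresponds to $x\in \partial\Omega$ with $|x|\leq \Lambda_A^{1/2}\,|\tilde S^{-1}x|\leq \Lambda_A\, r$, so the stated hypothesis (understood in the spirit of Section \ref{section: modifying the domain}, up to an inessential adjustment of constants) is sufficient. Beyond this bookkeeping step, the argument is entirely routine and presents no real obstacle.
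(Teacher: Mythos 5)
Your proof is correct and follows essentially the same route as the paper's (which is only a two-sentence sketch): normalize $A(x_0)=I$ by the anisotropic change of variables $\tilde S = A(x_0)^{1/2}$, observe that $\tilde S^{-1}B(x_0,r)$ is an ellipsoid contained in $B(x_0,\Lambda_A^{1/2}r)$, and then invoke Lemma \ref{lemma: annulus bounds} for the transformed solution together with $\mu_{\tilde S}\approx 1$. The constant-bookkeeping issue you flag at the end (the hypothesis $B(x_0,\Lambda_A^{1/2}r)\cap\partial\Omega\subset\Sigma$ in the original coordinates does not literally deliver the hypothesis $B(0,\Lambda_A^{1/2}r)\cap\partial(\tilde S^{-1}\Omega)\subset\tilde S^{-1}\Sigma$ needed for Lemma \ref{lemma: annulus bounds} after the change of variables, and one would a priori want a ball of radius $\Lambda_A r$) is also glossed over in the paper and is indeed inessential in this setting, where $\Lambda_A$ is taken close to $1$.
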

	\begin{proof}
		Make a change of variables so that $A(x_0) = I$. The ball $B(x_0, r)$ is sent to an ellipsoide contained in the ball $B(x_0, \Lambda_A^{1/2}r)$. Proceed as in the previous lemma by using that $\mu \approx 1$.
	\end{proof}
	
	The next lemma is a perturbation result for $H(z,r)$: it shows that we can bound $H(0,r)$ by $C H(z,r')$ if $0$ and $z$ are close compared to $r$. Moreover, it does not assume that $A(z) = I$.
	\begin{lemma}
		\label{lemma: bounding H by H at a different point}
		Assume that $0 \in \Omega \cup \Sigma$, $A(0) = I$, $\Lambda_A-1$ is small, $z \in \overline \Omega$ such that $|z| \leq \gamma r$ with $\gamma \in(0,1)$ and $B_{100r}(z)\cap\partial\Omega \subset \Sigma$.
		Then for any $\delta \in (0,10)$, we have 
		\[
		H(0,r) \leq C(\gamma, \delta, d, r, L_A) H(z,\Lambda_A^{1/2}r(1+\gamma+\delta))
		\]
		for some constant $C>0$ depending on $\gamma, \delta, d, r$, and $L_A$.
	\end{lemma}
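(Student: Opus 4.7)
The plan is to reduce $H(0,r)$ to an $L^2$ integral of $u$ over a thin annulus around $\partial B_r$, enlarge that annulus to a ball centered at $z$, and then invoke Lemma~\ref{lemma: bounding balls by tilde H} at $z$ to recover $H(z,\cdot)$ on the other side.

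Concretely, first I apply Lemma~\ref{lemma: annulus bounds} with thickness $\delta' := r\delta/2$ (the hypothesis $B(0,r+\delta')\cap\partial\Omega \subset \Sigma$ is available because $|z|\le \gamma r$ and $B_{100r}(z)\cap\partial\Omega\subset \Sigma$). Using that $\mu\le \Lambda_A$ and $e^{c_H|x|}\le e^{c_H(r+\delta')}$ on the annulus, this gives
\[
H(0,r) \;\le\; C(d)\,\Lambda_A\, e^{c_H \delta'}\, \frac{1}{|\mathcal A(0,r,r+\delta')|}\int_{\mathcal A(0,r,r+\delta')}|u|^2\,dx,
\]
where the volume of the annulus is comparable to $d|B_1|\,r^{d-1}\delta'$.

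Second, by the triangle inequality and $|z|\le \gamma r$, every $x\in \mathcal A(0,r,r+\delta')$ satisfies $|x-z|\le r(1+\gamma) + \delta' \le r(1+\gamma+\delta)$, so
\[
\mathcal A(0,r,r+\delta') \;\subset\; B\bigl(z,\, r(1+\gamma+\delta)\bigr),
\]
and I may estimate the integral above by the integral over this larger ball, paying a volume factor of order $(1+\gamma+\delta)^d/\delta$ (depending only on $d,\gamma,\delta$).

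Third, since $B_{100r}(z)\cap\partial\Omega\subset \Sigma$ and $\Lambda_A$ is close to $1$, the ball $B(z, \Lambda_A^{1/2}r(1+\gamma+\delta))$ still satisfies the hypothesis of Lemma~\ref{lemma: bounding balls by tilde H} (for $\gamma,\delta<10$). Applying that lemma at the point $z$ with radius $r(1+\gamma+\delta)$ yields
\[
\fint_{B(z,r(1+\gamma+\delta))}|u|^2\,dx \;\lesssim\; e^{c_H \Lambda_A^{1/2} r(1+\gamma+\delta)}\, H\bigl(z,\Lambda_A^{1/2}r(1+\gamma+\delta)\bigr).
\]
Chaining the three inequalities and absorbing $\Lambda_A$, the volume ratio, and the exponentials $e^{c_H\cdot}$ (whose exponents are bounded by a constant depending on $r$ and $L_A$ since $c_H=cL_A$) into $C(\gamma,\delta,d,r,L_A)$ produces the stated bound. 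No single step is the main obstacle: the proof is essentially a bookkeeping of Lemmas~\ref{lemma: annulus bounds} and~\ref{lemma: bounding balls by tilde H}, and the one point that requires mild care is ensuring that all admissibility/inclusion assumptions on $\partial\Omega$ are met, which follows from $B_{100r}(z)\cap\partial\Omega\subset \Sigma$ together with $\gamma<1$ and $\delta<10$.
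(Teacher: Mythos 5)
Your argument is correct, and it follows a genuinely different route from the paper's. The paper carries out the change of variables $x=\sqrt{A(z)}\,y$ by hand: it sandwiches the thin round annulus around $0$ inside an \emph{ellipsoidal} annulus around $z$, integrates $s^{d-1}H(z,s)\,ds$ in polar coordinates, and then invokes the almost-monotonicity of $e^{c_H s}H(z,s)$ to collapse the range of radii to the single value $\lambda_{\min}^{-1}(1+\gamma+\delta)r\le \Lambda_A^{1/2}(1+\gamma+\delta)r$. You instead enlarge the annulus all the way to the \emph{ball} $B(z,r(1+\gamma+\delta))$, paying a volume factor $\approx (1+\gamma+\delta)^d/\bigl((1+\delta/2)^d-1\bigr)$ that depends only on $d,\gamma,\delta$, and then apply Lemma~\ref{lemma: bounding balls by tilde H} at $z$ as a black box, which internally performs the same ellipsoid-within-ball reduction. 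Your version is more modular and slightly shorter; what the paper's inline version buys is explicit control of the eigenvalue ratios $\lambda_{\min},\lambda_{\max}$ of $\sqrt{A(z)}$, which it immediately re-uses in the more delicate perturbation argument of Lemma~\ref{lemma: perturbation of frequency function with small gamma}, where the annulus-into-annulus containment (rather than annulus-into-ball) is needed to keep the constant multiplicatively close to $1$. For the present lemma alone, where only a qualitative constant $C(\gamma,\delta,d,r,L_A)$ is claimed, your shortcut is perfectly adequate; the hypotheses of Lemmas~\ref{lemma: annulus bounds} and~\ref{lemma: bounding balls by tilde H} are indeed met under $B_{100r}(z)\cap\partial\Omega\subset\Sigma$, $\gamma<1$, $\delta<10$, and $\Lambda_A$ close to $1$, as you note.
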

	We omit the dependence of $C$ on $\Lambda_A$ in this lemma.
	\begin{proof}
	Let $\delta \in (0,10)$, then using Lemma \ref{lemma: annulus bounds}, we get
	\[
	e^{c_Hr} H(0,r) \leq \frac{d \vert B_1 \vert}{\vert \mathcal A(0, r, r(1+\delta))\vert} \int_{\mathcal A(0, r, r(1+\delta))} e^{c_H\vert x \vert}\mu(x) \vert u(x) \vert^2 dx  .
	\]
	
	Let $\tilde S := \sqrt{A(z)}$, $\lambda_\text{min}$ be the minimum eigenvalue of $\tilde S$, $\lambda_{\text{max}}$ be the maximum eigenvalue of $\tilde S$, $\mathcal A_0 := \mathcal A(0, r, r(1+\delta))$, and $\mathcal A_z = \mathcal A(0, \lambda_\text{max}^{-1}(1-\gamma)r, \lambda_\text{min}^{-1}(1+\delta+\gamma)r)$. 
	These two annuli are defined so that $\mathcal{A}_0 \subset \{z\} + \tilde S\mathcal A_z$.
	
	Moreover, we have the following estimates
	\[
	\lambda_\text{min} \geq \max(\Lambda_A^{-1/2}, 1 - O(L_A \gamma r))
	\]
	and
	\[
	\lambda_\text{max} \leq \min(\Lambda_A^{1/2}, 1 + O(L_A \gamma r))
	\]
	which will be useful in the proof of the next lemma.

	Now we can bound
	\begin{align*}
	 \frac{\vert \mathcal A_0\vert}{d \vert B_1 \vert}  e^{c_Hr} H(0,r) &\leq  \int_{\mathcal A_0} e^{c_H\vert x \vert}\mu(x) \vert u(x) \vert^2 dx  \leq
	 (1+O(L_A \gamma r)+O(c_H r(1+\delta)) \int_{\mathcal A_0} \vert u \vert^2 dx \\
	 &\leq (1+O(L_A r(1+\delta))) \int_{\{ z\} + \tilde S\mathcal A_z} \vert u \vert^2 dx 
	\end{align*}	
	where we have used that $\mathcal A_0 \subset \{z\} + \tilde S\mathcal A_z$ and $e^{c_H |x|} = 1+O(L_A|x|)$.

	We make the change of variables $x = \tilde Sy$, $dx = (\det \tilde S) dy = (1+O(L_A \gamma r)) \,dy$ and integrate in polar coordinates to get
	\[
	\int_{\{ z\} + \tilde S\mathcal A_z} \vert u \vert^2 dx  = (1+O(L_A \gamma r))  \int_{\{z\}+\mathcal A_z} \vert u (\tilde Sy) \vert^2 dy \leq
	(1+O(L_A \gamma r))
	\int_{\lambda_{\text{max}}^{-1}(1 - \gamma)r}^{\lambda_{\text{min}}^{-1}(1+\gamma+\delta)r} s^{d-1} H(z, s) ds .
	\]
	Finally, we use that $e^{c_H r} H(r)$ is increasing to see that
	\begin{align*}
	\int_{\lambda_{\text{max}}^{-1}(1 - \gamma)r}^{\lambda_{\text{min}}^{-1}(1+\gamma+\delta)r} s^{d-1} H(z, s) ds \leq 
	&(1+O({L_A r (1+\delta)})) H(z, \lambda_{\text{min}}^{-1}(1+\gamma+\delta)r)  \\
	&\frac{r^d[((\lambda_\text{min}^{-1}(1+\delta))^d - (\lambda_\text{max}^{-1}(1-\gamma))^d]}{d} .
	\end{align*}
	Summing up, we obtain
	\[
	H(0,r) 
	\leq
	(1+O({L_A r (1+\delta)}))
	\frac{[((\lambda_\text{min}^{-1}(1+\gamma+\delta))^d - (\lambda_\text{max}^{-1}(1-\gamma))^d]}{(1+\delta)^d - 1}
	H(z, \lambda_{\text{min}}^{-1}(1+\gamma+\delta)r) . 
	\]
	\end{proof}

	Finally, we prove a perturbation result for the frequency function $N$.
	
	\begin{lemma}
		\label{lemma: perturbation of frequency function with small gamma}
		Let $r>0$ and $z \in \Omega$ with $\vert z \vert \leq \gamma r$ for $\gamma >0$ small enough. Assume $L_A r$ is small enough, $0\in\Omega$, $A(0) = I$, the ellipticity $\Lambda_A$ of $A(x)$ is small enough, and the point $z$ and distance $4r$ are admissible. Then we have the following bound
		\[
		N(0,r) \leq O(\sqrt {L_A r}) + O(\sqrt \gamma) +  N(z, 4r)\left (1+O(\sqrt {L_A r})+O(\sqrt \gamma) \right ).
		\]
		If $\gamma$ merely satisfies $0<\gamma<(\Lambda_A+1)^{-1}$, we obtain
		\[
		N(0,r) \leq C +  CN(z, 4r)
		\]
		for some constant $C>0$.
	\end{lemma}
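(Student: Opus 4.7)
A direct comparison fails: although $I(0,r) \leq 4^{d-1}I(z,4r)$ follows at once from $B(0,r)\subset B(z,4r)$, any upper bound on $H(z,4r)/H(0,r)$ obtained from Lemma~\ref{lemma: lemma 2.2 of Tolsa} is exponential in $N(z,4r)$, producing a circular estimate. The remedy is to extract $N(0,r)$ from the \emph{logarithmic} growth of $H(0,\cdot)$: by the lower bound in Lemma~\ref{lemma: lemma 2.2 of Tolsa}, for any $\alpha=1+t>1$ with $0$ and $\alpha r$ admissible,
\[
N(0,r) \,\leq\, \frac{1}{2\log(1+t)\,e^{-C_N t r}}\left(\log\frac{H(0,(1+t)r)}{H(0,r)} + c_H t r\right),
\]
so it suffices to bound $\log(H(0,(1+t)r)/H(0,r))$ \emph{linearly} in $N(z,4r)$.

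For the numerator I would apply Lemma~\ref{lemma: bounding H by H at a different point} with radius $(1+t)r$ in place of $r$ to obtain $H(0,(1+t)r) \leq C H(z, r_a)$ with $r_a = \Lambda_A^{1/2}(1+t)(1 + \tfrac{\gamma}{1+t}+\delta)r$. The technical heart is a \emph{reverse} version of Lemma~\ref{lemma: bounding H by H at a different point},
\[
H(z,r_b) \,\leq\, C H(0,r),\qquad r_b := \Lambda_A^{-1/2}(1-\gamma-\delta)r,
\]
which I would prove by mimicking the proof of Lemma~\ref{lemma: bounding H by H at a different point} with the opposite inclusion: setting $\tilde S = \sqrt{A(z)}$, one chooses $\mathcal A_z' = \mathcal A(0,\lambda_{\max}^{-1}(r+|z|),\lambda_{\min}^{-1}(r(1+\delta)-|z|))$, verifies that $\{z\}+\tilde S\mathcal A_z' \subset \mathcal A(0,r,r(1+\delta))$ (which is nontrivial precisely when $\delta$ exceeds a constant multiple of $\gamma+\Lambda_A-1$), and then uses Lemma~\ref{lemma: annulus bounds} together with the change of variables $y = z + \tilde S x$ to sandwich $H(z,r_b)$ between suitable averages over these two annuli.

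Combining both one-sided estimates gives $\log(H(0,(1+t)r)/H(0,r)) \leq \log(H(z,r_a)/H(z,r_b)) + C$, and the \emph{upper} inequality of Lemma~\ref{lemma: lemma 2.2 of Tolsa} applied at the center $z$ with radii $r_b<r_a\leq 4r$ (ensured by choosing $t,\delta$ small enough) bounds this by $C N(z,r_a)\log(r_a/r_b) + C$; Proposition~\ref{prop:exp(Cr)N(r) is nondecreasing} then replaces $N(z,r_a)$ by $C N(z,4r)$. Taking $t$ and $\delta$ of order $1$ (say $t=1$, $\delta = \max(2\gamma, 1/4)$) immediately yields the weak bound $N(0,r)\leq C + C N(z,4r)$ in the regime $\gamma<(\Lambda_A+1)^{-1}$. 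For the refined statement I would instead set $t,\delta \sim \sqrt\gamma + \sqrt{L_A r}$, which makes $\log(r_a/r_b)/\log(1+t) = 1 + O(\sqrt\gamma + \sqrt{L_A r})$ while every $e^{C_N t r}$ and $c_H t r$ correction is of the same small order, producing the sharp multiplicative factor $1 + O(\sqrt{L_A r}) + O(\sqrt\gamma)$ in front of $N(z,4r)$. The main obstacle I foresee is calibrating the constants in the reverse comparison tightly enough to preserve the square-root rate, since any excess logarithmic or power-of-$\gamma$ loss in that step would degrade the refined inequality.
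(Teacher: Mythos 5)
Your overall scheme is the paper's: extract $N(0,r)$ from the logarithmic growth of $H(0,\cdot)$ via Lemma~\ref{lemma: lemma 2.2 of Tolsa}, sandwich $\log(H(0,\cdot)/H(0,\cdot))$ between the corresponding quantity at $z$ by matching annuli, and read off $N(z,\cdot)$ with Lemma~\ref{lemma: lemma 2.2 of Tolsa} again. Your weak bound ($t=1$, $\delta$ bounded away from zero) is correct and is essentially the paper's final remark that the arithmetic mean suffices for the unsharp estimate.

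The gap is in your parameter choice for the refined estimate. With $r_a = \Lambda_A^{1/2}(1+t)\bigl(1+\tfrac{\gamma}{1+t}+\delta\bigr)r$ and $r_b = \Lambda_A^{-1/2}(1-\gamma-\delta)r$, a first-order expansion gives
\[
\frac{\log(r_a/r_b)}{\log(1+t)} \;\approx\; 1 + \frac{2\delta + 2\gamma + O(L_A r)}{t}\,.
\]
Setting $t\sim\delta\sim\sqrt\gamma+\sqrt{L_A r}$ makes the correction $\frac{2\delta}{t}\approx 2$, so the ratio is $\approx 3$, not $1+O(\sqrt\gamma+\sqrt{L_A r})$ as you claim. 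Worse, the additive term $\bigl(\log C_a+\log C_b\bigr)/\log(1+t)$, where $C_a,C_b$ are the comparison constants from Lemma~\ref{lemma: bounding H by H at a different point} and its reverse, behaves like $(\gamma/\delta)/t$: it does not vanish and can even blow up as $t\to 0$. Sending $t\to 0$ is therefore the wrong move, because the \emph{numerator} of the extracted $N(0,r)$ contains error terms that do not scale with $t$, while the \emph{denominator} $\log(1+t)$ does. The paper instead fixes $t=1$ (comparing $H(0,2r)$ to $H(0,r)$) and lets only the annulus thickness $\delta$ shrink, choosing $\delta$ as the geometric mean $\sqrt{2\gamma+O(L_A r)}\sim\sqrt{\gamma}+\sqrt{L_A r}$; then $\log(r_a/r_b)/\log 2 = 1+O(\delta)$ and the Jacobian/volume corrections $\log C_a,\log C_b$ are each $O(\gamma/\delta+\delta)=O(\sqrt\gamma+\sqrt{L_A r})$, giving both the sharp multiplicative factor and the sharp additive term.

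A secondary issue: in your reverse version of Lemma~\ref{lemma: bounding H by H at a different point}, to lower bound $H(0,r)$ you must take an annulus at the origin with \emph{outer} radius $r$, i.e.\ $\mathcal A(0,r(1-\delta),r)$, since Lemma~\ref{lemma: annulus bounds} bounds the annulus average above by $H$ at the outer radius; using $\mathcal A(0,r,r(1+\delta))$ runs the inequality the wrong way. Correspondingly the ellipsoidal annulus $\mathcal A_z'$ to be \emph{contained} in it should use $\lambda_{\min}^{-1}(1-\delta+\gamma)r$ as inner radius and $\lambda_{\max}^{-1}(1-\gamma)r$ as outer radius (you have $\lambda_{\min}$ and $\lambda_{\max}$ swapped). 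These are fixable details, but they compound the parameter-scaling error above.
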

	\begin{proof}    
		Let $\delta \in (0,1)$ to be chosen later. Using Lemma \ref{lemma: annulus bounds} we get the following upper and lower bounds
		\[
		e^{c_H r} H(0,2r) \leq  \frac{d \vert B_1 \vert}{\vert \mathcal A(0, 2r, r(2+\delta))\vert} \int_{\mathcal A(0, 2r, r(2+\delta))} e^{c_H\vert x \vert}\mu(x) \vert u(x) \vert^2 dx  
		\]
		and 
		\[
		e^{c_H r} H(0,r) \geq  \frac{d \vert B_1 \vert}{\vert \mathcal A(0, r(1-\delta), r)\vert} \int_{\mathcal A(0, r(1-\delta), r)} e^{c_H \vert x \vert}\mu(x) \vert u(x) \vert^2 dx  .
		\]
		By Lemma \ref{lemma: lemma 2.2 of Tolsa}, we have
		\[
		2N(0, r) (\log 2) e^{-C_N r} -c_H r
		\leq
		\log \frac{H(0,2r)}{H(0,r)}.
		\]
		We aim to upper bound this quantity by something of the form $\log(H(z, r_1) /H(z,r_2))$. To do this we will proceed as in Lemma \ref{lemma: bounding H by H at a different point}.
		
		Since $A(0) = I$, we have $\lambda_\text{max}(A(z)) = 1 + O(L_A\gamma r)$ and $\lambda_\text{min} (A(z)) = 1 - O( L_A\gamma r)$ (maximum and minimum eigenvalues of $A(z)$).
		Let $\tilde S_z$ be the positive definite symmetric square root of ${A(z)}$. From now on, we will denote $\lambda_\text{max}(\tilde S_z) =: \lambda_\text{max}$ and $\lambda_\text{min}(\tilde S_z) =: \lambda_\text{min}$, both depending on $z$.

		We want to find an ellipsoidal annulus (with shape given by $\tilde S_z$) centered at $z$ that contains
		\[
		\mathcal A^2_0 := \mathcal A(0, 2r, r(2+\delta)) 
		\]
		and another one that is contained in
		\[
		\mathcal A^1_0 := \mathcal A(0, r(1-\delta), r).
		\]
		If we take an annulus $\mathcal A(0,r_1, r_2)$ and deform it by $\tilde S_z$, we get an ``ellipsoidal annulus" $\tilde S_z \mathcal A(0,r_1, r_2)$ such that
		\[
		\mathcal A(0,\lambda_{\text{max}}r_1, \lambda_{\text{min}}r_2) \subset
		\tilde S_z\mathcal A(0,r_1, r_2) \subset
		\mathcal A(0, \lambda_{\text{min}}r_1, \lambda_{\text{max}}r_2).
		\]
		Using this, we can choose the following annuli 
		\[
		\mathcal A^2_0 \subset \{z\} + \tilde S_z\mathcal A(0, \lambda_\text{max}^{-1}(2-\gamma)r, \lambda_\text{min}^{-1}(2+\delta+\gamma)r) =: \{z\} + \tilde S_z\mathcal \mathcal A^2_z
		\]
		and this other one (recall $|z| \leq \gamma r$)
		\[
		\mathcal A^1_0 \supset \{z\} + \tilde S_z\mathcal A(0, \lambda_\text{min}^{-1} (1-\delta+\gamma)r, \lambda_\text{max}^{-1}(1-\gamma)r) 
		=: \{z\} + \tilde S_z\mathcal A^1_z.
		\]    
		For this last annulus to be well defined, we need
		\[
		\lambda_\text{min}^{-1} (1-\delta+\gamma) <
		\lambda_\text{max}^{-1}(1-\gamma) \iff
		\delta > (1+\gamma) - 
		\left ( \frac{\lambda_\text{max}}{\lambda_\text{min}} \right)^{-1}
		(1 - \gamma).
		\]
		We also require
		\[
		\gamma \leq \frac{1}
		{\frac{\lambda_\text{max}}{\lambda_\text{min}}  + 1} < \frac 1 2 
		\]
		so that $\delta$ can satisfy $\delta \leq 1$.
		
		Now we can proceed in the exact same way as in Lemma \ref{lemma: bounding H by H at a different point} to get
		\[
		H(0,2r) \leq (1+O({L_A r (1+\delta)}))
		\frac{\vert \mathcal A_z^2 \vert}{\vert \mathcal A_0^2 \vert}
		H(z, \lambda_{\text{min}}^{-1}(2+\gamma+\delta)r).
		\]
		
		In an analogous way, we can lower bound
		\[
		H(0,r) \geq (1+O({L_A r (1+\delta)}))
		\frac{\vert \mathcal A_z^1 \vert}{\vert \mathcal A_0^1 \vert} H(z, \lambda_\text{min}^{-1}(1-\delta+\gamma)r).
		\]
		Putting both expressions together
		\begin{equation} \label{eq: log H}
			\begin{split}
				\log \left ( \frac{H(0,2r)}{H(0,r)} \right )\leq 
				(1+O(L_A r(1+\delta))) 
				+\log\left ( \frac{|\mathcal A_z^2| |\mathcal A_0^1| }{|\mathcal A_z^1| |\mathcal A_0^2|} \right) +
				\log\left ( \frac{H(z, \lambda_{\text{min}}^{-1}(2+\gamma+\delta)r)}{  H (z, \lambda_\text{min}^{-1} (1-\delta+\gamma)r)} \right ).
			\end{split}
		\end{equation}

		\item Using Lemma \ref{lemma: lemma 2.2 of Tolsa} again, we can upper bound $\log\frac{H(z,r_1) }{ H(z, r_2)}$ in terms of $N$ as
		\begin{align} \label{eq: log tilde H}
			\log\left ( \frac{ H(z, \lambda_{\text{min}}^{-1}(2+\gamma+\delta)r)}{  H (z, \lambda_\text{min}^{-1} (1-\delta+\gamma)r)} \right )
			\leq~
			&2  N(z, \lambda_{\text{min}}^{-1}(2+\gamma+\delta)r) 
			\log \left (\frac{2+\gamma+\delta}{1-\delta+\gamma}\right )  \\
			&(1+O(L_A r (1+\delta))) +
			O(L_A r (1+\delta)) \nonumber.
		\end{align}
		
		\item Now we need to choose $\delta$. Remember that $\delta$ has to satisfy
		\begin{equation}
			\label{eq: condition on delta}
			1 > \delta > (1+\gamma) - 
			\left ( \frac{\lambda_\text{min}}{\lambda_\text{max}} \right)
			(1 - \gamma)  = 2 \gamma + O(L_A r) (1 - \gamma).
		\end{equation}
		We will choose $\delta$ equal to the geometric mean of the left hand side and the right hand side of inequality \eqref{eq: condition on delta}, that is
		\[
		\delta = \sqrt{(1+\gamma) - 
			\left ( \frac{\lambda_\text{min}}{\lambda_\text{max}} \right)
			(1 - \gamma)}.
		\]
		On the other hand, $\gamma$ has to satisfy
		\[
		0 < \gamma <  \frac{1}{\frac{\lambda_\text{max}}{\lambda_\text{min}}  + 1} 
		= \frac{1}{2+O(L_A r)} = \frac 1 2 - O(L_A r).
		\]
		\textbf{Notation:} From now on, for this proof, we will write $1 - \epsilon := \frac{\lambda_\text{min}}{\lambda_\text{max}}$. Thus $\epsilon = O(L_A r)$.
		
		Notice that
		\[
		\sqrt{\gamma + \epsilon}\leq \delta = \sqrt{2\gamma + \epsilon - \epsilon\gamma} \leq \sqrt{2\gamma + \epsilon } \leq O(\sqrt \gamma) + O(\sqrt {L_A r}) 
		\]
		for $\epsilon$ small enough ($L_A r$ small enough).
		Now let's bound every term that has appeared before on Equations \eqref{eq: log H} and \eqref{eq: log tilde H}.
		First we bound
		\begin{align*}
		\log \left( \frac{2+\gamma + \delta}{1 - \delta+\gamma} \right)
		&=
		\log  \left( {\frac {2+\sqrt { \left( 2-\epsilon \right) \gamma+
					\epsilon}+\gamma}{1-\sqrt { \left( 2-\epsilon \right) \gamma+\epsilon}
				+\gamma}} \right) 
		\leq
		\log  \left( {\frac {2+\sqrt { \left( 2-\epsilon \right) \gamma+
					\epsilon}}{1-\sqrt { \left( 2-\epsilon \right) \gamma+\epsilon}
		}} \right) \\
		&\leq \log \left ( 
		\frac{2 +\sqrt{2\gamma + \epsilon}}
		{1 - \sqrt{2\gamma + \epsilon}}\right )
		= \log(2)+ O(\sqrt{\gamma+\epsilon}) \leq \log(2) + O(\sqrt \gamma) + O(\sqrt {L_A r}).
		\end{align*}
		To bound
		\[
		\log \left ( \frac{|\mathcal A_0^1| |\mathcal A_z^2|}{|\mathcal A_0^2||\mathcal A_z^1|} \right ) 
		=
		\log \left ( 
		\frac{(1-(1-\delta)^d) ((\lambda_\text{min}^{-1}(2+\delta+\gamma))^d-(\lambda_\text{max}^{-1}(2-\gamma))^d)}
		{((2+\delta)^d-2^d) ((\lambda_\text{max}^{-1}(1-\gamma))^d-(\lambda_\text{min}^{-1}(1-\delta+\gamma))^d)}
		\right )
		\]
		we separate it in two terms,
		\[
		\log \boxed A = \log \left ( \frac{1-(1-\delta)^d}
		{((1-\epsilon)(1-\gamma))^d-(1-\delta+\gamma)^d} \right )
		\]
		and
		\[
		\log \boxed B = \log \left ( \frac{(2+\delta+\gamma)^d-((1-\epsilon)(2-\gamma))^d}
		{(2+\delta)^d-2^d} \right ).
		\]
		
		Let's write the first order expansion of the terms in $\delta$:
		\begin{itemize}
			\item $(1-\delta)^d = 1 - d\delta + O(\delta^2)$
			\item $(1-\epsilon)^d = 1 -O(L_A r)$
			\item $(1-\gamma)^d \geq 1 -O(\delta^2)$
			\item $(1-\delta+\gamma)^d =(1-\delta)^d + O(\gamma) = 1-d\delta + O(\delta^2) $
		\end{itemize}
		Now we can use this in the first term
		\begin{align*}
		\boxed A 
		&=
		\frac{ d \delta + O(\delta^2)}
		{(1-O(L_A r)) (1 - O(\gamma)) - (1-d\delta + O(\delta^2))} \\
		&\leq
		\frac{ d \delta + O(\delta^2)}
		{(1-O(L_A r)) (1 - O(\delta^2)) - (1-d\delta + O(\delta^2))} \\ 
		&=
		\frac{d\delta + O(\delta^2)}{d\delta -O(L_A r) - O(\delta^2)} \\
		&= 1 + O(L_A r/\delta) + O(\delta) \\
		&\leq 1 + O(\sqrt {L_A r}) + O(\sqrt \gamma).
		\end{align*}
		
		As for the other term, we proceed similarly
		\begin{align*}
		\boxed B
		&=
		\frac{(2+\delta)^d+O(\gamma)-(1-O(L_A r))(2^d-O(\gamma))}
		{d2^{d-1}\delta + O(\delta^2)}\\
		&=
		\frac{d2^{d-1}\delta + O(\delta^2) +O(\gamma) +O(L_A r)}
		{d2^{d-1}\delta + O(\delta^2)}\\ 
		&= 
		1 + O(\delta) + O(\gamma/\delta) + O(L_A r/\delta) \\
		&\leq 1 + O(\sqrt {L_A r}) + O(\sqrt \gamma).
		\end{align*}        
		Using all the bounds obtained in the last paragraphs together with Equations \eqref{eq: log H} and \eqref{eq: log tilde H}, we get
		\[
		N(0,r) \leq O(\sqrt {L_A r}) + O(\sqrt \gamma) + \tilde N(z, \lambda_{\text{min}}^{-1}(2+\gamma+\delta)r)\left (1+O(\sqrt {L_A r})+O(\sqrt \gamma) \right )
		\]
		for $L_A r$ and $\gamma$ small enough.   
		
		Finally, we can bound
		\[
		\lambda_{\text{min}}^{-1}(2+\gamma+\delta)r \leq 4r
		\]
		and use that $e^{C_N r}N(r)$ is increasing to get a simpler expression.
		
	\begin{remark}
		To prove the second part of the lemma ($\gamma$ not necessarily small), we just need to choose $\delta$ as the arithmetic mean of the left hand side and the right hand side of inequality \eqref{eq: condition on delta}. The rest of the proof is straightforward.
	\end{remark}
	\end{proof}

\section{Behavior of the frequency function on cubes near the boundary}
	\label{section: behavior of frequency}
	The aim of this section is to prove the first technical lemma concerning the behavior of $N$ near the boundary.
	For an analogous proof in the harmonic case, see Section 3 of \cite{To} or Sections 4.1 and 4.2 of \cite{LMNN}.

\subsection{Whitney cube structure on \texorpdfstring{$\Omega$}{Omega}}
	\label{section: whitney structure}
	We will consider the same Whitney cube structure in $\Omega$ as \cite{To}.
	
	Let $H_0$ be the horizontal hyperplane through the origin, and $B_0$ be a ball centered in $\Sigma$ such that $M B_0 \cap \partial\Omega \subset \Sigma$ for some very large $M$. We also assume that $MB_0 \cap \partial\Omega$ is a Lipschitz graph with slope $\tau$ small enough with respect to $H_0$.

	We consider the following Whitney decomposition of $\Omega$: a family $\mathcal W$ of dyadic cubes in $\mathbb R^d$ with disjoint interiors and constants $W>20$ and $D_0 \geq 1$ such that
	\begin{enumerate}
		\item $\bigcup_{Q\in\mathcal W} Q = \Omega$,
		\item $10Q \subset \Omega$, $\forall Q \in \mathcal W$,
		\item $WQ \cap \partial\Omega \neq \varnothing$, $\forall Q \in \mathcal W$,
		\item there are at most $D_0$ cubes $Q' \in \mathcal W$ such that $10Q \cap 10Q' \neq \varnothing$, $\forall Q \in \mathcal W$. Further, for such cubes $Q'$ we have $\frac 1 2 \ell(Q') \leq \ell(Q) \leq 2 \ell(Q')$.
	\end{enumerate}
	We will denote by $\ell(Q)$ the side length of $Q$ and by $x_Q$ the center of the cube $Q$.
	From these properties it is clear that $\operatorname{dist}(Q,\partial\Omega) \approx \ell(Q)$. Also we consider the cubes small enough so that $\text{diam}(Q) < \frac 1 {20} \operatorname{dist}(Q, \partial\Omega)$.
	
	Now we will introduce a ``tree" structure of parents, children and generations to this Whitney cube decomposition.
	
	Let $\Pi$ denote the orthogonal projection on $H_0$ and choose $R_0 \in \mathcal W$ such that $R_0 \subset \frac M 2 B_0$. It will be the root of the tree and we define $\mathcal D^0_\mathcal W (R_0) = \{R_0\}$ (that is the set of cubes of generation $0$ of the rooted tree). To characterize the generations $\mathcal D^k_\mathcal W(R_0)$ for $k\geq 1$, we define first
	\[
	J(R_0) = \{\Pi(Q) : Q \in \mathcal W \text{ such that } \Pi(Q) \subset \Pi(R_0) \text{ and } Q \text{ is {below} } R_0\}.
	\]
	We have that $J(R_0)$ is a family of $d-1$ dimensional dyadic cubes in $H_0$, all of them contained in $\Pi(R_0)$. Let $J_k(R_0) \subset J(R_0)$ be the subfamily of $(d-1)$-dimensional dyadic cubes in $H_0$ with side length equal to $2^{-k} \ell(R_0)$. To each $Q' \in J_k(R_0)$ we assign some $Q\in \mathcal W$ such that $\Pi(Q) = Q'$ and such that $Q$ is {below} $R_0$ (notice that there may be more than one choice for $Q$ but the choice is irrelevant), {see \cite[Lemma B.2]{To}}, and we write $s(Q') = Q$. Then we define
	\[
	\mathcal D^k_\mathcal W(R_0) := \{s(Q') : Q' \in J_k(R_0)\}
	\]
	and 
	\[
	\mathcal D_\mathcal W (R_0) = \bigcup_{k \geq 0} \mathcal D^k_\mathcal W(R_0).
	\]
	Finally, for each $R \in \mathcal{D}_{\mathcal{W}}^{k}\left(R_{0}\right)$
	and $j \geq 1$, we denote
	\[
	\mathcal{D}_{\mathcal{W}}^{j}(R)=\left\{Q \in \mathcal{D}_{\mathcal{W}}^{k+j}\left(R_{0}\right): \Pi(Q) \subset \Pi(R)\right\} .
	\]
	By the properties of the Whitney cubes, we can observe that
	\[
	Q \in \mathcal{D}_{\mathcal{W}}(R_{0}) \Rightarrow \operatorname{dist}(Q, \Sigma)=\operatorname{dist}(Q, \partial \Omega) \approx \ell(Q).
	\]
	Further, for any $Q \in \mathcal W$, we denote its center by $x_Q$, its associated cylinder by
	\[
	\mathcal C(Q) := \Pi^{-1}(\Pi(Q)),
	\]
	and
	the $(d-1)$-dimensional Lebesgue measure on the hyperplane $H_{0}$ by $m_{d-1}$.
	In Appendix B of \cite{To} one can find more details about the construction of this Whitney cube structure and its projections.
	
\subsection{Lemma on the behavior of the frequency in the Whitney tree}~
	Now we can present the first main lemma required in the proofs of Theorem \ref{thm: u is positive besides a set of positive codimension} and Theorem \ref{thm: measure of nodal set near the boundary}. This lemma controls probabilistically the behavior of the frequency function in the tree of Whitney cubes defined in the last section.
	See \cite[Lemma 3.1]{To} for a version of this lemma for harmonic functions. Our proof is very similar. The reader only needs to consider that the properties of the frequency function for elliptic PDEs are slightly worse than those of the frequency function for harmonic functions, and that $A(x)$ is a perturbation of the identity matrix. Also this Lemma should be compared with the (interior) Hyperplane lemma of \cite{Lo1} and \cite[Lemma 7]{LMNN}. Note that, in what follows, we refer to the frequency function $N$ of a solution of $\operatorname{div}(A\nabla u)=0$ in $\Omega$ as in the statement of Theorem \ref{thm: u is positive besides a set of positive codimension}.
	
	\begin{lemma}
		\label{lemma: Key lemma}
		Let $N_0 > 1$ be big enough. There exists some absolute constant $\delta_0 > 0$ such that for all $S \gg 1$ big enough the following holds, assuming also the Lipschitz constant $\tau$ of $\Sigma$ is small enough. Let
		$R$ be a cube in $\mathcal D_\mathcal{W}(R_0)$ with $\ell(R)$ small enough depending on $S$ and $L_A$ that satisfies $N(x_R, S \ell(R)) \geq N_0$. Then, there exists some positive integer $K = K(S)$
		big enough such that if we let
		\[
		\mathcal{G}_{K}(R)=\left\{Q \in \mathcal{D}_{\mathcal{W}}^{K}(R): N(x_{Q}, S \ell(Q)) \leq  \frac{1}{2}   N(x_{R}, S \ell(R))\right\}
		\]
		then:
		\begin{enumerate}
			\item $m_{d-1}\left(\bigcup_{Q \in \mathcal{G}_{K}(R)} \Pi(Q)\right) \geq \delta_{0} m_{d-1}(\Pi(R))$,
			\item for $Q \in \mathcal D_\mathcal{W}^K(R)$, it holds
			\[
			N(x_Q, S\ell(Q)) \leq (1+CS^{-1/2}) N(x_R, S\ell(R)) .
			\]
			Note that (2) does not require $N(x_R, S\ell(R))\geq N_0$.
		\end{enumerate}
	\end{lemma}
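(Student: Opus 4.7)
I would split the proof into the two parts (1) and (2), with (2) being essentially a direct application of the perturbation theory of Section \ref{section: frequency function} and (1) requiring the more delicate pigeonhole argument.

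For (2), fix $Q \in \mathcal{D}_{\mathcal{W}}^{K}(R)$ and note that $|x_Q - x_R| \lesssim \ell(R)$, so at the scale $r := cS\ell(R)$ (for a suitable absolute $c$) the ratio $|x_Q - x_R|/r$ is of order $S^{-1}$. After the change of variables of Section \ref{section: modifying the domain} that puts $A$ into the identity at $x_Q$, Lemma \ref{lemma: perturbation of frequency function with small gamma} with $\gamma \approx S^{-1}$ yields
\[
N(x_Q, r) \leq O(S^{-1/2}) + (1 + O(S^{-1/2})) \, N(x_R, S\ell(R)).
\]
The almost monotonicity Proposition \ref{prop:exp(Cr)N(r) is nondecreasing} then lets me move the argument from $r$ down to $S\ell(Q)$ at the cost of a factor $e^{C_N r} = 1 + o(1)$, provided $\ell(R)$ is small compared to $1/(SL_A)$. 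Absorbing this exponential into the $O(S^{-1/2})$ term yields (2).

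For (1) I argue by contradiction. Assume $m_{d-1}(\bigcup_{Q \in \mathcal{G}_K(R)} \Pi(Q)) < \delta_0\, m_{d-1}(\Pi(R))$ and set $\mathcal{B} := \mathcal{D}_{\mathcal{W}}^{K}(R) \setminus \mathcal{G}_K(R)$, so the projections of $\mathcal{B}$ cover at least $(1-\delta_0)\, m_{d-1}(\Pi(R))$ and $N(x_Q, S\ell(Q)) > N_0/2$ for every $Q \in \mathcal{B}$. Applying Lemma \ref{lemma: lemma 2.2 of Tolsa} with $\rho = S\ell(Q)$ and $\alpha = 2^K$, and using the smallness of $\ell(R)$ to absorb the $C_N$- and $c_H$-corrections, gives
\[
H(x_Q, S\ell(Q)) \leq C\, 2^{-K N_0}\, H(x_Q, S\ell(R)),
\]
and Lemma \ref{lemma: bounding H by H at a different point} replaces the right-hand side by $C\, H(x_R, C S\ell(R))$ with a constant independent of $Q$. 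Using Lemmas \ref{lemma: annulus bounds} and \ref{lemma: bounding balls by tilde H} to convert $H$-values into $L^2$ integrals, together with the bounded overlap of the balls $B(x_Q, c\ell(Q))$ (which follows from the disjointness of their projections), summation over $\mathcal{B}$ produces the upper bound
\[
\int_E |u|^2\,dx \lesssim 2^{-K N_0}\, \ell(R)^d\, H(x_R, C S\ell(R)), \qquad E := \bigcup_{Q \in \mathcal{B}} B(x_Q, c\ell(Q)).
\]
A matching lower bound of the form $\int_E |u|^2 \gtrsim (1-\delta_0)\, \ell(R)^d\, H(x_R, c' S\ell(R))$, obtained from the Whitney geometry of Section \ref{section: whitney structure} and the fact that $E$ captures a definite proportion of a near-boundary annular shell around $x_R$, combined with Lemma \ref{lemma: lemma 2.2 of Tolsa} applied to $H(x_R,\cdot)$ to bound the ratio $H(x_R, CS\ell(R))/H(x_R, c'S\ell(R))$ by a constant depending only on $S$ and $N_0$, yields a contradiction as soon as $K = K(S)$ is chosen large enough that $2^{-K N_0}$ overwhelms this constant.

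The hard step is exactly the lower bound for $\int_E |u|^2$, since the balls $B(x_Q, c\ell(Q))$ are anchored near $\Sigma$ where $|u|$ is small. Controlling it rigorously uses the Whitney structure of Section \ref{section: whitney structure}, the smallness of $\tau$ through the admissibility conditions of Definition \ref{def: x r admissible} and Lemma \ref{lemma: conditions on boundary term so that it is positive}, and the fact that the $O(L_A r)$ corrections arising from the Lipschitz dependence of $A$ are absorbed by requiring $\ell(R)$ small compared to $1/(SL_A)$. This is also the step where the argument differs most from its harmonic analog in \cite[Lemma 3.1]{To}, since the elliptic perturbative errors must be tracked carefully throughout the summation over $\mathcal{B}$.
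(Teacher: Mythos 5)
Part (2) of your argument is correct and follows the same route as the paper: Lemma \ref{lemma: perturbation of frequency function with small gamma} with $\gamma\approx S^{-1}$ followed by almost monotonicity (Proposition \ref{prop:exp(Cr)N(r) is nondecreasing}), using $\ell(R)\lesssim (SL_A)^{-1}$ to absorb the exponential corrections.

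Part (1) is where your argument breaks down, and the gap is not just a hard step to fill in but a genuine obstruction. You propose deriving a contradiction from an upper and a lower bound for $\int_E |u|^2$ over the union $E$ of small balls $B(x_Q,c\ell(Q))$ attached to the bad cubes $\mathcal B$. Your upper bound is fine in spirit (you even drop a further factor of $2^{-K}$ coming from $\sum_{Q\in\mathcal B}\ell(Q)^d\leq 2^{-K}\ell(R)^d$). But the claimed lower bound
\[
\int_E |u|^2\,dx \gtrsim (1-\delta_0)\,\ell(R)^d\,H(x_R,c'S\ell(R))
\]
is simply false, for two independent reasons. First, the volume of $E$ is at most comparable to $2^{-K}\ell(R)^d$, not $\ell(R)^d$. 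Second, and more fundamentally, the balls in $E$ live at distance $\approx 2^{-K}\ell(R)$ from $\Sigma$, and near $\Sigma$ the solution is small: roughly $|u|\lesssim (\operatorname{dist}(\cdot,\Sigma)/\ell(R))^{N}\sup_{B(x_R,S\ell(R))}|u|$ with $N=N(x_R,S\ell(R))$, which may be arbitrarily large. So the best lower bound one could hope for is of order $2^{-K}2^{-2KN}\ell(R)^d\,H(x_R,\cdot)$, which is \emph{smaller} than your upper bound $\approx 2^{-KN_0}2^{-K}\ell(R)^d\,H(x_R,\cdot)$ whenever $N\geq N_0/2$. There is therefore no contradiction available from comparing these two quantities; the scheme cannot be rescued by being more careful with Whitney geometry or admissibility, because the ordering of the two bounds already goes the wrong way.

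The missing idea is the quantitative Cauchy uniqueness theorem (Theorem \ref{thm: quantitative cauchy uniq}), which is what the paper's proof hinges on and which does not appear anywhere in your argument. The paper does not argue by measure on $\mathcal D_{\mathcal W}^K(R)$; instead it fixes a hyperplane $L$ at height $2^{-j}\ell(R)$ with $j$ independent of $S$, looks at the finite family $J$ of Whitney cubes meeting $L\cap\mathcal C(\tfrac12 R)$, and shows by contradiction that at least one $Q_0\in J$ has small frequency. The contradiction is not obtained from an $L^2$ comparison on a near-boundary region but from the fact that if \emph{all} $Q\in J$ had frequency $>N/(4C_0)$, then $u$ and $\nabla u$ would be exponentially small (in $jN$) on the whole hyperplane slab covered by $\frac32Q$, $Q\in J$; the Cauchy uniqueness theorem then forces $u$ to be small in $B(\tilde z_R,\ell(R)/16)$, which makes $H(\tilde z_R,\cdot)$ double too fast, contradicting the perturbation bound $N(\tilde z_R,\ell(R))\lesssim N(x_R,S\ell(R))$. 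Once $Q_0$ is found, $\mathcal G_K(R)$ is defined constructively as the descendants of $Q_0$ at generation $K=j+\lceil\log_2 S\rceil$; this is why $\delta_0\approx 2^{-j(d-1)}$ is independent of $S$ while $K$ depends on $S$. Your proposal also misses this structural point: the statement is proved by exhibiting a definite sub-column of $\Pi(Q_0)$ inside $\Pi(R)$, not by a global counting/measure argument.
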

	It is important that $\delta_0$ does not depend on $S$. Other constants such as $M$, $K$, and the upper bound on $\tau$ do depend on $S$. Finally, $N_0$ only depends on the dimension $d$.
	
	\begin{remark}
		\label{rmk:star_shaped}
		Fix $S,T>0$ and $R \in \mathcal D_{\mathcal W}(R_0)$ with $\ell(R)$ small enough depending on $L_A$ and $T$. If $x\in\Omega$ satisfies
		\[
		\operatorname{dist}(x,R) \leq T \ell(R) \quad \text{ and }\quad \operatorname{dist}(x, \partial\Omega) \geq T^{-1} \ell(R), 
		\]
		then $x$ and $S \ell(R)$ are admissible, assuming $M \gg T, M \gg S$, and that $\tau$ is small enough. The proof is analogous to the proof of \cite[Remark 3.3]{To}.
	\end{remark}

	An important tool for the proof of the Lemma \ref{lemma: Key lemma} is the following quantitative Cauchy uniqueness theorem.
	
	\begin{theorem}[Quantitative Cauchy uniqueness]
		\label{thm: quantitative cauchy uniq}
		Let $u$ be a solution of $\operatorname{div}(A(x)\nabla u(x))=0$ in the half ball
		\[
		B_{+}=\left\{y=\left(y^{\prime}, y^{\prime \prime}\right) \in \mathbb{R}^{d-1} \times \mathbb{R} \,|\, \left|y^{\prime}\right|^{2}+|y^{\prime \prime }|^2<1,~ y^{\prime \prime}>0\right\} 
		\]
		and suppose $u$ is $C^1$ smooth up to the boundary and $A(x)$ is as discussed in Section \ref{section: conditions on A(x)}. Let 
		\[
		\Gamma := \{ x \in \mathbb R^d \,|\, x_d = 0,~ \vert x \vert \leq 3/4\}
		\]
		Suppose that $\Vert u \Vert_{L^2(B^+)} \leq 1$ and 
		$ \Vert u \Vert_{W^{1,\infty}(\Gamma)} \leq \epsilon$ for some $\epsilon \in (0,1)$.
		Then 
		\[
		\sup_{B((0, \hdots, 0, 1/2) ,1/4)} \vert u\vert \leq C \epsilon^\alpha ~~\text{ and } ~~\sup_{\frac 1 4 B_+} |u| \leq C' \epsilon^{\alpha'}
		\]
		where $C, C', \alpha,$ and $\alpha'$ are positive constants depending only on the ellipticity and the Lipschitz constant of the matrix $A(x)$ and the dimension $d$.
	\end{theorem}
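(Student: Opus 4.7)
The plan is to combine an extension of $u$ across the flat boundary $\Gamma$ with a propagation-of-smallness argument based on the three-spheres inequality encoded in Lemma \ref{lemma: lemma 2.2 of Tolsa}, which is the $L^2$-incarnation of Almgren's frequency function for divergence-form operators developed in Section \ref{section: frequency function}. The overall strategy is standard for quantitative Cauchy uniqueness; the content is in carrying it out in divergence form with Lipschitz coefficients.

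First, I would reduce the problem to an interior unique-continuation statement. Let $v$ be a $C^1$ extension of $u|_\Gamma$ to the full ball $B = B(0,1)$ with $\|v\|_{C^1(B)} \lesssim \epsilon$, which exists because $\|u\|_{W^{1,\infty}(\Gamma)} \leq \epsilon$. Replacing $u$ by $w := u - v$ in $B_+$, one has $w = 0$ and $|\partial_d w| \lesssim \epsilon$ on $\Gamma$, and $\operatorname{div}(A\nabla w) = g$ in $B_+$ with $\|g\|_{L^\infty(B_+)} \lesssim \epsilon$. Now extend $w$ to $B \setminus \overline{B_+}$ by the odd reflection $\tilde w(x',x_d) := -w(x',-x_d)$, and extend $A$ by even reflection of the tangent-tangent and normal-normal blocks and odd reflection of the mixed blocks; the resulting matrix $\tilde A$ is still uniformly elliptic with Lipschitz coefficients (up to a harmless constant). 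Because $w$ vanishes on $\Gamma$ and $\partial_d w = O(\epsilon)$ there, the jump of $\tilde A \nabla \tilde w \cdot \nu$ across $\Gamma$ is $O(\epsilon)$, so $\tilde w$ solves $\operatorname{div}(\tilde A \nabla \tilde w) = \tilde g + h$ in $B$ in the distributional sense, with $\|\tilde g\|_{L^\infty(B)} + \|h\|_{H^{-1}(B)} \lesssim \epsilon$.

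Second, I would apply a three-spheres inequality to $\tilde w$. For honest solutions of $\operatorname{div}(\tilde A \nabla \cdot) = 0$, the almost-monotonicity of $N$ in Section \ref{section: frequency function} together with the $H$-comparison of Lemma \ref{lemma: lemma 2.2 of Tolsa} and interior elliptic regularity gives, for concentric balls $B(x_0,\rho)\subset B(x_0,r)\subset B(x_0,R)\subset B$,
\[
\|\tilde w\|_{L^\infty(B(x_0,r))} \leq C \|\tilde w\|_{L^2(B(x_0,\rho))}^{\theta} \|\tilde w\|_{L^2(B(x_0,R))}^{1-\theta}
\]
for some $\theta = \theta(\rho/r,r/R) \in (0,1)$. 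The same inequality holds for our approximate solution up to an additive error $O(\epsilon)$ coming from $\tilde g$ and $h$. Near any point of $\Gamma$, the Cauchy data estimate and the equation force $\|\tilde w\|_{L^2(B(z,r))} = O(\epsilon r^{d/2})$ for small $r$, so the three-spheres inequality can be iterated along a Harnack-type chain from a small ball attached to $\Gamma$ to the target ball $B((0,\dots,0,1/2),1/4)$, producing $\sup |\tilde w| \lesssim \epsilon^{\alpha}$ there and, via $u = w + v$, the first conclusion.

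The second estimate on $\tfrac14 B_+$ follows by iterating this chain toward $\Gamma$ itself: covering $\tfrac14 B_+$ by a controlled collection of balls whose distances to $\Gamma$ shrink geometrically and applying the three-spheres inequality along each chain, the composition of the exponents $\theta$ yields a weaker but still positive exponent $\alpha' < \alpha$. The main technical obstacle is the handling of the $H^{-1}$ source $h$ supported on $\Gamma$ in the propagation step, because the frequency-function monotonicity of Section \ref{section: frequency function} was established only for honest solutions; the cleanest workaround is either to absorb the correction into the $\epsilon$-side of the three-spheres inequality via a Carleman estimate for divergence-form operators with Lipschitz coefficients, or, more economically, to invoke existing quantitative Cauchy uniqueness statements available in the literature for this class of equations and merely verify that the constants depend only on $d$, $\Lambda_A$ and $L_A$.
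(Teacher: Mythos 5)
The paper does not prove this theorem; it simply cites \cite[Theorem~1.7]{ARRV} and states that the result holds in great generality for divergence-form operators with Lipschitz coefficients. You do mention that option in your final sentence, and that is in fact the entire content of the paper's ``proof.'' So your proposal is a genuinely different (and more ambitious) route: you try to sketch a self-contained argument via reflection and a three-spheres inequality built from the frequency-function machinery of Section~\ref{section: frequency function}.

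The sketch has a concrete gap at the reduction step, and I would flag it before any of the later concerns. You take a $C^1$ extension $v$ of $u|_\Gamma$ with $\|v\|_{C^1(B)}\lesssim\epsilon$ and set $w=u-v$, claiming $\operatorname{div}(A\nabla w)=g$ with $\|g\|_{L^\infty(B_+)}\lesssim\epsilon$. This does not follow: $g=-\operatorname{div}(A\nabla v)$ involves second derivatives of $v$ (and first derivatives of $A$), so a $C^1$ bound on $v$ gives $g$ only as a distribution in $W^{-1,\infty}$ or $H^{-1}$, not an $L^\infty$ function of size $\epsilon$. The natural fix---taking $v(x',x_d)=u(x',0)+x_d\,\partial_d u(x',0)$ so that both Cauchy data of $w$ vanish on $\Gamma$---still requires a tangential derivative of $\partial_d u|_\Gamma$, which the hypothesis $\|u\|_{W^{1,\infty}(\Gamma)}\leq\epsilon$ does not control. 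Even after that, the odd reflection $\tilde w$ solves the reflected equation only with a distributional right-hand side supported on $\Gamma$, and the three-spheres inequality from Lemma~\ref{lemma: lemma 2.2 of Tolsa} was established only for genuine solutions of $\operatorname{div}(A\nabla\cdot)=0$; you acknowledge this as the ``main technical obstacle'' but do not resolve it. Absorbing the singular source does require a Carleman-type estimate, which is effectively re-deriving the main ingredient of \cite{ARRV}. Given all this, the economical and correct move---and the one the paper takes---is to cite \cite[Theorem~1.7]{ARRV} directly and check that its constants depend only on $d$, $\Lambda_A$, and $L_A$, which is immediate from the statement there.
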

	This result is proved in great generality in \cite[Theorem 1.7]{ARRV}. It will also be useful for the proof of Lemma \ref{lemma: 2nd hyperplane lemma}. Before starting the proof of Lemma \ref{lemma: Key lemma}, we note that we will require both $\Lambda_A-1$ and $L_A$ very small in what follows (see again Remark \ref{rmk: small LA and LambdaA} to see why we can do so).

	\begin{proof}[Proof of Lemma \ref{lemma: Key lemma}]
 		Let $S\gg 1$ and then choose $R \in \mathcal D_\mathcal W(R_0)$ with $\ell(R)$ small enough depending on $S$ and $L_A$. For some $j \gg 1$ independent of $S$ that will be fixed below, consider the hyperplane $L$ parallel to $H_0$  (and above $H_0$) such that 
 		\[
 		\operatorname{dist}(L, \Sigma \cap \mathcal C(R)) = 2^{-j}\ell(R) .
 		\]
 		From now on, we will denote by $J$ the family of cubes from $\mathcal W$ that intersect $L\cap\mathcal C(\frac 1 2 R)$. By our construction of the Whitney cubes, we have $\ell(Q) \approx 2^{-j}\ell(R)$ and $\Pi(Q) \subset \Pi(R)$ for all $Q\in J$. Notice that if $\tau$ is small enough (depeding on $j$), then 
 		\[
 		\operatorname{dist}(x, \Sigma \cap \mathcal C(10R)) \approx 2^{-j} \ell(R) \quad \text{for } x\in L \cap \mathcal C(10R) .
 		\]
 		
 		Denote by Adm$(2W Q)$ the set of points $x\in \Omega \cap 2WQ$ such that the interval $(0, \text{diam}(25WQ))$ is admissible for $x$. We assume that the Lipschitz constant of the domain $\tau$ is small enough so that $3Q \subset \text{Adm}(2WQ)$ (using Remark \ref{rmk:star_shaped}). Then by Lemma \ref{lemma: perturbation of frequency function with small gamma}, for $Q$ small enough
 		\begin{equation}
 			\label{eq: Tolsa's 3.4}
 			\sup _{x \in \text{Adm}(2WQ)}  N(x, \operatorname{diam}(5 W Q)) \leq C_{0} N(x_{Q}, \operatorname{diam}(20 W Q))+C_{0}
 		\end{equation}
 		where $C_0$ is an absolute constant. 
 		Note that $|x-x_Q| \leq \operatorname{diam}(WQ) < \frac{1}{4} \operatorname{diam}(5WQ)$ since $x\in 2WQ$, hence it satisfies the conditions required by Lemma \ref{lemma: perturbation of frequency function with small gamma} (we use that the ellipticity constant is small enough).
 		
 		\begin{claim} {There exists some $Q \in J$ such that}
 		\begin{equation}
 			\label{eq 3.5 in Tolsa}
 			N(x_Q, \operatorname{diam}(20 W Q)) \leq \frac{N(x_R, S\ell(R))}{4C_0}
 		\end{equation}
 		{if $j$ is big enough (but independent of $S$) and we assume that $\tau_0$ is small enough depending on $j$, and also $N_0$ is big enough.}
 		\end{claim}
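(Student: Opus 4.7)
The plan is to argue by contradiction. Suppose that every $Q\in J$ satisfies $N(x_Q,\operatorname{diam}(20WQ))>N_1:=N(x_R,S\ell(R))/(4C_0)$; with $N_0$ large this forces $N_1$ to be large as well. The strategy is to turn this blanket lower bound on frequency into an $L^\infty$ smallness of $u$ near $L$, propagate that smallness into a macroscopic ball around $x_R$ via Cauchy uniqueness, and finally show that this contradicts the growth of $H(x_R,\cdot)$ forced by the finiteness of $N(x_R,S\ell(R))$.

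Step 1 (smallness of $u$ near $L$). Pick an absolute constant $\tilde C>1$ such that $\tilde C\ell(R)$ is admissible for every $x_Q$ with $Q\in J$; this is available by Remark \ref{rmk:star_shaped} upon requiring $M$ large enough. Applying Lemma \ref{lemma: lemma 2.2 of Tolsa} to $x_Q$ on the interval $[\operatorname{diam}(20WQ),\tilde C\ell(R)]$ (whose ratio is $\approx 2^{j}$ up to constants in $W,d$) gives
\[
H(x_Q,\operatorname{diam}(20WQ))\ \lesssim\ 2^{-2jN_1}\,H(x_Q,\tilde C\ell(R)),
\]
with $c_H$-corrections negligible since $L_A\ell(R)$ is small. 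Lemma \ref{lemma: bounding H by H at a different point} replaces $H(x_Q,\tilde C\ell(R))$ by a constant multiple of $H(x_R,\tilde C'\ell(R))$ for an absolute $\tilde C'$. Combining with Lemma \ref{lemma: bounding balls by tilde H} and the $C^{1,\alpha}$ interior regularity of solutions of $\operatorname{div}(A\nabla\cdot)=0$ yields
\[
\|u\|_{L^\infty(WQ/2)}+\ell(Q)\|\nabla u\|_{L^\infty(WQ/2)}\ \lesssim\ 2^{-jN_1}\,H(x_R,\tilde C'\ell(R))^{1/2}
\]
for every $Q\in J$.

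Step 2 (Cauchy uniqueness). The family $\{WQ/2\}_{Q\in J}$ covers a one-sided neighborhood of $L\cap\mathcal C(\tfrac12 R)$, and by taking $\tau_0$ small enough depending on $j$, the part of $\Sigma$ below $\mathcal C(\tfrac12 R)$ lies within that neighborhood as well. A bi-Lipschitz straightening of $\Sigma$ with controlled distortion (preserving the ellipticity and Lipschitz character of $A$ up to absolute factors) reduces the situation to Theorem \ref{thm: quantitative cauchy uniq}: the flat-boundary data vanish because $u|_\Sigma=0$, and the $W^{1,\infty}$-norm on the image of $L$ is at most $\epsilon:=C\,2^{-jN_1}H(x_R,\tilde C'\ell(R))^{1/2}$ by Step 1. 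After rescaling the half-ball to unit size, the theorem yields absolute constants $\eta,\alpha>0$ with $\|u\|_{L^\infty(B(x_R,\eta\ell(R)))}\lesssim\epsilon^{\alpha}H(x_R,\tilde C'\ell(R))^{(1-\alpha)/2}$, and hence
\[
H(x_R,\eta\ell(R))\ \lesssim\ 2^{-2\alpha jN_1}\,H(x_R,\tilde C'\ell(R)).
\]

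Step 3 (contradiction, independent of $S$). Applying Lemma \ref{lemma: lemma 2.2 of Tolsa} at $x_R$ on $[\eta\ell(R),\tilde C'\ell(R)]$ gives the reverse inequality
\[
H(x_R,\eta\ell(R))\ \gtrsim\ (\eta/\tilde C')^{2N(x_R,\tilde C'\ell(R))}\,H(x_R,\tilde C'\ell(R)).
\]
The almost-monotonicity of $e^{C_Nr}N(x_R,r)$ (Proposition \ref{prop:exp(Cr)N(r) is nondecreasing}) and $L_AS\ell(R)$ small give $N(x_R,\tilde C'\ell(R))\le 2N(x_R,S\ell(R))$. Combining with Step 2 produces $(\eta/\tilde C')^{4N(x_R,S\ell(R))}\lesssim 2^{-\alpha j N(x_R,S\ell(R))/(2C_0)}$; taking logs and dividing by $N(x_R,S\ell(R))\ge N_0$ cancels the $S$- and $N_0$-dependence and reduces the inequality to $4\log(\tilde C'/\eta)\gtrsim\alpha j\log 2/(2C_0)$, which fails whenever $j$ exceeds an absolute threshold $j_0$ depending only on $\tilde C',\eta,\alpha,C_0$. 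This is the sought contradiction. The main obstacle is the calibration of $N_0$, $j$, $\tau_0$, and the implicit $M$ so as to guarantee simultaneously (i) admissibility of every radius that appears, (ii) preservation under the straightening of the hypotheses of Theorem \ref{thm: quantitative cauchy uniq}, and (iii) the clean cancellation of $S$-dependence in Step 3, which is precisely what forces the use of the intermediate scale $\tilde C'\ell(R)$ rather than $S\ell(R)$ when normalizing $H(x_R,\cdot)$.
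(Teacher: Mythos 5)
Your overall strategy — argue by contradiction, convert the blanket lower bound on frequency along the layer $J$ into exponential smallness of $u$ and $\nabla u$ near the hyperplane $L$, propagate this by quantitative Cauchy uniqueness, and derive a contradiction with the growth of $H$ forced by $N(x_R,S\ell(R))<\infty$ — is exactly the paper's. Steps 1 and 3 are essentially the paper's proof with a slightly different but equivalent normalization (you normalize by $H(x_R,\tilde C'\ell(R))$ where the paper normalizes by $\fint_{B_+}|u|^2$), and your bookkeeping of the $S$-dependence and the $N_0$-cancellation is correct.

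The genuine gap is in Step 2. You propose a ``bi-Lipschitz straightening of $\Sigma$'' and then invoke Theorem \ref{thm: quantitative cauchy uniq} with the flat boundary $\Gamma$ being (the image of) $\Sigma$, remarking that ``the flat-boundary data vanish because $u|_\Sigma=0$.'' Two hypotheses of the theorem fail in this setup. First, the straightening map $(x',x_d)\mapsto(x',x_d-\phi(x'))$ has a Jacobian that is only $L^\infty$, so the transformed equation $\operatorname{div}(\tilde A\nabla\tilde u)=0$ has merely bounded measurable coefficients, not Lipschitz ones; Theorem \ref{thm: quantitative cauchy uniq} as stated needs Lipschitz coefficients. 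Second, and more fundamentally, the theorem requires $u$ to be $C^1$ up to the flat boundary; a solution that vanishes on a Lipschitz graph $\Sigma$ is in general \emph{not} $C^1$ up to $\Sigma$ (indeed $\nabla u$ exists on $\Sigma$ only as a non-tangential limit $\sigma$-a.e., see Appendix \ref{appendix: nontangential limits}), so the hypothesis ``$u$ is $C^1$ smooth up to the boundary'' cannot be verified with $\Gamma=\Sigma$. The paper sidesteps both obstructions by placing the half-ball $B_+=\{x\in B(z_R,\ell(R)/4)\,|\,x_d>(z_R)_d\}$ strictly above the flat hyperplane $L$ (which sits at distance $2^{-j}\ell(R)$ from $\Sigma$, inside $\Omega$), so that no change of variables is needed, $\Gamma=L\cap B_+$ is already flat, $u\in C^{1,\alpha}(\overline{B_+})$ by interior regularity, and the small $W^{1,\infty}$ data on $\Gamma$ comes precisely from your Step 1. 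The boundary condition $u|_\Sigma=0$ plays no role at this stage. With this substitution your Step 2 conclusion (smallness of $H$ at a point comparable to $\tilde z_R$, not $x_R$ — but the two are related by Lemma \ref{lemma: bounding H by H at a different point}) is recovered, and your Step 3 closes the argument exactly as the paper does.
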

 		\begin{proof}[Proof of the claim]
 		From now on, we denote $N = N(x_R, S\ell(R))$. Our aim is to prove the claim using Theorem \ref{thm: quantitative cauchy uniq} in a small half-ball centered at $z_R$, the projection of $x_R$ onto the hyperplane $L$. Set 
 		\[
 		B_+ := \{ x\in B(z_R, \ell(R)/4) \,|\, x_d > (z_R)_d \} \subset \Omega.
 		\]
 		Also, let $\tilde z_R := z_R + (0, \hdots, 0, \ell(R)/8) \in B_+$. Note that, rescaling $B_+$, $\tilde z_R$ corresponds to the point $(0,\hdots, 0, 1/2)$ in the statement of Theorem \ref{thm: quantitative cauchy uniq}.
 		
 		We aim for a contradiction, so we assume that $ N(x_Q, \text{diam}(20WQ)) > N/(4C_0)$ for all $Q \in J$, where $N =  N(x_R, S\ell(R))$.
 		For each $Q\in J$, we have
 		\[
 		\sup_{2Q} \vert u \vert^2 {\lesssim}
 		\fint_{B(x_Q, \text{diam}(3Q))} \vert u \vert^2 dx
 		\]
 		by standard properties of solutions of elliptic PDEs. Then, by Lemma \ref{lemma: bounding balls by tilde H}, we obtain
 		\[
 		\fint_{B(x_Q, \text{diam}(3Q))} \vert u \vert^2 dx\lesssim
 		H(x_Q, \text{diam}(20 W Q)) e^{c_H \text{diam}{(20WQ)}}
 		\]
 		where $c_H$ is the constant from Lemma \ref{lemma: lemma 2.2 of Tolsa}.
 		Note that $e^{c_H \text{diam}{(20WQ)}} = 1+O(L_A \ell(Q))$, omiting the dependence on $\Lambda_A$ (which we may assume is very close to $1$).
 		Using Lemma \ref{lemma: lemma 2.2 of Tolsa} we obtain the following bound using the frequency function:
 		\[
 		H(x_Q, \text{diam}(20WQ)) 
 		\leq 
 		H(x_Q, \ell(R)) \left ( \frac{\text{diam}(20WQ)}{\ell(R)} \right )^{2 N(x_Q, \text{diam}(20WQ))(1+O(L_A \ell(Q)))} (1+O(L_A \ell(Q))) .
 		\]
 		Note that for the previous step we need $\tau$ small enough depending on $j$. At other points of the proof we will require $\tau$ small enough but without further reference. 
 		
 		Now we estimate $H(x_Q, \ell(R))$ as follows 
 		\begin{align*}
 		H(x_Q, \ell(R)) &\approx \ell(R)^{1-d}\int_{\partial B({\ell(R)})} \vert u(x_Q + A(x_Q)^{1/2}y) \vert^2 d\sigma(y) \\
 		&\lesssim \frac{(1+O(L_A \ell(R)))}{\vert \mathcal A(x_Q, \ell(R), 2\ell(R))\vert} \int_{\mathcal A(0, \ell(R), 2\ell(R))} \vert u(x_Q + A(x_Q)^{1/2}y) \vert^2 e^{c_H\vert y\vert} dy \\
 		&\lesssim \frac{(1+O(L_A \ell(R)))}{\vert B(\tilde z_R,C_1 \ell(R) )\vert} \int_{B(0,C_1 \ell(R))} \vert u(\tilde z_R + A(\tilde z_R)^{1/2}y) \vert^2 e^{c_H \vert y\vert} dy
 		\end{align*}
 		where we have used Remark \ref{rmk: H in general points}, and that for some fixed $C_1>0$ we have that $A(x_Q)^{1/2} \mathcal A(x_Q, \ell(R), 2\ell(R)) \subset \mathcal A({\tilde z_R})^{1/2}B(\tilde z_R,C_1 \ell(R))$. Finally, using Lemma \ref{lemma: annulus bounds} we can bound 
 		\[
 		H(x_Q, \ell(R)) \lesssim (1+O(L_A \ell(R))) H(\tilde z_R, C_1\ell(R)).
 		\]
 		Moreover, using again Lemma \ref{lemma: lemma 2.2 of Tolsa}, we can further bound as follows
 		\begin{align*}
 		H(\tilde z_R, C_1 \ell(R)) 
 		&\leq 
 		H(\tilde z_R, \ell(R)/16) (16C_1)^{2 N(\tilde z_R, C_1 \ell(R)) (1+O(L_A \ell(R)))} (1+O(L_A \ell(R))) \\
 		&\lesssim \frac{(1+O(L_A \ell(R)))}{\vert B(\tilde z_R, \ell(R)/8) \vert }
 		\int_{B(0, \ell(R)/8)} e^{c_H|y|} \vert u(\tilde z_R + A(\tilde z_R)^{1/2}y) \vert^2 dy \\
 		&\quad
 		(16C_1)^{2 N(\tilde z_R, C_1 \ell(R))(1+O(L_A \ell(R)))} 
 		\end{align*}
 		and recalling that $B(\tilde z_R, \ell(R)/8) \subset B_+$ even after considering the reescaling by $A(\tilde z_R)^{1/2}$, we obtain
 		\[
 		H(\tilde z_R, C_1 \ell(R))
 		\lesssim \frac{(1+O(L_A \ell(R)))}{\vert B_+ \vert }
 		\int_{B_+} \vert u \vert^2 dy \cdot
 		(16C_1)^{2\tilde N(\tilde z_R, C_1 \ell(R)) (1+O(L_A \ell(R)))} .
 		\]
 		Now, using that $L_A \ell(R)$ is very small, we can bound all the terms $O(L_A \ell(R))$ by $1$, for example.
 		Thus, summing up all the computations we have done, we can write
 		\begin{equation}
 			\label{eq 3.6 in Tolsa}
 			\sup_{2Q} \vert u \vert^2 \lesssim \left ( \frac{\text{diam}(20WQ)}{\ell(R)} \right )^{N(x_Q, \text{diam}(20WQ))}
 			(16C_1)^{4 N(\tilde z_R, C_1 \ell(R))}
 			\fint_{B_+} \vert u \vert^2 dy.
 		\end{equation}
 		By Lemma \ref{lemma: perturbation of frequency function with small gamma}, we have
 		\begin{equation}
 			\label{eq 3.7 in Tolsa}
 			 N(\tilde z_R, C_1 \ell(R)) \leq C  N (x_R, S\ell(R)) + C \leq C'N
 		\end{equation}
 		for a suitable constant $C>2C_1$ if $S$ is large enough, and $N_0$ too.
 		
 		Recalling that we assumed for all $Q \in J$ that $N(x_Q, \text{diam}(20WQ)) > \frac{N}{4C_0}$, we get
 		\begin{align*}
 		\sup_{2Q} \vert u \vert^2
 		&\lesssim
 		(16C_1)^{C'N} \left ( \frac{\text{diam}(20WQ)}{\ell(R)} \right )^{N/4C_0} 
 		\fint_{B_+} \vert u \vert^2 dy\\ 
 		&= 2^{-jC''N+C'''N} \fint_{B_+} \vert u\vert^2 dy
 		\end{align*}
 		for some positive constants $C''$ and $C'''$. We have used that $\text{diam}(20 W Q) < \ell(R)$ by choosing $j$ large enough.

		Using interior estimates for solutions of elliptic PDEs
 		\[
 		\sup_{\frac 3 2 Q} \vert \nabla u \vert^2 \lesssim
 		\frac{\sup_{2Q}\vert u \vert^2}{\ell(Q)^2}
 		\lesssim
 		\frac{2^{2j}}{\ell(R)^2}
 		2^{-jC''N+C'''N} 
 		\fint_{B_+} \vert u\vert^2 dy .
 		\]
 		
 		From the last two estimates we deduce that if $j$ is big enough (depending on the absolute constants $C''$ and $C'''$)
 		and $N_0$ (and thus also $N$) is big enough too, then there exists some $c'>0$ such that
 		\[
 		\sup_{\frac 3 2 Q} \left ( \vert u \vert^2 + \ell(R)^2 \vert \nabla u \vert^2 \right )
 		\lesssim
 		2^{-jc'N}
 		\fint_{B_+} \vert u\vert^2 dy .
 		\]
 		Since the cubes $\frac 3 2 Q$ with $Q \in J$ cover the flat part of the boundary of $B_+$, we can apply a rescaled version of Theorem \ref{thm: quantitative cauchy uniq} to $B_+$ to get
 		\begin{equation} \label{eq: after quant Cauchy uniq}
 			\sup_{B(\tilde z_R, \ell(R)/16)} \vert u \vert^2 
 			\lesssim
 			2^{-jc'N\alpha}
 			\fint_{B_+} \vert u\vert^2 dy
 		\end{equation}
 		for some $\alpha > 0$.
 		Observe that we can lower bound
 		\[
 		\sup_{B(\tilde z_R, \ell(R)/16)} \vert u \vert^2 \gtrsim H(\tilde z_R, \ell(R)/16) 
 		\]
 		and upper bound 
 		\[
 		2^{-jc'N\alpha}
 		\fint_{B_+} \vert u \vert^2 dy 
 		\lesssim
 		2^{-jc'N\alpha}
 		H(\tilde z_R, \ell(R)) .
 		\]
 		Using these bounds in \eqref{eq: after quant Cauchy uniq} we obtain
 		\[
 		H(\tilde z_R, \ell(R)/16) \lesssim 
 		2^{-jc'N\alpha}
 		H(\tilde z_R, \ell(R)).
 		\]
 		By Lemma \ref{lemma: lemma 2.2 of Tolsa}, this implies
 		\[
 		2 N(\tilde z_R, \ell(R))(\log 16) (1+O(L_A \ell(R))) + O(L_A \ell(R))
 		\geq
 		\log\left ( \frac{ H(\tilde z_R, \ell(R))}{ H(\tilde z_R, \ell(R)/16)}  \right )
 		\gtrsim c'jN\alpha
 		\]
 		for some fixed $c'>0$. 
 		But for $j$ big enough this contradicts the fact that $ N(\tilde z_R, \ell(R)) \lesssim N$ by \eqref{eq 3.7 in Tolsa}. Observe though that $j$ ``big enough" does not depend on the election of $S$.
 		\end{proof}
 		Now we may introduce the set $\mathcal G_K(R)$. Fix $Q_0 \in J$ such that \eqref{eq 3.5 in Tolsa} holds for $Q_0$. Notice that, by \eqref{eq: Tolsa's 3.4}
 		\begin{equation}
 			\label{eq: 3.8 of Tolsa}
 			\sup _{x \in \text{Adm}(2WQ_0)}  N(x, \operatorname{diam}(5 W Q_0)) \leq C_{0}  N(x_{Q_0}, \operatorname{diam}(20W Q_0))+C_{0} \leq \frac N 2 \cdot \left ( \frac{10}{11} \right )^2
 		\end{equation}
 		since $N \geq N_0$ and we assume $N_0$ big enough. The precise value of the constant $\left ( \frac{10}{11} \right )^2$ is not important. 
 		Finally, we can define
 		\[
 		\mathcal G_K(R) = \{Q \in \mathcal D_{\mathcal W}^{j+k}(R) : \Pi(Q) \subset \Pi(Q_0) \}
 		\]
 		with $k = \lceil\log_2 S\rceil$. Thus, we have $\mathcal G_K(R)\subset \mathcal D^K_\mathcal W(R)$ with $K = j+k$ and it holds $\ell(Q) = 2^{-k}\ell(Q_0)$ for every $Q \in \mathcal G_K(R)$.
 		
 		The property (1) follows from \eqref{eq: 3.8 of Tolsa}. Indeed if $P \in \mathcal G_K(R)$, then taking into account that $x_P \in \text{Adm}(2WQ_0)$ for $\tau$ small enough (depending on $S$) and using Proposition \ref{prop:exp(Cr)N(r) is nondecreasing} we get
 		\[
 		N(x_P, S\ell(P)) \leq
 		\frac{11}{10} N(x_P, \ell(Q_0)) \leq
 		\left(\frac{11}{10}\right)^2
 		N(x_P, W \ell(Q_0)) \leq \frac N 2 
 		\]
 		where we have bounded the terms $1+O(L_A \ell(Q_0))$ by $11/10$.
 		Notice also that
 		\[
 		m_{d-1}\left(\bigcup_{Q \in \mathcal{G}_{K}(R)} \Pi(Q)\right) =
 		l\left(Q_{0}\right)^{d-1} \approx
 		\left(2^{-j} \ell(R)\right)^{d-1}
 		\]
 		and recall that $j$ is independent of $S$. So (1) holds with $\delta_0 \approx 2^{-j(d-1)}$. 
 		
 		The property (2) is a consequence of Lemma \ref{lemma: perturbation of frequency function with small gamma}. Indeed for any $P \in \mathcal D^K_\mathcal W (R)$, since $\vert x_P - x_R \vert \lesssim \ell(R)$, taking $\gamma \approx S^{-1}$ in {the Lemma \ref{lemma: perturbation of frequency function with small gamma}}, we deduce
 		\begin{align*}
 		N(x_P, S\ell(P)) 
 		&\leq (1 + O(L_A S\ell(R))) N(x_P, S\ell(R)/3) \\
 		&\leq (1 + O(L_A S\ell(R))) \left[ O(\sqrt{L_A S \ell(R)}) + O (\sqrt{S^{-1/2}}) \right ] \\
 		& + (1+O(L_A S \ell(R))) N(x_R, S \ell(R)) \left [1 + O(\sqrt{L_A S \ell(R)}) + O (\sqrt{S^{-1/2}})) \right] .
 		\end{align*}
 		Assuming that $\ell(R) \lesssim S^{-2}$ and $N_0$ large enough, we obtain
 		\[
 		N(x_P, S \ell(P)) \leq  (1 + CS^{-1/2})N(x_R, S \ell(R))
 		\]
 		for certain constant $C$.
 	\end{proof}
	
\section{Balls without zeros near the boundary}
	\label{section: balls without zeros}
	In this section we will prove the second main lemma concerning the behavior of $N$ near the boundary. This lemma shows that if we have a ball near the boundary with bounded frequency, then we can find a smaller ball centered at the boundary where $u$ does not change sign. The following lemma should be compared with \cite[Lemma 8]{LMNN} that treats the harmonic case. Note that, in what follows, we refer to the frequency function $N(x,r)$ of a solution of $\operatorname{div}(A\nabla u)=0$ in $\Omega$ as in the statement of Theorem \ref{thm: u is positive besides a set of positive codimension}. Moreover, we consider $\Omega$ with the Whitney structure defined in Section \ref{section: whitney structure}.
	
	\begin{lemma}
		\label{lemma: 2nd hyperplane lemma} For any $N>0$ and $S \gg 1$ large enough there exist positive constants $\tau_0(N,S)$ and $\rho(N,S)$ such that the following statement holds. Suppose the Lipschitz constant $\tau$ of $\Sigma$ is smaller than $\tau_0$ and $Q$ is a cube in $\mathcal D_\mathcal W(R)$ such that $N(x_Q, S \ell(Q)) \leq N$. Then there exists a ball $B$ centered in $\Sigma \cap \mathcal C(Q)$ with radius $\rho\ell(Q)$ such that $u$ does not vanish in $B \cap \Omega$.
	\end{lemma}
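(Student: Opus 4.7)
The plan is a blow-up/compactness argument. Suppose the conclusion fails for some $N$ and $S \gg 1$: then for each $n \in \mathbb{N}$ there will be a counterexample consisting of a domain $\Omega_n$ with Lipschitz constant $\tau_n \le 1/n$, coefficients $A_n$, a solution $u_n$, and a cube $Q_n$ with $N(x_{Q_n}, S\ell(Q_n)) \le N$, such that $u_n$ changes sign in every ball of radius $\ell(Q_n)/n$ centered on $\Sigma_n \cap \mathcal{C}(Q_n)$. Applying the reductions of Section \ref{section: modifying the domain}---rescaling by $\ell(Q_n)$ and a linear change of variables---I may assume $\ell(Q_n) = 1$, $A_n(x_{Q_n}) = I$, and (by taking $L_A$ as small as I wish, per Remark \ref{rmk: small LA and LambdaA}) $L_{A_n} \to 0$. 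I then translate so that the point of $\Sigma_n$ nearest $x_{Q_n}$ is the origin, and normalize $u_n$ so that $H_n(x_{Q_n}, 1) = 1$. The goal is to pass to a harmonic limit in $\mathbb{R}^d_+$ possessing a ball of constant sign at the boundary, contradicting the failure assumption.

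Next I use Lemma \ref{lemma: lemma 2.2 of Tolsa} and the frequency bound to obtain $H_n(x_{Q_n}, r) \le C(N)$ for $r \in [1, S]$, yielding uniform $L^2$ bounds of $u_n$ on $B(x_{Q_n}, S)$. Combined with Caccioppoli and uniform up-to-boundary Hölder estimates---valid since $\tau_n \to 0$, the $A_n$ have vanishing Lipschitz constants and ellipticity approaching $1$, and $u_n$ vanishes continuously on $\Sigma_n$---I extract a subsequence $u_{n_k} \to u_\infty$ locally uniformly on $\overline{\mathbb{R}^d_+}$. Since $A_n \to I$, the limit $u_\infty$ is harmonic in $\mathbb{R}^d_+$, vanishes on $\{y_d = 0\}$, satisfies $H_\infty(0,1) = 1$ (in particular $u_\infty \not\equiv 0$), and inherits frequency bounded by $N$ on $(0, S]$.

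The heart of the argument is then the observation that $u_\infty \not\equiv 0$ harmonic in $\mathbb{R}^d_+$ and vanishing on $\{y_d = 0\}$ extends across the hyperplane by Schwarz reflection, and unique continuation forbids $\partial_{y_d} u_\infty$ from vanishing identically on any open subset of $\Pi(Q) \cap \{y_d = 0\}$. Hence there exist $y_0 \in \Pi(Q) \cap \{y_d = 0\}$ and $\rho_{u_\infty} > 0$ with $u_\infty$ of constant sign on $B(y_0, 2\rho_{u_\infty}) \cap \mathbb{R}^d_+$. The family of all possible normalized limits is precompact in $C^k_{\mathrm{loc}}$ of the closed half-space (by the frequency bound, elliptic regularity, and boundary smoothness for harmonic functions), so the infimum of $\rho_{u_\infty}$ over this family is strictly positive, yielding a uniform $\rho_0 = \rho_0(N, S) > 0$. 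Uniform convergence then forces $u_{n_k}$ to have constant sign on $B(y_0, \rho_0) \cap \Omega_{n_k}$ for all large $k$, contradicting $\rho_{n_k} = 1/n_k < \rho_0$.

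The main technical obstacle will be the uniform up-to-boundary Hölder regularity for $u_n$ in the sequence of Lipschitz domains $\Omega_n$, needed to pass to a boundary-continuous limit; this is in principle available via De Giorgi--Nash--Moser-type boundary estimates for divergence-form elliptic PDEs in Lipschitz domains, but one must verify the constants are uniform in $n$, exploiting $\tau_n \to 0$ and the continuous vanishing of $u_n$ on $\Sigma_n$. A secondary subtlety is ensuring $\rho_0$ depends only on $(N,S)$ and not on the specific limit function, which is secured by the compactness argument in the previous paragraph; one could alternatively extract a uniform $\rho_0$ more directly via a quantitative Cauchy uniqueness step (Theorem \ref{thm: quantitative cauchy uniq}) applied at boundary points where $u_\infty$ is small, should the compactness route encounter difficulties.
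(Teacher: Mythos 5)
Your blow-up strategy is a genuinely different route from the paper's proof, which proceeds by an explicit perturbation chain (harmonic comparison function via quantitative Cauchy uniqueness in a toy half-ball lemma, then an elliptic perturbation of that lemma, then a boundary Harnack step). Unfortunately, the proposal as written has a gap at exactly the point where the paper invokes the De Silva--Savin boundary Harnack result, Lemma~\ref{lemma: silva-savin}, and this gap does not appear to be fixable by a cheap modification.

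The problem is the final deduction: ``Uniform convergence then forces $u_{n_k}$ to have constant sign on $B(y_0,\rho_0)\cap\Omega_{n_k}$.'' Uniform convergence $u_{n_k}\to u_\infty$ on compacta of $\overline{\mathbb R^d_+}$ gives $|u_{n_k}-u_\infty|<\epsilon_k$ with $\epsilon_k\to 0$, and $u_\infty$ is positive in $B(y_0,2\rho_0)\cap\mathbb R^d_+$ but vanishes on $\{y_d=0\}$ (so $u_\infty(y)\approx c\,y_d$ near $y_0$). Hence in the boundary layer $\{y_d\lesssim \epsilon_k/c\}\cap\Omega_{n_k}$ the approximation error swamps the positive lower bound and nothing prevents $u_{n_k}$ from dipping below zero there. (This cannot be repaired by upgrading to $C^1$ convergence up to the boundary: the $\Omega_{n_k}$ are only Lipschitz and, as the paper itself stresses, $\nabla u$ exists on $\Sigma$ only in a weak, non-tangential sense; there is no uniform up-to-boundary $C^1$ estimate to pass to the limit.) To close the argument one needs a quantitative boundary Harnack-type lemma that converts ``$u_{n_k}\geq c\,(y_d-\tau_{n_k})_+ - \epsilon_k$, vanishes on $\Sigma_{n_k}$, bounded below by $-\epsilon_k$'' into strict positivity up to $\Sigma_{n_k}$, and this is precisely the role of the Silva--Savin lemma in the paper's proof. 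Without that ingredient your contradiction is not reached; I would add that step explicitly, at which point the compactness route becomes a legitimate (if less quantitative) alternative to the paper's constructive argument. One smaller point worth flagging: the compactness of the family of normalized limits (to get a uniform $\rho_0$) is plausible but stated loosely; the paper avoids this entirely by producing $\rho$ explicitly through Theorem~\ref{thm: quantitative cauchy uniq}, which is also available to you as the fallback you mention at the end.
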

		
	First, we will prove a ``toy" version of this lemma on the half ball $B_+$ for harmonic functions. The following lemma is essentially \cite[Lemma 9]{LMNN} but formulated using the frequency function instead of the doubling index (a closely related quantity used in \cite{LMNN}).
	
	\begin{lemma}
		\label{lemma: Toy lemma}
		Let $B$ be the unit ball in $\mathbb R^d$ and let $B_+$ be the half ball, 
		\[
		B_{+}=\left\{y=\left(y', y''\right) \in \mathbb{R}^{d-1} \times \mathbb{R}\, | \,|y'|^{2}+|y''|^2<1, y''>0\right\} .
		\]
		Let $u$ be a function harmonic in $B_{+}$ such that $u \in C(\overline{B_{+}})$, $u=0$ on $\Gamma := \partial{B}_{+} \cap\left\{y^{\prime \prime}=0\right\}$, and
		\[
		\sup_{\frac{1}{4} B_{+}}|u|=1.
		\]
		For any $N>0$ and $0<r_0<1/16$, there exist $\rho=\rho(N, r_0) >0 $ and $c_{0}=c_{0}(N, r_0)>0$ such that if {$N(0,1/2) \leq N$}, then there is $x^{\prime} \in \mathbb{R}^{d-1}$ with $\left|x^{\prime}\right|< r_0$ such that
		\[
		|u(y)| \geq c_{0} y^{\prime \prime}, \quad \text { for any } \quad y=(y^{\prime}, y^{\prime \prime}) \in B((x^{\prime}, 0), \rho) \cap B_{+} .
		\]
		In particular, $u$ does not vanish in $B\left(\left(x^{\prime}, 0\right), \rho\right) \cap B_{+}$.
	\end{lemma}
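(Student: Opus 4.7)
We start by extending $u$ to a harmonic function on the full unit ball $B$ via odd reflection across $\Gamma$; henceforth the extension is still denoted $u$. Since $u\equiv 0$ on $\Gamma$, all tangential first derivatives vanish there, and the first-order Cauchy data of $u$ on $\Gamma$ reduces to the single trace
\[
w(x'):=\partial_{y''}u(x',0),\qquad |x'|<1/2.
\]
The function $w$ is real-analytic, being the restriction to $\{y''=0\}$ of the harmonic function $\partial_{y''}u$ on $B$. The plan is to prove that $\|w\|_{L^\infty(B'_{r_0})}\ge 2c_0$ for some $c_0=c_0(N,r_0)>0$, where $B'_{r_0}:=\{x'\in\mathbb R^{d-1}:|x'|<r_0\}$; this is the main quantitative ingredient of the lemma.

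We argue by contradiction. Assume $|w(x')|<\delta$ for every $|x'|<r_0$, with $\delta>0$ to be fixed later, and rescale by $v(y):=u(r_0y)$ on $B_+$. Then $v$ is harmonic, vanishes on the flat part $\Gamma$ of $\partial B_+$, and satisfies $|\nabla v(y',0)|=r_0|w(r_0y')|<r_0\delta$ there. Since $\sup_{\frac{1}{4}B_+}|u|=1$ and $r_0<1/16$, one has $\|v\|_{L^2(B_+)}\le|B_+|^{1/2}$, a universal constant $M$. Applying the quantitative Cauchy uniqueness of Theorem~\ref{thm: quantitative cauchy uniq} to $v/M$ and undoing the scaling yields
\[
\sup_{\frac{r_0}{4}B_+}|u|=\sup_{\frac{1}{4}B_+}|v|\le C_1(r_0\delta)^{\alpha'}
\]
for universal $C_1,\alpha'>0$. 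For the matching lower bound, we combine the frequency inequality of Lemma~\ref{lemma: lemma 2.2 of Tolsa} (in the harmonic case, where $c_H=0$) with the normalization: $H(0,r_0/4)\ge H(0,1/2)(r_0/2)^{2N}$, and $\sup_{\frac{1}{4}B_+}|u|=1$ combined with the subharmonicity of $|u|^2$ gives $\|u\|_{L^2(B_{1/2})}^2\gtrsim 1$, whence $H(0,1/2)\gtrsim 1$ via polar coordinates and the monotonicity of $H$. A further use of subharmonicity converts $H(0,r_0/4)\ge c_2(N,r_0)$ into $\sup_{\frac{r_0}{4}B_+}|u|\ge c_3(N,r_0)>0$. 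Comparing the two estimates forces $\delta\ge\delta_0(N,r_0)$; taking $\delta=\delta_0/2$ produces the desired point $x_0'\in B'_{r_0}$ with $|w(x_0')|\ge 2c_0:=\delta_0/2$.

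Finally we propagate the pointwise lower bound to a small ball. From the bounds on $H$ obtained above we derive an $L^\infty$ estimate $\sup_{\frac{3}{8}B}|u|\le C(N)$, and standard interior Cauchy estimates then give a Lipschitz constant $L=L(N)$ for $\partial_{y''}u$ on $\frac{1}{8}B$. Choose $\rho=\rho(N,r_0)>0$ with $L\rho\le c_0/2$, so that $|\partial_{y''}u(y)|\ge c_0$ with constant sign throughout $B((x_0',0),\rho)\cap B_+$. For $y=(y',y'')$ in this ball, integration of $\partial_{y''}u$ along the vertical segment from $(y',0)$ to $y$, together with $u(y',0)=0$, yields
\[
|u(y)|=\left|\int_0^{y''}\partial_tu(y',t)\,dt\right|\ge c_0 y'',
\]
which finishes the proof after possibly replacing $c_0$ by $c_0/2$. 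The main obstacle we foresee is the quantitative matching of constants in the contradiction step: the Cauchy uniqueness produces an upper bound that decays only polynomially in $\delta$, while the frequency-based lower bound is a fixed positive constant whose explicit dependence on $N$ and $r_0$ must be tracked carefully through the rescaling in order to extract concrete values of $\rho$ and $c_0$.
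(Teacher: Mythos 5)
Your proof is correct and follows essentially the same route as the paper's: Schwarz reflection, Theorem~\ref{thm: quantitative cauchy uniq} on the rescaled half-ball, the frequency-based lower bound on $H(0,r_0/4)$, and propagation of the normal derivative via Cauchy estimates (the contradiction framing is a logically equivalent repackaging of the paper's direct lower bound on $\delta=\max_{|x'|\le r_0}|\nabla u(x',0)|$). One small simplification you could make: rather than deriving $\sup_{\frac{3}{8}B}|u|\le C(N)$ via the frequency (which would also require the upper half of Lemma~\ref{lemma: lemma 2.2 of Tolsa} and an upper bound on $H(0,1/4)$, neither of which you spelled out), the paper uses the normalization $\sup_{B(0,1/4)}|u|=1$ directly to get an absolute (i.e.\ $N$-independent) bound on $D^2u$ over $B(0,1/8)\supset B((x_0',0),\rho)$, which is cleaner and avoids the extra $N$-dependence in the Lipschitz constant of $\partial_{y''}u$.
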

	The notation $\frac 1 n B_+$ used in the previous statement stands for $\{y \in \mathbb R^d \, |\, n y \in B_+ \}$.

	\begin{proof}
	Let $B_-$ be the reflection of the half-ball $B_+$ with respect to $\Gamma =\{y''=0\}\cap\partial B_+$. Since $u$ vanishes on $\Gamma$, $u$ can be extended to a harmonic function in $B$ by the Schwarz reflection principle. We also denote this extension by $u$.
	
	Using Cauchy estimates we can uniformly bound every partial derivative of $u$ inside $B(0,1/8)$ obtaining
	\[
	\sup_{x \in B(0,1/8)}\vert \nabla u(x) \vert \lesssim \sup_{B(0, 1/4)} \vert u \vert = 1.
	\]
	Let $\delta := \max_{\substack{x' \in \mathbb R^{d-1},\\ \vert x'\vert \leq r_0}} \vert \nabla u(x',0)\vert$.
	Then, Theorem \ref{thm: quantitative cauchy uniq} applied to $r_0 B_+$ implies that
	\[
	\sup_{B(0,r_0/4)}\vert u \vert \leq C \delta^\gamma
	\]
	for some positive $C$ and $\gamma \in (0,1)$.
	Then $\fint_{\partial B(0,r_0/4)} u^2 d\sigma \leq C \delta^{2\gamma}$ and by subharmonicity $\fint_{\partial B(0,1/2)} u^2 d\sigma \geq c\sup_{B(0,1/4)} u^2 = c$.
	
	By the monotonicity of $N$ in the harmonic case, we also have
	\[
	\log_{2/r_0} \frac{\fint_{\partial B(0,1/2)} u^2 d\sigma}{\fint_{\partial B(0,r_0/4)} u^2 d\sigma} = \log_{2/r_0} \frac{H(0,1/2)}{H(0,r_0/4)} \leq N(0,1/2) \leq N.
	\]
	We can conclude that
	\[
	\log_{2/r_0} \frac{c}{C \delta^{2\gamma}} \leq N \implies \delta \geq c' \left ( \frac{2}{r_0}\right)^{-\frac{N}{2\gamma}}.
	\]
	
	Now, choose $x_*' \in \mathbb R^{d-1}, \vert x_*'\vert \leq r_0$ such that $\vert \nabla u(x_*',0) \vert = \delta$. Clearly, at this point, $\vert \nabla u(x_*',0) \vert = \vert \partial_d u(x_*',0) \vert$ (the derivative in the direction normal to $\Gamma$). Without loss of generality, assume that $\partial_d u(x_*',0) = \delta$. 
	Observe that the second derivatives of $u$ are uniformly bounded in $B(0,1/8)$ using Cauchy estimates (as we have done before). Thus, we have  
	\[
	\partial_d u(y) > \delta/2 \quad\mbox{ when } \operatorname{dist}(y,(x_*',0)) < \rho = \min\{c_0 \delta, r_0\}
	\] 
	where $c_0$ only depends on the bound on the second derivatives, and thus is an absolute constant.
	Using this, we finally get
	\[
	u(y) \geq \frac \delta 2 y^{\prime \prime}  \geq c'' \left( \frac{2}{r_0} \right)^{-\frac{N}{2\gamma}} y^{\prime \prime}, \\
	\]
	for $y=\left(y^{\prime}, y^{\prime \prime}\right) \in B\left(\left(x_{*}^{\prime}, 0\right), \rho\right) \cap B_+$. 
	\end{proof}	

	Now we will prove an elliptic extension of the previous lemma. Unfortunately, the proof presents some complications since we do not have an adequate substitute to Schwarz's reflection principle. {To overcome this, we assume that our function $u$ is a solution of $\operatorname{div}(A(x)\nabla u)=0$ where $A(x)$ is a small (Lipschitz) perturbation of the identity matrix, and we show that there is a harmonic function $v$ very close to $u$ in $C^1$ norm for which the previous lemma holds. Then, we obtain that there is a smaller ball where $u$ does not vanish either.} 
	\begin{lemma}
		\label{lemma: toy lemma elliptic version}
	Let $u$ be a solution of $\operatorname{div}(A\nabla u)=0$ in $B_{+}$ such that $u \in C(\overline{{B}_{+}})$, $u=0$ on $\Gamma := \partial{B}_{+} \cap\left\{y^{\prime \prime}=0\right\}$, and
	\[
	\sup_{\frac{1}{4} B_{+}}|u|=1
	\]
	where $\frac{1}{4} B_{+} = \left(\frac 1 4 B\right)_+$.
	For any $N>0$ and $0<r_0<1/32$, there exist $\rho=\rho(N, r_0) >0 $ and $c_{0}=c_{0}(N, r_0)>0$ such that if {$N(0,1/2) \leq N$}, then there is $x^{\prime} \in \mathbb{R}^{d-1}$ with $\left|x^{\prime}\right|< r_0$ such that
	\[
	|u(y)| \geq c_{0} y^{\prime \prime}, \quad \text { for any } \quad y=\left(y^{\prime}, y^{\prime \prime}\right) \in B\left(\left(x^{\prime}, 0\right), \rho\right) \cap B_{+} 
	\]
	assuming that $L_A$ and $\Lambda_A-1$ are small enough depending on $N$ and $r_0$ (where $L_A$ and $\Lambda_A$ are the Lipschitz and ellipticity constants of $A(x)$, respectively).
	In particular, $u$ does not have zeros in $B\left(\left(x^{\prime}, 0\right), \rho\right) \cap B_{+}$.
	\end{lemma}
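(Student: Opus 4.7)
The plan is to reduce to the harmonic Lemma \ref{lemma: Toy lemma} by approximating $u$ on $\frac12 B_+$ by a harmonic function $v$ that also vanishes on $\Gamma$, applying that lemma to $v$, and transferring the resulting non-vanishing estimate back to $u$ via a quantitative $C^1$-closeness bound for $u-v$. Thanks to Remark \ref{rmk: small LA and LambdaA}, I may assume throughout that $L_A$ and $\Lambda_A - 1$ are as small as needed, with smallness allowed to depend on $N$ and $r_0$.

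Let $v$ solve $\Delta v = 0$ in $\frac12 B_+$ with Dirichlet data $v = u$ on $\partial(\frac12 B_+)$. Since $u \equiv 0$ on $\Gamma$, also $v \equiv 0$ on the flat disk $\Gamma \cap \overline{\frac12 B_+}$, so by odd reflection $v$ extends to a harmonic function on $\frac12 B$. The error $w := u - v$ satisfies
\[
\Delta w = -\operatorname{div}\bigl((A-I)\nabla u\bigr) \quad\text{in } \tfrac12 B_+, \qquad w = 0 \quad\text{on } \partial(\tfrac12 B_+),
\]
with $\|A-I\|_{L^\infty(\frac12 B_+)} \lesssim L_A + (\Lambda_A - 1) =: \varepsilon$. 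A standard energy estimate combined with Caccioppoli's inequality for $u$ and the normalization $\sup_{\frac14 B_+}|u|=1$ gives $\|\nabla w\|_{L^2(\frac12 B_+)} = O(\varepsilon)$. Boundary $W^{2,p}$ estimates for the Dirichlet problem across the flat piece $\Gamma$, together with interior Schauder estimates, then upgrade this to $\|w\|_{C^1(\frac18 B_+)} \leq \eta$, where $\eta=\eta(\varepsilon)\to 0$ as $\varepsilon \to 0$.

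By the triangle inequality, $\sup_{\frac14 B_+}|v| \in [1-\eta, 1+\eta]$. The $W^{1,2}$-closeness of $u$ and $v$, combined with $\mu \approx 1$ and $A\approx I$ (Remark \ref{rmk: properties mu et al}) and the almost monotonicity of $H$ and $N$ (Propositions \ref{prop:exp(cr)H(r) is nondecreasing} and \ref{prop:exp(Cr)N(r) is nondecreasing}), implies $N^v(0,1/2) \leq N+1$ once $\varepsilon$ is small enough. Applying Lemma \ref{lemma: Toy lemma} to the normalized function $v/\sup_{\frac14 B_+}|v|$, with parameters $N+1$ and $r_0$, produces $x' \in \mathbb R^{d-1}$ with $|x'|<r_0$ and positive constants $\rho_0=\rho_0(N,r_0)$, $c_0=c_0(N,r_0)$ such that $|v(y)| \geq 2c_0\, y''$ on $B((x',0),\rho_0)\cap B_+$. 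I may further take $\rho_0<1/16$ so that this ball lies inside $\frac18 B_+$.

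Finally, since $w \equiv 0$ on $\Gamma$ and $\|\nabla w\|_{L^\infty(\frac18 B_+)} \leq \eta$, the fundamental theorem of calculus gives $|w(y)| \leq \eta\, y''$ on $B((x',0),\rho_0)\cap B_+$. Choosing $\varepsilon$ small enough that $\eta<c_0$ yields $|u(y)| \geq |v(y)| - |w(y)| \geq c_0\, y''$, establishing the lemma with this $\rho_0$ and $c_0$. The main obstacle is the quantitative $C^1$-closeness step: the constant $\eta$ must depend only on $\varepsilon$ and on the $L^\infty$ bound on $u$, uniformly in the possibly irregular behavior near the corner $\overline\Gamma \cap \partial B_+$. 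This is arranged by restricting the $C^1$-estimate to $\frac18 B_+$, strictly away from this corner, which still contains the ball produced by the toy lemma since $|x'|<r_0<1/32$ and $\rho_0<1/16$.
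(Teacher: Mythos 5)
Your proposal follows essentially the same route as the paper: build a harmonic comparison function $v$ with the same trace on $\partial(\frac12 B_+)$, show $u-v$ is small in $C^1$ near the flat boundary, apply the harmonic Toy lemma to $v$, and transfer the linear lower bound back to $u$ via the fundamental theorem of calculus. The high-level architecture is right. There are, however, two places where your sketch glosses over precisely the work that the paper is forced to do carefully.

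First, the claim that an energy estimate plus Caccioppoli and the normalization $\sup_{\frac14 B_+}|u|=1$ gives $\|\nabla w\|_{L^2(\frac12 B_+)}=O(\varepsilon)$ is not right as stated: the normalization controls $u$ only on $\frac14 B_+$, and to control the Dirichlet energy of $u$ on $\frac12 B_+$ one must use the frequency bound $N(0,1/2)\le N$, which introduces a factor that is exponential in $N$ (of the form $2^{cN}$). You do eventually allow the smallness of $L_A,\Lambda_A-1$ to depend on $N$, so this is consistent, but the $O(\varepsilon)$ with an implicit absolute constant is misleading — tracking this $N$-dependence is in fact the bulk of the paper's computation (its Green-function representation of $h$, the doubling estimate $H^u(0,1/2)\lesssim 2^{cN}H^u(0,1/4)$, and the Cauchy estimates on $\nabla v$).

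Second, your PDE for $w=u-v$ has source $-\operatorname{div}\bigl((A-I)\nabla u\bigr)$, and you propose boundary $W^{2,p}$ estimates to upgrade to $C^1$. To run Schauder or $W^{2,p}$ with a divergence-form source you need $(A-I)\nabla u$ in $C^{0,\alpha}$ or $W^{1,p}$ with controlled norm; regularity of $\nabla u$ is available (from elliptic theory for Lipschitz-coefficient equations) but carries the same $N$-dependent constants, and you do not spell out this step. The paper sidesteps this by flipping the roles: it writes the equation as $\operatorname{div}(A\nabla h)=\operatorname{div}\bigl((A-I)\nabla v\bigr)$ with $v$ \emph{harmonic}, so that $\nabla v$ and $D^2v$ are controlled pointwise by Cauchy estimates after Schwarz reflection, and the Schauder estimate (GT Cor.~8.36) then applies cleanly. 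That small rearrangement — putting $\nabla v$ rather than $\nabla u$ in the source — is the neater choice and is worth adopting. Finally, a minor stylistic point: your bound $N^v(0,1/2)\le N+1$ via $W^{1,2}$-closeness works, but the paper's argument, using directly that $v$ minimizes Dirichlet energy with the same boundary data on $\partial(\frac12 B_+)$, is more direct and does not even require $\varepsilon$ small.
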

	\begin{proof} 
	Let $v$ be the harmonic extension of $u|_{\partial (\frac 1 2 B_+)}$ defined in $\frac 1 2B_+$.
	We intend to use Lemma \ref{lemma: Toy lemma} to find a ball $B$ such that $|v(y',y'')| \gtrsim y''$ for $(y',y'')\in B\cap\Omega$. Afterwards, we will see that if $L_A$ and $\Lambda_A-1$ are small enough, then the difference $v-u$ is arbitrarily small in $W^{1,\infty}(B\cap\Omega)$ which will prove the lemma (for a smaller concentric ball).
	
	First, we bound the frequency (associated to $\Delta$) of $v$ as follows 
	\[
	N^v(0,1/2) = \frac 1 2\frac{\int_{\frac 1 2B_+} \vert \nabla v \vert^2 dx}{\int_{\partial\frac 1 2 B_+} v^2 d\sigma} 
	\lesssim
	\frac{\int_{\frac 1 2 B_+} \vert \nabla u \vert^2 dx}{\int_{\partial \frac 1 2 B_+} u^2 d\sigma} 
	\]
	using that $v$ is the minimizer of the Dirichlet energy for the boundary condition $u|_{\partial (B_+/2)}$.
	We also have that
	\[
	\frac 1 2\frac{\int_{\frac 1 2 B_+} \vert \nabla u \vert^2 dx}{\int_{\partial\frac 1 2 B_+} u^2 d\sigma} 
	\approx
	\frac 1 2  \frac{\int_{\frac 1 2B_+} (A(x)\nabla u, \nabla u)  dx}{\int_{\partial \frac 1 2B_+} \mu u^2 d\sigma} =
	N^u(0,1/2),
	\]
	obtaining an upper bound for $N^v(0,1/2)$. Analogously, we may also obtain a lower bound for $N^v(0,1/2)$ in terms of $N^u(0,1/2)$ using that $u$ minimizes a weighted Dirichlet energy.
	
	Consider the function $h = v-u$ defined in $\frac 1 2B_+$. Note that $h$ is a solution of
	\[
	\begin{cases}
		\operatorname{div}(A\nabla h)= \operatorname{div}((A-I)\nabla v),  &\text{in } \frac 1 2B_+,\\ 
		h= 0,  &\text{on } \partial \frac 1 2 B_+.
	\end{cases}
	\]
	
	We aim to bound $h(x)$ and $\nabla h(x)$ in $\frac 1 {16} B_+$ in terms of $N, L_A$, and $\Lambda_A$. 
	Using the Green's function $G_A(x,y)$ for the elliptic operator $\operatorname{div}(A(x)\nabla \cdot)$ in $\frac 1 2B_+$, we can represent $h(x)$ as
	\[
	h(x)  =
	- \int_{\frac 1 2B_+} (\nabla_y G_A(x,y), (A-I)(y) \nabla v(y)) dy
	\]
	for $x \in B_+/16$.
	We split this integral in two parts and take absolute values:
	\begin{align*}
	|h(x)| \leq \int_{\frac 1 8 B_+} |\nabla_y G_A(x,y)| |(A-I)(y)| {|\nabla v(y)|} dy
	+ \int_{\frac 1 2 B_+\backslash \frac 1 8 B_+} |\nabla_y G_A(x,y)| |(A-I)(y)| |\nabla v(y)| dy .
	\end{align*}
	In both integrals we bound $|A-I|$ by $\Lambda_A-1$.
	Also, in the first integral, we are going to bound $|\nabla v(y)|\lesssim2^{cN}$ for some $c>0$ using Cauchy estimates. To this end, we consider $v$ extended to $\frac 1 2 B$ using Schwarz reflection principle, and then, for $y \in \frac 1 8 B$, we obtain 
	\begin{align*}
		|\nabla v(y)|^2 \lesssim \int_{\partial \frac 1 4 B} v^2 d\sigma \approx H^v(0,1/4) \leq H^v(0,1/2)\approx H^u(0,1/2) \lesssim H^u(0,1/4)\, 2^{cN} \lesssim 1 \cdot 2^{cN}
	\end{align*} 
	 where we have also used $H^u(0,1/4) \lesssim \sup_{\frac 1 4 B_+} |u|^2 = 1$. Remember that $H^v(x,r) = r^{1-d} \int_{\partial B_r(x)} \vert u(z) \vert^2 d\sigma(z)$, as $v$ is harmonic.
	
	Then, using that $\nabla_y G_A(x,\cdot)$ has weak $L^{\frac d {d-1}}$ norm bounded by a constant depending only on $\Lambda_A$ and $d$ (see \cite[estimate (1.6)]{GW} together with the symmetry of $A$), we get
	\[
	\int_{\frac 1 8 B_+} |\nabla_y G_A(x,y)|  dy \leq C(d, \Lambda_A).
	\]
	Thus, we can bound the first integral by
	\[
	\int_{\frac 1 8 B_+} |\nabla_y G_A(x,y)| |(A-I)(y)| {|\nabla v(y)|} dy \lesssim C'(d,\Lambda_A) 2^{cN}.
	\]
	
	For the other integral, we use Cauchy-Schwarz to obtain
	\[
	\int_{\frac 1 2 B_+\backslash \frac 1 8 B_+} |\nabla_y G_A(x,y)| |\nabla v(y)| dy 
	\leq 
	\underbrace{\left (\int_{\frac 1 2 B_+\backslash \frac 1 8 B_+} |\nabla_y G_A(x,y)|^2  dy \right )^{1/2}}_{\boxed A} 
	\underbrace{\left ( \int_{\frac 1 2 B_+}  |\nabla v(y)|^2 dy \right)^{1/2}}_{\boxed B}.
	\]
	Invoking \cite[Theorem 3.3]{GW}, we have that  $|\nabla_y G_A(x,y)|\leq C |x-y|^{1-d} \lesssim 1$ for $x \in \frac 1 {16}B_+$ and $y \in \frac 1 2 B_+ \backslash \frac 1 8 B_+$, which allows us to bound \boxed A.
	We estimate \boxed B as follows:
	\[
	\int_{\frac 1 2 B_+}  |\nabla v(y)|^2 dy = 2 N^v(0,1/2) \int_{\partial \frac 1 2 B_+} v^2 d\sigma \lesssim 2 N 2^{cN},
	\]
	where we have used that $H^v(0,1/2) \lesssim 2^{cN}$, as shown before.
	
	Summing up all the previous estimates, we have obtained $|h(x)| \lesssim C(N, \Lambda_A)(\Lambda_A -1)$ for $x\in \frac 1 {16}B_+$.
	Now, \cite[Corollary 8.36]{GT} gives us
	\[
	\Vert h \Vert_{C^{1,\alpha}({\frac{1}{32}B_{+}})} \lesssim \Vert h \Vert_{L^\infty(\frac 1 {16} B_+)} + \Vert (A-I)\nabla v\Vert_{C^{0,\alpha}(\frac 1 {16} B_+)}
	\]
	for some $\alpha \in (0,1)$. 
	Using the product rule for derivatives, we get 
	\[
	\Vert (A-I)\nabla v\Vert_{C^{0,\alpha}(\frac 1 {16} B_+)} 
	\lesssim
	\Vert (A-I)\nabla v\Vert_{C^{0,1}(\frac 1 {16} B_+)} 
	\lesssim
	L_A \Vert \nabla v \Vert_{L^\infty(\frac 1 {16} B_+)} + (\Lambda_A-1) \Vert D^2 v \Vert_{L^\infty(\frac 1 {16} B_+)}.
	\]
	Finally, using interior Cauchy estimates to estimate $\Vert D^2 v \Vert_{L^\infty(\frac 1 {16} B_+)}$ and $\Vert \nabla v \Vert_{L^\infty(\frac 1 {16} B_+)}$ in terms of $N$, we get
	\[
	|\nabla h(x)| \leq C(N) \left (L_A + (\Lambda_A-1) \right ), \quad x \in \frac 1 {32}B_+ .
	\]
	
	To end the proof, we apply Lemma \ref{lemma: Toy lemma} to $v$ to get a ball where 
	\[
	|v(y)| \geq c_{0} y^{\prime \prime}, \quad \text { for any } \quad y=\left(y^{\prime}, y^{\prime \prime}\right) \in B\left(\left(x^{\prime}, 0\right), \rho\right) \cap B_{+}
	\]
	and we make $L_A$ and $\Lambda_A-1$ small so that $\partial_d h(x) \leq c_0/2$ in $\frac 1 {32}B_+$. This implies $|h(y',y'')| \leq \frac{c_0} 2 y''$ and, since $|u| \geq |v|-|h|$, it finishes the proof.
	\end{proof}

	We need to state a last lemma before proving Lemma \ref{lemma: 2nd hyperplane lemma}.
	\begin{lemma}
		\label{lemma: silva-savin}
		Let $\Sigma \subset \mathbb R^d$ be a Lipschitz graph with respect to the hyperplane $H_0$ with Lipschitz constant $\tau<1/2$, assume that $0\in\Sigma$, and let $u$ be a solution of $\operatorname{div}(A\nabla u)=0$ in the domain $\Omega= \{x + r e_d \,|\, x \in \Sigma\cap B(0,1), r \in (0,1)\}$. Then, there exist positive constants $M$ and $\delta$ depending on the Lipschitz constant $L_A$ and the ellipticity constant $\Lambda_A$ of $A(x)$ such that the following statement holds.
		If $u \geq -1$ in $\Omega\cap B(0,1)$, $u\equiv 0$ on $\Sigma$ and $u \geq M$ on $\{x + r e_d \,|\, x \in \Sigma\cap B(0,1), r \in (\delta,1)\}\cap B(0,1)$,
		then $u\geq 0$ in $\Omega \cap B(0,1/2)$.
	\end{lemma}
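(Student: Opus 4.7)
The plan is to reduce the statement to a barrier estimate on the thin slab of height $\delta$ above $\Sigma$, and then invoke the boundary Harnack principle. Set $\Omega^\delta := \{x + re_d : x \in \Sigma \cap B(0,1),\ r \in (0,\delta)\}$. Any point $y \in \Omega \cap B(0,1/2)$ with vertical height above $\Sigma$ exceeding $\delta$ lies in $E_\delta$, so the hypothesis gives $u(y) \geq M > 0$ directly. It therefore suffices to prove $u \geq 0$ in $\Omega^\delta \cap B(0,1/2)$. To this end, work in the enlarged slab $\widetilde D := \Omega^\delta \cap B(0, 3/4)$, whose boundary splits into three disjoint pieces: the bottom Lipschitz graph $\Sigma_0 := \Sigma \cap \partial\widetilde D$ (where $u = 0$), the translated top graph $T_\delta := \{x + \delta e_d : x \in \Sigma\} \cap \partial\widetilde D$ (where $u \geq M$ by continuity and the hypothesis on $E_\delta$), and the spherical portion $S := \partial B(0, 3/4) \cap \overline{\Omega^\delta}$ (where only $u \geq -1$ is known).

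Let $v_1$ and $v_2$ denote the continuous solutions of $\operatorname{div}(A\nabla v_i) = 0$ in $\widetilde D$ with Dirichlet data $v_1 = \chi_{T_\delta}$, $v_2 = \chi_S$ (and $0$ on the remaining pieces). Both are nonnegative by the maximum principle, and the inequality $u \geq M v_1 - v_2$ holds piece-by-piece on $\partial\widetilde D$, hence throughout $\widetilde D$ by comparison. The conclusion will follow once we prove $M v_1 - v_2 \geq 0$ on $\widetilde D \cap B(0, 1/2)$. Since both $v_1$ and $v_2$ vanish continuously on the Lipschitz piece $\Sigma_0$ and are positive in the interior, the classical boundary Harnack principle for divergence-form uniformly elliptic operators with bounded measurable coefficients in Lipschitz domains gives
\[
\frac{v_2(y)}{v_1(y)} \leq C\,\frac{v_2(y_0)}{v_1(y_0)}, \qquad y \in \widetilde D \cap B(0, 1/2),
\]
where $y_0$ is any non-tangential reference point, taken for concreteness at height $\delta/2$ above the origin, and $C$ depends only on $d$, $\Lambda_A$, $L_A$, and the Lipschitz character of $\Sigma$ (uniformly for $\tau<1/2$). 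At $y_0$ the maximum principle gives $v_2(y_0) \leq 1$, while the interior Harnack inequality, chained from points adjacent to $T_\delta$ where $v_1 \equiv 1$, yields $v_1(y_0) \geq c_1 > 0$ with $c_1$ depending only on $d, \Lambda_A, L_A$, and $\delta$.

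Fixing $\delta$ first (for instance $\delta := 1/10$) and then choosing $M := 2 C_\ast$, where $C_\ast := C/(c_1)$, gives $M v_1 - v_2 \geq C_\ast v_1 \geq 0$ on $\widetilde D \cap B(0, 1/2)$, hence $u \geq 0$ there; combined with the direct bound $u \geq M > 0$ on the complementary set above the slab, this yields $u \geq 0$ on $\Omega \cap B(0, 1/2)$. The main technical obstacle is quantitative control of the boundary Harnack constants with dependence only on $d, \Lambda_A, L_A$, and $\delta$ (and not deteriorating as $\tau$ shrinks); this is handled by the restriction to $B(0,1/2) \subset B(0, 3/4)$, which keeps all points of interest at a definite distance from the corners where $\Sigma_0$ meets $S$ (and $T_\delta$ meets $S$), so that only the Lipschitz character of $\Sigma_0$ enters the estimate.
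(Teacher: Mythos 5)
Your proof is correct, but it takes a genuinely different route from the reference the paper points to. The paper itself gives no argument: it defers to Step~1 of the proof of Theorem~1.1 in De~Silva--Savin \cite{DS}, whose purpose is precisely to \emph{prove} a boundary Harnack inequality, so their Step~1 is a direct iterative barrier argument that carefully avoids using any boundary Harnack principle. You instead take boundary Harnack (the classical Caffarelli--Fabes--Mortola--Salsa result for divergence-form operators with bounded measurable coefficients in Lipschitz domains) as a black box: you construct the lower barrier $Mv_1 - v_2$ out of elliptic measures of the top graph $T_\delta$ and the lateral sphere $S$, verify the comparison $u \geq Mv_1 - v_2$ on $\partial\widetilde D$, and then use boundary Harnack near $\Sigma_0$ together with interior Harnack chaining (at the fixed scale $\delta = 1/10$) to show $v_2 \leq M v_1$ in $\widetilde D \cap B(0,1/2)$. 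This is shorter and conceptually cleaner, at the cost of invoking a much heavier classical tool; De~Silva--Savin's route is more elementary because in their context it has to be. Since the present paper already uses the boundary Harnack inequality elsewhere as a known external input (e.g.\ in the proof of Lemma~\ref{lemma: 2nd hyperplane lemma} and Corollary~\ref{cor: unique continuation}), your argument introduces no circularity and is a legitimate alternative. Two small remarks: the constants in Harnack and boundary Harnack for these operators depend only on $d$ and $\Lambda_A$, not on $L_A$, so the stated $L_A$-dependence is spurious (though harmless); and the chaining constant that transports the corkscrew ratio $v_2(A_z)/v_1(A_z)$ to the fixed point $y_0$ does depend on $\delta$ through the chain length, which you correctly control by fixing $\delta$ first.
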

		The proof of this lemma is Step 1 of the proof of \cite[Theorem 1.1]{DS}. For a simpler proof in the harmonic case (in Lipschitz domains with small Lipschitz constant), see Appendix B in \cite{FR}.

	In the following proof, we will assume again that $L_A$ and $\Lambda_A-1$ are very small (see Remark \ref{rmk: small LA and LambdaA}).
	\begin{proof}[Proof of Lemma \ref{lemma: 2nd hyperplane lemma}]	
		Let $S \gg 1$ and $Q$ be a cube of our Whitney cube structure such that $x_Q$  (the center of $Q$) and $S \ell(Q)$ are admissible and $N(x_Q, S \ell(Q)) \leq N$. Note that, to attain this, we need the Lipschitz constant of the domain $\tau$ small enough depending on $S$.
		Further, we assume that $S \ell(Q) = 8$ by rescaling the domain and the Whitney cube structure.
		This rescaling changes the Lipschitz constant $L_A$ of the matrix $A(x)$ corresponding to the elliptic operator. But if the cube $Q$ is small enough, the rescaling improves it, that is, makes $L_A$ smaller.

		Let $\tilde x$ be the projection (in the direction $e_d$) of $x$ on $\Sigma$. Then, if $S$ is big enough, we have
		\begin{equation}
			\label{eq: doubling index for H}
			\log \frac{H(\tilde x,S\ell(Q)/2^{k})}{H(\tilde x,S\ell(Q)/2^{k+1})} \leq \log c + \log \frac{H(x_Q,S\ell(Q)/2^{k-1})}{H(x_Q,S\ell(Q)/2^{k+2})} \leq \log c + N c' , ~ \forall k\in \{1,\hdots,5\}.
		\end{equation}
		by Lemma \ref{lemma: lemma 2.2 of Tolsa} and Lemma \ref{lemma: bounding H by H at a different point} (since we do not assume $A(x_Q)=I$ or $A(\tilde x)=I$, we use first Remark \ref{rmk: H in general points}).
		We need $S$ large enough so that $B(\tilde x, \Lambda_A S\ell(Q)/2^k) \subset B(x_Q, S\ell(Q)/2^{k-1})$ and $B(\tilde x, S\ell(Q)/2^{k+1}) \supset B(x_Q, \Lambda_A S\ell(Q)/2^{k+2})$ (we are using that $\Lambda_A$ is very close to $1$) for $k=1,\hdots,5$. From now on, $S$ is fixed.

		We also fix the following normalization for $u$:
		\begin{equation}
			\label{eq: suprem de u}
			\sup_{B(\tilde x,3)\cap\Omega} \vert u \vert = 1 
		\end{equation}
		(remember that $S\ell(Q) = 8$).
		
		Let $x_1 = \tilde x - 3 \tau e_d$.
		Observe that
		\[
		\Gamma_0 = \{ x = (x', x'') \in B(x_1, 2) : x'' = x_1'' \}
		\]
		doesn't intersect $\overline\Omega$. Also let $B_1:=B(x_1, 1), B_2 :=B(x_1, 2)$ and $B_{k,+}$ the upper half of $B_k$, $k=1,2$ (the half of $B_k$ that intersects $\Omega$). 
		
		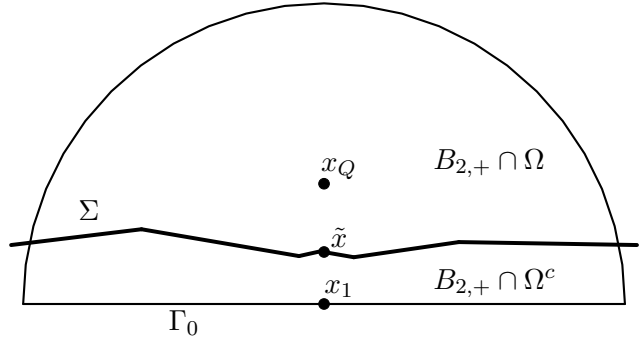
\begin{figure}[h]
			\centering
			\label{figure: lemma 5.1}
			\begin{tikzpicture}[line cap=round,line join=round,>=triangle 45,x=4cm,y=4cm]
				\draw [line width=0.8pt] (-1,0)-- (1,0); 
				\draw [line width=1.5pt] (-1.04,0.19611613513818385)-- (-0.606648968671746,0.24859672580237538);
				\draw [line width=1.5pt] (-0.606648968671746,0.24859672580237538)-- (-0.08316688419400843,0.15941088918764998);
				\draw [line width=1.5pt] (-0.08316688419400843,0.15941088918764998)-- (-0.013369272930310087,0.17492146946847179);
				\draw [line width=1.5pt] (-0.013369272930310087,0.17492146946847179)-- (0.09908243410564835,0.1555332441174445);
				\draw [line width=1.5pt] (0.09908243410564835,0.1555332441174445)-- (0.44807049042414004,0.2059426300301154);
				\draw [line width=1.5pt] (0.44807049042414004,0.2059426300301154)-- (1.04,0.19611613513818404);
				
				\draw(0,0.4) circle[radius = 2pt];
				\fill(0,0.4) circle[radius = 2pt];
				\draw(0.05,0.45) node[]{$x_Q$}; 
				\draw(0,0.173) circle[radius = 2pt];
				\fill(0,0.173) circle[radius = 2pt]; \draw(0.05,0.223) node[]{$\tilde x$};
				\draw(0,0) circle[radius = 2pt];
				\fill(0,0) circle[radius = 2pt]; \draw(0.05,0.05) node[]{$x_1$};
				
				\draw (-0.85,0.38) node[anchor=north west] {$\Sigma$};
				\draw (0.33,0.55) node[anchor=north west] {$B_{2,+}\cap\Omega$};
				\draw (0.33,0.15) node[anchor=north west] {$B_{2,+}\cap\Omega^c$};
				\draw (-0.55,0.012) node[anchor=north west] {$\Gamma_0$};
				\draw [shift={(0,0)},line width=0.8pt]  plot[domain=0:3.141592653589793,variable=\t]({1*1*cos(\t r)+0*1*sin(\t r)},{0*1*cos(\t r)+1*1*sin(\t r)}); 
			\end{tikzpicture}
			\caption{The domain $\Omega$, the Lipschitz graph $\Sigma$, and the half ball $B_{2,+}$.}
		\end{figure}
		Let $g_0$ be the solution of $\operatorname{div}(A\nabla g_0)=0$ on $B_{2,+}$ such that $g_0 \equiv 1$ on $\partial B_{2,+}\backslash \Gamma_0$ and $g_0 \equiv 0$ on $\Gamma_0$. By the maximum principle $g_0 \geq 0$ on $B_{2,+}$ and $g_0 \geq \vert u \vert$ on $\Omega \cap B_2 \subset B_{2,+}$ because of the normalization \eqref{eq: suprem de u} of $u$. 
		Notice that, moreover, we have the bound $$g_0(x) \leq C_1(x''-x_1'')$$ for $x\in B_{1,+}$ which gives us a bound for $\vert u \vert$ in $\Omega \cap B_1$. 
		In the case $A \equiv I$, this follows from reflection and interior Cauchy estimates for $\nabla u$. In the general case, we may use that $g_0(x)$ is comparable to the Green function $G_A(x, y)$ of the domain $B_{2,+}$ (with pole $y=(0,\hdots,0,1.5)$ for example) by the boundary Harnack inequality  inside $B_{1,+}$. 
		By \cite[Theorem 3.3]{GW}, since $B_{2,+}$ satisfies an exterior sphere condition, we have that $G(x,y) \lesssim \operatorname{dist}(x,\partial B_{2,+}) |x-y|^{1-d} \approx \operatorname{dist}(x,\partial B_{2,+})$.
		Further, for $x \in B_{1,+}$ we have that $\operatorname{dist}(x,\partial B_{2,+}) = x'' - x_1''$ which gives us the desired bound.

		Let $g$ be the solution of $\operatorname{div}(A\nabla g)=0$ in $B_{1,+}$ with Dirichlet boundary conditions $g \equiv u$ on $\partial B_{1,+} \cap \Omega$ and $g \equiv 0$ on $\partial B_{1,+} \backslash \Omega$.
		Since $|u| \leq g_0$ in $B_{2,+}\cap\Omega$, we have $\vert g \vert \leq g_0 \leq C_1(x''-x_1'')$. Also, 
		\begin{equation}		
		\label{eq: g and u lemma zeros}
			|g-u| = 
			\begin{cases}
				0, \quad &\text{on } \Omega \cap\partial B_{1,+}, \\
				|g| \leq 4C_1 \tau, \quad &\text{on } \partial\Omega \cap B_{1,+},
			\end{cases}
		\end{equation}
		because of the bound of $|g|\leq g_0\leq C_1(x''-x_1'')$. 
		
		By \eqref{eq: doubling index for H}, Remark \ref{rmk: H in general points}, Lemma \ref{lemma: bounding balls by tilde H}, Lemma \ref{lemma: annulus bounds}, Proposition \ref{prop:exp(cr)H(r) is nondecreasing}, and \eqref{eq: suprem de u}, we have
		\begin{align*}
		\int_{\partial B(\tilde x, \frac 1 8)\cap \Omega} \vert u \vert^2 d\sigma &\gtrsim
		e^{-c' N}\int_{\partial B(\tilde x, 4)\cap \Omega} \vert u \vert^2 d\sigma \approx e^{-c' N} H(\tilde x,4) 
		\gtrsim 
		e^{-c' N} \int_{B(\tilde x,4)}\vert u \vert^2 dy\\ 
		&\geq e^{-c' N}\int_{B(x_{\sup{}},1)} \vert u \vert^2 dy \gtrsim e^{-c' N}|u(x_{\sup{}})|^2 = e^{-c' N} 
		\end{align*}
		where $x_{\sup{}}$ is a point in $\partial B(\tilde x,3)$ where $|u(x_{\sup{}})|=1$.
		Assuming that $\tau$ is small enough, we have $B(\tilde x, \frac 1 8) \cap \Omega \subset \frac 1 4 B_{1,+}$. Using \eqref{eq: g and u lemma zeros} and Lemma \ref{lemma: bounding H by H at a different point}, we get
		\begin{align*}
			\left(\int_{\partial \frac{1}{4} B_{1,+}} g^{2} d\sigma \right)^{1 / 2} 
			&\geq
			\left(\int_{\partial \frac{1}{4} B_{1,+}} u^{2} d\sigma \right)^{1 / 2} - C_2 \tau
			\\
			&\geq
			\left(\tilde c\int_{\partial B\left(\tilde x, \frac{1}{8}\right)} u^{2} d\sigma \right)^{1 / 2}-C_{2} \tau .
		\end{align*}
		If we assume that $\tau$ (depending on $N$) is small enough, we conclude that
		\begin{equation}
			\label{eq: bound on g by N}
			\Vert g \Vert_{L^2\left(\partial \left (\frac 1 4 B_{1,+}\right)\right)} \geq c_1 e^{-c'N/2}.
		\end{equation}		
	
		Now let's estimate the doubling of $H^g(x_1,r)$ for $g$ by using \eqref{eq: suprem de u} and \eqref{eq: bound on g by N}:
		\[
		\log \frac{\int_{\partial \frac 1 2 B_{1,+}} g^2 d \sigma}
		{\int_{\partial \frac 1 4 B_{1,+}} g^2 d \sigma}
		\leq
		\log \frac{\sigma(\partial \frac 1 2 B_{1})}{c_1e^{-c'N/2}} = \log c'' + c'N/2 \leq C(N+1) .
		\]
		This gives us an upper bound for $N^g(x_1,1/4)$, the frequency for $g$ at $x_1$.
		Also note that \eqref{eq: bound on g by N} implies 
		\[
		\sup_{\frac 1 4 B_{1,+}} \vert g \vert \geq c e^{-c' N/2}.
		\]
		Then, by Lemma \ref{lemma: toy lemma elliptic version}, there exists $x_* \in \Gamma_0 \cap  B_{1/S}$, $c_2 = c_2(C(N+1), S) >0$, and $\rho = \rho(C(N+1), S)$ such that
		\[
		|g(x)| \geq c_{2}\left(x^{\prime \prime}-x_{1}^{\prime \prime}\right) \text { for } x=\left(x^{\prime}, x^{\prime \prime}\right) \in B\left(x_{*}, \rho\right) \cap B_{1,+} .
		\]
		We may assume that $g>0$ in $B\left(x_{*}, \rho\right) \cap B_{1,+}$, otherwise we consider $-u$ and $-g$ in place of $u$ and $g$. From \eqref{eq: g and u lemma zeros}, we obtain
		\begin{equation}
		\label{eq: last equation of lemma of zeros}
		u(x) \geq g(x)-4 C_1 \tau \geq c_{2}\left(x^{\prime \prime}-x_{1}^{\prime \prime}\right)-4 C_1 \tau \quad \text { in } \quad B\left(x_{*}, \rho\right) \cap \Omega .
		\end{equation}
		Note that $\rho$ does not depend on $\tau$ and for $\tau$ small enough we have $B\left(x_{*}, \frac{\rho}{4}\right) \cap \partial \Omega \neq \varnothing .$ 
		
		Our goal is to show that $u > 0$ on $B(x_*, \frac \rho 2) \cap \Omega$ and we will use Lemma \ref{lemma: silva-savin} to this end (from now on, constants $\delta$ and $M$ come from the statement of Lemma \ref{lemma: silva-savin}). 
		Restrict $\tau$ to be small enough so that $u>0$ in $B(x_*, \rho) \cap \Omega \cap \{ (x', x'') | x'' > x''_* + \delta \rho\}$, which we can do thanks to \eqref{eq: last equation of lemma of zeros}. Now, we  multiply the function $u$ by a very large constant $K$ so that $Ku \geq M$ in the same set $B(x_*, \rho) \cap \Omega \cap \{ (x', x'') | x'' > x''_* + \delta \rho\}$. Finally, we make $\tau$ even smaller so that $Ku \geq -1$ in $B(x_*, \rho) \cap \Omega$, thanks again to \eqref{eq: last equation of lemma of zeros}. Now, by a rescaled version of Lemma \ref{lemma: silva-savin} to $B(x_*,\rho)\cap\Omega$, the function $Ku$ restricted to $B(x_*, \rho/2)\cap\Omega$ is positive, which implies that $u$ is positive too and ends the proof. 
	\end{proof}

\section{Proof of Theorems \ref{thm: u is positive besides a set of positive codimension} and \ref{thm: measure of nodal set near the boundary}}
\label{section: pf of thm 1.1}
We will combine Lemmas \ref{lemma: Key lemma} and \ref{lemma: 2nd hyperplane lemma} to prove Theorems \ref{thm: u is positive besides a set of positive codimension} and \ref{thm: measure of nodal set near the boundary}.

Note that, in the present section, we refer to the frequency function $N$ of a solution of $\operatorname{div}(A\nabla u)=0$ in $\Omega$ as in the statement of Theorem \ref{thm: u is positive besides a set of positive codimension}. Without further mention, we will use that the constants $L_A$ and $\Lambda_A -1$ from the matrix $A(x)$ are very small thanks to Remark \ref{rmk: small LA and LambdaA}. Moreover, we consider $\Omega$ with the Whitney structure defined in Section \ref{section: whitney structure}.

First, we give a corollary to Lemma \ref{lemma: 2nd hyperplane lemma} in a language closer to that of Lemma \ref{lemma: Key lemma}.
\begin{coro}
	\label{coro: 2nd hyperplane} 
	For any $N>0$ and $S \gg 1$ large enough there exist positive constants $\tau_0(N,S)$ and $K(N,S)$ such that the following statement holds. Suppose $\Omega \subset \mathbb R^d$ has Lipschitz constant $\tau < \tau_0$ and $Q$ is a cube in $\mathcal D_\mathcal W(R)$ such that $N(x_Q, S \ell(Q)) \leq N$. Then, for all $\tilde K \geq K$, there exist 
	cubes $Q''_1, \hdots, Q''_{2^{(d-1)(\tilde K-K)}}$ such that, for all $j$,
	\begin{enumerate}
		\item the center of $Q_j''$ lies in $\Sigma$,
		\item  $u|_{Q_j''\cap\Omega}$ does not have zeros,
		\item there exists $Q_j' \in \mathcal D^{\tilde K}_\mathcal W(Q)$ such that $Q''_j$ is a vertical translation of $Q_j'$.
	\end{enumerate} 
	In particular, there exists $\delta_0(N,S)>0$ such that 
	\[
	m_{d-1}\left (\bigcup_{i} \Pi(Q_i')\right) \geq \delta_0 m_{d-1}(\Pi(Q)) .
	\]
\end{coro}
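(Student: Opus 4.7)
The plan is to apply Lemma \ref{lemma: 2nd hyperplane lemma} once to $Q$, producing a ball $B = B(z_B, \rho\ell(Q))$ with $z_B \in \Sigma \cap \mathcal C(Q)$, $\rho = \rho(N,S) > 0$, and no zero of $u$ in $B \cap \Omega$; and then use the Whitney tree to pack many subcubes of $Q$ (after a vertical translation) inside $B$. The task is reduced to finding, for each $\tilde K \geq K$, at least $2^{(d-1)(\tilde K-K)}$ cubes $Q' \in \mathcal D^{\tilde K}_{\mathcal W}(Q)$ whose vertical translate $Q''$ with center on $\Sigma$ lies inside $B$; once this is done, $Q'' \cap \Omega \subset B \cap \Omega$ contains no zero of $u$.

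For the construction, I would fix $K = K(N, S)$ large enough that $\sqrt d\,2^{-K} \leq \rho/4$, and for each $\tilde K \geq K$ select the cubes $Q' \in \mathcal D^{\tilde K}_{\mathcal W}(Q)$ whose horizontal projection $\Pi(Q')$ lies in the $(d-1)$-dimensional ball $\Pi(B(z_B, \rho\ell(Q)/2))$ (which, inspecting the proof of Lemma \ref{lemma: 2nd hyperplane lemma}, sits well inside $\Pi(Q)$ provided $S$ is large). For each such $Q'$, define $Q''$ as the vertical translate of $Q'$ whose center sits on $\Sigma$ at horizontal location $\Pi(x_{Q'})$. Since $\Sigma$ is a Lipschitz graph over $H_0$ with small slope $\tau$, the Euclidean distance between the center of $Q''$ and $z_B$ is at most $\sqrt{1+\tau^2}\,\rho\ell(Q)/2$, while $\mathrm{diam}(Q'') = \sqrt d\,2^{-\tilde K}\ell(Q) \leq \rho\ell(Q)/4$, so $Q'' \subset B$ for $\tau$ sufficiently small. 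A standard volume count in $H_0$ then shows that the number of such subcubes is at least $c_d\rho^{d-1}\,2^{(d-1)\tilde K}$; enlarging $K$ if necessary so that $c_d\rho^{d-1} \geq 2^{-(d-1)K}$ (keeping $K$ a function of $N$ and $S$ alone) delivers the required $2^{(d-1)(\tilde K - K)}$ cubes. The projected-measure estimate is then immediate with $\delta_0 = 2^{-(d-1)K}$.

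The only delicate point is the interplay between the vertical translation and the tilt of $\Sigma$: even though both $z_B$ and the center of $Q''$ lie on the same Lipschitz graph, the vertical translation amount depends on the graph height at $\Pi(x_{Q'})$, so one must use the small-slope assumption on $\tau$ (bounded in terms of $\rho$, hence ultimately in terms of $N$ and $S$) to guarantee $Q'' \subset B$. Everything else is routine dyadic counting in $H_0$ once Lemma \ref{lemma: 2nd hyperplane lemma} has been invoked.
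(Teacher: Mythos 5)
Your proof is correct and follows the same route the paper indicates: it invokes Lemma \ref{lemma: 2nd hyperplane lemma} to produce the zero-free ball $B$ centered on $\Sigma\cap\mathcal C(Q)$ and then takes the $Q''_j$ to be the vertical translates $t(Q')$ of cubes $Q'\in\mathcal D^{\tilde K}_\mathcal W(Q)$ landing inside $B$; you simply supply the routine dyadic counting in $\Pi(Q)$ and the small-slope check that the paper leaves implicit.
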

These cubes $Q''_j$ are the cubes that are contained in the ball $B$ given by Lemma \ref{lemma: 2nd hyperplane lemma} and are vertical translation of cubes in $\mathcal D^{\tilde K}_\mathcal W(Q)$. From now on, given a cube $Q \in \mathcal D_\mathcal W(R)$, we denote by $t(Q)$ the unique cube $Q'$ such that its center lies on $\Sigma$ and $Q'$ is a vertical translation of $Q$.

Next, we present a modified frequency function for which we prove good behavior as a consequence of Lemmas \ref{lemma: Key lemma} and \ref{lemma: 2nd hyperplane lemma}. 

\subsection{Modified frequency function}
\label{section: modified freq function}
Let $R$ be a cube in $\mathcal W$ such that it satisfies the conditions of Lemma \ref{lemma: Key lemma} with $S = S_1 \gg 1$ large enough so that $CS_1^{-1/2}$ (also from the statement of Lemma \ref{lemma: Key lemma}) is small enough (we will specify the precise relation later). The use of this lemma gives us constants $K_1 := K(S_1)$ and $\delta_1 := \delta_0$ (that does not depend on $S_1$).

Consider also Corollary \ref{coro: 2nd hyperplane} with fixed $N=2N_0+1$ (where $N_0$ is the constant of Lemma \ref{lemma: Key lemma}) and $S=S_2$ large enough but smaller than $S_1$ (note that both constants are independent). The use of this corollary gives us constants $K_2 := K(N, S_2)$ and $\delta_2 := \delta_0(N,S_2)$.
In particular, we may assume that $K_2$ is smaller or equal than $K_1$ and that $CS_1^{-1/2}$ is small enough depending on $\min(\delta_1,\delta_2)$. 

We remark that the use of Lemma \ref{lemma: Key lemma} and Corollary \ref{coro: 2nd hyperplane} with constants $S_1$ and $S_2$ respectively requires that the domain has small enough Lipschitz constant $\tau$. 
For the rest of this section, we will denote $\epsilon := CS_1^{-1/2}$, $K:=K_1$, and $\delta_0=\min(\delta_1,\delta_2)$ (in particular, $\delta_0$ and $S_1$ are independent constants).

Define
\[
N(Q) := N(x_Q, S_1 \ell(Q))
\]
for any $Q \in \mathcal D_{\mathcal W}^{jK} (R)$ for $j\geq 0$.
Notice that $1+2N(Q) \geq N(x_Q, S_2 \ell(Q))$ thanks to Proposition \ref{prop:exp(Cr)N(r) is nondecreasing} (we assume $x_Q$ and $S_1 \ell(Q)$ are admissible for all children $Q$ of $R$ in the Whitney tree structure, thanks to $\tau$ being small enough).

We define the modified frequency function $N'(Q)$ for $Q \in \mathcal D_\mathcal W^{jK}(R)$, $j \geq 0$, inductively. For $j=0$, we define $N'(R) = \max(N(R),N_0/2)$. Assume we have $N'(P)$ defined for all cubes $P \in \mathcal D_\mathcal W^{iK}(R)$ for $0\leq i < j$. Fix $\widehat Q \in \mathcal D_\mathcal W^{(j-1)K}(R)$ and consider its vertical translation $t(\widehat Q)$ centered on $\Sigma$. Then:
\begin{itemize}
	\item[(a)] if $u$ restricted to $t(\widehat Q)\cap\Omega$ has no zeros, define $N'(Q) = N'(\widehat Q)/2$ for $\lceil\delta_0\cdot \operatorname{card}\{\mathcal D^K_\mathcal W(\widehat Q)\}\rceil$ of the cubes $Q$ in $\mathcal D^K_\mathcal W(\widehat Q)$ and $N'(Q) = (1+\epsilon)N'(\widehat Q)$ for the rest of cubes in $\mathcal D^K_\mathcal W(\widehat Q)$ (the particular choice is irrelevant),
	
	\item[(b)] if $u$ restricted to $t(\widehat Q)\cap\Omega$ has zeros, choose $Q\in\mathcal D^K_\mathcal W(\widehat Q)$, and 
	\begin{itemize}
		\item[1.] if its vertical translation $t(Q)$ satisfies that $u$ restricted to $t(Q)\cap\Omega$ has no zeros, define $N'(Q) = N'(\widehat Q)/2$,
		\item[2.] else define $N'(Q) = \max (N(Q), N_0/2)$.
	\end{itemize}
\end{itemize} 
Note that if a cube $Q$ satisfies that $u$ restricted to  $t(Q)\cap\Omega$ has no zeros, then all its descendants in the Whitney cube structure will satisfy the same property and (a) applies to them. Alternatively, if a cube $Q$ satisfies that $u$ restricted to  $t(Q)\cap\Omega$ changes sign, then all its predecessors in the Whitney cube structure will satisfy the same and (b2) applies to them.

Now, a combination of Lemma \ref{lemma: Key lemma} and Corollary \ref{coro: 2nd hyperplane} yields the following behavior for $N'(Q)$ for $Q \in\mathcal D_\mathcal W^{jK}(R), ~j\geq0$. Consider a cube $\widehat Q$ and its vertical translation $t(\widehat Q)$. Then:
\begin{itemize}
	\item If $u$ restricted to $t(\widehat Q)\cap\Omega$ has zeros and $N(\widehat Q) \geq N_0$, then Lemma \ref{lemma: Key lemma} tells us that at least $\lceil\delta_0\cdot \operatorname{card}\{\mathcal D^K_\mathcal W(\widehat Q)\}\rceil$ cubes in $\mathcal D^K_\mathcal W(\widehat Q)$ satisfy $N(Q) \leq N(\widehat Q)/2$. Moreover, in this case, $N'(Q) = \max(N(Q), N_0/2) \leq N(\widehat Q)/2 = N'(\widehat Q)/2$ where we have used that $N(\widehat Q)>N_0$ and that (b) applies. For the rest of the cubes $Q$ in $\mathcal D^K_\mathcal W(\widehat Q)$, we have $N'(Q) = \max(N(Q),N_0/2) \leq (1+\epsilon)N(\widehat Q) = (1+\epsilon) N'(\widehat Q)$  where we have used again that $N(\widehat Q)>N_0$ and (b) applies.
	
	\item If $u$ restricted to $t(\widehat Q)\cap\Omega$ has zeros and $N(\widehat Q) < N_0$, then Corollary \ref{coro: 2nd hyperplane} enters in play and it tells us that at least $\lceil\delta_0\cdot \operatorname{card}\{\mathcal D^K_\mathcal W(\widehat Q)\}\rceil$ cubes $Q$ in $\mathcal D^K_\mathcal W(\widehat Q)$ satisfy that $t(Q)\cap\Omega$ does not contain zeros of $u$. For these cubes, $N'(Q) = N'(\widehat Q)/2 = \max(N(\widehat Q),N_0/2)/2<N_0/2$. For the rest of the cubes $Q$ in $\mathcal D^K_\mathcal W(\widehat Q)$, we have $N'(Q) = \max(N(Q), N_0/2) \leq \max((1+\epsilon)N(\widehat Q), N_0/2) \leq (1+\epsilon)N'(\widehat Q)$.
	
	\item If $u$ restricted to $t(\widehat Q)\cap\Omega$ has no zeros, then we have defined $N'(Q) = N'(\widehat Q)/2$ for $\lceil\delta_0\cdot \operatorname{card}\{\mathcal D^K_\mathcal W(\widehat Q)\}\rceil$ cubes $Q$ in $\mathcal D^K_\mathcal W(\widehat Q)$. We have defined $N'(Q) = (1+\epsilon)N'(\widehat Q)$ for the rest of cubes $Q$ in $\mathcal D^K_\mathcal W(\widehat Q)$ .
\end{itemize}	
Summing up, for $\widehat Q \in \mathcal D_\mathcal W(R)$ and random $Q \in \mathcal D_\mathcal W^K(\widehat Q)$, we have
\[
N'(Q) \leq 
\begin{cases}
	N'(\widehat Q)/2, & \mbox{ with probability at least $\delta_0$, } \\
	N'(\widehat Q)(1+\epsilon), &  \mbox{ with probability at most $1-\delta_0$.}
\end{cases}
\]
This is similar to the behavior of the frequency function $N$ given by Lemma \ref{lemma: Key lemma} but without the restriction $N'(\widehat Q) \geq N_0$.

Let's summarize the dependence of the constants that have appeared in this section (omitting its dependence on the dimension $d$ and on $L_A$ and $\Lambda_A$ by assuming we are in a setting like the one described in Remark \ref{rmk: small LA and LambdaA}).
On one hand, the constants $K_2$ and $\delta_2$ given by Corollary \ref{coro: 2nd hyperplane} are absolute since  they depend on $S_2$ and $N = 2N_0 + 1$ which also are absolute constants. We do require $\tau$ small enough to use Corollary \ref{coro: 2nd hyperplane}.
On the other hand, we have $K_1$ depending on $S_1$, $\epsilon = CS_1^{-1/2}$, and an absolute constant $\delta_1$ given by Lemma \ref{lemma: Key lemma}. To use Lemma \ref{lemma: Key lemma}, we require $\tau$ small enough depending on $S_1$. For the arguments that follow, we need $\epsilon$ small enough depending on $\delta_0 = \min(\delta_1, \delta_2)$. Though, since both constants ($\epsilon$ and $\delta_0$) are independent, we can choose $S_1$ large enough and, thus, we require $\tau$ small enough (depending on $S_1$) to use Lemma \ref{lemma: Key lemma}.

\subsection{Proof of Theorem \ref{thm: u is positive besides a set of positive codimension}}
The idea behind the proof is that Lemma \ref{lemma: Key lemma} allows us to use that most cubes in any generation of the Whitney tree satisfy $N(x_Q, S \ell(Q)) \leq N_0$. Then, we can apply Lemma \ref{lemma: 2nd hyperplane lemma} to these cubes, thus covering most of $\Sigma$ by balls where $u$ does not change sign. 

\begin{proof}[Proof of Theorem \ref{thm: u is positive besides a set of positive codimension}]
	We will prove the result for the projection of a single cube $\Pi(R)$. Afterwards, we can cover any compact in $\Sigma$ by a finite union of such cubes which leaves stable the Minkowski dimension estimate.
	
	For $x \in \Pi(R)$, we denote by $Q_j(x)$ the unique cube $Q_j \in \mathcal D_\mathcal W^{jK}(R)$ such that $x \in \Pi(Q_j)$ for {some integer $K$ large enough that will be fixed later.} 
	We will say that $Q_j(x)$ is a \textit{good} cube if $N'(Q_j) \leq \frac{1}{2} N'(Q_{j-1})$ and that it is \textit{bad} otherwise.
	
	\begin{remark}
		\label{rmk:reducing to N_0/2}
		Note that, with the previous definitions, $N'(Q) < N_0/2$ implies that for all $x \in t(Q)\cap\Sigma$ there is a neighborhood where $u$ does not vanish in $\Omega$.
		Thus we only need to study the Minkowski dimension of the set of points $x \in \Pi(R)$ such that they are not in $\Pi(Q)$ for some $Q\in\mathcal D_{\mathcal W}(R)$ with $N'(Q) < N_0/2$.  	
		Also, notice that the map $\Pi: \Sigma \cap \Pi^{-1}(\Pi(R)) \mapsto \Pi(R)$ is biLipschitz and thus it preserves Minkowski dimensions.
	\end{remark}
	We define the \textit{goodness frequency} of a point $x \in \Pi(R)$ as
	\[
	F_j(x) = \frac{1}{j} \#\{ \text{good cubes in } Q_1(x), \hdots, Q_j(x) \}
	\]
	for $j\in\mathbb N$. We define $\overline F(x) = \limsup_{n \rightarrow \infty} F_n(x)$. Let $\alpha(\delta_0)>0$ be such that 
	\[
	\frac{\delta_0}{1-\delta_0} \frac{1-\alpha}{\alpha} = 3	
	\]
	(in particular, $\alpha<\delta_0$) and $\epsilon_0(\alpha)>0$ such that
	\[
	\alpha  = \frac{\log (1+\epsilon_0)}{\log(1+\epsilon_0) + \log 2}.
	\]
	Note that for any $0<\epsilon<\epsilon_0$, we have
	\begin{equation}
	\label{eq:def_comb_eps_alpha}
	\left ( \frac 1 2 \right)^\alpha (1+\epsilon)^{1-\alpha}<1.
	\end{equation}
	For all $j>1$, define 
	\[
	\mu_j=\frac 1 j \log_2 \left ( \frac{2N'(R)}{ N_0}\right)\geq0.
	\]
	\begin{claim} For all $j>1$, the following holds		\[
		F_j(x) \geq \alpha + \mu_j \implies N'(Q_j(x)) < \frac{N_0}{2}. 
	\]
	\end{claim}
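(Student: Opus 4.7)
The plan is to iterate the recursive bound on $N'$ along the chain $R = Q_0(x), Q_1(x), \ldots, Q_j(x)$ and then use the careful calibration of $\alpha$ and $\epsilon_0$ encoded in \eqref{eq:def_comb_eps_alpha}.

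First, I would record the one-step estimate that follows directly from the three bullets summarising the behavior of $N'$ in Section \ref{section: modified freq function}: for every $i \geq 1$,
\[
N'(Q_i(x)) \leq \tfrac{1}{2}\, N'(Q_{i-1}(x)) \quad \text{if } Q_i(x) \text{ is good,} \qquad N'(Q_i(x)) \leq (1+\epsilon)\, N'(Q_{i-1}(x)) \quad \text{otherwise.}
\]
Both bounds are obtained by combining Lemma \ref{lemma: Key lemma} and Corollary \ref{coro: 2nd hyperplane} as already done in Section \ref{section: modified freq function}. Setting $g := \#\{\text{good cubes among } Q_1(x),\ldots,Q_j(x)\} = j F_j(x)$, iterating yields
\[
N'(Q_j(x)) \leq \left(\tfrac{1}{2}\right)^{g} (1+\epsilon)^{\,j-g}\, N'(R).
\]

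Next I would plug in the hypothesis $g \geq j(\alpha + \mu_j)$. Since $\tfrac12 < 1 < 1+\epsilon$, replacing $g$ by the smaller quantity $j(\alpha + \mu_j)$ only increases the right-hand side, so
\[
N'(Q_j(x)) \leq \left(\tfrac{1}{2}\right)^{j(\alpha+\mu_j)} (1+\epsilon)^{j(1-\alpha-\mu_j)}\, N'(R) = \Bigl[\left(\tfrac{1}{2}\right)^{\alpha}(1+\epsilon)^{1-\alpha}\Bigr]^{j} (1+\epsilon)^{-j\mu_j}\, \left(\tfrac{1}{2}\right)^{j\mu_j}\! N'(R).
\]
By the very definition of $\mu_j$ we have $(1/2)^{j\mu_j} N'(R) = 2^{-\log_2(2N'(R)/N_0)}\, N'(R) = N_0/2$, so
\[
N'(Q_j(x)) \leq \Bigl[\left(\tfrac{1}{2}\right)^{\alpha}(1+\epsilon)^{1-\alpha}\Bigr]^{j} (1+\epsilon)^{-j\mu_j}\, \tfrac{N_0}{2}.
\]

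Finally, since $N'(R) \geq N_0/2$ by construction we have $\mu_j \geq 0$, so $(1+\epsilon)^{-j\mu_j} \leq 1$; and because $\epsilon < \epsilon_0$ (this is where I would fix $S_1$ large enough to guarantee $\epsilon = CS_1^{-1/2} < \epsilon_0$), the calibration \eqref{eq:def_comb_eps_alpha} gives $(1/2)^{\alpha}(1+\epsilon)^{1-\alpha} < 1$. Both factors are therefore $\leq 1$, and at least one is strictly less than $1$, yielding $N'(Q_j(x)) < N_0/2$, as claimed.

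There is no real obstacle here: the whole proof is just a book-keeping argument on a telescoping product. The only point requiring care is that the constants $\alpha$ and $\epsilon_0$ have been chosen precisely so that a single good step beats the accumulation of $(1+\epsilon)$-bad steps, provided the proportion of good cubes exceeds $\alpha$. The additive shift $\mu_j$ in the hypothesis is exactly what is needed to absorb the starting value $N'(R)$ down to $N_0/2$, and this is reflected in the cancellation $(1/2)^{j\mu_j} N'(R) = N_0/2$ above.
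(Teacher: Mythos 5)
Your proof is correct and follows essentially the same telescoping-product argument as the paper's proof of the claim: iterating the one-step bounds, substituting $g \geq j(\alpha+\mu_j)$, factoring out $\bigl[(1/2)^\alpha(1+\epsilon)^{1-\alpha}\bigr]^j$, and invoking \eqref{eq:def_comb_eps_alpha} together with $(1/2)^{j\mu_j}N'(R)=N_0/2$ and $\mu_j\geq 0$. You are actually slightly more careful than the paper's terse three-inequality chain in identifying precisely which factor delivers the strict inequality.
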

	\begin{proof}[Proof of claim]
		We have that
		\[
		N'(Q_j(x)) \leq \left ( \frac 1 2 \right)^{j(\alpha+\mu_j)}(1+ \epsilon)^{j(1-\alpha-\mu_j)} N'(R) < 
		\left ( \frac 1 2 \right)^{j \mu_j} (1+\epsilon)^{-j \mu_j} N'(R)
		< \frac{N_0}{2}
		\]
		{by \eqref{eq:def_comb_eps_alpha}, and the definition of $F_j(x)$ and $\mu_j$.}
	\end{proof}

	Now, thanks to the previous claim and Remark \ref{rmk:reducing to N_0/2}, we can reduce the problem to studying the Minkowski dimension of the set of points 
	\[
	E = \{ x \in \Pi(R) ~|~ F_j(x) < \alpha + \mu_j, ~\forall j \in \mathbb N \}.
	\]
	
	{If we consider a random sequence of cubes $(Q_j)_j$ with $Q_0 = R$ and $Q_j \in \mathcal D_{\mathcal W}^{K}(Q_{j-1})$, and let $x \in \bigcap_{j \geq 0}\Pi(Q_j)$, the probability that $F_j(x) \leq \beta_j$ for $j \in \mathbb N$ and $\beta_j \in (0,1)$ is bounded above by}
	\[
	\sum_{i=0}^{\lfloor j \beta_j \rfloor} {j \choose i} \delta_0^i (1-\delta_0)^{j-i} .
	\]
	Note that choosing randomly such a sequence is equivalent to choosing a random $x \in \Pi(R)$ uniformly.
	{In what follows we will assume that $\beta_j$ satisfy $2<\frac{\delta_0}{1-\delta_0} \frac{1-\beta_j}{\beta_j}<4$ for all $j>0$, in particular $\beta<\delta_0$.} Let's find an upper bound for the previous quantity for very large $j$:
	\[
	\boxed A_j := \sum_{i=0}^{{\lfloor j \beta_j \rfloor}}
	{j \choose i} (1-\delta_0)^{j-i} \delta_0^i
	=
	(1-\delta_0)^j\sum_{i=0}^{{\lfloor j \beta_j \rfloor}}
	{j \choose i} \left ( \frac{\delta_0}{1-\delta_0} \right )^i.
	\]
	Observe that for $\beta_j<1/2$ we have
	\[
	{j \choose k-1} < \frac{\beta_j}{1-\beta_j} {j \choose k}, \quad\text{for } 0 < k \leq \lfloor \beta_j j \rfloor.
	\]
	This is because
	\[
	\frac{{j \choose k-1}}{{j \choose k}} = \frac{k! (j-k)!}{(k-1)! (j-k+1)!} = \frac{k}{j-k+1} < \frac{\beta_j}{1-\beta_j}.
	\]
	Iterating this inequality, we obtain
	\[
	{j \choose i} < \left ( \frac{\beta_j}{1-\beta_j} \right)^{\lfloor j \beta_j\rfloor-i} {j \choose \lfloor j \beta_j \rfloor}, \quad \mbox{for } i<\lfloor j \beta_j \rfloor.
	\]
	Using this observation we can bound  $\boxed A_j$ by
	\[
	(1-\delta_0)^j\sum_{i=0}^{{\lfloor j\beta_j \rfloor}}
	{j \choose i} \left ( \frac{\delta_0}{1-\delta_0} \right )^i \leq 
	(1-\delta_0)^j \left(\frac{\beta_j}{1-\beta_j} \right)^{\lfloor j\beta_j\rfloor } {j \choose \lfloor j\beta_j \rfloor} 
	\sum_{i=0}^{{\lfloor j\beta_j \rfloor}}
	\left ( \frac{\delta_0}{1-\delta_0}  \frac{1-\beta_j}{\beta_j}\right )^i .
	\]
	We use Stirling's formula to approximate
	\[
	{j \choose \lfloor j\beta_j \rfloor } \approx \frac{\sqrt{2\pi j} \left (\frac{j}{e} \right)^{j}}
	{\sqrt{2\pi \beta_j j} \left (\frac{\beta_j j}{e} \right)^{\beta_j j}
		\sqrt{2\pi (1-\beta_j) j} \left (\frac{(1-\beta_j )j}{e} \right)^{(1-\beta_j )j}}
	=
	\frac{1}{\sqrt{2\pi\beta_j(1-\beta_j)j}} \left(\frac{1}{\beta_j^{\beta_j} (1-\beta_j)^{1-\beta_j}}\right)^j.
	\]
	We also estimate
	\[
	\left(\frac {\beta_j} {1-\beta_j}\right)^{\lfloor j \beta_j \rfloor} \approx \left(\frac {\beta_j} {1-\beta_j}\right)^{ j \beta_j } 
	\,\,\text{ and }\,\,\,\,
	\sum_{i=0}^{{\lfloor j\beta_j \rfloor}}
	\left (\frac{\delta_0}{1-\delta_0}  \frac{1-\beta_j}{\beta_j}\right )^i \approx  \left(\frac{\delta_0}{1-\delta_0}  \frac{1-\beta_j}{\beta_j}\right )^{j\beta_j} .
	\]
	Observe that the comparability constants in the previous approximations depend on the upper and lower bounds of $\beta_j$ but not on $j$ for $j$ large enough depending on $\delta_0$.
	
	Summing everything up, we obtain
	\[
	\boxed A_j \lesssim \frac{2}{\sqrt{2\pi\beta_j(1-\beta_j)j}}
	\underbrace{\left ( 
		\frac{(1-\delta_0)^{1-\beta_j} {\delta_0}^{\beta_j}}{\beta_j^{\beta_j} (1-\beta_j)^{1-\beta_j}}
		\right)^j}_{z(\beta_j)^j},
	\]
	for $j$ large enough.
	Observe that $z(\beta_j)<1$ for $\beta_j<\delta_0$.
	
	Now, we will choose a suitable covering of the set $E$ by projections of cubes in $\mathcal D^{jK}_{\mathcal W}(R)$. First, pick $\epsilon<\epsilon_0$ (equivalently $S_1$ large enough, recall the discussion in Section \ref{section: modified freq function}). Observe that by choosing $\epsilon$ we also fix $K$. For $j \geq 1$, set 
	\[
	E_{j} := \{ x \in \Pi(R) ~|~ F_j(x) \leq \alpha+\mu_j \},
	\]
	so that $E = \bigcap_j E_j$.
	Let's upper bound the ($K$-adic) Minkowski dimension of $E$ by finding a certain cover of $E_j$ by projections of cubes in $\mathcal D^{jK}_\mathcal W(R)$ (note that there are $M = 2^{(d-1)K}$ cubes in $\mathcal D^K_\mathcal W(R)$).
	
	Using the previous asymptotics (setting $\beta_j = \alpha + \mu_j$), we can cover $E_j$ (for $j$ large enough) with 
	\[
	C M^j\frac{2}{\sqrt{2\pi(\alpha+\mu_j)(1-\alpha - \mu_j)j}} z(\alpha+\mu_j)^j
	\]
	projections of cubes in $\mathcal D^{jK}_\mathcal W(R)$ and each of those cubes has side length $M^{-j/(d-1)}$.
	
	Now we are ready to upper bound the Minkowski dimension of the set $E$. We will use the following definition of upper Minkowski dimension
	\[
	\operatorname{dim}_{\overline{\mathcal M}} E = \limsup_{j\to\infty} \frac{\log \#(\{\mbox{$K$-adic cubes $Q$ of side length $K^{-j}$ that satisfy $Q \cap E \neq \varnothing$}\})}{j\log K}.
	\]
	which is equivalent to the dyadic one in \eqref{eq: Minkowski dimension}.
	By covering a single set $E_j \supset E$ and making $j\to\infty$, we obtain the following upper Minkowski dimension estimate
	\begin{align*}
		\operatorname{dim}_{\overline{\mathcal M}} (E) &\leq \lim_{j \rightarrow\infty} \frac{ j (\ln M + \ln z(\alpha + \mu_j)) + \ln \left(\frac{2}{\sqrt{2\pi(\alpha+\mu_j)(1-\alpha - \mu_j)j} } \right )}{  {(d-1)^{-1}j } \ln M} \\
		&= (d-1) \frac{\ln M + \ln z(\alpha)}{\ln M} < d-1 
	\end{align*}
	since $z(\alpha)<1$.
	
\end{proof}

\subsection{Planar case of Theorem \ref{thm: u is positive besides a set of positive codimension}}~
\noindent In the planar case, Theorem \ref{thm: u is positive besides a set of positive codimension} asserts that we can cover any compact $K \subset \Sigma$ by balls where $u$ does not change sign inside apart from a finite set of points. Moreover, this is valid for general Lipschitz domains (and domains with worse boundary regularity).

\begin{proof}[Proof of Theorem \ref{thm: u is positive besides a set of positive codimension} in the planar case]
	Without loss of generality, suppose that $\Omega$ is simply connected and bounded.
	
	By \cite[Theorem 16.1.4]{AIM}, there exists a $K$-quasiconformal map $\phi : \mathbb C \rightarrow \mathbb C$ such that $u= w \circ \phi$ with $w$ harmonic in $\phi(\Omega)$.
	Since $\phi$ is biH\"older continuous it is enough to prove the desired result for harmonic functions in Jordan domains.
	
	Now, consider a conformal mapping between $\phi(\Omega)$ and the disk $\mathbb D$ which extends to a homeomorphism up the boundary, and let $v$ be the induced harmonic function in the disk.
	
	Denote by $\tilde\Sigma$ the open set in $\partial \mathbb D$ where $v$ vanishes and by $\tilde E$ the set of points in $\tilde\Sigma$ such that for every neighborhood $v$ changes sign. Then $\tilde E$ must be a discrete set.
	This is because it coincides with the zero set of $\nabla v$ which is holomorphic (we can locally extend $v$ to $\mathbb D^c$ by reflection using the Kelvin transform near $\tilde \Sigma$). Thus, since it is a discrete set, it is countable and finite inside any compact. Note that all maps we have considered are homeomorphisms, thus the set where $v$ changes sign in every neighborhood is transported to another countable discrete set in $\Sigma\subset \overline\Omega$.

\end{proof}

\subsection{Estimates on the measure of nodal sets in the interior of the domain}
Theorem 6.1 in \cite{Lo1} estimates the $(d-1)$-dimensional Hausdorff measure of the nodal set in a cube in terms of its \textit{doubling index}, which is a quantity intimately related to the frequency function used in this paper. We present the following reformulation of Logunov's theorem avoiding the use of doubling indices.
\begin{theorem}
	\label{thm: Logunov nodal set}
	There exist positive constants $r,R,C$ depending on the Lipschitz constant $L_A$ of $A(x)$, ellipticity constant $\Lambda_A$ of $A(x)$, and dimension $d$ such that the following statement holds. Let $u$ be a solution of $\operatorname{div}(A\nabla u)=0$ on $B(0,R) \subset \mathbb R^d$. Then, for any cube $Q \subset B(0,r)$, we have
	\[
	\mathcal H^{d-1}(\{u = 0\} \cap Q) \leq C \ell(Q)^{d-1} (N(x_Q, 16\operatorname{diam}(Q))+1)^\alpha
	\]
	for certain $\alpha = \alpha(d) > 0$ and where $x_Q$ is the center of $Q$.
\end{theorem}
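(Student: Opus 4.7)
The plan is to reduce the statement to the version of Logunov's theorem phrased in terms of the \emph{doubling index}
\[
\mathcal N^*(Q) := \sup_{x\in 2Q,\ 0<\rho\leq \operatorname{diam}(Q)} \log_2 \frac{\sup_{B(x,2\rho)}|u|}{\sup_{B(x,\rho)}|u|},
\]
and then to bound $\mathcal N^*(Q)$ by $N(x_Q,16\operatorname{diam}(Q))+1$ (up to multiplicative and additive constants depending on $d$, $L_A$, $\Lambda_A$). The first ingredient — the doubling-index version — is \cite[Theorem~6.1]{Lo1}. Its extension from harmonic functions to weak solutions of $\operatorname{div}(A\nabla u)=0$ with Lipschitz, uniformly elliptic $A$ is standard: Logunov's combinatorial arguments rely only on doubling, monotonicity-type properties, and quantitative Cauchy uniqueness, all of which hold in the elliptic setting (see Theorem~\ref{thm: quantitative cauchy uniq} and Sections~\ref{section: frequency function}, \ref{section: behavior of frequency}). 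The constants $r,R$ are chosen so that $R\gg r$ and $L_A R$, $\Lambda_A-1$ are small enough for the perturbation results of Section~\ref{section: frequency function} to apply with universal constants.

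The translation from doubling index to frequency proceeds via two-sided comparisons of $\sup$-norms and $L^2$-means with $H(x,\cdot)$. On one hand, interior sub-solution estimates (Moser iteration) give
\[
\sup_{B(x,\rho)}|u|^2 \;\lesssim\; \fint_{B(x,2\rho)} |u|^2\,dy,
\]
and by Lemma~\ref{lemma: bounding balls by tilde H}, the right-hand side is bounded above by $C\,H(x, C'\rho)$. On the other hand, the trivial bound and Lemma~\ref{lemma: annulus bounds} (applied to the annulus $\mathcal A(x,\rho/4,\rho/2)$) yield
\[
\sup_{B(x,\rho)}|u|^2 \;\gtrsim\; \fint_{B(x,\rho/4)} |u|^2\,dy \;\gtrsim\; H(x, c\rho).
\]
Combining these with Lemma~\ref{lemma: lemma 2.2 of Tolsa} applied on a fixed uniform ratio gives
\[
\log_2 \frac{\sup_{B(x,2\rho)}|u|}{\sup_{B(x,\rho)}|u|} \;\leq\; C\,\bigl(N(x, C''\rho) + 1\bigr),
\]
valid for all $x\in 2Q$ and $\rho\leq \operatorname{diam}(Q)$, provided $B(x,C''\rho)\subset B(0,R)$ (which is where the choice $r\ll R$ matters).

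Finally, to move the basepoint of the frequency from $x\in 2Q$ back to the cube center $x_Q$, Lemma~\ref{lemma: perturbation of frequency function with small gamma} (applied with $\gamma\approx 1$, so using its second conclusion) gives
\[
N(x, C''\rho) \;\leq\; C_1\,N(x_Q,\, 16\operatorname{diam}(Q)) + C_1,
\]
since $|x-x_Q|\lesssim \operatorname{diam}(Q)$ and $C''\rho\leq C''\operatorname{diam}(Q)\ll 16\operatorname{diam}(Q)$; the almost-monotonicity of $e^{C_N r}N(x_Q,r)$ from Proposition~\ref{prop:exp(Cr)N(r) is nondecreasing} absorbs the $e^{C_N\cdot\operatorname{diam}(Q)}$ factor into the constants, using $\operatorname{diam}(Q)\leq 2r$ small. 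Taking the supremum over $x$ and $\rho$ yields $\mathcal N^*(Q)\lesssim N(x_Q,16\operatorname{diam}(Q))+1$, and substituting into the doubling-index form of Logunov's theorem produces the stated inequality with some $\alpha=\alpha(d)>0$.

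The main technical obstacle is ensuring that all constants produced by the chain of comparisons (Caccioppoli/Moser, $L^2$-to-$H$ comparisons, the Lemma~\ref{lemma: lemma 2.2 of Tolsa} exponentiation, and the Lemma~\ref{lemma: perturbation of frequency function with small gamma} basepoint change) depend only on $d$, $L_A$, $\Lambda_A$, and not on $u$ or on the particular cube; this is what forces the dependence of $r$ and $R$ on $L_A$ and $\Lambda_A$, so that the exponential corrections $1+O(L_A r)$ appearing throughout Section~\ref{section: frequency function} remain uniformly bounded.
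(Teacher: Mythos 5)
Your proposal is essentially the one the paper has in mind: the paper does not give a proof of Theorem \ref{thm: Logunov nodal set} at all, but states it as a ``reformulation of Logunov's theorem avoiding the use of doubling indices'' and cites \cite[Theorem 6.1]{Lo1}. You are therefore filling a gap the paper leaves implicit, namely the dictionary between the doubling index $\mathcal N^*(Q)$ and the frequency $N(x_Q,\cdot)$, and your chain — $L^\infty$-to-$L^2$ comparisons via interior elliptic estimates, $L^2$-to-$H$ comparisons via Lemmas \ref{lemma: bounding balls by tilde H} and \ref{lemma: annulus bounds}, ratios of $H$ controlled by $N$ via Lemma \ref{lemma: lemma 2.2 of Tolsa}, and a basepoint shift via Lemma \ref{lemma: perturbation of frequency function with small gamma} together with the almost-monotonicity of Proposition \ref{prop:exp(Cr)N(r) is nondecreasing} — is the correct route, and does produce $\mathcal N^*(Q)\lesssim N(x_Q,16\operatorname{diam}(Q))+1$.

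Two small corrections. First, the sentence about ``extension from harmonic functions to weak solutions \dots being standard'' is misleading: \cite[Theorem 6.1]{Lo1} is already stated and proved for solutions of divergence-form second-order elliptic equations with Lipschitz coefficients (that is Logunov's actual setting, since Laplace eigenfunctions on Riemannian manifolds reduce to this), so no ``extension'' is needed; only the doubling-index-to-frequency translation is missing. Second, in the lower bound $\sup_{B(x,\rho)}|u|^2\gtrsim H(x,c\rho)$, the intermediate average should be over $B(x,\rho/2)$ (not $B(x,\rho/4)$) in order to contain the annulus $\mathcal A(x,\rho/4,\rho/2)$ to which you apply Lemma \ref{lemma: annulus bounds}; this is a harmless slip. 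You should also make explicit that in the basepoint shift with $r=4\operatorname{diam}(Q)$ and $x\in 2Q$ one has $|x-x_Q|\leq \operatorname{diam}(Q)=r/4$, so the second conclusion of Lemma \ref{lemma: perturbation of frequency function with small gamma} applies provided $\Lambda_A<3$, which is guaranteed in the small-ellipticity regime of Remark \ref{rmk: small LA and LambdaA}. With these clarifications the argument is complete.
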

\begin{remark}
	If we assume $N(x_Q, 16\operatorname{diam}(Q)) > N_0$ for some $N_0$ positive, we can rewrite the previous theorem as
	\[
	\mathcal H^{d-1}(\{u = 0\} \cap Q) \leq C \ell(Q)^{d-1} (N(x_Q, 16\operatorname{diam}(Q)))^\alpha
	\]
	where now $C$ depends also on $N_0$ and $\alpha$.
\end{remark}

\subsection{Proof of Theorem \ref{thm: measure of nodal set near the boundary}}
To prove Theorem \ref{thm: measure of nodal set near the boundary} we will follow the ideas of \cite{LMNN} but exchanging the use of the {Donnelly-Fefferman} estimate for the size of nodal sets (see \cite{DF}) by Logunov's estimate (Theorem \ref{thm: Logunov nodal set}). This gives rise to a worse estimate (polynomial in the frequency) than the one of \cite{LMNN} (which is linear in the frequency but only valid for harmonic functions).

\begin{proof}[Proof of Theorem \ref{thm: measure of nodal set near the boundary}]
	Let $Q$ be a small enough cube of the Whitney structure (so that we can use the modified frequency function defined in Section \ref{section: modified freq function}). Again, we will use the notation $t(Q)$ for the unique cube $Q'$ with center on $\Sigma$ such that $Q'$ is a vertical translation of $Q$.
	\begin{claim}	
		The following equation holds
		\begin{equation}
			\label{eq: nodal set in compact}
			\mathcal H^{d-1}(\{u = 0\} \cap t(Q) \cap K) \leq C_0 N'(Q)^\alpha \ell(Q)^{d-1}
		\end{equation}
		for any $K$ compact inside $\Omega$ with $C_0$ and $\alpha$ independent of $K$.
	\end{claim}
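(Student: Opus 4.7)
The plan is to prove the inequality by induction on the Whitney tree of descendants of $Q$. Since the compact set $K$ lies strictly inside $\Omega$, the quantity $T(P) := \mathcal H^{d-1}(\{u=0\}\cap t(P) \cap K)$ vanishes once $\ell(P)$ is smaller than a fixed multiple of $\operatorname{dist}(K, \partial\Omega)$; these cubes furnish the base case. For the inductive step I would distinguish cases according to the sign behavior of $u$ on $t(Q) \cap \Omega$: if $u$ does not change sign there (the scenarios (a) or (b1) from the definition of $N'$ in Section \ref{section: modified freq function}), the strong maximum principle yields $\{u=0\}\cap t(Q) \cap \Omega = \varnothing$, so $T(Q)=0$ and the bound is trivial. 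Otherwise $Q$ is in case (b2) and $N'(Q) = \max(N(Q),N_0/2) \geq N(Q)$.

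In that remaining case I split $t(Q) \cap K$ into the ``bottom'' region $\bigcup_{P \in \mathcal D_{\mathcal W}^K(Q)} t(P)$ and a ``top'' region lying above it. The top region is a union of $O(K)$ layers of interior Whitney cubes: at depth $\approx 2^{-j}\ell(Q)$ for $1 \leq j \leq K$ there are $\approx 2^{j(d-1)}$ Whitney cubes of side $\approx 2^{-j}\ell(Q)$, each bounded away from $\partial\Omega$. For each such cube $P'$, Theorem \ref{thm: Logunov nodal set} gives $\mathcal H^{d-1}(\{u=0\}\cap P') \lesssim \ell(P')^{d-1}(N(x_{P'},16\operatorname{diam}(P'))+1)^\alpha$, and Lemma \ref{lemma: perturbation of frequency function with small gamma} lets me replace the frequency at $x_{P'}$ by a constant multiple of $N'(Q)+1$. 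Summing within each layer yields a contribution of order $\ell(Q)^{d-1}N'(Q)^\alpha$, hence a total top estimate of at most $C'' K\,\ell(Q)^{d-1}N'(Q)^\alpha$ (absorbing the $+1$ into $N'(Q) \geq N_0/2 \gtrsim 1$).

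For the bottom region I invoke the inductive hypothesis on each child $P \in \mathcal D_{\mathcal W}^K(Q)$. The combinatorial input---extracted from Lemma \ref{lemma: Key lemma} combined with Corollary \ref{coro: 2nd hyperplane}---is that among the $M = 2^{K(d-1)}$ children, at least $\delta_0 M$ are ``good'' with $N'(P) \leq N'(Q)/2$, while the remaining ones (at most $(1-\delta_0)M$) are ``bad'' with $N'(P) \leq (1+\epsilon)N'(Q)$. Since each child has $\ell(P) = 2^{-K}\ell(Q)$, a direct computation then gives
\[
\sum_{P\in \mathcal D_{\mathcal W}^K(Q)} C_0\,N'(P)^\alpha \ell(P)^{d-1} \leq r\,C_0\,\ell(Q)^{d-1}N'(Q)^\alpha,
\]
where $r := \delta_0 2^{-\alpha} + (1-\delta_0)(1+\epsilon)^\alpha$ and we used $M \cdot 2^{-K(d-1)} = 1$.

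Combining the two contributions yields $T(Q) \leq (C''K + r C_0)\,\ell(Q)^{d-1} N'(Q)^\alpha$, and the induction closes by choosing $C_0 \geq C''K/(1-r)$. The key point is that $r<1$, which is guaranteed because $\delta_0$ and $\alpha$ are absolute and $\epsilon = CS_1^{-1/2}$ can be made arbitrarily small by increasing $S_1$ in the setup of Section \ref{section: modified freq function}; the resulting $C_0$ depends only on $d$, $L_A$, $\Lambda_A$, and $N_0$, in particular not on the compact set $K$. The main obstacle I expect is the careful geometric bookkeeping of the top-region decomposition into Whitney-cube layers and the uniform perturbation estimate linking $N(x_{P'},\cdot)$ to $N'(Q)$ across these layers; both rely crucially on $\Sigma$ having small Lipschitz constant and on $A(x)$ being a small perturbation of the identity.
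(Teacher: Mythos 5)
Your proof is correct and follows essentially the same inductive strategy as the paper: base case on cubes too small to meet $K$, a split of $t(Q)\cap\Omega$ into interior Whitney cubes (handled by Theorem \ref{thm: Logunov nodal set} plus the frequency perturbation lemma) and boundary child translates (handled by the induction hypothesis together with the $\delta_0$/$\epsilon$ dichotomy for $N'$), and closing the induction by noting $r = (1-\delta_0)(1+\epsilon)^\alpha + \delta_0 2^{-\alpha} < 1$ once $\epsilon$ is made small. The only slight redundancy is invoking the strong maximum principle for cases (a)/(b1), where the definition of $N'$ already records directly that $u$ has no zeros on $t(Q)\cap\Omega$.
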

	
	\begin{proof}[Proof of claim]
		Note that Equation \eqref{eq: nodal set in compact} holds for all cubes $Q$ small enough, since $t(Q) \cap K = \varnothing$ if $Q$ is small enough. This is because $\operatorname{dist}(K,\Sigma)>0$. We will proceed to prove this estimate by induction going from small cubes to large cubes.
		
		Assume it holds for all small cubes with $\ell(Q)<s$ . Now choose a larger cube $P$ in the Whitney cube structure with $\ell(P) < 2^K \ell(Q)$ (with $K$ given by Lemma \ref{lemma: Key lemma} as discussed in Section \ref{section: modified freq function}).
		Given such a cube $P$, we can cover $t(P) \cap \Omega$ with small cubes $t(Q_i)$ (intersecting the boundary)
		where $Q_i \in \mathcal D_{\mathcal W}^K(P)$ and with small cubes $Q_i'$ far from the boundary (small enough so that we can apply Theorem \ref{thm: Logunov nodal set} on them).
		
		Using that the Lipschitz constant of $\partial\Omega$ is small, we can bound the number of small cubes $Q_i'$ necessary. Moreover, using Lemma \ref{lemma: perturbation of frequency function with small gamma}, we can bound $N(x_{Q'}, 16 \operatorname{diam}(Q')) < 2 N(P) + 1$. Note that $N(P) \leq N'(P)$ in the case that $t(P) \cap \Omega \cap \{ u = 0 \} \neq \varnothing$.
		This allows us to bound the size of the nodal set on $\bigcup Q_i' \cap \Omega$ using Theorem \ref{thm: Logunov nodal set} by
		\[
		\sum_{Q_i'} \mathcal H^{d-1} (\{u = 0\} \cap Q_i') \leq C_1 N'(P) \ell(P)^{d-1}.
		\]
		
		We still need to bound the size of the nodal set in the boundary cubes $t(Q_i)$, which satisfy $N'(Q_i) \leq (1+\epsilon) N'(P)$. Moreover, we know that at least for $\lceil\delta_0 \operatorname{card}\{\mathcal D_\mathcal W^K(P)\}\rceil $ cubes $Q_i$ we have $N'(Q_i) \leq \frac {N'(P)}{2}$.
		Now we can use the induction hypothesis
		\begin{align*}
			\mathcal H^{d-1}(\{u = 0\} \cap K \cap \bigcup_{Q_i} t(Q_i)) &\leq
			\sum_{ N'(Q_i) > N'(P)/2} \mathcal H^{d-1}(\{u = 0\} \cap K \cap t(Q_i)) \\
			&~~ + 
			\sum_{N'(Q_i)\leq N'(P)/2} \mathcal H^{d-1}(\{u = 0\} \cap K \cap t(Q_i)) \\
			&\leq \sum_{ N'(Q_i) > N'(P)/2} C_0 N'(Q_i)^\alpha \ell(Q_i)^{d-1}\\ 
			&~~+ \sum_{N'(Q_i)\leq N'(P)/2} C_0 N'(Q_i)^\alpha \ell(Q_i)^{d-1} \\
			&\leq C_0 N'(P)^\alpha \ell(P)^{d-1} \left (  (1+\epsilon)^\alpha (1 - \delta_0) + \frac {\delta_0} {2^\alpha} \right) .
		\end{align*}
		We choose $\epsilon$ small enough (by increasing $S_1$) so that
		\[
		(1+\epsilon)^\alpha (1 - \delta_0) + \frac {\delta_0} {2^\alpha}  < 1
		\]
		noting that $\delta_0$ does not depend on $S_1$ or $\epsilon$.
		Finally, we choose $C_0$ large enough so that it absorbs all terms, that is,
		\[
		C_0 \left ( (1+\epsilon)^\alpha (1 - \delta_0) + \frac {\delta_0} {2^\alpha} \right ) + C_1 < C_0 .
		\]
		Note that the previous estimates do not depend on the compact $K$ chosen and we prove the claim.
	\end{proof}
	
	To treat a general (small) ball $B$ centered at $\Sigma$, we first cover it by a comparable cube $Q$ centered at $\Sigma$.
	Now, we choose a translate and dilation of the dyadic cube structure of $\mathbb R^d$ such that $Q = t(P)$ for some $P$ in the Whitney cube structure and we apply the previous claim.
	
\end{proof}
\begin{remark}
	\label{rmk: x in the thm of measure of nodal set}
	In the statement of Theorem \ref{thm: measure of nodal set near the boundary}, the point $\tilde x$ that appears is the center of a particular Whitney cube appearing in the proof.
	Nonetheless, Lemma \ref{lemma: perturbation of frequency function with small gamma} (and that the Lipschitz constant $\tau$ of the boundary is small to preserve admissibility) gives us a lot of freedom to choose $\tilde x$.
\end{remark}
\subsection{\texorpdfstring{$(d-1)$}{(d-1)}-dimensional Hausdorff measure of Dirichlet eigenfunctions}
\label{section: Yau}
Theorem \ref{thm: measure of nodal set near the boundary} allows us to study the zero set of solutions of the Dirichlet eigenvalue problem
\[\begin{cases} 
	\operatorname{div}(A \nabla u_\lambda) = -\lambda u_\lambda,~ \text{ in } \Omega, \\
	u_\lambda = 0, ~ \text{ on } \partial\Omega.
\end{cases}\]
In fact, one can show that
\begin{equation}
	\label{eq: yaus upper estimate}
	\mathcal H^{d-1}(\{u_\lambda = 0\}) \leq C(\Omega, \Lambda_A, L_A) \lambda^{\frac \alpha 2}
\end{equation}
for bounded domains $\Omega$ with local Lipschitz constant small enough for some $\alpha = \alpha(d) > 1$. For a detailed account of the proof in the harmonic case (and a sharper result), see Section 6 in \cite{LMNN}. We will only briefly sketch the main ideas behind its proof. Also note that this problem is intimately related to \textit{Yau's conjecture on nodal sets of Laplace eigenfunctions in manifolds} (see \cite{LM2,Lo1,Lo2,DF}).

The first step consists in passing from eigenfunctions to solutions of $\operatorname{div}(A\nabla u)=0$.
Consider the function $u(x,t) = u_\lambda(x) e^{\sqrt\lambda t}$ in the cylinder domain $\Omega \times \mathbb R \subset \mathbb R^{d+1}$. Let
\[
\tilde A(x) = \begin{pmatrix}
	A(x) & 0 \\ 0 & 1
\end{pmatrix},
\]
then we have that $u$ solves
\[
\operatorname{div}(\tilde A \nabla  u) = \underbrace{\operatorname{div}(A \nabla  u_\lambda)}_{-\lambda u_\lambda(x)} e^{\sqrt \lambda t} + \lambda u_\lambda(x) e^{\sqrt \lambda t} = 0 .
\]
Clearly, we have $\{ u(x,t)  = 0 \} = \{ u_\lambda(x)  = 0 \}\times \mathbb R $. Thus, we can restrict us to the study of the nodal set of $u(x,t)$.

The next necessary step is the \textit{Donnelly-Fefferman frequency estimate} \cite{DF}. For small balls $B$ contained in the domain, it is shown in \cite{LM1, DF} that $N^u(x_B, r(B)) \leq C(\Omega, \Lambda_A, L_A) \sqrt{\lambda}$. The previous result is also true for balls intersecting the boundary (see Lemma 10 in \cite{LMNN} for a proof in the harmonic case).

Finally, using the previous estimate together with Theorems \ref{thm: measure of nodal set near the boundary} and \ref{thm: Logunov nodal set}, one obtains the result in \eqref{eq: yaus upper estimate}.

\section{Proof of Corollary \ref{cor: unique continuation}}
	\label{section: proof of unique cont}
	Theorem \ref{thm: u is positive besides a set of positive codimension} tells us that we can decompose $\Sigma$ in its intersection with a countable family of balls $(B_i)_i$ and a set of Hausdorff dimension smaller than $d-1$ by taking an exhaustion of $\Sigma$ by compacts.
	Thanks to countable additivity, we only need to prove that 
	\[
	\mathcal H^{d-1}\left (\{x \in \Sigma \,|\, \partial_\nu u(x) = 0\}\cap B \right ) = 0
	\] 
	for any ball $B\in(B_i)_i$ given by the decomposition of Theorem \ref{thm: u is positive besides a set of positive codimension}.
	
	Before starting the proof, we define the concept of $\mathcal A_\infty$ weight.
	\begin{definition}
		\label{def: Ainfty weight}
		We say that a measure $\omega \in \mathcal A_\infty(\sigma)$ if there exist $0<\gamma_1, \gamma_2 < 1$ such that for all balls $B$ and subsets $E \subset B$, $\sigma(E) \leq \gamma_1 \sigma(B)$ implies $\omega(E) \leq \gamma_2 \omega(B)$.
	\end{definition}
	
	\begin{proof}[Proof of Corollary \ref{cor: unique continuation}]
		Consider $B$ centered on $\Sigma$ such that $u|_{B\cap\Omega}$ does not change sign. Without loss of generality, we assume that $u$ is positive.
		
		By Dahlberg's theorem \cite{Dah}, harmonic measure for the domain $B\cap \Omega$ is an $\mathcal A_\infty$ weight with respect to surface measure.
		By \cite{FKP}, 
		since the matrix $A(x)$ is uniformly elliptic and has Lipschitz coefficients, its associated elliptic measure $\omega_A$ is another $\mathcal A_\infty$ weight.
		In particular, it is well known that this implies that the density $\frac{d \omega_A}{d\sigma}$ can only vanish in a set of zero surface measure.
		
		On the other hand, the density of elliptic measure is comparable with $(A\nabla g, \nu)$ at the boundary (where $g$ is the Green function with pole outside $2B$). By the boundary Harnack inequality (see \cite{DS} for example), since $u$ is positive in $B\cap\Omega$, we have that $A\nabla u$ on $\Sigma\cap B$ is comparable to $A\nabla g$. This finishes the proof.
	\end{proof}
	
	\begin{remark}
		There is also a different approach to proving Corollary \ref{cor: unique continuation}, which is the one adopted in \cite{To} for harmonic functions. The ingredients are Lemma \ref{lemma: Key lemma}, \cite[Lemma 0.2]{AE} (which is also valid for the type of PDEs we consider, see the paragraph below the proof of  \cite[Lemma 2.2]{AE}, also \cite[Lemma 4.3]{To}), and a modification of \cite[Lemma 4.1]{To}. In particular, the tools of Sections \ref{section: balls without zeros} and \ref{section: pf of thm 1.1} are not indispensable for this result.
	\end{remark}

\section{Proof of Corollary \ref{cor: order of zeros}}
	\label{section: proof of order of zeros}
	\begin{definition}
		We define a non-truncated cone $\tilde C_\tau$ with aperture $\tau \in \mathbb R$ and vertex at $0$ as
		\[
		\tilde C_\tau(0) = \{(y',y'') \in \mathbb R^{d-1} \times \mathbb R ~|~ y'' > \tau |y'|\}.
		\]
		Notice that when $\tau = 0$, then $\tilde C_0$ is a half space and when $\tau<0$, $\tilde C_\tau$ is a non-convex cone.
		A truncated cone $C_{\tau,s}$ is a non-truncated cone $\tilde C_\tau$ intersected with the ball $B(0,s)$.
	\end{definition}
	First we will prove the result for harmonic functions.
	\begin{proof}[Proof of Corollary \ref{cor: order of zeros} in the harmonic case]
		Let $x \in B\cap\Sigma$ for some ball $B$ where $u$ does not vanish given by Theorem \ref{thm: u is positive besides a set of positive codimension}.
		Assume without loss of generality that $u|_{B\cap\Omega} >0$.
		
		Since $\Sigma$ is Lipschitz with Lipschitz constant $\tau$, we can find a small truncated cone $C_{\tau,s}$ of aperture $\tau$ with vertex at $x$ and contained in $\Omega$. 
		
		Let $g$ be the Green function (for the Laplacian) with pole at infinity of the non-truncated cone $\tilde C_\tau$. Since $u|_{C_{\tau,s}}$ is nonnegative up to the boundary, we can lower bound it by some adequate multiple of $g$ (that is $u \gtrsim g$ in $C_{\tau,s}$) by boundary Harnack inequality.
		Notice that $g$ is of the form $g(x) = g_r(|x|)g_\theta(\frac{x}{|x|})$ where $g_\theta$ is the first Dirichlet eigenfunction of the Laplace-Beltrami operator $\Delta$ in the domain ${\tilde C_\tau \cap \partial B(0,1)}$ with eigenvalue $\lambda_\tau$ and $g_r(|x|) = |x|^{-\frac d 2 + 1 + \frac{\sqrt{(d-2)^2-4\lambda_\tau}}{2}}$ (see \cite[Theorem 1.1]{An}).
		Thus $g_r$ gives an upper bound on the order of vanishing at the origin.
		Also notice that $\lambda_0 = 1-d$ (when $\tilde C_0$ is a half space) and thus $g_r(|x|) = |x|$.
		Since the Dirichlet eigenvalues $\lambda_\tau$ of $\tilde C_\tau\cap \partial B(0,1)$ for the Laplace-Beltrami operator vary continuously with the aperture $\tau$ of $\tilde C_\tau$, we can ensure that the order of vanishing of $g$ is close enough to $1$ by making $\tau$ small enough. The continuity of the variation of the eigenvalues with the aperture can be easily shown using the Rayleigh quotient.
		
		For the lower bound on the order of the vanishing at the origin, consider a cone $\tilde C_{-\tau}$ of aperture $-\tau$ (concave cone) and its Green function $g$ with pole at the infinity. Now $g|_{\Omega\cap \tilde C} \gtrsim u$ and we can follow the same argument.
	\end{proof}
	Next, we deal with the elliptic case, but first, we need several lemmas.	
	\begin{lemma}
		\label{lemma: N and vanishing order}
		Let $u$ be a solution of $\operatorname{div}(A\nabla u) = 0$ in $\Omega$ and $x_0 \in \Omega\cup\Sigma$.
		Then the vanishing order $\alpha$ of $u$ at $x_0$ satisfies
		\[
		\alpha(x_0) = 
		\lim_{r \downarrow 0}  \log_2 \frac{\sqrt{H(x_0,2r)}}{\sqrt{H(x_0,r)}}
		\] 
		when the limit exists.
	\end{lemma}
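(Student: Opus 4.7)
The idea is that the limit $\beta:=\lim_{r\downarrow 0}\log_2\sqrt{H(x_0,2r)/H(x_0,r)}$ records the polynomial rate of decay of $\sqrt{H(x_0,r)}$ as $r\downarrow 0$, whereas the vanishing order records the polynomial rate of decay of $\fint_{B(x_0,r)\cap\Omega}|u|\,dy$. My plan is therefore to first extract sharp polynomial asymptotics for $H(x_0,r)$ from the doubling hypothesis, and then to transfer them to $\fint_{B(x_0,r)\cap\Omega}|u|\,dy$: Cauchy--Schwarz for one inequality, and a Moser-type $L^1$-to-$L^\infty$ estimate for the reverse.

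For the first step, I would fix $\epsilon>0$, use the hypothesis to obtain a scale $r_1$ on which $2^{\beta-\epsilon}\leq\sqrt{H(x_0,2r)/H(x_0,r)}\leq 2^{\beta+\epsilon}$, and iterate dyadically downwards to obtain $c_\epsilon\,r^{\beta+\epsilon}\leq\sqrt{H(x_0,r)}\leq C_\epsilon\,r^{\beta-\epsilon}$ for all $0<r<r_1$. In polar coordinates,
\[
\int_{B(x_0,r)}\mu_{x_0}(y)|u(y)|^2\,dy=\int_0^r s^{d-1}H(x_0,s)\,ds,
\]
and since $\Lambda_A^{-1}\leq \mu_{x_0}\leq \Lambda_A$ (Remark \ref{rmk: properties mu et al}), this identity together with the two-sided bound on $H$ yields $\fint_{B(x_0,r)}|u|^2\,dy\approx r^{2\beta\pm 2\epsilon}$. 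The case $A(x_0)\neq I$ introduces only $\Lambda_A$-dependent multiplicative constants via the anisotropic change of variables of Section \ref{section: modifying the domain} (cf.\ Remark \ref{rmk: H in general points}), which does not affect the existence or value of $\beta$.

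For the inequality $\alpha(x_0)\geq\beta$, Cauchy--Schwarz then gives
\[
\fint_{B(x_0,r)\cap\Omega}|u|\,dy\leq\frac{|B(x_0,r)|}{|B(x_0,r)\cap\Omega|}\left(\fint_{B(x_0,r)}|u|^2\,dy\right)^{1/2}\lesssim r^{\beta-\epsilon},
\]
using $|B(x_0,r)|\approx|B(x_0,r)\cap\Omega|$ for our Lipschitz domain; letting $\epsilon\downarrow 0$ proves the inequality. For the reverse inequality $\alpha(x_0)\leq\beta$, I would use that, with the zero extension of $u$ across $\Sigma$, $|u|$ is a subsolution of $\operatorname{div}(A\nabla\cdot)=0$ in balls with $B\cap\partial\Omega\subset\Sigma$ (Section \ref{section: conditions on A(x)}). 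Moser's $L^1$-to-$L^\infty$ estimate then gives $\sup_{B(x_0,r/2)}|u|\lesssim\fint_{B(x_0,r)\cap\Omega}|u|\,dy$, while the trivial $L^2\leq L^\infty$ bound combined with the lower bound $\fint_{B(x_0,r/2)}|u|^2\,dy\gtrsim r^{2\beta+2\epsilon}$ from the first step yields $\sup_{B(x_0,r/2)}|u|\gtrsim r^{\beta+\epsilon}$. Together these force $\fint_{B(x_0,r)\cap\Omega}|u|\,dy\gtrsim r^{\beta+\epsilon}$ for all small $r$, ruling out a vanishing order strictly greater than $\beta$.

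The main point to watch is the case $x_0\in\Sigma$ combined with $A(x_0)\neq I$: the polar-coordinate identity, Cauchy--Schwarz, and Moser's theorem must all be used on balls with $B(x_0,r)\cap\partial\Omega\subset\Sigma$, which is guaranteed once $r$ is small enough. The anisotropic rescaling of Section \ref{section: modifying the domain} absorbs the non-identity matrix into bounded multiplicative constants, and the zero extension of $u$ keeps $|u|$ a subsolution throughout the ball, so Moser's estimate applies up to $\Sigma$. None of these steps affects the limit $\beta$, so the identification $\alpha(x_0)=\beta$ survives.
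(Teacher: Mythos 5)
Your proof is correct and follows essentially the same route as the paper's: extract two-sided polynomial asymptotics for $\sqrt{H(x_0,r)}$ from the dyadic-telescoping of the doubling ratio, then transfer between $\sqrt{H}$ and the $L^1$-average via Cauchy--Schwarz in one direction and a Moser-type $L^\infty$--$L^1$ subsolution estimate in the other (the paper routes the latter through $\sqrt{H(x_0,r)}\lesssim\sup_{B(x_0,r)}|u|\lesssim\fint_{B(x_0,2\Lambda_A^{-1}r)}|u|$, using Lemma~\ref{lemma: bounding balls by tilde H} instead of your explicit polar-coordinate identity, but the ingredients and the logic coincide).
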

	\begin{proof}
		The proof has two parts. First, we see that 
		\[
		\fint_{B(x_0,r)} |u|\, dx \leq C_\alpha r^\alpha
		\iff 
		\sqrt{H(x_0,r)} \leq C_\alpha' r^\alpha.
		\]
		Notice that for $r$ small enough, we have
		\[
		\fint_{B(x_0,\Lambda_A^{-1}r)} |u| dy 
		\leq
		\left( \fint_{B(x_0,\Lambda_A^{-1}r)}|u(y)|^2 dy \right)^{1/2} 
		\lesssim
		\sqrt{H(x_0,r)} 
		\]
		thanks to Cauchy-Schwarz and Lemma \ref{lemma: bounding balls by tilde H}.
		On the other hand, if we let $x_* \in \overline{B(x_0,r)}$ be such that $|u(x_*)| = \sup_{B(x_0,r)} |u|$, then
		\[
		\sqrt{H(x_0, r)} \lesssim
		\sup_{B(x_0,r)} |u|
		\lesssim \fint_{B(x_*, \Lambda_A^{-1}r)} |u| dy
		\lesssim  \fint_{B(x_0, 2\Lambda_A^{-1}r)} |u| dy 
		\]
		using standard estimates for solutions of elliptic PDEs.
		
		The second part consists in showing that
		\[
		\lim_{r \downarrow 0}  \log_2 \frac{\sqrt{H(x_0,2r)}}{\sqrt{H(x_0,r)}} = \alpha
		\] 
		implies
		\[
		\sqrt{H(x_0,r)} \leq C_\gamma r^\gamma, \quad 0<\gamma<\alpha, ~\text{ and }~ \lim_{r\to0} \frac{\sqrt{H(x_0,r)}}{r^\gamma} = \infty, \quad \gamma>\alpha.
		\]
		Fix $\gamma<\alpha$. Then there exists some $k_\gamma>0$ such that for all $k>k_\gamma$, we have
		\[
		 \log_2 \frac{\sqrt{H(x_0,2^{-k+1})}}{\sqrt{H(x_0,2^{-k})}} > \gamma.
		\]
		Then, for all $n$ large enough, we have
		\[
		\sqrt{H(x_0, 2^{-n})} = \sqrt{H(x_0,1)}\, \prod_{j=1}^n \frac{\sqrt{H(x_0,2^{-j})}}{\sqrt{H(x_0,2^{-j+1})}}
		<
		C 2^{-\gamma(n-k_\gamma)} = C' \left (2^{-n} \right)^\gamma.
		\]
		Now, fix $\gamma>\alpha$. As before,there exists some $k_\gamma>0$ such that for all $k>k_\gamma$, we have
		\[
		\log_2 \frac{\sqrt{H(x_0,2^{-k+1})}}{\sqrt{H(x_0,2^{-k})}} < \gamma - \delta.
		\]
		for some small $\delta>0$.
		Finally, for all $n$ large enough, we obtain
		\[
		\sqrt{H(x_0, 2^{-n})} = \sqrt{H(x_0,1)}\, \prod_{j=1}^n \frac{\sqrt{H(x_0,2^{-j})}}{\sqrt{H(x_0,2^{-j+1})}}
		>
		C 2^{-(\gamma+\delta)(n-k_\gamma)} > C' \left (2^{- n}\right)^\gamma 2^{\delta n}.
		\]
		Since $\lim_{n\to\infty}2^{\delta n} = \infty$, this finishes the proof.
	\end{proof}
	\begin{lemma}
		\label{lemma: frequency bounded for positive solutions}
		Let $u$ be a positive solution to $\operatorname{div}(A\nabla u) = 0$ in a cone $C_{\tau,s}$ such that $u = 0$ on $\partial C_{\tau,s}\cap B(0,s)$. Assume also that $A(0) = I$. Then, its frequency $N(0,r)$ is bounded above for $0<r<\frac s 2$.
	\end{lemma}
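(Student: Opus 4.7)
The plan is to reduce the boundedness of $N(0, r)$ to the boundedness of the doubling ratio $H(0, 2r)/H(0, r)$ via a Caccioppoli inequality, and then to establish the doubling bound by a blow-up argument at the vertex of the cone.

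For the reduction, I would apply Caccioppoli's inequality to $u$ with a cutoff supported in $B_{2r}$ and equal to $1$ on $B_r$, using that $u$ vanishes on $\partial C_{\tau, s} \cap B(0, s)$. Combined with the almost monotonicity of $e^{c_H r}H(0, r)$ from Proposition \ref{prop:exp(cr)H(r) is nondecreasing}, which gives $\int_{B_{2r}} u^2 \, dx \lesssim (2r)^d H(0, 2r)$ via polar coordinates, this yields $N(0, r) = r I(0, r)/H(0, r) \lesssim H(0, 2r)/H(0, r)$ for all $r \in (0, s/2)$.

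To bound $H(0, 2r)/H(0, r)$ uniformly, I argue by contradiction: if the ratio is unbounded, then by continuity and strict positivity of $H(0, \cdot)$ this must happen along a sequence $r_n \to 0$. Consider the rescalings $u_n(y) := u(r_n y)/\sqrt{H(0, r_n)}$, which solve $\operatorname{div}(A_n \nabla u_n) = 0$ on the expanding domain $C_{\tau, s/r_n}$ with $A_n(y) := A(r_n y) \to I$ locally uniformly (by $A(0) = I$ and the Lipschitz continuity of $A$); each $u_n$ is nonnegative, vanishes on the lateral boundary, and is normalized by $H^{u_n}(0, 1) = 1$. Interior $C^{1, \alpha}$ estimates together with boundary H\"older regularity on the smooth portion of the lateral boundary (away from the vertex) yield a subsequential locally uniform limit $u_0$, which is a nonnegative harmonic function on the infinite cone $\tilde C_\tau$ vanishing on $\partial \tilde C_\tau \setminus \{0\}$ with $H^{u_0}(0, 1) = 1$. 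The classification of such functions (via Martin boundary theory on Lipschitz cones, or equivalently separation of variables together with nonnegativity eliminating all sign-changing eigenmodes) gives $u_0 = a|y|^{\alpha_\tau^+}\phi_1(y/|y|) + b|y|^{\alpha_\tau^-}\phi_1(y/|y|)$ for some $a, b \geq 0$, where $\phi_1$ is the first positive Dirichlet eigenfunction of the spherical Laplacian on the cap $\tilde C_\tau \cap \mathbb S^{d-1}$ and $\alpha_\tau^\pm$ are the roots of $\alpha^2 + (d-2)\alpha - \lambda_1 = 0$ (with $\lambda_1$ the corresponding eigenvalue). In particular $H^{u_0}(0, 2)/H^{u_0}(0, 1)$ is finite; uniform convergence of $u_n$ on the smooth compact $\partial B_2 \cap \overline{\tilde C_\tau}$ then gives $H^{u_n}(0, 2)/H^{u_n}(0, 1) \to H^{u_0}(0, 2)/H^{u_0}(0, 1) < \infty$, contradicting the assumption.

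This handles small $r$; for $r$ in any compact subinterval of $(0, s/2)$, $N(0, r)$ is continuous with $H(0, r) > 0$ and $I(0, r) < \infty$ (by Caccioppoli in a slightly larger ball), hence automatically bounded. The main obstacle is the blow-up step: obtaining sufficient compactness to identify $u_0$ and to pass to the limit in the sphere integral $H^{u_n}(0, 2)$. Both are accessible because $A_n$ converges to the identity matrix and $\partial \tilde C_\tau$ is smooth (in fact a cone over a smooth sphere) away from the vertex; the Martin classification then collapses $u_0$ to a nonnegative combination of two explicit power-law profiles, which automatically has bounded doubling.
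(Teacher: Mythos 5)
Your reduction of $N(0,r)$ to the doubling ratio $H(0,2r)/H(0,r)$ via Caccioppoli is exactly what the paper does, and is correct. Where you diverge is in how the doubling bound is established: the paper cites Remark~3 (p.~953) of \cite{AE}, which gives the doubling $\int_{B_{4r}} u^2 \lesssim \int_{B_{\Lambda_A^{-1}r}} u^2$ directly for positive solutions in domains star-shaped with respect to the origin (cones in particular), combined with Lemma~\ref{lemma: bounding balls by tilde H}. You instead propose a blow-up contradiction argument with a Martin-boundary classification of positive harmonic functions on the infinite cone. That is a legitimate alternative strategy, but as written it has a gap and also a redundancy.

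The gap is in the compactness step. The normalization $H^{u_n}(0,1)=1$ gives, via almost-monotonicity and Lemma~\ref{lemma: annulus bounds}, an $L^2$ bound on $B_1\cap\tilde C_\tau$ only. Interior $C^{1,\alpha}$ and boundary H\"older estimates turn such a bound into equicontinuity, but only on subsets of $B_1$; they say nothing about $B_2$ or $B_3$, where you need the $u_n$ to converge in order to take the limit in $H^{u_n}(0,2)$. Under your contradiction hypothesis $H^{u_n}(0,2)\to\infty$, the sequence is precisely \emph{not} bounded in $L^2(B_2)$ by definition, so the precompactness cannot come from elliptic regularity alone. The missing ingredient is positivity: from the $L^2(B_1)$ bound and local boundedness you get $u_n(p)\lesssim 1$ at some fixed interior point $p$, and then Harnack chains (using that each $u_n>0$ in $C_{\tau, s/r_n}\supset B_4\cap\tilde C_\tau$ for $n$ large) propagate this to $\sup_K u_n \lesssim C_K$ on interior compacts $K\subset B_3$, with the boundary-vanishing giving the corresponding bound up to the lateral boundary. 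You should state this explicitly, because positivity is doing essential work here and without it the argument fails.

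Once you do invoke Harnack, however, the rest of the blow-up machinery is unnecessary: the resulting uniform $L^\infty$ bound on $\partial B_2\cap\overline{\tilde C_\tau}$ already forces $H^{u_n}(0,2)\lesssim 1$, contradicting the hypothesis without any need to identify the limit $u_0$ or to classify positive harmonic functions on the cone. The Martin-representation/separation-of-variables step thus imports heavy theory to re-derive something already in hand. This is close in spirit to how the paper itself handles the analogous classification issue later (Lemma~\ref{lemma: blowups in cones} cites \cite[Lemma 3.7]{KT}); but for the present lemma the paper's route through \cite{AE} is both shorter and avoids the compactness subtlety entirely.
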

	\begin{proof}
		Remember that
		\[
		N(0,r) = r \frac{\int_{B(0,r)} (A \nabla u, \nabla u) dy}{\int_{\partial B(0,r)} \mu u^2 d\sigma} \approx  r \frac{\int_{B(0,r)} | \nabla u|^2 dy}{\int_{\partial B(0,r)} u^2 d\sigma}.
		\]
		Using that $u$ extended by $0$ outside of $C_{\tau,s}$ is a subsolution, by Caccioppoli's inequality we have
		\[
		\int_{B(0,r)} |\nabla u|^2 dx  \lesssim \frac{1}{r^2} \int_{B(0,2r)} u^2 dx.
		\]		
		By Remark 3 (page 953) in \cite{AE}, we obtain the doubling property
		\[
		\int_{B(0,4r)} u^2 \,dx \lesssim \int_{B(0,\Lambda_A^{-1} r)} u^2 \,dx.
		\]
		uniformly for all $r$ small enough. The remark is stated for convex domains, but the condition needed is that the domain is star-shaped with respect to $0$. In the case of cones, this is trivially true.
		Using  Lemma \ref{lemma: bounding balls by tilde H}, we obtain that 
		\[
		\fint_{B(0,\Lambda_A^{-1}r)} u^2 dx \lesssim \fint_{\partial B(0,r)} u^2 d\sigma.
		\]
		
		Summing up, we get 
		\[
		N(0,r) \approx r \frac{\int_{B(0,r)} |\nabla u|^2 dx}{\int_{\partial B(0,r)} u^2 dx}  \lesssim 1
		\]
		for all $r$ small enough.
	\end{proof}
	The following lemma shows that the blow-up of positive solutions in cones converges to the Green function for the Laplacian in the domain $\tilde C_\tau$ with pole at $\infty$ (see the proof of Corollary \ref{cor: order of zeros} in the harmonic case).
	\begin{lemma}
		\label{lemma: blowups in cones}
		Let $u$ be a positive solution of $\operatorname{div}(A\nabla u)=0$ in a truncated cone $C_{\tau,s}$ that vanishes on $\partial C_{\tau,s} \cap B(0,s)$ and assume that $A(0) = I$.
		Consider any sequence of radii $r_k \downarrow 0$ (such that $r_1 < |p|$).
		Let 
		\[
		u_k(x) := \frac{u(r_k x)}{\left ( \int_{\partial B_{1}} \vert u(r_k y) \vert^2 d\sigma(y) \right )^{1/2}} .
		\] 
		Then $u_k$ converges in $W^{1,2}_{loc}(\tilde C_\tau)$ and in $C^{1,\alpha}_{loc}\left (\overline{\tilde C_\tau} \backslash \{0\} \right)$ to a multiple of the Green function $g$ with pole at $\infty$ for the Laplacian in $\tilde C_\tau$ for some exponent $0<\alpha<1$.
		In particular, $u$ and $g$ have the same vanishing order at $0$. 
	\end{lemma}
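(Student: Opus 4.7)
The plan is to extract a limit $u_\infty$ from the rescaled family $(u_k)$ via uniform frequency bounds, identify it as a positive harmonic function with constant Almgren frequency in the cone (which forces it to coincide, up to a multiplicative constant, with $g$), and then compare vanishing orders.

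First I would establish uniform estimates. Lemma \ref{lemma: frequency bounded for positive solutions} yields $N^u(0, r) \le N_0$ for all small $r$, and Proposition \ref{prop:exp(Cr)N(r) is nondecreasing} gives existence of $\alpha := \lim_{r \downarrow 0} e^{C_N r} N^u(0, r)$. A direct change of variables in the definitions of $H$ and $I$ yields $H^{u_k}(0, R) = H^u(0, R r_k)/H^u(0, r_k)$ and $N^{u_k}(0, R) = N^u(0, R r_k)$, so $N^{u_k}(0, R) \to \alpha$ uniformly on compact subsets of $(0,\infty)$. Combined with Lemma \ref{lemma: lemma 2.2 of Tolsa}, this controls $H^{u_k}(0, R)$ above and below on compact $R$-intervals, and Lemma \ref{lemma: bounding balls by tilde H} converts it to uniform $L^2$ bounds on annular regions. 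Caccioppoli's inequality (using that $|u_k|$ is a subsolution) upgrades these to uniform $W^{1,2}_{\operatorname{loc}}(\tilde C_\tau)$ bounds, and interior plus boundary $C^{1,\alpha}$ regularity (applicable since $\partial \tilde C_\tau \setminus \{0\}$ is smooth) gives uniform bounds in $C^{1,\alpha}_{\operatorname{loc}}(\overline{\tilde C_\tau} \setminus \{0\})$. Extracting a subsequence, $u_k \to u_\infty$ in both topologies; since $A_k(x) := A(r_k x)$ satisfies $A_k(0) = I$ and has Lipschitz constant $r_k L_A \to 0$, the matrices converge to $I$ uniformly on compacts, so $u_\infty$ is harmonic in $\tilde C_\tau$. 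Moreover $u_\infty \ge 0$, $u_\infty = 0$ on $\partial \tilde C_\tau \setminus \{0\}$, and $\int_{\partial B_1} u_\infty^2\, d\sigma = 1$.

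The identification step is the crux. Passing to the limit through $C^{1,\alpha}$ convergence on annular regions, I obtain $N^{u_\infty}(0, R) = \lim_k N^{u_k}(0, R) = \alpha$ for every $R > 0$. Constant frequency (equality case in the Cauchy--Schwarz inequality used in Proposition \ref{prop:exp(Cr)N(r) is nondecreasing}) forces $u_\infty$ to be homogeneous of degree $\alpha$, that is, $u_\infty(x) = |x|^\alpha \psi(x/|x|)$, where $\psi \ge 0$ is a Dirichlet eigenfunction of $-\Delta_{S^{d-1}}$ on the spherical cap $\tilde C_\tau \cap \partial B_1$ with eigenvalue $\alpha(\alpha + d - 2)$. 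Positivity forces $\psi$ to be a multiple of the first (simple) eigenfunction, and the normalization fixes this multiple, so $u_\infty$ is uniquely determined as a positive multiple of $g$. All subsequential limits therefore coincide, and the full sequence converges. For the vanishing order assertion, Lemma \ref{lemma: N and vanishing order} reduces the problem to computing $\lim_{r \downarrow 0} H^u(0, 2r)/H^u(0, r)$; since $H^u(0, 2r_k)/H^u(0, r_k) = H^{u_k}(0, 2) \to H^{u_\infty}(0, 2) = 2^{2\alpha}$ for any sequence $r_k \downarrow 0$, this limit equals $2^{2\alpha}$, giving the same vanishing order as $g$.

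The main obstacle is showing that constant frequency forces homogeneity in the cone setting, since the Cauchy--Schwarz equality case must be verified on every sphere up to the lateral boundary of $\tilde C_\tau$ where $u_\infty$ vanishes; this requires a careful handling of the boundary term from the monotonicity formula, but is facilitated by the fact that $u_\infty$ is harmonic and that $\partial \tilde C_\tau \setminus \{0\}$ is smooth, so that the representation $u_\infty(r,\theta) = r^\alpha \psi(\theta)$ can be extracted from the ODE $\partial_r u_\infty = (\alpha/r) u_\infty$ satisfied in $\tilde C_\tau$.
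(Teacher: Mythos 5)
Your proof is correct and arrives at the same conclusion, but the key identification step differs from the paper's. The compactness argument is essentially identical (uniform frequency bounds from Lemma \ref{lemma: frequency bounded for positive solutions}, Caccioppoli plus interior/boundary Schauder estimates, Arzel\`a--Ascoli, passage to the limit in the weak formulation using $A(r_k\cdot)\to I$). Where you diverge is the identification of the limit $u_\infty$: the paper simply invokes the Liouville-type uniqueness result from Kenig--Toro \cite[Lemma 3.7]{KT}, which asserts outright that a positive harmonic function in $\tilde C_\tau$ vanishing on the lateral boundary must be a multiple of $g$. You instead reconstruct this from scratch by showing $N^{u_\infty}(0,R)\equiv\alpha$ (using the exact scaling identity $N^{u_k}(0,R)=N^u(0,Rr_k)$ and the existence of $\lim_{r\downarrow0}N^u(0,r)$ from Proposition \ref{prop:exp(Cr)N(r) is nondecreasing}), then deducing homogeneity from the equality case of Cauchy--Schwarz, and finally identifying $\psi$ with the first Dirichlet eigenfunction via positivity. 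This route is more self-contained but requires the observation you flag at the end: on a cone the vector $y$ is tangent to the lateral boundary, so $(\nu(y),y)\equiv0$ there and the boundary term $\boxed C$ in the derivative of $N$ vanishes identically, leaving only the Cauchy--Schwarz term; without spelling this out the ``constant frequency $\Rightarrow$ homogeneous'' step would not be justified in a domain with boundary. Both approaches are sound; the paper's is shorter by citing an external result, yours is longer but elementary and exhibits the homogeneity degree explicitly, which also streamlines the final vanishing-order computation (your $H^{u_\infty}(0,2)=2^{2\alpha}$ versus the paper's pinching argument on dyadic annuli). One minor inaccuracy: $H^{u_k}(0,R)$ equals $H^u(0,Rr_k)$ divided by $\int_{\partial B_1}|u(r_ky)|^2\,d\sigma$, not by $H^u(0,r_k)$; these agree only up to the factor $\int\mu_k|u(r_k\cdot)|^2/\int|u(r_k\cdot)|^2\to1$, which is harmless here since all your uses are asymptotic or involve ratios where the normalization cancels.
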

	\begin{proof}
		Without loss of generality assume that $s>2$ and $r_1=1$.
		Clearly, $u \in W^{1,2}( C_{\tau,1})$ and $u_k \in W^{1,2}( C_{\tau, {1/r_k}})$ for all $k>0$. Also notice that $N^u(0,r_k) = N^{u_k}(0,1)$ where $N^{u_k}$ is the frequency for the new PDE.
		Moreover, each $u_k$ satisfies $1 \approx H^{u_k}(0,1)$ and $\Vert \nabla u_k \Vert^2_{L^2(C_{\tau,1})} \approx N^{u_k}(0,1)$. 
		Observe that we can also bound $\Vert \nabla u_k \Vert_{L^2(C_{\tau,1/r_j})}$ and $H^{u_k}(0,r_j)$ for any $j \leq k$ (with bound depending on $r_j$).
		
		Thanks to Lemma \ref{lemma: frequency bounded for positive solutions}, we have that the sequence $(u_k)_k$ is bounded in Sobolev norm in any bounded set. Moreover, by boundary Schauder estimates (see Lemma 6.18 in \cite{GT}), $u_k\in C^{1,\alpha}(K)$ for any compact $K \subset \overline{\tilde C_\tau} \backslash \{0\}$ (for $k$ large enough depending on $K$). By Arzel\`a-Ascoli, there is a subsequence $(u_{k_j})_j$ that converges in norm $C^1$ to $\tilde u$ in $\overline{ C_{\tau,1}}\backslash B_{1/8}$.
		
		Moreover, we can also assume that $\tilde u$ is a weak limit in $W^{1,2}_\text{loc}(\tilde C_\tau)$. Thus,  $\tilde u$ is harmonic in $\tilde C_\tau$. To see this, fix any compact $K \subset \subset \tilde C_\tau$. Then, for all $\phi \in C_C^1(K)$, we have
		\[
		\int_K (A(r_k x)\nabla u_k, \nabla \phi) dx = 0, \quad \forall k>0.
		\]
		On the other hand, since $\tilde u$ is the weak limit of $u_k$, we have
		\[
		\int_K (\nabla \tilde u, \nabla \phi) dx = \lim_{k\to\infty} \int_K (\nabla u_k, \nabla \phi) dx = \lim_{k\to\infty}
		\underbrace{\int_K (A(r_kx) \nabla u_k, \nabla \phi) dx}_{\boxed A}
		- \underbrace{\int_K ((A(r_kx)-I) \nabla u_k, \nabla \phi) dx}_{\boxed B} .
		\]
		The term \boxed A is zero by the definition of weak solution and we can estimate \boxed B as
		\[
		\int_K ((A(r_kx)-I) \nabla u_k, \nabla \phi) dx \lesssim
		L_A r_k (\operatorname{dist}(0,K) + \operatorname{diam}(K)) \Vert \nabla u_k \Vert_{L^2(K)} \Vert \nabla \phi \Vert_{L^2(K)} \to_{k\to\infty} 0.
		\]
		Summing up, we get that $\tilde u$ is harmonic in $K$.
		
		\begin{claim} The only functions which are positive and harmonic in $\tilde C_\tau$ are the multiples of the Green function $g$ for the Laplacian in $\tilde C_\tau$ with pole at $\infty$. 
		\end{claim}
		For the proof of the claim see \cite[Lemma 3.7]{KT}.
		
		Since we have $C^1$ convergence away from the pole and $\int_{\partial B_1} u_k^2 \,d\sigma=1$ for all $k>0$, the multiple of the Green function in the claim is fixed. Thus, the convergence does not depend on the sequence $r_k$ chosen and is not up to subsequences.
		For this reason, we assume without loss of generality that $r_k = 2^{-k+1}$.

		We will see now that the limit
		\[
		\lim_{r \downarrow 0} \log_2 \sqrt{H^u(x, 2r)} - \log_2 \sqrt{H^u(x,r)}
		\]
	 	exists and by Lemma \ref{lemma: N and vanishing order} this coincides with the order of vanishing at zero. 
	 	
	 	If $r\in (1/2^k, 1/2^{k-1})$, then
	 	\[
	 	\log_2 \sqrt{H^u(0, 2r)} - \log_2 \sqrt{H^u(0,r)}
	 	=
	 	\log_2 \frac{\sqrt{H^{u_{k-1}}(0, 2^{k-1}r)} }{ \sqrt{H^{u_{k-1}}(0,2^{k-2}r)}}.
	 	\] 
	 	Since $r\in (1/2^k, 1/2^{k-1})$, this happens in the region where $u_{k}$ converges to $\tilde u$ in $C^1$ norm. Thus $H^{u_k}(0,r) \rightarrow_{k \rightarrow \infty} H^{\tilde u}(0,r)$ for $1/8<r<1$. By Lemma \ref{lemma: N and vanishing order} this implies that both $u$ and $\tilde u$ have the same vanishing order. 	 	
	\end{proof} 
	The general elliptic case is a direct consequence of the previous lemma following the same proof as in the harmonic case.

\section{\texorpdfstring{Example with $\operatorname{dim}_{\mathcal H}(\{ x\in\Sigma ~| \lim_{r \downarrow 0} {\omega(B(x,r))}{r^{1-d}} = 0\}) > d-1-\epsilon$}{Example of domain with set of zero density of harmonic measure with small codimension}}
	\label{section: counterexample to hausdorff estimates}
	
	The following example by Xavier Tolsa shows that in a Lipschitz domain $\Omega$ (even with small Lipschitz constant) there is no hope for a (non-trivial) Hausdorff dimension bound of the set of points of $\Sigma$ where $\partial_\nu u(x) = 0$.
	\begin{remark}
		\label{rmk: non-tangential derivative and density of harmonic measure}
		We are interested in the normal derivative, but it may not exist at every point $x \in \Sigma$.
		Nonetheless, since we are in a Lipschitz domain $\Omega$ for every point $x \in \Sigma$ we can consider a non-tangential cone $C_\tau(x)$ contained in the domain. Thus, we can consider a non-tangential approach to the normal derivative.
		That is, for $x\in\Sigma$, we define
		\[
		\color{black}\nabla_{nt} u(x) = \limsup_{\substack{y \rightarrow x \\ y \in C_\tau(x)}} \frac{|u(y)-u(x)|}{|y-x|} .
		\]
		We will show that the set $\{x \in\Sigma \,|\, \nabla_{nt} u(x) = 0\}$ can be very large (in terms of Hausdorff dimension).
	\end{remark}
	The following lemma shows that non-tangential derivatives for positive harmonic functions are closely related with the density of harmonic measure.		
	\begin{lemma}
		Assume $u$ is positive and harmonic in $\Omega$ and vanishes in $\Sigma$. Then, for $x\in\Sigma$, $\nabla_{nt} u(x) = 0$ if and only if $\lim_{r\downarrow 0} \frac{\omega(B(x,r))}{r^{d-1}} = 0$ where $\omega$ is the harmonic measure for the Laplacian in $\Omega$.
	\end{lemma}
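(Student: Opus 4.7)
The plan is to reduce both sides of the equivalence to the same non-tangential quantity, namely $u(A_r(x))/r$, where $A_r(x) \in \Omega$ is a corkscrew (non-tangential approach) point with $|A_r(x)-x| \approx \mathrm{dist}(A_r(x),\partial\Omega) \approx r$. The two main tools are the Carleson-type estimate relating harmonic measure to the Green function and the boundary Harnack principle for Lipschitz domains.

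First, I would fix an interior pole $p \in \Omega$ with $\mathrm{dist}(p, \Sigma) \gtrsim 1$ (and away from $x$), and write $\omega = \omega^p$. In a Lipschitz domain, the standard Green function/harmonic measure comparability (CFMS-type estimate) gives
\[
\omega(B(x,r)) \approx r^{d-2}\, G(A_r(x), p)
\]
for $r$ small enough, where $G$ is the Green function for $\Omega$. Since both $u$ and $G(\cdot,p)$ are positive harmonic functions in $\Omega \cap B(x,2r)$ vanishing on $\Sigma \cap B(x,2r)$, Dahlberg's boundary Harnack principle yields
\[
\frac{u(A_r(x))}{G(A_r(x),p)} \approx \frac{u(A_{r_0}(x))}{G(A_{r_0}(x),p)} =: c(x) \in (0,\infty)
\]
for a fixed reference scale $r_0$. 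Combining these two comparabilities,
\[
\frac{\omega(B(x,r))}{r^{d-1}} \approx \frac{G(A_r(x),p)}{r} \approx \frac{u(A_r(x))}{r}.
\]
Hence $\lim_{r\downarrow 0} \omega(B(x,r))/r^{d-1} = 0$ if and only if $\lim_{r\downarrow 0} u(A_r(x))/r = 0$.

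Next I would show that $\nabla_{nt}u(x) = 0$ is equivalent to $u(A_r(x))/r \to 0$. One direction is immediate from the definition: if $\nabla_{nt}u(x)=0$, then since $A_r(x)$ lies in a non-tangential cone $C_\tau(x)$ and $|A_r(x)-x| \approx r$, the difference quotient $u(A_r(x))/|A_r(x)-x|$ tends to zero, hence so does $u(A_r(x))/r$. For the converse, assume $u(A_r(x))/r \to 0$ and take an arbitrary sequence $y_n \to x$ with $y_n \in C_\tau(x)$. Set $r_n := |y_n - x|$. Then $y_n$ and $A_{r_n}(x)$ both lie in the non-tangential region at scale $r_n$, at distance $\gtrsim r_n$ from $\Sigma$, so they can be joined by a Harnack chain of length bounded only by $\tau$ inside $\Omega$; interior Harnack therefore gives $u(y_n) \lesssim u(A_{r_n}(x))$. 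Consequently
\[
\frac{u(y_n)}{|y_n-x|} = \frac{u(y_n)}{r_n} \lesssim \frac{u(A_{r_n}(x))}{r_n} \to 0,
\]
which proves $\nabla_{nt}u(x) = 0$.

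The main obstacle, in my view, is not any single step but the bookkeeping around the quantitative comparabilities: one must be careful that the constants in the CFMS estimate, the boundary Harnack inequality, and the Harnack chain argument depend only on the Lipschitz character of $\Omega$ and on the aperture $\tau$ of the non-tangential cone, so that they do not degenerate as $r \downarrow 0$. Once that is verified, the equivalence follows by transitivity: $\omega(B(x,r))/r^{d-1} \approx u(A_r(x))/r$, which vanishes as $r \downarrow 0$ exactly when $\nabla_{nt}u(x) = 0$.
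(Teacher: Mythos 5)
Your proof is correct and takes essentially the same route as the paper: both hinge on the Dahlberg/CFMS comparability $\omega(B(x,r))\approx r^{d-2}G(A_r(x),p)$ together with the boundary Harnack inequality relating $u$ to the Green function. The only difference is organizational (the paper passes through ``$\nabla_{nt}u=0\iff\nabla_{nt}g=0$'' and then invokes Dahlberg, whereas you chain the two comparabilities directly and spell out, via a bounded Harnack chain, the point the paper leaves implicit — that controlling $u(A_r(x))/r$ already controls the full non-tangential $\limsup$).
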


	\begin{proof}[Proof of Remark \ref{rmk: non-tangential derivative and density of harmonic measure}]
		Fix a point $p \in \Omega$ {far} from $\Sigma$ and consider the Green function $g$ with pole at $p$.
		Then, in a neighborhood of $\Sigma$, the boundary Harnack inequality {(see \cite{DS})} implies that the non-tangential derivative of $u$ is $0$ if and only if the non-tangential derivative of $g$ is $0$ wherever one of the two exists (if one exists and is $0$ the other exists too by boundary Harnack inequality). This does not depend on the pole $p$ chosen.
		
		Now, the non-tangential derivative of $g$ is $0$ if and only if $\lim_{r \downarrow 0} \frac{\omega(B(x,r))}{r^{d-1}} = 0$. This is a consequence of \cite[Lemma 1]{Dah} which shows
		\[
		r^{d-2} g(x+Cre_d) \approx \omega^p(B(x,r))
		\]
		for some $C$ depending on the dimension and the Lipschitz constant of the domain.
	\end{proof}
	
	Now we can start setting up an appropriate domain.
	Consider, for $\lambda \in (0,1)$, the $\lambda$-Cantor set defined as
	\[
	\mathcal C_\lambda = \bigcap_{n=1}^\infty E_n^\lambda
	\]
	where $E_0^\lambda = [0,1]$ and $E_k^\lambda = \frac{1-\lambda}{2}E_{k-1}^\lambda \cup (\frac{1+\lambda}{2}+\frac{1-\lambda}{2}E_{k-1}^\lambda)$ for $k\geq1$. {We will refer to the intervals in $[0,1]\backslash E_k^\lambda$ by \textit{gaps}.}
	
	\begin{remark}
		The set $\mathcal C_\lambda$ has Hausdorff dimension between $0$ and $1$ depending on $\lambda$, but by making $\lambda$ small enough we can obtain a set with dimension arbitrarily close to $1$. From now, on we will denote it by $s = s(\lambda) = \dim_{\mathcal H} \mathcal C_\lambda$.
	\end{remark}
	
	\begin{remark}
		If $\lambda=1/(2k+1)$ for some $k\in\mathbb N$, then the set $\mathcal C_\lambda$ coincides with the set of real numbers in $[0,1]$ such that its decimal expansion in basis $2k+1$ does not contain the digit $k$.
		Instead of choosing $\lambda$, we will choose $k$ large enough so that the Hausdorff dimension $s$ of $\mathcal C_\lambda$ is as close as we want to $1$.
	\end{remark}
	
	\begin{theorem}\label{thm:borel_normal_numbers}
	 Let $\lambda = 1/(2k+1)$ for some $k\in\mathbb N$, and $s=s(\lambda) = \operatorname{dim}_{\mathcal H}(\mathcal C_\lambda)$. Then, $\mathcal H^s$-a.e. $x \in \mathcal C_\lambda$ satisfies that each possible digit $\{0,1,\hdots,k-1,$  $k+1,  \hdots, 2k\}$ in its decimal representation in basis $2k+1$ appears with the {same asymptotic frequency $1/(2k)$.} 
	\end{theorem}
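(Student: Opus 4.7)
The statement is a Borel-type normal number theorem, adapted to the symbolic dynamics associated to $\mathcal C_\lambda$. The plan is to reduce it to the strong law of large numbers after identifying $\mathcal H^s\!\lfloor_{\mathcal C_\lambda}$ with the natural Bernoulli measure on digit sequences.

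First, I would set up the symbolic coding. Every $x\in\mathcal C_\lambda$ is uniquely encoded by a sequence $(d_n(x))_{n\geq1}$ in the alphabet $\mathcal A=\{0,1,\ldots,k-1,k+1,\ldots,2k\}$ (these are exactly the $2k$ digits that avoid the central gap at each scale); here $d_n(x)$ is the $n$-th base-$(2k+1)$ digit of $x$. Equivalently, $\mathcal C_\lambda$ is the attractor of the $2k$ contractions $\varphi_a(t)=\frac{t+a}{2k+1}$, $a\in\mathcal A$, which satisfy the open set condition with ratio $\lambda=1/(2k+1)$. Moreau's/Hutchinson's similarity formula then gives $s=\log(2k)/\log(2k+1)$ and $0<\mathcal H^s(\mathcal C_\lambda)<\infty$.

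Next, I would identify $\mu:=\mathcal H^s(\mathcal C_\lambda)^{-1}\,\mathcal H^s\!\lfloor_{\mathcal C_\lambda}$ with the uniform Bernoulli measure on $\mathcal A^{\mathbb N}$ under the coding map. By the standard theory of self-similar sets satisfying the open set condition (see Hutchinson, or Mattila's book), $\mathcal H^s\!\lfloor_{\mathcal C_\lambda}$ is the unique self-similar probability measure with weights $(2k)^{-1}$ at each branch, so the cylinder of digits $(a_1,\ldots,a_n)$ has $\mu$-mass $(2k)^{-n}$. Under the coding, this is exactly the Bernoulli product measure with each digit uniformly distributed on $\mathcal A$.

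With this identification in hand, the conclusion is immediate from the strong law. For a fixed digit $a\in\mathcal A$, the functions $X_n(x):=\mathbf 1_{\{d_n(x)=a\}}$ are i.i.d.\ Bernoulli$(1/(2k))$ random variables on $(\mathcal C_\lambda,\mu)$, so Kolmogorov's SLLN gives
\[
\lim_{N\to\infty}\frac{1}{N}\sum_{n=1}^{N}\mathbf 1_{\{d_n(x)=a\}}=\frac{1}{2k}\qquad\text{for $\mu$-a.e.\ }x.
\]
Taking the union over the $2k$ digits $a\in\mathcal A$ (still a $\mu$-null exceptional set), we obtain the claim for $\mathcal H^s$-a.e.\ $x\in\mathcal C_\lambda$.

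The only nontrivial step is the identification of $\mathcal H^s\!\lfloor_{\mathcal C_\lambda}$ with the Bernoulli measure; everything else is routine. This identification is standard, but to avoid appealing to a black box one could verify it directly by showing that $\mathcal H^s(\mathcal C_\lambda\cap I_n)=(2k)^{-n}\mathcal H^s(\mathcal C_\lambda)$ for every generation-$n$ cylinder interval $I_n$, using the self-similarity and a standard mass-distribution/comparison argument with the Hausdorff $s$-premeasure.
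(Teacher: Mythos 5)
Your proof is correct and fills in exactly what the paper leaves implicit: the paper dispenses with this statement by noting it is ``analogous to the proof of Borel's normal number theorem,'' and the coding of $\mathcal C_\lambda$ by digit sequences, the identification of the normalized restriction of $\mathcal H^s$ with the uniform Bernoulli measure on $\mathcal A^{\mathbb N}$ (using the open set condition), and the application of the strong law of large numbers to the indicator variables $\mathbf 1_{\{d_n(\cdot)=a\}}$ is precisely that argument.
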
 
	{The proof of the theorem is analogous to the proof of Borel's normal number theorem.}
	Also, we will say that the points $x \in \mathcal C_\lambda$ that satisfy this are $\mathcal C_\lambda$-\textit{normal}.

	Fix $a\in(0,1)$ and let
	\[
	\Omega_\lambda := B(0,2) \cap \bigcup_{p \in \mathcal C_\lambda} \{C_a(p)\} \subset \mathbb R^2,
	\]
	where $C_a(p)$ are vertical open cones of aperture $a$ with vertex at $p$.
	Thus, $\Omega_\lambda$ is the union of all the open cones with aperture $a$ centered at a point of the Cantor set $\mathcal C_\lambda$. The domain $\Omega_\lambda$ clearly has Lipschitz boundary and its Lipschitz constant can be made arbitrarily small as it depends only on the aperture of the cones $a$. 
	\begin{remark}
	\label{rmk:gaps in cantor sets}
	{Let the interval $(\xi_1, \xi_2)$ be a gap in $[0,1]\backslash E_j^\lambda$. Then $\partial \left (C_a(\xi_1) \cup C_a(\xi_2)\right) \cap \{(x,y)\in\mathbb R^2\,|\, x \in (\xi_1,\xi_2)\} \subset \partial\Omega_\lambda$. In other words,
	the boundary of $\partial\Omega_\lambda$ looks like a cone (pointing upwards) where the Cantor set $\mathcal C_\lambda$ has gaps.}
	
	{Note that, if $\lambda = 1/(2k+1)$, the gaps correspond to subintervals of $[0,1]$ where all the numbers have the digit $k$ in the position $j$ of the decimal expansion in base $2k+1$.}
	\end{remark}
	
	Now we will prove that every $\mathcal C_\lambda$-normal point $(x,0) \in \mathcal C_\lambda \subset \mathbb R \cap \overline \Omega_\lambda$ satisfies that the density of harmonic measure at $(x,0)$ is $0$.

	\begin{propo}
	\label{propo: upper density is 0 Hs ae}
	Let $\Omega_\lambda$ be the Lipschitz domain defined above. Let $(x,0)$ be a $\mathcal C_\lambda$-normal point in $\mathcal C_\lambda$. Then $\lim_{r\downarrow 0} \frac{\omega(B(x,r))}{r^{d-1}} = 0$ in $(x,0)$ where $\omega$ is the harmonic measure of $\Omega_\lambda$ with pole in $(x,1)$.
	\end{propo}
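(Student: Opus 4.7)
The plan is to reduce the statement to a Green function estimate via the Dahlberg relation used in the proof of the preceding remark: in dimension $d=2$ this gives $\omega^p(B(x,r)) \approx g(x+Cre_2)$ where $g$ is the Green function of $\Omega_\lambda$ with pole $p=(x,1)$. It therefore suffices to show that $g(x+Cre_2)/r\to 0$ as $r\downarrow 0$ at every $\mathcal C_\lambda$-normal point.

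First I would work at the dyadic scales $r_j=\sigma^j$ with $\sigma=(1-\lambda)/2$ and exploit the self-similarity of the construction: the shape of $\Omega_\lambda\cap B(x,C\sigma^j)$ is, up to rescaling, determined by finitely many of the Cantor addresses $d_{j+1},d_{j+2},\dots$ of $x$. Near $x$ one always sees the cone $C_a(x)$, and at each level $k$ this cone merges at height $\sim\sigma^{k-1}\lambda/(2a)$ with the cones of neighboring Cantor points through a V-tip above the adjacent gap; the heights and positions of these V-tips are controlled by the address of $x$. This layered structure drives a recursive estimate for $g$.

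The key quantitative step would be a multiplicative decay inequality
\[
g(x+\sigma^{j+1}e_2)\le \rho_j\cdot g(x+\sigma^{j}e_2),
\]
with $\rho_j\le \sigma$ in every case (obtained from boundary Harnack applied between two consecutive scales, reflecting the fact that in a Lipschitz domain $g$ cannot decay slower than linearly) and $\rho_j\le \eta\sigma$ for a fixed $\eta=\eta(a)\in(0,1)$ whenever the block $(d_{j+1},\dots,d_{j+m})$, for a fixed block length $m$, lies in a distinguished family of \emph{favorable} address patterns. These favorable patterns are the ones forcing $x$ to sit deep inside its level-$(j+1)$ sub-cell, so that over a full scale window no V-tip is present and $\Omega_\lambda$ is effectively pinched into the wedge $C_a(x)$ of opening $2\arctan a<\pi$. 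In this window a Carleman/wedge comparison yields the geometric gain $\eta$ coming from the exponent $\pi/(2\arctan a)>1$.

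For a $\mathcal C_\lambda$-normal point, Theorem \ref{thm:borel_normal_numbers} (the Borel normal number analog) guarantees that any fixed finite pattern of length $m$ appears in the Cantor address of $x$ with a positive asymptotic frequency. Applying the strong law of large numbers to $\sum_{j\le J}\log(\rho_j/\sigma)$ yields $\prod_{j\le J}(\rho_j/\sigma)\to 0$, hence $g(x+\sigma^{J}e_2)=o(\sigma^{J})$; passing to arbitrary $r\downarrow 0$ with Harnack's inequality and invoking the Dahlberg estimate once more gives $\omega(B(x,r))/r\to 0$. The main obstacle is the quantitative multiplicative estimate together with the geometric gain $\eta<1$ in favorable configurations: this requires a careful wedge comparison combining boundary Harnack with the explicit cone geometry of $\Omega_\lambda$, and in particular a precise propagation of the wedge-type decay $g\lesssim r^{\pi/\theta_a}$ through the merging structure of the cones.
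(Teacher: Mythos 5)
Your proposal goes through the Green function and a scale-by-scale multiplicative recursion; the paper's proof is considerably more direct, using the Ahlfors distortion (``$dx/\theta(x)$'') estimate from Garnett--Marshall: one bounds $\omega(B((x,0),r))\le \tfrac{8}{\pi}\exp\bigl(-\pi\int_r^1 \tfrac{ds}{s\Theta(s)}\bigr)$, where $s\Theta(s)$ is the arclength of $\partial B_s\cap\Omega_\lambda$ on the component of the pole, and then shows that the deficit $\int_r^1\bigl(\tfrac{1}{\Theta(s)}-\tfrac1\pi\bigr)\tfrac{ds}s$ diverges because $\Theta(s)<\pi-\epsilon(a)$ on a positive asymptotic proportion of dyadic windows (those indexed by digits $k\pm1$). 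This turns the needed superlinear decay into a purely geometric/angular counting problem with no Green function, no boundary Harnack, and no self-similarity recursion.

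Beyond being heavier, your proposal has genuine gaps. First, the claim that ``in a Lipschitz domain $g$ cannot decay slower than linearly'' is false in general (a concave corner decays sublinearly); it holds here only because $\Omega_\lambda$ lies in the open upper half-plane with $(x,0)$ on the axis, so $g$ is dominated by the half-plane Green function. Even granting that, boundary Harnack does not hand you the clean scale-to-scale bound $g(x+\sigma^{j+1}e_2)\le\sigma\,g(x+\sigma^{j}e_2)$; Harnack chains introduce multiplicative constants that can build up over the $j$ iterations, so the ``$\rho_j\le\sigma$ always'' step needs a real argument, e.g.\ precisely the extremal-length estimate the paper uses. Second, and more seriously, your identification of the favorable configuration is backwards. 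You write that the gain occurs when ``no V-tip is present and $\Omega_\lambda$ is effectively pinched into the wedge $C_a(x)$.'' But the domain is the \emph{union} of all the cones $C_a(p)$, and since $\mathcal C_\lambda$ is perfect there are always other Cantor points arbitrarily near $x$; the absence of a V-tip (i.e.\ no gap at that scale) means those neighboring cones have already merged, so the domain at that scale looks like essentially the full half-plane and $\Theta(s)\approx\pi$ — no gain at all. The angular pinching comes precisely from the \emph{presence} of a gap at the right scale (digit $k\pm1$ in the paper's bookkeeping), whose V-tip above it cuts the arc and drives $\Theta(s)$ below $\pi-\epsilon$. Also, even in that favorable case the pinching is one-sided, so the domain is not contained in $C_a(x)$, and the exponent you'd extract is not $\pi/\theta_a$ but only something like $1+ca$. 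Finally, the domain $\Omega_\lambda$ is never contained in $C_a(x)$, so a wedge \emph{upper} bound for $g$ does not follow from a containment argument; the paper sidesteps this entirely by working with $\Theta(s)$ rather than with any explicit barrier.
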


	\begin{proof}
		
	Theorem 6.2. in \cite[page 149]{GM} (called $dx/\theta(x)$ estimate) tells us
	\begin{equation}
		\label{eq: Ahlfors distortion}
	\omega(B((x,0),r)) \leq \frac 8 \pi \exp \left ( -\pi \int_r^1 \frac{ds}{s\Theta(s)} \right)
	\end{equation}
	for $r$ small enough where $s\Theta(s)$ is the length of the connected arc in $\Omega \cap \partial B((x,0),s)$ that separates $(x,0)$ and the pole $(x,1)$.
	We want to prove that
	\[
	\limsup_{r\rightarrow 0} \frac{\omega(B((x,0),r))}{r} = 0
	\]
	which is a consequence of
	\[
	\limsup_{r\rightarrow0} -\pi\int_{r}^1  \left ( \frac{1}{\Theta(s)} - \frac 1 \pi \right) \frac{ds}{s} \rightarrow -\infty
	\]
	by inequality \eqref{eq: Ahlfors distortion}.
	
	Since the domain is contained in the upper half plane and $x\in\mathbb R$ we have that $0<\Theta(s) \leq \pi$. This implies that we can change the $\limsup$ by a $\lim$. Also we can rewrite the integral as
	\[
	-\pi \lim_{n\rightarrow \infty} \sum_{j=1}^n \int_{(2k+1)^{-j}}^{(2k+1)^{1-j}} \left ( \frac{1}{\Theta(s)} - \frac 1 \pi \right) \frac{ds}{s}.
	\]
	Now we will use that $x$ is $\mathcal C_\lambda$-normal. This implies that, asymptotically, for a fixed proportion of digits in the decimal expansion of $x$ in basis $2k+1$ the digit is $k-1$ or $k+1$.
	
	If the $j$-th digit in the decimal expansion of $x$ in basis $2k+1$ is either $k-1$ or $k+1$, then for $s \in [2.1(2k+1)^{-j}, (2k+1)^{1-j}]$ we have that $\Theta(s) < \pi - \epsilon$ for some $\epsilon>0$ depending only on the aperture of the cones $a$. 
	This is because the arc of circumference centered at $(x,0)$ with radius $s$ meets with a cone $C_a(p)$ corresponding to a gap in $[0,1]\backslash E_j^\lambda$ (see Remark \ref{rmk:gaps in cantor sets}) where $p$ is the truncation to $j$ digits of $x$ in basis $2k+1$. Thus, $\Theta(s) < \pi-\epsilon$.
	Since $x$ is $\mathcal C_\lambda$-normal, this happens for a positive proportion of digits, the integral tends to $-\infty$, and $\lim_{r\downarrow 0} \frac{\omega(B(x,r))}{r^{d-1}} = 0$.
	\end{proof}
	
	As a consequence of Proposition \ref{propo: upper density is 0 Hs ae} and the previous claim, we can construct a domain where $\lim_{r\downarrow 0} \frac{\omega(B(x,r))}{r^{d-1}} = 0$ in a set of dimension $s$ and $s$ can be made as close to $1$ as desired. This implies that the non-tangential derivative of a harmonic function (the Green function of the domain) can be $0$ in a set of Hausdorff dimension arbitrarily close to $1$. Moreover, we can obtain a domain where this set has Hausdorff dimension $1$ by concatenating domains of the form $\Omega_\lambda$ considered before.

\section{\texorpdfstring{$\mathcal S_{\Sigma}(u)=\mathcal S_{\Sigma}'(u)$ in the $C^{1,\operatorname{Dini}}$ case}{Equality between S(u) and S'(u) in the Dini smooth case}}
	\label{section: equality singular set in Dini dmn}

	Recall that for a harmonic function $u$ in a Lipschitz domain that vanishes in a relatively open subset of the boundary $\Sigma\subset\partial\Omega$, we define
	\[
	\mathcal S_{\Sigma}(u) = \{x \in \Sigma ~|~ |\nabla u(x)| = 0\}
	\]
	and 
	\[
	\mathcal S_{\Sigma}'(u) = \{x \in \Sigma ~|~ u^{-1}(\{0\}) \cap B(x,r)\cap\Omega \neq \varnothing, ~\forall r>0\}.
	\]
	\begin{remark}
		Note that $\mathcal S_{\Sigma}(u)$ is well defined in the $C^{1,\operatorname{Dini}}$ case since $u\in C^1({\Omega\cup\Sigma})$ thanks to the results of \cite{DEK}.
	\end{remark}
	The proof of Proposition \ref{prop: equality singular set} follows from a local expansion of $u$ as the sum of a homogeneous harmonic polynomial and an error term of higher degree (see \cite[Theorem 1.1]{KZ2}).
	\begin{proof}[Proof of Proposition \ref{prop: equality singular set}]
		By  \cite[Theorem 1.1]{KZ2}, for every $x\in\Sigma$ there exists a positive radius $R=R(x)$ and a positive integer $N=N(x)$ such that
		\[
		u(y) = P_N(y-x) + \psi(y-x), \quad\text{ in } B_R(x)\cap\Omega
		\]
		where $P_N$ is a non-trivial homogeneous harmonic polynomial of degree $N$ and the error term $\psi$ satisfies
		\[
		\lim_{y\to 0} |\psi(y)||y|^{-N} = 0 
		\quad\text{ and }\quad 
		\lim_{y\to 0} |\nabla\psi(y)||y|^{-N+1} = 0.
		\]
		
		We will show that $N(x) = 1$ implies $x\notin \mathcal S_{\Sigma}(u)$ and $x\notin \mathcal S_{\Sigma}'(u)$ and that $N(x)>1$ implies $x\in \mathcal S_{\Sigma}(u)$ and $x\in \mathcal S_{\Sigma}'(u)$.
		\vspace{2mm}
		
		\noindent\textbf{Case $N=1$:}
		We have that $\nabla u(x) = \nabla P_1(0) + \nabla \psi(0) = \nabla P_1(0) \neq 0$ since $P_1$ is a non-trivial linear function, thus $x\notin \mathcal S_{\Sigma}(u)$. Also, we know that $u(y) = 0$ for $y\in\partial\Omega\cap B_R(x)$. Suppose there exist $(z_n)_n\in\Omega\cap B_R(x)$ tending to $x$ such that $u(z_n) = 0$. Then, by Rolle's theorem, we get a contradiction since the derivative of $u$ in the direction $\nabla P_1/|\nabla P_1|$ does not vanish in a neighborhood of $x$ (because $\lim_{y \to 0} |\nabla\psi(y)| = 0$) but $u$ vanishes on $(z_n)_n$ and on $\partial\Omega\cap B_R(x)$. Thus $x \notin \mathcal S_{\Sigma}'(u)$.
		
		\vspace{2mm}
		
		\noindent\textbf{Case $N>1$:}
		Clearly, $\nabla P_N(0)=0$ if $N>1$ which implies that $x\in \mathcal S_{\Sigma}(u)$. Assume that $x\notin \mathcal S_{\Sigma}'(u)$ and, without loss of generality, $u$ is positive near $x$. Then, by the generalized Hopf principle of \cite{Saf}, we get that $\partial_\nu u(x) < 0$ contradicting $x\in \mathcal S_{\Sigma}(u)$.
	\end{proof}
	
\appendix

\section{Existence of non-tangential limits for \texorpdfstring{$\nabla u$}{nabla u}}
\label{appendix: nontangential limits}
We will closely follow the ideas of Appendix A of \cite{To} in order to prove the $L^2$ convergence of the non-tangential limits of $\nabla u$ in $\Sigma$.

Let $\Omega \subset \mathbb R^d$ be a Lipschitz domain, $B$ a ball centered on $\partial\Omega$, $\Sigma = B \cap \partial\Omega$, and $\sigma$ denote the surface measure on $\Sigma$. Without loss of generality, we assume $\Omega$ is locally above $\Sigma$ in the direction of $e_d = (0,\hdots,0,1)$.
For $\sigma$-a.e. $x\in \Sigma$, the outer unit normal vector $\nu(x)$ is well defined.
For a parameter $a \in (0,1)$ and $x \in \Sigma$, we consider the inner cone and outer cone
\[
X_a^+(x) = \{y \in \mathbb R^d  |  (x-y,\nu(x)) > a |y-x|  \}, \quad X_a^-(x) = \{y \in \mathbb R^d  |  -(x-y,\nu(x)) > a |y-x|  \},
\]
respectively.
For a function $f$ defined on $\mathbb R^d \backslash \Sigma$, we define the non-tangential limits
\[
f_{+, a}(x)=\lim _{X_{a}^{+}(x) \ni y \rightarrow x} f(y), \quad f_{-, a}(x)=\lim _{X_{a}^{-}(x) \ni y \rightarrow x} f(y),
\]
when they exist.

We prove the following theorem about the convergence of the non-tangential limits of the gradient of the solution of an elliptic PDE (of the type of Section \ref{section: conditions on A(x)}).
\begin{theorem}
	\label{thm: nontangential L2 convergence}
	Let $\Omega \subset \mathbb R^d$ be a Lipschitz domain, $B$ be an open ball centered in $\partial\Omega$, and $\Sigma = B \cap \partial\Omega$ be a Lipschitz graph such that $\Omega \cap B$ is above $\Sigma$. 
	Let $u$ be a solution of $\operatorname{div}(A\nabla u) = 0$ in $\Omega$ with $A$ as in Section \ref{section: conditions on A(x)}. Assume that $u$ is continuous up to the boundary and that it vanishes continuously on $\Sigma$. 
	Then, for all $a \in (0,1)$ large enough, $(\nabla u)_{+,a}$ exists $\sigma$-a.e. and belongs to $L^2_{\operatorname{loc}}(\sigma)$.
	Further, $(\nabla u)_{+,a}$ has {vanishing tangential component}. That is, $(\nabla u)_{+,a} = (\nabla u, \nu)\nu$. Further,
	\[
	\lim _{\varepsilon \rightarrow 0} \nabla u\left(\cdot+\varepsilon e_{n}\right) \rightarrow\left(\nabla u, \nu \right) \nu \quad \text { in } L_{\operatorname{loc}}^{2}(\sigma).
	\]
\end{theorem}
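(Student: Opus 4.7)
The strategy is to adapt the corresponding argument of \cite[Appendix A]{To} from the harmonic to the divergence-form elliptic setting with Lipschitz coefficients. The argument is local, so after working in a fixed smaller ball $B' \subset B$ centered on $\Sigma$, and choosing $a \in (0,1)$ large enough so that the cones $X_a^+(x)$ for $x \in \Sigma \cap B'$ are contained in $\Omega$ and include a short vertical segment $\{x + \varepsilon e_d : 0 < \varepsilon < \varepsilon_0\}$, there are three tasks: a non-tangential maximal function bound for $\nabla u$ in $L^2(\sigma)$; $\sigma$-a.e.\ non-tangential convergence; and identification of the limit as normal to $\Sigma$.

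The main technical point is the bound
\[
\| N_*^a(\nabla u) \|_{L^2(\Sigma\cap B')} \lesssim \|u\|_{W^{1,2}(\Omega\cap B)},
\qquad
N_*^a(\nabla u)(x) := \sup_{y \in X_a^+(x)} |\nabla u(y)|.
\]
I would prove it using the Whitney decomposition of $\Omega\cap B'$ from Section \ref{section: whitney structure}. For each Whitney cube $Q$, interior elliptic regularity (Caccioppoli together with De Giorgi--Nash--Moser) gives $\sup_Q |\nabla u|^2 \lesssim \ell(Q)^{-d-2} \int_{2Q} u^2$; since $u$ vanishes on $\Sigma$ and is H\"older continuous up to $\Sigma$ by boundary De Giorgi--Nash--Moser, we can further exploit a boundary Caccioppoli-type estimate $\int_{2Q} |\nabla u|^2 \lesssim \ell(Q)^{-2} \int_{4Q} u^2$ for cubes near $\Sigma$, and sum the pointwise bounds against the projection onto $\Sigma$ provided by the Whitney structure to obtain the $L^2(\sigma)$ bound. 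Here the Lipschitz regularity of $A$ is essential: after the coordinate change of Section \ref{section: modifying the domain}, the operator is a small $L^\infty$-perturbation of the Laplacian at each small scale, so the constants in the classical Dahlberg-type estimates for harmonic functions carry over.

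Granted the maximal function bound, $\sigma$-a.e.\ non-tangential convergence of $\nabla u$ follows from a standard density argument: interior mollifications of $u$ give $C^1$ approximants $u_\epsilon$ whose non-tangential limits trivially exist, and the weak-type $(2,2)$ inequality applied to the non-tangential oscillation of $\nabla(u - u_\epsilon)$ transfers convergence to $\nabla u$ at $\sigma$-a.e.\ $x \in \Sigma$. To identify the tangential component as zero, fix $x_0 \in \Sigma$ where $\nu(x_0)$ exists (Rademacher) and $(\nabla u)_{+,a}(x_0)$ exists. For any unit tangent vector $\tau$ to $\Sigma$ at $x_0$, take a sequence $\{x_k\} \subset \Sigma$ with $(x_k - x_0)/|x_k - x_0| \to \tau$; since $u \equiv 0$ on $\Sigma$, a line-integral of $\nabla u$ along paths close to $[x_0, x_k]$ but pushed slightly inside $X_a^+(x_0)$ must vanish in the limit, which forces $((\nabla u)_{+,a}(x_0), \tau) = 0$. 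Hence $(\nabla u)_{+,a}(x_0)$ is a multiple of $\nu(x_0)$.

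The $L^2_{\operatorname{loc}}(\sigma)$ convergence of $\nabla u(\cdot + \varepsilon e_d)$ to $(\nabla u, \nu)\nu$ now follows immediately: the pointwise a.e.\ convergence holds by the previous step and the choice of $a$ (so that $x + \varepsilon e_d \in X_a^+(x)$ for $\varepsilon$ small at $\sigma$-a.e.\ $x$), and $|\nabla u(\cdot + \varepsilon e_d)| \leq N_*^a(\nabla u)$ provides the $L^2(\sigma)$ dominating function required for dominated convergence. The main obstacle is the non-tangential maximal function estimate; Dahlberg's classical proof uses subharmonicity of $|\nabla u|^2$, which is not directly available in our setting, but the Lipschitz regularity of $A$ (and the resulting interior $W^{2,2}$ regularity of $u$) provides an adequate substitute for the integrations by parts needed in the boundary estimate.
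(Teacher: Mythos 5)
Your proposal takes a genuinely different route from the paper. The paper never proves a non-tangential maximal function estimate for $\nabla u$. Instead it shows, via two integration-by-parts claims and the boundary Harnack inequality, that $\operatorname{div}(A\nabla u)|_B$ is a Radon measure on $\Sigma$ absolutely continuous with respect to elliptic measure with bounded density, hence (by the $B_2$ property of $\omega$, i.e.\ solvability of the $L^2$-Dirichlet problem from \cite{KS}) equal to $h\,\sigma$ with $h\in L^2_{\mathrm{loc}}(\sigma)$. It then writes $v=\phi u$ through the fundamental solution, so that $\nabla v$ is (up to smooth terms) the gradient of a single layer potential with $L^2$ density, and invokes the jump relations and the $L^2(\sigma)$-boundedness of that singular integral on Lipschitz graphs (\cite[Theorems 3.1, 4.4]{KS}, \cite{ToJumps}). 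Existence, $L^2_{\mathrm{loc}}$ membership, the vanishing of the tangential component, and the $L^2$ convergence of $\nabla u(\cdot+\varepsilon e_d)$ all drop out of the jump formulas. This is leaner: it trades a non-tangential maximal function estimate for known layer-potential theory.

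The gap in your argument is in the crux estimate
\[
\|N_*^a(\nabla u)\|_{L^2(\Sigma\cap B')}\ \lesssim\ \|u\|_{W^{1,2}(\Omega\cap B)}.
\]
What you are asserting is a local version of the $L^2$ Regularity problem for $\operatorname{div}(A\nabla\cdot)$ in a Lipschitz domain (with zero data on $\Sigma$); this is a theorem, not a Whitney-cube computation. The summation you sketch does not produce it. After interior and boundary Caccioppoli one has $\sup_Q|\nabla u|^2\lesssim\fint_{2Q}|\nabla u|^2$ for a Whitney cube $Q$, so replacing the supremum in $N_*^a$ by a sum over the cubes above $x$ and integrating over $\Sigma$ gives
\[
\int_\Sigma N_*^a(\nabla u)^2\,d\sigma\ \lesssim\ \sum_Q \ell(Q)^{d-1}\fint_{2Q}|\nabla u|^2\ =\ \sum_Q \ell(Q)^{-1}\int_{2Q}|\nabla u|^2 ,
\]
and the right-hand side diverges in general (each dyadic generation can contribute a bounded amount, so there is no control by $\|\nabla u\|_{L^2}^2$). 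The real proofs of $(R)_2$ go through Rellich/square-function identities or Dahlberg's subharmonicity and good-$\lambda$ arguments, none of which appear in your sketch, and the closing remark that ``Lipschitz regularity of $A$ provides an adequate substitute'' is precisely the part that would need to be supplied. If you want to keep this route, you must either give that argument or cite the solvability of the Regularity problem for divergence-form operators with Lipschitz (or H\"older) coefficients in Lipschitz domains; with such a citation the rest of your proof (a.e.\ convergence via the weak-type bound and dominated convergence) is sound. The line-integral identification of the tangential component also needs a bit more care, since a curve joining $x_0+\varepsilon e_d$ to $x_k+\varepsilon e_d$ can only stay in the cone $X_a^+(x_0)$ when $\varepsilon\gtrsim|x_k-x_0|$, so you are averaging $\nabla u$ over a non-degenerate portion of the cone rather than reading off its value at one point; this is fixable once the maximal function control is in hand, but is not as immediate as written.
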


\begin{proof}
	We extend $u$ by $0$ out of $\Omega$ and denote $u^+=\max(u,0)$, $u^-=\max(-u,0)$. Both $u^+$ and $u^-$ are continuous and subsolutions (that is $-\operatorname{div}(A\nabla u^\pm)\leq 0$ in $B$).
	\begin{claim}
		 In the sense of distributions, $\operatorname{div}(A\nabla u) = \operatorname{div}(A\nabla u^+) - \operatorname{div}(A\nabla u^-)$ restricted to $B$ is a signed Radon measure supported on $\Sigma$.
	\end{claim}
	\begin{proof}[Proof of the claim]
	To see this, take arbitrary $\phi \in C_c^\infty (B)$. For $0<\epsilon\ll r(B)$, denote $\Sigma_\epsilon = \Sigma + \epsilon e_d$
	and $\Omega_\epsilon = \Omega + \epsilon e_d$, where $e_d=(0,\ldots,0,1)$. 
	Let  $\psi_\delta$ be a bump function in an $\delta$-neighborhood of $\Sigma$ such that $\Vert\nabla \psi_\delta \Vert_\infty \lesssim \delta^{-1}$ and $\Vert \nabla^2 \psi_\delta \Vert_\infty \lesssim \delta^{-2}$. 
	Then, writing $\phi = \phi \psi_\delta + \phi(1-\psi_\delta)$ for $\phi \in C_c^\infty(B)$, we get
	\begin{align*}
		\left | \int_{B\cap\Omega} A \nabla u \nabla \phi dx  \right | 
		\leq
		\underbrace{\left | \int_{B\cap\Omega} A \nabla u \nabla (\phi\psi_\delta) dx  \right |}_{\boxed A} +	\underbrace{\left | \int_{B\cap\Omega} A \nabla u \nabla (\phi(1-\psi_\delta)) dx  \right |}_{\boxed B}. 
	\end{align*}
	We directly obtain $\boxed B=0$ since $u$ is a weak solution of the PDE in $\Omega$.
	We have, because $u \in W^{1,2}(B)$ and the divergence theorem, that
	\begin{align*}
		\int_{B\cap\Omega} A \nabla u \nabla (\phi\psi_\delta) dx
		&=
		\lim_{\epsilon\to0}\int_{B\cap\Omega_\epsilon} A \nabla u \nabla (\phi\psi_\delta) dx  \\
		&= 
		\lim_{\epsilon\to0}\underbrace{\int_{\partial\Omega_\epsilon \cap B} u (A\nabla (\phi\psi_\delta), \nu) d\sigma}_{\boxed C}
		- 
		\underbrace{\int_{\Omega_\epsilon\cap B} u \operatorname{div}(A\nabla(\phi\psi_\delta))dx}_{\boxed D}
	\end{align*}
	The term \boxed C converges uniformly to $0$ as $\epsilon\to0$ as $u$ is $0$ in $\Sigma$. On the other hand,
	\begin{align*}
		|\operatorname{div}(A\nabla(\phi\psi_\delta))| 
		\leq&
		|\operatorname{div}(A\nabla\psi_\delta)\phi|  + |\operatorname{div}(A\nabla\phi)\psi_\delta| + 
		|2(A\nabla\phi,\nabla\psi_\delta)|  \\
		\lesssim& \frac{1}{\delta^2} \Vert \phi\Vert_{L^\infty(\operatorname{supp} \psi_\delta)} + \Vert \nabla^2\phi\Vert_\infty + \frac{1}{\delta} \Vert \nabla \phi \Vert_\infty 
	\end{align*}
	with constants depending on the Lipschitz and ellipticity constants of $A$.
	
	We can bound \boxed D as
	\begin{equation}\label{eq: divAnablau is a Radon measure}
		\left|\int_{\Omega \cap B} u \operatorname{div}(A\nabla(\phi\psi_\delta))dx\right|
		\lesssim
		\left ( \frac{1}{\delta^2} \Vert \phi\Vert_{L^\infty(\operatorname{supp} \psi_\delta)} + \Vert \nabla^2\phi\Vert_\infty + \frac{1}{\delta} \Vert \nabla \phi \Vert_\infty  \right) 
		\int_{\operatorname{supp} \psi_\delta \cap \Omega} |u| dx
		.
	\end{equation}
	Using the boundary Harnack inequality (see \cite{DS}, for example), we can bound
	\[
	\int_{\operatorname{supp} \psi_\delta \cap \Omega} |u| dx 
	\lesssim
	\int_{\operatorname{supp} \psi_\delta \cap \Omega} g \, dx \lesssim \delta^2
	\]
	where $g$ is the Green function with fixed pole far from $B$. To prove the second inequality, we cover $\operatorname{supp} \psi_\delta$ by a finite family of cubes $Q_i$ with side length $\ell(Q) \approx \delta$. We can do this with approximately $\left( \frac{r(B)}{\delta}\right)^{d-1}$ cubes.
	By {standard estimates for elliptic measure}, we have
	\[
	g(x)\lesssim \frac{\omega(4Q)}{\ell(Q)^{n-2}} \approx \omega(4Q) \delta^{2-d} \quad \mbox{ for all $x\in Q\cap \Omega$.}
	\]
	where $\omega$ is the elliptic measure associated to $\operatorname{div}(A\nabla \cdot)$ for $\Omega$ with respect to a fixed pole $p\in\Omega\backslash B$. 
	
	Finally, we have 
	\[
	\int_{\operatorname{supp} \psi_\delta\cap\Omega} g \,dx \leq
	\sum_{Q} \int_{Q\cap\Omega} g \, dx 
	\lesssim
	\sum_Q  \omega(4Q) \delta^{2-d} \int_Q dx\ \approx \delta^2 \omega(\Sigma)
	\]
	where we have used the doubling properties of elliptic measure.
	\color{black}

	Summing up, we take the limit as $\delta\to0$ in equation \eqref{eq: divAnablau is a Radon measure} to obtain
	\[
	((\operatorname{div}(A\nabla u)),\phi) \leq C \Vert \phi \Vert_{L^\infty(\Sigma)} 
	\]	
	where we have used that the $\operatorname{supp} \psi_\delta \to \Sigma$ as $\delta\to0$,
	Thus $\operatorname{div}(A\nabla u)|_B$ is a Radon measure supported on $\Sigma$.
	\end{proof}
	Moreover, this Radon measure is absolutely continuous with respect to elliptic measure.
	\begin{claim}
		In the sense of distributions, we have
		\begin{equation*}
			\label{eq: claim appendix}
			\operatorname{div}(A \nabla u)|_B = \rho \omega|_\Sigma,
		\end{equation*}
		where $\rho\in L^\infty_{loc}(\Sigma)$ and $\omega$ is the elliptic measure associated to $\operatorname{div}(A\nabla \cdot)$ for $\Omega$ with respect to a fixed pole $p\in\Omega\backslash B$.
	\end{claim}
	We can assume that $B$ is small enough 
	so that $\Omega\backslash 2B\neq\varnothing$ and $p\in\Omega\setminus 2B$.
	\begin{proof}[Proof of the claim]
		To prove the claim, let $B'$ be an open ball concentric with $B$ such that 
		$\overline {B'}\subset B$. We will show that that there exists some constant $C$ depending on $B'$ and $p$ such that for any compact set $K\subset \Sigma \cap B'$, it holds
		\begin{equation}
			\label{eq: divAnabla u bdd by elliptic measure}
			\left (\operatorname{div}(A\nabla u), \chi_K \right)\leq C \omega(K).
		\end{equation}
		By duality, this implies the claim.
		
		Given $\epsilon \in \big(0,\frac12\operatorname{dist}(K,\mathbb R^d\backslash B')\big)$,
		let $\{Q_i\}_{i\in I}$ be a lattice of cubes covering $\mathbb R^d$ such that each $Q_i$ has diameter $\epsilon/2$. Let 
		$\{\phi_i\}_{i\in I}$ be a partition of unity of $\mathbb R^n$, so that each $\phi_i$ is supported
		in $2Q_i$ and satisfies $\|\nabla^j\phi_i\|_\infty\lesssim \ell(Q_i)^{-j}$, for $j=0,1,2$.
		Then, we have
		\begin{equation*}
			( \operatorname{div} (A\nabla u),\chi_K) = ( \operatorname{div}(A\nabla u), \sum_{i\in I'} \phi_i) - 
			( \operatorname{div}(A\nabla u), \sum_{i\in I'} \phi_i -\chi_K),
		\end{equation*}
		where $I'$ is the collection of indices $i\in I$ that satisfy $2Q_i\cap K\neq\varnothing$.
		By the regularity properties of Radon measures, we obtain
		\[
		| ( \operatorname{div}(A\nabla u), \sum_{i\in I'} \phi_i -\chi_K)|
		\leq \left ( |\operatorname{div}(A\nabla u)|,  \chi_{U_\epsilon(K)\setminus K}\right)\to 0\quad
		\mbox{ as $\epsilon\to 0$,}
		\]
		where $U_\epsilon(K)$ is the $\epsilon$-neighborhood of $K$.
		For the other term, we have
		\[
		|( \operatorname{div}(A\nabla u), \sum_{i\in I'} \phi_i)| \leq 
		\sum_{i\in I'}
		\left|\left(  u,  \operatorname{div}(A\nabla\phi_i)\right)\right| \lesssim \sum_{i\in I'}\frac1{\ell(Q_i)^2} \int_{2Q_i}
		|u| dm
		\]
		using that $A$ is elliptic and Lipschitz.
		Since $|u|$ is a continuous subsolution in $B$ that vanishes in $B\backslash\Omega$, by the
		boundary Harnack inequality, we have again $|u(x)| \lesssim  g(x)$ for all $x\in B'\cap\Omega$,
		where $g$ is the Green function of $\Omega$ for $\operatorname{div}(A \nabla \cdot)$ with pole in $\Omega \backslash B$. The constant $C$ depends on $u$, $p$, $\Lambda_A$ (the ellipticity constant of $A$), and $B'$, but not on $K$.
		Thus, proceeding as in the proof of the previous claim, we obtain
		\[
		|( \operatorname{div}(A\nabla u), \sum_{i\in I'} \phi_i)| \lesssim \sum_{i\in I'}
		\frac1{\ell(Q_i)^2} \int_{2Q_i} g(x) dx \approx \sum_{i \in I'} \omega (4Q_i) = \omega(U_{4\epsilon(K)}) .
		\]
		Letting $\epsilon\to 0$, we have $\omega(U_{4\epsilon}(K))\to\omega(K)$ from which equation \eqref{eq: divAnabla u bdd by elliptic measure} follows.
	\end{proof}
	
	By the solvability of the Dirichlet problem with $L^2$ data in Lipschitz domains for divergence form elliptic equations with H\"older coeficients (see Remark 1.4 in \cite{KS}), 
	we have that $\omega$ is a $B_2$ weight with respect to the surface measure $\sigma$. In particular, 
	the density function $\frac{d\omega}{d\sigma}$ belongs to $L^2_{loc}(\sigma)$. Therefore,
	in the sense of distributions,
	\[
	\operatorname{div}(A\nabla u)|_B = h \sigma,\quad \mbox{ for some $h\in L^2_{loc}(\sigma)$.}
	\]
	
	Next, we will show that
	$(\nabla u)_{+,a}$ exists $\sigma$-a.e.\ and moreover $(\nabla u)_{+,a} = (\nabla u,\nu) \nu \in L^2_{loc}(\sigma)$. Consider an arbitrary open ball $\tilde B$ centered in $\Sigma$ such that
	$4\tilde B\subset B$. Let $\phi$ be a $C^\infty$ function which equals $1$ on $2\tilde B$ is supported on $3\tilde B$, and let $v=\phi  u$. Observe that
	\begin{align}
		\label{eq: u given by the fund sol}
		v(x) &= \int \mathcal E(x,y) \operatorname{div}(A\nabla v)(y) \\
		&= 
		\int \mathcal E(x,y) \phi(y)\operatorname{div}(A\nabla u)(y) +
		 \int \mathcal E(x,y) u(y)\operatorname{div}(A\nabla \phi)(y) +
		 2\int \mathcal E(x,y) (A\nabla u(y), \nabla \phi(y)), \nonumber
	\end{align}
	where $\mathcal E$ is the fundamental solution of $\operatorname{div}(A\nabla\cdot)$ (we consider a Lipschitz, elliptic extension of $A$ defined in $\mathbb R^d$). Note that 
	$\nabla u\in L^2_{\operatorname{loc}}(B)$, by Caccioppoli's inequality.
	
	For a finite Borel measure $\eta$, let $T\eta$ be the gradient of the single layer potential of $\eta$ (in the case $A \equiv I$ it coincides with the $(d-1)$-dimensional Riesz transform of $\eta$). That is,
	\[
	T\eta(x) = \int \nabla_1 \mathcal E(x,y) d\eta(y),
	\]
	in the sense of truncations.
	From identity \eqref{eq: u given by the fund sol}, we obtain for $x\not \in\Sigma$,
	\[
	\nabla v(x) = c_d \big(T(\phi h \sigma|_\Sigma)(x) + T(u \operatorname{div}(A\nabla\phi) m)(x) + 2 T((A\nabla u, \nabla \phi)  m\big)(x)\big)
	\]
	(where $m$ is the Lebesgue measure in $\mathbb R^d$). Observe that 
	$T(u\operatorname{div}(A\nabla\phi) m)(x)$ and 
	$T((A\nabla u, \nabla \phi)  m\big)(x)$ are continuous functions in $\tilde B$.
	On the other hand, the non-tangential limit $T(\phi h \sigma|_\Sigma)_{\pm,a}(x)$ exists for $\sigma$-a.e.\ $x\in\partial\Omega$, by the jump formulas for the gradients of single layer potentials transforms
	(see \cite[Theorem 4.4]{KS} or \cite[Theorem 1]{ToJumps} for the constant coefficients case in rectifiable sets, for example). 
	From the fact that $\nabla v=\nabla u$ in $\tilde B$, it follows that
	$(\nabla u)_{\pm,a}(x)$  exists for $\sigma$-a.e.\ $x\in\Sigma\cap \tilde B$. By the $L^2(\sigma)$
	boundedness of $T$ on Lipschitz graphs (see  \cite[Theorem 3.1]{KS}), we deduce that 
	$(\nabla u)_{\pm,a}\in L^2(\sigma|_{\Sigma\cap\tilde B})$. 
	
	Since $u\equiv0$ in
	$\Omega^c$, we have $(\nabla u)_{-,a}=0$ in $\Sigma\cap\tilde B$. As the tangential component of $T(\phi h \sigma|_\Sigma)(x)$ is continuous across $\partial\Omega$ for $\sigma$-a.e.\ $x\in\partial\Omega$, again by the jump formulas for $T$ (\cite[Theorem 4.4]{KS}), we deduce that the tangential component of $(\nabla u)_{+,a}$ coincides with
	the tangential component of $(\nabla u)_{-,a}$ $\sigma$-a.e.\ in $\Sigma\cap\tilde B$, and thus
	$(\nabla u)_{+,a}
	= (\nabla u, \nu) \nu$ in $\Sigma\cap\tilde B$.
	
	We will now prove that $(\operatorname{div}(A\nabla u))|_B = -(A \nabla u, \nu)  \sigma|_\Sigma$ in the sense of distributions. Let $\psi$ be a function in $C^\infty_c(\tilde B)$ .
	Again, for $0<\epsilon\ll r(\tilde B)$, consider $\Sigma_\epsilon = \Sigma + \epsilon e_d$
	and $\Omega_\epsilon = \Omega + \epsilon e_d$,
	where $e_d=(0,\ldots,0,1)$. Then, we have
	\begin{align}\label{eqrt943}
		\langle \operatorname{div}(A\nabla u), \psi\rangle &= \int_{\tilde B \cap \Omega_\epsilon} u \operatorname{div}(A\nabla \psi) dm  = \lim_{\epsilon\to 0} \int_{\tilde B \cap\Omega_\epsilon} u \operatorname{div}(A\nabla \psi) dm \\
		& = \lim_{\epsilon\to 0} \int_{\tilde B \cap\partial \Omega_\epsilon} u (A\nabla\psi, \nu) d\sigma
		- \lim_{\epsilon\to 0} \int_{\tilde B \cap\partial \Omega_\epsilon} \psi (A\nabla u, \nu) d\sigma\nonumber\\
		& = 0 -  \int_{\Sigma} \psi (A\nabla u, \nu) d\sigma.\nonumber
	\end{align}
	The last identity follows from the uniform convergence of $u$ to $0$ as $\Sigma_\epsilon \to \Sigma$ and that $\nabla u(\cdot  + \epsilon e_n)$ converges  to $(\nabla u)_{+,a}$
	in $L^2(\sigma|_{\Sigma\cap \tilde B})$ (this is proven by arguments analogous to the ones above for the $\sigma$-a.e.\ existence of the limit $(\nabla u)_{+,a}(x)$ in $\Sigma$). From \eqref{eqrt943}, we deduce that $\operatorname{div}(A\nabla u) = -(A\nabla u,\nu)  \sigma$ in $\tilde B$, and thus also in $B$.
\end{proof}


\begin{thebibliography}{MTV2}
	\bibitem[Anc]{An} A. Ancona. {\em On positive harmonic functions in cones and cylinders.} Rev. Mat. Iberoam. 28 (2010): 201-230.
	
	\bibitem[AE]{AE} V. Adolfsson and L. Escauriaza. {\em $C^{1,\alpha}$ domains and unique continuation at the boundary.} Comm. Pure Appl. Math. 50 (1997), no. 10, 935--969. 
	
	\bibitem[AEK]{AEK} V. Adolfsson, L. Escauriaza and C.E. Kenig. {\em Convex domains and unique continuation at the boundary.} Rev. Mat. Iberoamericana 11 (1995), no. 3, 513--525. 
	
	\bibitem[AIM]{AIM} K. Astala, T. Iwaniec and G. Martin. {\em Elliptic Partial Differential Equations and Quasiconformal Mappings in the Plane.} (PMS-48) Princeton University Press (2008).
	
	\bibitem[ARRV]{ARRV} G. Alessandrini, L. Rondi, E. Rosset, and S. Vessella. {\em The stability for the Cauchy problem for elliptic equations.} Inverse Problems 25 (2009), no. 12, 123004, 47 pp. 
	
	\bibitem[BW]{BW} J. Bourgain and T. Wolff. {\em
	A remark on gradients of harmonic functions in dimension $\geq 3$.}
	Colloq. Math. 60/61 (1990), no. 1, 253--260. 
	
	\bibitem[BZ]{BZ} N. Burq and C. Zuily. {\em A remark on quantitative unique continuation from subsets of the boundary of positive measure.} arXiv preprint arXiv:2110.14282 (2021).
	
	\bibitem[Dah]{Dah} B. Dahlberg. {\em  
	On estimates for harmonic measure.}
	{Arch. Rat. Mech. Analysis} { 65} (1977), 272--288.
	
	\bibitem[DEK]{DEK} H. Dong, L. Escauriaza and S. Kim.  {\em On $ C^ 1$, $ C^ 2$, and weak type-(1, 1) estimates for linear elliptic operators: part II.} Mathematische Annalen, 370(1) (2018)
	
	\bibitem[DF]{DF} H. Donnelly and C. Fefferman. {\em Nodal sets of eigenfunctions on Reimannian manifolds.} Inventiones Math. 93 (1998)
	
	\bibitem[DS]{DS} D. De Silva and O. Savin. {\em A short proof of boundary Harnack inequality.} J. Differ. Equ. 269 (2020) 2419--2429.
	
	\bibitem[FKP]{FKP} R. A. Fefferman, C. E. Kenig and J. Pipher. {\em The theory of
	weights and the Dirichlet problem for elliptic equations.}
	Ann. Math. (2) (1991), 134(1):65--124, 
	
	\bibitem[FR]{FR} X. Fern\'andez-Real and X. Ros-Oton. {\em Regularity Theory for Elliptic PDE.} (2020) Forthcoming book.
	
	\bibitem[GL]{GL} N. Garofalo and F.-H. Lin.  {\em Monotonicity properties of variational integrals, $A_p$ weights and unique continuation.} Indiana Uni. Math. J. (1986);35(2):245--268.
	
	\bibitem[GM]{GM} J. Garnett and D. Marshall. {\em Harmonic Measure.} (New Mathematical Monographs). Cambridge: Cambridge University Press (2005).
	
	\bibitem[GT]{GT} D. Gilbarg and N.S. Trudinger. {\em Elliptic partial differential equations of second order.} Springer, (1983).
	
	\bibitem[GW]{GW} M. Gr\"uter and K.O. Widman. {\em The Green function for uniformly elliptic equations.} Manuscripta math. 37 (1982): 303--342.

	
	\bibitem[KT]{KT} C.E. Kenig and T. Toro. {\em Free boundary regularity for harmonic measures and Poisson kernels.} Ann. Math. 150, no. 2 (1999): 369-454.
	
	\bibitem[KN]{KN} I. Kukavica and K. Nystr\"om. {\em Unique continuation on the boundary for Dini domains.} Proc. Amer. Math. Soc. 126 (1998), no. 2, 441--446.  

	
	\bibitem[KS]{KS} C.E. Kenig and Z. Shen. {\em Layer potential methods for elliptic homogenization problems.} Comm. Pure Appl. Math. 64 (2011), no. 1 : 1-44.
	
	\bibitem[KZ1]{KZ1} C.E. Kenig and Z. Zhao. {\em Boundary unique continuation on $C^1$-Dini domains and the size of the singular set.} Arch. Rat. Mech. Analysis 245 (2022), 1-88.
	
	\bibitem[KZ2]{KZ2} C.E. Kenig and Z. Zhao. {\em Expansion of harmonic functions near the boundary of Dini domains.} Preprint arXiv:2107.06324 (2021).
	
	\bibitem[Lin]{Lin} F.-H. Lin. {\em Nodal sets of solutions of elliptic and parabolic equations.} Comm. Pure Appl. Math. 44 (1991), 287--308.
	
	\bibitem[Lo1]{Lo1} A. Logunov. {\em Nodal sets of Laplace eigenfunctions: polynomial upper estimates of the Hausdorff measure.} Ann. Math. (2) 187 (2018), no. 1, 221--239. 
	
	\bibitem[Lo2]{Lo2} A. Logunov. {\em Nodal sets of Laplace eigenfunctions: proof of Nadirashvili's conjecture and of the lower bound in Yau's conjecture.} Ann. Math. (2) 187 (2018), no. 1, 241--262. 
	
	\bibitem[LM1]{LM1} A. Logunov and E. Malinnikova. {\em Lecture notes on quantitative unique continuation for solution of second order elliptic equations.} Preprint arXiv:1903.10619 (2019)
	
	\bibitem[LM2]{LM2} A. Logunov and E. Malinnikova. {\em Nodal sets of Laplace eigenfunctions: estimates of the Hausdorff
		measure in dimension two and three.} Oper. Theory Adv. Appl. 261 (2018), 333--344. 
	
	\bibitem[LMNN]{LMNN}
	A. Logunov, E. Malinnikova, N. Nadirashvili, and F. Nazarov.{\em The sharp upper bound for the area of the nodal sets of Dirichlet Laplace eigenfunctions.} Geom. Funct. Analysis (2021), 31(5), 1219-1244.
	
	\bibitem[Mcc]{Mc} S. McCurdy. {\em Unique continuation on convex domains.} Preprint 
	arXiv:1907.02640 (2019).
	
	\bibitem[NV]{NV}   A. Naber and D. Valtorta. {\em Volume estimates on the critical sets of solutions to elliptic PDEs.}
	Comm. Pure Appl. Math. 70 (2017), no 10, 1835-1897.
	
	\bibitem[Saf]{Saf}M. Safonov. {\em Boundary estimates for positive solutions to second order elliptic equations.} Compl. Var. and Elliptic Eq. (2008)
	
	\bibitem[To1]{ToJumps} X. Tolsa. {\em Jump formulas for singular integrals and layer potentials on rectifiable sets.} Proc. Amer. Math. Soc. 148 (2020), no. 11 : 4755-4767.
	
	\bibitem[To2]{To}
	X.~Tolsa
	{\em Unique continuation at the boundary for harmonic functions in ${C}^1$ domains and {L}ipschitz domains with small constants.}
	Preprint arXiv:2004.10721 (2021). To appear in Comm. Pure Appl. Math.
	

	
\end{thebibliography}
\end{document}